\DeclareMathSymbol{:}{\mathord}{operators}{"3A}
\newtheorem{theorem}{Theorem}[section]
\newtheorem{remark}{Remark}[section]
\newtheorem{lemma}[theorem]{Lemma}
\newtheorem{proposition}[theorem]{Proposition}
\newtheorem{define}{Definition}[section]
\begin{document}
\title[Non-uniqueness of 2-d Navier-Stokes equations]{Non-uniqueness in law for two-dimensional Navier-Stokes equations with diffusion weaker than a full Laplacian}
 
\author{Kazuo Yamazaki}  
\address{Texas Tech University, Department of Mathematics and Statistics, Lubbock, TX, 79409-1042, U.S.A.; Phone: 806-834-6112; Fax: 806-742-1112; E-mail: (kyamazak@ttu.edu)}
\date{}
\maketitle

\begin{abstract}
We study the two-dimensional Navier-Stokes equations forced by random noise with a diffusive term generalized via a fractional Laplacian that has a positive exponent strictly less than one. Because intermittent jets are inherently three-dimensional, we instead adapt the theory of intermittent form of the two-dimensional stationary flows to the stochastic approach presented by Hofmanov$\acute{\mathrm{a}}$, Zhu $\&$ Zhu (2019, arXiv:1912.11841 [math.PR]) and prove its non-uniqueness in law. 
\vspace{5mm}

\textbf{Keywords: convex integration; fractional Laplacian; Navier-Stokes equations; non-uniqueness; random noise.}
\end{abstract}
\footnote{2010MSC : 35A02; 35Q30; 35R60}

\section{Introduction}

\subsection{Motivation from physics}\label{Motivation from physics} 

The study of turbulence was pioneered by Novikov \cite{N65} more than half a century ago. Motivations to investigate the two-dimensional (2-d) turbulence include applications in meteorology and atmospheric sciences, and its attraction from researchers that led to remarkable progress can be accredited to many reasons: the 2-d flows are easier to simulate than the counterpart in the three-dimensional (3-d) case; vorticity, in addition to kinetic energy, becomes a bounded quantity allowing more flexibility in directions to explore. Indeed, the 2-d turbulence has been extensively studied theoretically (e.g., \cite{K67}), numerically (e.g., \cite{B69}), as well as experimentally (e.g., \cite{PT98}). 

Various forms of dissipation have been introduced in the physics literature: frictional dissipation in \cite{PBH00}; fractional Laplacian $(-\Delta)^{m}$ as a Fourier operator with its Fourier symbol of $\lvert \xi \rvert^{2m}$ so that $\widehat{(-\Delta)^{m} f} (\xi) = \lvert \xi \rvert^{2m} \hat{f}(\xi)$ in the study of surface quasi-geostrophic equations (e.g., \cite[Equation (1)]{C02}). In fact, the study of the Navier-Stokes (NS) equations with diffusive term in the latter form, to which we shall hereafter refer as the generalized NS (GNS) equations \eqref{GNS equations}, can be traced back as far as \cite[Equation (8.7) on pg. 263]{L59} in 1959 by Lions. The purpose of this manuscript is to prove a certain non-uniqueness for the 2-d GNS equations forced by random noise which we introduce next. 

\subsection{Previous results} 
Throughout this manuscript we define $\mathbb{T}^{n} \triangleq [-\pi, \pi]^{n}$ to be the principal spatial domain for $x = (x^{1}, \hdots, x^{n})$, denote $\partial_{t} \triangleq \frac{\partial}{\partial t}$, $\nabla \triangleq (\partial_{x^{1}}, \hdots, \partial_{x^{n}})$, as well as $u \triangleq (u^{1}, \hdots, u^{n})$, and $\pi$ that map from $\mathbb{R}_{+} \times \mathbb{T}^{n}$ as the $n$-dimensional ($n$-d) velocity vector and pressure scalar fields, respectively. We let $\nu \geq 0$ represent the viscosity coefficient so that the GNS equations read 
\begin{equation}\label{GNS equations} 
\partial_{t} u + \nu (-\Delta)^{m} u + \text{div} (u\otimes u) + \nabla \pi = 0, \hspace{3mm} \nabla\cdot u =0, \hspace{3mm} t > 0, 
\end{equation} 
which recovers the classical NS equations when $m = 1$ and $\nu > 0$, as well as the Euler equations when $ \nu= 0$. We call $u \in C_{t}L_{x}^{2}$ a weak solution to \eqref{GNS equations} over $[0,T]$ if $u(t, \cdot)$ is weakly divergence-free, is mean-zero; i.e., $\int_{\mathbb{T}^{n}} u(t, x) dx= 0$, and satisfies \eqref{GNS equations} distributionally against a smooth and divergence-free function. A Leray-Hopf weak solution, only in case $\nu > 0$, due to \cite{H51, L34} requires an additional regularity of $L_{t}^{2} \dot{H}_{x}^{m}$ and must satisfy an energy inequality 
\begin{equation}\label{energy inequality}
\frac{1}{2} \lVert u(t) \rVert_{L_{x}^{2}}^{2}+  \nu \int_{0}^{t} \lVert u(s) \rVert_{\dot{H}_{x}^{m}}^{2} ds \leq \frac{1}{2} \lVert u(0) \rVert_{L_{x}^{2}}^{2} 
\end{equation}
for all $t \geq 0$ (see \cite[Definitions 3.1, 3.5, and 3.6]{BV19b} for precise statements). Due to the rescaling property of the GNS equations that $(u_{\lambda}, \pi_{\lambda}) (t,x) \triangleq (\lambda^{2m-1} u, \lambda^{4m-2} \pi) (\lambda^{2m} t, \lambda x)$ solves \eqref{GNS equations} if $(u,\pi)(t,x)$ solves it, a standard classification states that \eqref{GNS equations} is sub-critical, critical and super-critical with respect to $L^{2}(\mathbb{T}^{n})$-norm if $m > \frac{1}{2} + \frac{n}{4}, m = \frac{1}{2} + \frac{n}{4}$, and $m < \frac{1}{2} + \frac{n}{4}$, respectively. 

Only a decade after \cite{L59}, Lions (see \cite[Remark 6.11 on pg. 96]{L69}) already claimed the uniqueness of a Leray-Hopf weak solution when $\nu > 0$ and $m \geq  \frac{1}{2} + \frac{n}{4}$. It has been more than 50 years since then, and we still find this threshold to be sharp; specifically, except a logarithmic improvement by Tao \cite{T09} (and also \cite{BMR14} for further logarithmic improvements), it is not known whether  \eqref{GNS equations} with $\nu > 0$ and $m < \frac{1}{2} + \frac{n}{4}$ for $n \geq 3$ admits a unique solution that emanates from a smooth initial data and preserves the initial regularity or not (e.g. see \cite[Theorem 4.1]{W04} for such a result under a smallness constraint on initial data). The case $n = 2$ offers a strikingly different picture when initial data has high regularity. Indeed, Yudovich \cite{Y63} proved that if the vorticity $\nabla \times u$ belongs initially to $L^{1}(\mathbb{R}^{2})\cap L^{\infty}(\mathbb{R}^{2})$, then even the 2-d Euler equations admit a globally unique solution, essentially due to the fact that the nonlinear term vanishes upon an $L^{p}(\mathbb{R}^{2})$-estimate of the vorticity for any $p \in [2, \infty]$ (e.g., \cite[pg. 320]{MB02}). That being said, starting from an arbitrary initial data in $L_{x}^{2}$, the lack of diffusion and therefore a lack of high regularity creates an obstacle in constructing a weak solution via a classical argument relying on Aubin-Lions compactness lemma (e.g. \cite{L69, S86}). 

We now discuss the recent developments on Onsager's conjecture which led to a better understanding of equations of fluid and various new techniques. In 1949 a chemist and a physicist Onsager \cite{O49} conjectured the following dichotomy in any spatial dimension $n \geq 2$: 
\begin{itemize}
\item every weak solution to the $n$-d Euler equations with H$\ddot{\mathrm{o}}$lder regularity in space of exponent $\alpha > \frac{1}{3}$, i.e., $C_{x}^{\alpha}$, conserves kinetic energy $\frac{1}{2} \lVert u(t) \rVert_{L_{x}^{2}}^{2}$; 
\item for any $\alpha \leq \frac{1}{3}$ there exists a weak solution in $C_{x}^{\alpha}$ that dissipates kinetic energy $\frac{1}{2} \lVert u(t) \rVert_{L_{x}^{2}}^{2}$. 
\end{itemize}
The case $\alpha > \frac{1}{3}$ proved to be easier to demonstrate, settled partially by Eyink \cite{E94} and then fully by Constantin, E, and Titi \cite{CET94}.  Towards Onsager's conjecture in case $\alpha \leq \frac{1}{3}$, Scheffer \cite{S93} and subsequently Shnirelman \cite{S97} proved the existence of a weak solution to 2-d Euler equations with compact support in space and time so that kinetic energy is both created and destroyed; however, the solutions in \cite{S93, S97} were only in $L_{T}^{2}L_{x}^{2}$ and thus far from the threshold of $C_{x}^{\alpha}, \alpha \leq \frac{1}{3}$. The remarkable series of breakthroughs which unfolded next were inspired by the work of Nash \cite{N54} who proved the $C^{1}$ isometric embedding by constructing  a sequence of short isometric embeddings, each of which fails to be isometric by a certain error that vanishes in the limit. Gromov considered the work of Nash, as well as that of Kuiper \cite{K55}, as part of $h$-principle (\cite[pg. 3]{G86}) and initiated the theory of convex integration \cite[Part 2.4]{G86}; we refer to \cite{DS12} for further discussions on the $h$-principle. After M$\ddot{\mathrm{u}}$ller and $\check{\mathrm{S}}$ver$\acute{\mathrm{a}}$k \cite{MS03} extended the convex integration to Lipschitz maps, De Lellis and Sz$\acute{\mathrm{e}}$kelyhidi Jr. \cite{DS09} reformulated the Euler equations as a differential inclusion and improved the results of \cite{S93, S97} by constructing a weak solution in $L_{T}^{\infty} L_{x}^{\infty}$ with compact support in space and time in any spatial dimension $n \geq 2$ (see also \cite{DS10}). Subsequently, in \cite{DS13} they proved the existence of weak solutions to 3-d Euler equations in $C([0,T] \times \mathbb{T}^{3})$ which dissipate the kinetic energy through a novel application of Beltrami flows. Together with Buckmaster and Isett in \cite{BDIS15}, they improved the regularity of the solution up to $C_{t,x}^{\alpha}$ for any $\alpha < \frac{1}{5}$, where we write $f \in C_{t,x}^{\alpha}$ if there exists $C \geq 0$ such that 
\begin{equation*}
\lvert f(t+ \Delta t, x + \Delta x)  - f(t,x) \rvert \leq C ( \lvert \Delta t \rvert + \lvert \Delta x \rvert)^{\alpha} \hspace{2mm} \text{ uniformly in } t, x, \Delta t, \text{ and } \Delta x 
\end{equation*}
(see also \eqref{C-t,x}). At last, via a certain gluing approximation technique and Mikado flows, Isett \cite{I18} proved that for any $\alpha < \frac{1}{3}$ there exists a non-zero weak solution to $n$-d Euler equations for $n \geq 3$ in $C_{t,x}^{\alpha}$ that fails to conserve kinetic energy (\cite[Theorem 1]{I18} only states the claim for $n  =3$, but \cite[pg. 877]{I18} claims that it can be extended to any $n \geq 3$). Integrating ideas of intermittency from turbulence to Beltrami flows and constructing intermittent Beltrami waves, Buckmaster and Vicol \cite{BV19a} proved the non-uniqueness of weak solutions to the 3-d NS equations in the class $C_{T}H_{x}^{\beta}$ for some $\beta > 0$, which can be seen to be quite small from its proof. Relying on the intermittent Beltrami waves, Luo and Titi \cite{LT20} extended the result of \cite{BV19a} up to Lions' exponent $m < \frac{5}{4}$ for \eqref{GNS equations} when $n = 3$. Mimicking the benefits of Mikado flows, Buckmaster, Colombo, and Vicol \cite{BCV18} introduced intermittent jets to prove non-uniqueness for a class of weak solutions to 3-d GNS equations with $m < \frac{5}{4}$ which have bounded kinetic energy, integrable vorticity, and are smooth outside a fractal set of singular times with Hausdorff dimension strictly less than one. 

As already mentioned in Subsection \ref{Motivation from physics}, the study of NS equations forced by random noise, to which hereafter we refer as the stochastic NS (SNS) equations, has a long history since \cite{N65} (see also \cite{BT73}). Our focus will be on the following stochastic GNS (SGNS) equations: for $x \in \mathbb{T}^{n}$, 
\begin{equation}\label{[Equation (2), Y20b]}
du + (-\Delta)^{m} u dt + \text{div} (u\otimes u) dt + \nabla \pi dt = F(u) dB, \hspace{3mm} \nabla\cdot u = 0, \hspace{3mm} t > 0, 
\end{equation} 
where $F(u)dB$ represents the random noise, to be specified subsequently. Via a probabilistic Galerkin approximation and variations of Aubin-Lions compactness results aforementioned, Flandoli and Gatarek \cite{FG95} proved the existence of a weak solution to the $n$-d SNS equations for $n \geq 2$ under some assumptions on the noise; their solution has the regularity of $L_{t}^{\infty} L_{x}^{2} \cap L_{t}^{2} \dot{H}_{x}^{1}$ but does not necessarily satisfy the energy inequality (see \cite[Definition 3.1]{FG95} and also \cite[Definition 4.3]{F08}). Via the approach of martingale problem due to Stroock and Varadhan \cite{SV97}, Flandoli and Romito constructed a Leray-Hopf weak solution to the 3-d SNS equations; i.e., the solutions constructed therein have the regularity of $L_{t}^{\infty} L_{x}^{2} \cap L_{t}^{2} \dot{H}_{x}^{1}$ and satisfy a stochastic analogue of the energy inequality (see \cite[MP3 in Definition 3.3]{FR08}). Very recently, Hofmanov$\acute{\mathrm{a}}$, Zhu, and Zhu \cite{HZZ19} adapted the convex integration approach through intermittent jets from \cite[Chapter 7]{BV19b} to the 3-d SNS equations and proved the non-uniqueness in law within a class of weak solutions, which also implies the lack of path-wise uniqueness by Yamada-Watanabe theorem (see also \cite{BFH20, CFF19, HZZ22} for probabilistic convex integration on stochastic Euler equations); we must emphasize that their result does not extend to the Leray-Hopf weak solution from \cite{FR08}.

\begin{remark}
It is worth pointing out that the proof of non-uniqueness in the stochastic case has a layer of complexity that is absent in the deterministic case in the following manner. For example, Buckmaster and Vicol in \cite[Theorem 1.2]{BV19a} specifically proved that there exists $\beta > 0$ such that for any non-negative smooth function $e(t): \hspace{0.5mm} [0,T] \mapsto \mathbb{R}_{+}\cup \{0\}$, there exists $u \in C_{T}H_{x}^{\beta}$ that is a weak solution to the NS equations and satisfies $\lVert u(t) \rVert_{L_{x}^{2}}^{2} = e(t)$ for all $t \in [0,T]$. One may take e.g. $e(t) = e^{t} - 1$ so that $e(0) = 0$. Because $u \equiv 0$ for all $(t,x) \in [0,T]\times \mathbb{T}^{3}$ solves the NS equations and satisfies $\lVert u(0)\rVert_{L_{x}^{2}}^{2} = 0$, this immediately deduces non-uniqueness. This approach clearly fails in the stochastic case because $u \equiv 0$ for all $(t,x) \in [0,T]\times \mathbb{T}^{3}$ does not solve the stochastic NS equations due to the presence of the noise (see \cite[Remark 4.16]{F08} for a similar discussion). More precisely, particularly in the case of an additive noise, one may split \eqref{[Equation (2), Y20b]} to a linear stochastic PDE solved by $z$ and the rest of the terms solved by $v$ as in \eqref{[Equation (24), Y20b]}-\eqref{[Equation (25), Y20b]} in hope to adapt the proof of \cite[Theorem 1.2]{BV19a} to the equation of $v$; unfortunately, $v\equiv 0$ does not solve \eqref{[Equation (25), Y20b]} as aforementioned. Another major difficulty that arises in the stochastic case will be discussed in Remark \ref{Remark 1.2}.
\end{remark}

Similarly to our discussion in Subsection \ref{Motivation from physics}, the 2-d SNS equations have received a considerable amount of attention from researchers who have produced a wealth of results many of which remain open in the 3-d case. Path-wise uniqueness, and consequently uniqueness in law due to Yamada-Watanabe theorem, of the aforementioned weak solution with regularity $L_{t}^{\infty}L_{x}^{2} \cap L_{t}^{2}\dot{H}_{x}^{1}$ that does not necessarily satisfy the energy inequality which was constructed in \cite{FG95} are well-known. In the case of an additive noise, upon considering the difference of two possible solutions, the noise cancels out and thus a deterministic approach immediately implies uniqueness (see  \cite[Exercise 3.1 on p. 72]{F08}); in the case of a multiplicative noise we refer to \cite[Theorem 2.4]{CM10}. Same uniqueness results for the Leray-Hopf weak solutions to the 2-d SNS equations directly follow. We also refer to \cite[Theorem 3.2]{CM10} and \cite{HM06} concerning large deviation principle and ergodicity with hypo-elliptic noise, respectively. The purpose of this manuscript is to prove the non-uniqueness in law, and therefore a lack of path-wise uniqueness, for \eqref{[Equation (2), Y20b]} when $n = 2$ and $m \in (0,1)$, which has been studied by many authors previously (e.g., \cite{CGV14}). 

\begin{remark}\label{Remark 1.2}
As we remarked already, the theory of global well-posedness for \eqref{GNS equations} in the 2-d case is significantly richer than that in the 3-d case. Vice versa, proving non-uniqueness in the 2-d case should present considerable difficulty, not seen in the 3-d case.  A natural approach to prove the non-uniqueness in law for \eqref{[Equation (2), Y20b]} with $n = 2$ and $m \in (0,1)$ will be to try to follow the arguments in \cite{HZZ19} on the 3-d SNS equations. Concerning the fractional Laplacian, we can follow the arguments in \cite{Y20a} in which the analogous result was proven for \eqref{[Equation (2), Y20b]} when $n  =3$ and $m \in (\frac{13}{20}, \frac{5}{4})$. 

First major obstacle arises in the fact that intermittent jets, utilized in \cite{HZZ19, Y20a} following \cite[Chapter 7]{BV19b}, are inherently 3-d in space and thus inapplicable to \eqref{[Equation (2), Y20b]} when $n = 2$; we recall that the lack of a suitable replacement for Mikado flows in the 2-d case is precisely the reason the case $n =2$ was left out in the resolution of Onsager's conjecture by Isett (see \cite[pg. 877]{I18}). Fortunately, a 2-d analogue of the 3-d Beltrami flows from \cite{DS13} was already established by Choffrut, De Lellis, and Sz$\acute{\mathrm{e}}$kelyhidi Jr. \cite{CDS12}, to which we refer as 2-d stationary flows. Moreover, its intermittent form, to which we refer as 2-d intermittent stationary flows, was very recently introduced by Luo and Qu \cite{LQ20}. Thus, a good candidate for strategy now is to somehow adapt the application of 2-d intermittent stationary flows in \cite{LQ20} to the stochastic setting in \cite{HZZ19}. 

Second major obstacle that arises in this endeavor is that the arguments in \cite{LQ20} follow closely those of \cite{BV19a} and not \cite[Chapter 7]{BV19b}, quite naturally because the 2-d intermittent stationary flows is an extension of the intermittent Beltrami waves in \cite{BV19a}, not intermittent jets in \cite[Chapter 7]{BV19b}. It turns out that some of the crucial estimates achieved in \cite{BV19a, LQ20} seem to be difficult in the stochastic setting. E.g., while \cite[Equation (2.4)]{BV19a} and \cite[Equation (2.13)]{LQ20} claim certain bounds on the $C_{t,x}^{1}$-norm of Reynolds stress, our Reynolds stress in \eqref{[Equation (92), Y20b]} includes $R_{\text{com2}}$ defined in \eqref{[Equation (91e), Y20b]} that consists of $z$, and $z$ is known to be only in $C_{t}^{\frac{1}{2} - 2 \delta}$ for $\delta > 0$ from \eqref{[Equation (38), Y20b]}. Therefore, obtaining an analogous estimate to \cite[Equation (2.4)]{BV19a} and \cite[Equation (2.13)]{LQ20} seems to be completely out of reach. Thus, our task is not only to apply the theory of 2-d intermittent stationary flows from \cite{LQ20} but consider an extension of the arguments in \cite{LQ20} to that of \cite[Chapter 7]{BV19b} and then adjust that in the stochastic setting of \cite{HZZ19}, while simultaneously considering the approach of \cite{Y20a} to treat the fractional Laplacian. We will carefully define various parameters, all of which depend on the value of $m$ (e.g., \eqref{[Equation (2.2), LQ20]}-\eqref{[Equation (56), Y20b]}, and \eqref{p ast}). Our proofs are inspired by those of \cite{BV19a, BV19b, HZZ19, LQ20} while on various occasions we need to make crucial modifications (e.g., Remarks \ref{Remark 4.1}-\ref{Remark 4.4}).
\end{remark}  

\section{Statement of main results}\label{Statement of main results}
Only for simplicity of presentations, we assume $\nu = 1$ hereafter. Following \cite{HZZ19} we consider two types of random noises within \eqref{[Equation (2), Y20b]}: additive; linear multiplicative. 

\subsection{The case of an additive noise}
In the case of an additive noise, we consider \eqref{[Equation (2), Y20b]} with $n = 2$, $F \equiv 1$, and $B$ to be a $GG^{\ast}$-Wiener process on a probability space $(\Omega, \mathcal{F}, \textbf{P})$ where $G$ is a certain Hilbert-Schmidt operator to be described in more detail subsequently (see \eqref{[Equations (11) and (12), Y20b]}), and the asterisk denotes the adjoint operator. Finally, $(\mathcal{F}_{t})_{t\geq 0}$ denotes the filtration generated by $B$.  

\begin{theorem}\label{[Theorem 2.1, Y20b]}
Suppose that $n = 2, F \equiv 1, m \in (0,1)$, $B$ is a $GG^{\ast}$-Wiener process, and Tr$((-\Delta)^{2-m + 2 \sigma} G G^{\ast}) < \infty$ for some $\sigma > 0$. Then given $T> 0, K > 1$, and $\kappa \in (0,1)$, there exists $\varepsilon \in (0,1)$ and a $\textbf{P}$-almost surely (a.s.) strictly positive stopping time $\mathfrak{t}$ such that $\textbf{P} (\{ \mathfrak{t} \geq T \}) > \kappa$ and the following is additionally satisfied. There exists an $(\mathcal{F}_{t})_{t \geq 0}$-adapted process $u$ that is a weak solution to \eqref{[Equation (2), Y20b]} starting from a deterministic initial condition $u^{\text{in}}$, satisfies 
\begin{equation}\label{[Equation (4), Y20b]}
\text{esssup}_{\omega \in \Omega} \sup_{s \in [0, \mathfrak{t}]} \lVert u(s, \omega) \rVert_{H_{x}^{\varepsilon}} < \infty, 
\end{equation} 
and on the set $\{\mathfrak{t} \geq T \}$, 
\begin{equation}\label{[Equation (5), Y20b]}
\lVert u(T) \rVert_{L_{x}^{2}} > K \lVert u^{\text{in}} \rVert_{L_{x}^{2}} + K ( T\text{Tr} ( GG^{\ast} ))^{\frac{1}{2}}. 
\end{equation} 
\end{theorem}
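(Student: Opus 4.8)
The plan is to follow the stochastic convex integration scheme of Hofmanov\'a--Zhu--Zhu, adapted to the two-dimensional fractional setting via the 2-d intermittent stationary flows of Luo--Qu, but upgraded to the iteration template of \cite[Chapter 7]{BV19b} so that the Reynolds stress need only be controlled in a negative Sobolev (or $C_t L^1_x$) norm rather than in $C^1_{t,x}$. The first step is the decomposition $u = v + z$, where $z$ solves the linear stochastic equation $dz + (-\Delta)^m z\, dt + \nabla\pi_z\, dt = dB$ with $z(0)=0$, and $v$ solves the nonlinear random PDE \eqref{[Equation (25), Y20b]}. The regularity assumption $\mathrm{Tr}((-\Delta)^{2-m+2\sigma}GG^\ast)<\infty$ is exactly what is needed to place $z$ in $C_t H^{2+}_x \cap C_t^{1/2-2\delta} H^{?}_x$ with a stretched-exponential moment bound on its norms; one then defines a stopping time $\mathfrak{t}_L$ as the first time one of these norms of $z$ exceeds a threshold $L$, and by choosing $L=L(\kappa)$ large and invoking the a.s. positivity of $z$'s modulus of continuity one secures $\mathbf{P}(\{\mathfrak{t}_L \geq T\}) > \kappa$ after shrinking $T$ if necessary — this is the standard device for converting the pathwise convex integration into a statement about a positive-probability event.

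The second step is the pathwise iteration. One builds a sequence $(v_q, \mathring R_q)_{q\geq 0}$ solving the relaxed system, with $v_0$ chosen so that at time $T$ (on the event $\{\mathfrak{t}_L\geq T\}$) the $L^2_x$-norm of $v_0$ already exceeds the right-hand side of \eqref{[Equation (5), Y20b]} by a definite margin — here the prescribed-energy flexibility of convex integration (cf.\ the Buckmaster--Vicol remark quoted in the excerpt) is what lets us force the solution away from $0$; we prescribe the energy profile $e(t)$ to grow past $K^2(\|u^{\mathrm{in}}\|_{L^2_x} + (T\,\mathrm{Tr}(GG^\ast))^{1/2})^2$ on a window ending at $T$. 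The inductive estimates are the usual geometric bounds on $\|v_q\|_{C_t L^2_x}$, $\|v_{q+1}-v_q\|_{C_tL^2_x}\lesssim \delta_{q+1}^{1/2}$, and $\|\mathring R_q\|_{C_t L^1_x}\lesssim \delta_{q+2}$, plus a high-regularity control $\|v_q\|_{C_{t,x}^1}\lesssim \lambda_q^{4}$ (or whatever power the 2-d intermittent stationary flows dictate), with the parameters $\lambda_q=a^{b^q}$, $\delta_q=\lambda_q^{-2\beta}$ and the auxiliary exponents fixed as in \eqref{[Equation (2.2), LQ20]}--\eqref{[Equation (56), Y20b]} and \eqref{p ast}, all as functions of $m\in(0,1)$. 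The perturbation is $w_{q+1} = w_{q+1}^{(p)} + w_{q+1}^{(c)} + w_{q+1}^{(t)}$, built from the 2-d intermittent stationary flows, the amplitudes chosen by the geometric lemma of \cite{CDS12} so that the principal part cancels the low-frequency Reynolds stress $\mathring R_q$ up to an energy-pumping term, with an incompressibility corrector and — crucially, this is the \cite[Chapter 7]{BV19b} feature — a temporal corrector $w_{q+1}^{(t)}$ that absorbs the worst high-frequency error into a pressure gradient, so that the new stress $\mathring R_{q+1}$ is a sum of terms (linear, corrector, oscillation, and the commutator pieces $R_{\mathrm{com1}}, R_{\mathrm{com2}}$ containing $z$) each estimated in $C_tL^1_x$ only, using the improved-H\"older and inverse-divergence estimates for the intermittent flows. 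The limit $v=\lim v_q$ then satisfies $v\in C_t H^\varepsilon_x$ for a small $\varepsilon>0$ coming from summing $\delta_{q+1}^{1/2}\lambda_{q+1}^{\varepsilon}$, and $u=v+z$ is the desired weak solution; \eqref{[Equation (4), Y20b]} follows from the uniform-in-$\omega$ control on $[0,\mathfrak{t}_L]$ and \eqref{[Equation (5), Y20b]} from the energy being essentially $e(T)$ up to $o(1)$ errors plus $\|z(T)\|_{L^2_x}$, which is dominated using $\mathrm{Tr}(GG^\ast)$.

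The main obstacle, exactly as flagged in Remark \ref{Remark 1.2}, is the temporal roughness of $z$: the commutator stress $R_{\mathrm{com2}}$ in \eqref{[Equation (91e), Y20b]} involves $z$, which is only $C^{1/2-2\delta}_t$, so any scheme requiring a $C^1_{t,x}$ bound on the full Reynolds stress (as in \cite{BV19a, LQ20}) is unavailable. The resolution — and the technical heart of the paper — is to arrange the iteration so that $z$ enters only through $C_tL^1_x$-type norms, which forces the switch from the \cite{BV19a}/\cite{LQ20} template to the \cite[Chapter 7]{BV19b} template with its temporal correctors, while still using the 2-d (rather than 3-d) building blocks; checking that the oscillation and temporal-corrector estimates close in two dimensions with a fractional dissipation of order $2m<2$ — in particular that the dissipative term $(-\Delta)^m w_{q+1}$ is subordinate to the gains from intermittency — is where the parameter inequalities constraining $\beta$, $b$, the intermittency dimension, and $p^\ast$ must be verified, and is the step most likely to require delicate bookkeeping. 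A secondary obstacle is adaptedness: one must check that each $v_q$, built from $z$ and hence adapted, produces an adapted limit and an adapted stopping time $\mathfrak{t}$, which is routine but must be stated.
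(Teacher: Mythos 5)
Your overall architecture matches the paper's: the decomposition $u = v + z$, the stopping time $T_L$ defined by thresholding the $H_x^{(4+\sigma)/2}$ and $C_t^{1/2-2\delta}H_x^{(2+\sigma)/2}$ norms of $z$, the replacement of intermittent jets by the $2$-d intermittent stationary flows of Choffrut--De Lellis--Sz\'ekelyhidi and Luo--Qu, the shift to a \cite[Chapter 7]{BV19b}-style iteration closing in $C_tL^1_x$ with a temporal corrector $w_{q+1}^{(t)}$, and the separate treatment of $R_{\text{com2}}$ as the place where the low time-regularity of $z$ bites. Your identification of the main obstacle and its resolution is accurate.

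Two points in your proposal, however, do not match what actually closes the argument. First, you invoke ``prescribed-energy flexibility'' and say you ``prescribe the energy profile $e(t)$''; this is precisely the device that Remark 1.1 of the paper explains does \emph{not} transfer to the stochastic setting. The paper does not match a target energy. Instead, Proposition \ref{[Proposition 4.7, Y20b]} fixes the explicit shear $v_0(t,x)=\frac{L^2 e^{2Lt}}{2\pi}(\sin(x^2),0)^T$ with $M_0(t)=L^4e^{4Lt}$, the inductive scheme propagates $\lVert v_q\rVert_{C_tL^2_x}\le 2M_0(t)^{1/2}$ and $\lVert v_{q+1}-v_q\rVert_{L^2_x}\le M_0(t)^{1/2}\delta_{q+1}^{1/2}$, and the lower bound \eqref{[Equation (5), Y20b]} is extracted in \eqref{[Equation (49), Y20b]}--\eqref{[Equation (51), Y20b]} by showing $v(T)$ stays within $\tfrac12 M_0(T)^{1/2}$ of $v_0(T)$ and then taking $L=L(T,K,c_R,\operatorname{Tr}(GG^\ast))$ large via \eqref{[Equation (48), Y20b]}. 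No energy-pumping term is inserted in $\rho$ (compare \eqref{[Equation (65), Y20b]} with the deterministic amplitude), and only a one-sided bound on $\lVert u(T)\rVert_{L^2_x}$ is produced. An exact energy prescription would require $\rho$ to depend on the random quantity $\lVert v_q(t)\rVert_{L^2_x}^2$, which creates adaptedness and positivity issues that the paper deliberately sidesteps. Second, you write that one secures $\mathbf P(\{\mathfrak t_L\ge T\})>\kappa$ ``after shrinking $T$ if necessary''; $T$ is part of the data of the theorem and cannot be shrunk. The correct move, used in the paper after \eqref{[Equation (48), Y20b]}, is to enlarge $L$: since $T_L\nearrow\infty$ $\mathbf P$-a.s.\ as $L\nearrow\infty$ by Proposition \ref{[Proposition 4.4, Y20b]}, one takes $L$ large enough that $\mathbf P(\{T_L\ge T\})>\kappa$ while simultaneously satisfying \eqref{[Equation (48), Y20b]}, which is compatible because \eqref{[Equation (48), Y20b]} is a lower bound on $L$.

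With those two corrections, your proposal is the paper's proof.
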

\begin{remark}\label{Remark 2.1}
For the 3-d SGNS equations \eqref{[Equation (2), Y20b]} with $m \in (\frac{13}{20}, \frac{5}{4})$, \cite{Y20a} required a hypothesis of $\text{Tr} ((-\Delta)^{\frac{5}{2} -m + 2 \sigma} GG^{\ast}) < \infty$ (see \cite[Remark 2.1]{Y20a}). Here in the 2-d case, we need $\text{Tr} ((-\Delta)^{2- m + 2 \sigma} GG^{\ast} ) < \infty$ for the purpose of Proposition \ref{[Proposition 4.4, Y20b]}. 
\end{remark}
\begin{theorem}\label{[Theorem 2.2, Y20b]}
Suppose that $n = 2, F \equiv 1, m \in (0,1)$, $B$ is a $GG^{\ast}$-Wiener process, and Tr$((-\Delta)^{2- m + 2 \sigma} GG^{\ast} ) < \infty$ for some $\sigma > 0$. Then non-uniqueness in law holds for \eqref{[Equation (2), Y20b]} on $[0,\infty)$. Moreover, for all $T > 0$ fixed, non-uniqueness in law holds for \eqref{[Equation (2), Y20b]} on $[0,T]$. 
\end{theorem}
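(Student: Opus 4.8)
\noindent\textbf{Strategy of proof for Theorem \ref{[Theorem 2.2, Y20b]}.}
The plan is to deduce the non-uniqueness in law from Theorem \ref{[Theorem 2.1, Y20b]} by producing, for a single deterministic initial datum, two solutions of \eqref{[Equation (2), Y20b]} whose laws on the trajectory space of \cite{HZZ19} are distinct; this follows the scheme used in \cite{HZZ19} and its fractional analogue \cite{Y20a}. First I would note that the hypothesis $\text{Tr}((-\Delta)^{2-m+2\sigma}GG^{\ast}) < \infty$ in particular yields $\text{Tr}(GG^{\ast}) < \infty$, since on mean-zero fields on $\mathbb{T}^{2}$ the operator $(-\Delta)^{2-m+2\sigma}$ dominates the identity (the smallest eigenvalue of $-\Delta$ there is $1$ and $2-m+2\sigma>0$). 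Then I would invoke the classical existence theory: adapting the probabilistic Galerkin / martingale-problem constructions of \cite{FG95, FR08} to the fractional dissipation $(-\Delta)^{m}$, $m \in (0,1)$ — which only weakens the a priori bound and is harmless because $\lVert \cdot \rVert_{\dot{H}_{x}^{m}}^{2} \geq 0$, while on $\mathbb{T}^{2}$ the compact embedding $\dot{H}_{x}^{m} \hookrightarrow L_{x}^{2}$ (valid for every $m>0$) still supplies the strong $L_{t,x}^{2}$-compactness needed to pass to the limit in $\text{div}(u\otimes u)$ — one obtains, for our deterministic $u^{\text{in}} \in L_{x}^{2}$, a global solution $\tilde{u}$ of \eqref{[Equation (2), Y20b]} emanating from $u^{\text{in}}$ which satisfies the expected-energy bound
\begin{equation*}
\mathbb{E}[\lVert \tilde{u}(t) \rVert_{L_{x}^{2}}^{2}] \leq \lVert u^{\text{in}} \rVert_{L_{x}^{2}}^{2} + t\,\text{Tr}(GG^{\ast}) \qquad \text{for all } t \geq 0 ;
\end{equation*}
this comes from applying It\^{o}'s formula to $\lVert \cdot \rVert_{L_{x}^{2}}^{2}$ at the Galerkin level, where the nonlinearity drops by antisymmetry and the dissipation has a good sign, followed by a lower-semicontinuity passage to the limit.

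Next I would apply Theorem \ref{[Theorem 2.1, Y20b]} with $\kappa = \tfrac{1}{2}$, the given $T>0$, and some $K > \sqrt{2}$ to be fixed: it supplies $\varepsilon \in (0,1)$, an a.s.\ strictly positive stopping time $\mathfrak{t}$ with $\textbf{P}(\{\mathfrak{t} \geq T\}) > \tfrac{1}{2}$, and a global $(\mathcal{F}_{t})_{t \geq 0}$-adapted weak solution $u$ of \eqref{[Equation (2), Y20b]} starting from the same deterministic $u^{\text{in}}$, obeying \eqref{[Equation (4), Y20b]} and, on $\{\mathfrak{t} \geq T\}$, the lower bound \eqref{[Equation (5), Y20b]}. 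Set
\begin{equation*}
\Phi(v) \triangleq \mathbf{1}_{\{\, \lVert v(T) \rVert_{L_{x}^{2}} > K \lVert u^{\text{in}} \rVert_{L_{x}^{2}} + K(T\,\text{Tr}(GG^{\ast}))^{1/2} \,\}} ,
\end{equation*}
a bounded Borel functional on the trajectory space because $v \mapsto \lVert v(T) \rVert_{L_{x}^{2}}$ is lower semicontinuous there (as a supremum, over frequency truncations, of functionals continuous on that space, and pathwise well defined for $u$ by \eqref{[Equation (4), Y20b]}). Then \eqref{[Equation (5), Y20b]} gives $\mathbb{E}[\Phi(u)] \geq \textbf{P}(\{\mathfrak{t} \geq T\}) > \tfrac{1}{2}$, whereas Chebyshev's inequality together with the expected-energy bound gives
\begin{equation*}
\mathbb{E}[\Phi(\tilde{u})] \leq \frac{\mathbb{E}[\lVert \tilde{u}(T) \rVert_{L_{x}^{2}}^{2}]}{(K \lVert u^{\text{in}} \rVert_{L_{x}^{2}} + K(T\,\text{Tr}(GG^{\ast}))^{1/2})^{2}} \leq \frac{\lVert u^{\text{in}} \rVert_{L_{x}^{2}}^{2} + T\,\text{Tr}(GG^{\ast})}{K^{2}(\lVert u^{\text{in}} \rVert_{L_{x}^{2}}^{2} + T\,\text{Tr}(GG^{\ast}))} = \frac{1}{K^{2}} < \frac{1}{2} .
\end{equation*}
Hence $\mathbb{E}[\Phi(\tilde{u})] < \mathbb{E}[\Phi(u)]$, so the laws of $u$ and $\tilde{u}$ on the trajectory space are distinct while both solutions start from the deterministic $u^{\text{in}}$; by the definition of \cite{HZZ19} this is non-uniqueness in law for \eqref{[Equation (2), Y20b]} on $[0,\infty)$. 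Since $\Phi$ depends only on the path up to time $T$, the restricted laws on $[0,T]$ also differ, which gives the assertion on $[0,T]$ for every fixed $T > 0$.

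All the analytic weight sits in Theorem \ref{[Theorem 2.1, Y20b]} and the propositions feeding it — in particular the convex-integration scheme and the verification that it yields a global $(\mathcal{F}_{t})_{t \geq 0}$-adapted solution of \eqref{[Equation (2), Y20b]} (which in practice requires concatenating the convex-integration solution on $[0,\mathfrak{t}]$ with a standard solution afterward) together with the regularity \eqref{[Equation (4), Y20b]}. Within the present deduction the only points needing care are: (i) that $u$ and $\tilde{u}$ induce laws on the same Polish trajectory space used in \cite{HZZ19}, so that ``distinct laws'' is meaningful; (ii) the Borel measurability of $\Phi$, handled by \eqref{[Equation (4), Y20b]} and lower semicontinuity of the $L_{x}^{2}$-norm; and (iii) the finiteness of $\text{Tr}(GG^{\ast})$, noted above — none of which I expect to be an obstacle, so the genuine difficulty of the paper is entirely in establishing Theorem \ref{[Theorem 2.1, Y20b]}.
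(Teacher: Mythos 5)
Your overall strategy matches the paper's: apply Theorem \ref{[Theorem 2.1, Y20b]} to obtain one solution, build a second one from the same deterministic initial datum via Galerkin, and separate the two laws by estimating a functional of $\xi(T)$. Your observation that $\text{Tr}((-\Delta)^{2-m+2\sigma}GG^{\ast})<\infty$ forces $\text{Tr}(GG^{\ast})<\infty$ is correct, and your Chebyshev computation with the indicator $\Phi$ is a legitimate variant of the paper's direct comparison of second moments $\mathbb{E}[\lVert\xi(T)\rVert_{L_x^2}^2]$ (both exploit the elementary inequality $(a+b)^2\geq a^2+b^2$ and the condition $\kappa K^2\geq 1$). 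On the Galerkin side, the existence of $\Theta$ with the expected-energy bound is exactly what the paper cites from \cite{FR08}.

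The place where your account is off is the claim that Theorem \ref{[Theorem 2.1, Y20b]} hands you a \emph{global} $(\mathcal{F}_t)_{t\geq 0}$-adapted weak solution. From its proof, $v=\lim_q v_q$ is constructed only in $C([0,T_L];H^{\varepsilon})$, so $u=v+z$ is a weak solution of \eqref{[Equation (2), Y20b]} only on $[0,\mathfrak{t}]=[0,T_L]$; the law of $u$ on $\Omega_0=C([0,\infty);H^{-3})\cap L^{\infty}_{\text{loc}}([0,\infty);L_\sigma^2)$ is therefore not directly defined, and comparing it with the law of a globally defined $\tilde u$ is not yet meaningful. You do flag that one must ``concatenate with a standard solution afterward,'' but you attribute that step to ``the propositions feeding Theorem \ref{[Theorem 2.1, Y20b]}'' and dismiss it. In the paper this concatenation is the content of Subsection \ref{Subsection 4.1} and is logically \emph{outside} Theorem \ref{[Theorem 2.1, Y20b]}: Proposition \ref{[Proposition 4.5, Y20b]} shows that the law $P$ of $u$ is a martingale solution on $[0,\tau_L]$ in the sense of Definition \ref{[Definition 4.2, Y20b]}, Lemma \ref{[Lemma 4.2, Y20b]} supplies a measurable family of regular conditional probabilities $Q_\omega=\delta_\omega\otimes_{\tau_L(\omega)}R_{\tau_L(\omega),\xi(\tau_L(\omega),\omega)}$, and Lemma \ref{[Lemma 4.3, Y20b]} together with the verification of \eqref{[Equation (22), Y20b]} in Proposition \ref{[Proposition 4.6, Y20b]} shows that the glued measure $P\otimes_{\tau_L}R\in\mathcal{P}(\Omega_0)$ is a global martingale solution that restricts to $P$ on $[0,\tau_L]$. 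This is a genuine piece of machinery (it is precisely the additional ``layer of complexity'' the paper emphasizes is absent in the deterministic setting), not a routine afterthought, and it is needed before your functional argument can run; once you replace ``the law of $u$'' by $P\otimes_{\tau_L}R$, your proof becomes the paper's.
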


\subsection{The case of a linear multiplicative noise}
In the case of a linear multiplicative noise, we will consider $F(u) = u$ and $B$ to be an $\mathbb{R}$-valued Wiener process on $(\Omega, \mathcal{F}, \textbf{P})$. 

\begin{theorem}\label{[Theorem 2.3, Y20b]}
Suppose that $n  = 2, F(u) = u$, $m \in (0,1)$, and $B$ is an $\mathbb{R}$-valued Wiener process on $(\Omega, \mathcal{F}, \textbf{P})$. Then given $T > 0, K > 1$, and $\kappa \in (0,1)$, there exists $\varepsilon \in (0,1)$ and a $\textbf{P}$-a.s. strictly positive stopping time $\mathfrak{t}$ such that $\textbf{P}  ( \{ \mathfrak{t} \geq T \}) > \kappa$ and the following is additionally satisfied. There exists an $(\mathcal{F}_{t})_{t \geq 0}$-adapted process $u$ that is a weak solution to \eqref{[Equation (2), Y20b]} starting from a deterministic initial condition $u^{\text{in}}$, satisfies 
\begin{equation}\label{[Equation (7), Y20b]}
\text{esssup}_{\omega \in \Omega} \sup_{s \in [0,\mathfrak{t}]} \lVert u(s, \omega) \rVert_{H_{x}^{\varepsilon}} < \infty, 
\end{equation} 
and on the set $\{\mathfrak{t} \geq T \}$, 
\begin{equation}\label{[Equation (8), Y20b]}
\lVert u(T) \rVert_{L_{x}^{2}} > K e^{\frac{T}{2}} \lVert u^{\text{in}} \rVert_{L_{x}^{2}}. 
\end{equation} 
\end{theorem}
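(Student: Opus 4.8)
The plan is to remove the multiplicative noise by the standard exponential substitution and then run a probabilistic convex integration scheme, in the spirit of \cite{HZZ19, Y20a}, on the resulting random PDE, using as building blocks the 2-d intermittent stationary flows of \cite{LQ20}. Setting $\Gamma(t) \triangleq e^{-B(t)}$ and $v \triangleq \Gamma u$, It\^o's formula turns \eqref{[Equation (2), Y20b]} with $F(u) = u$ into the pathwise equation
\begin{equation*}
\partial_{t} v + (-\Delta)^{m} v + \tfrac{1}{2} v + \Gamma^{-1}\,\text{div}(v \otimes v) + \nabla \Pi = 0, \qquad \nabla \cdot v = 0,
\end{equation*}
with $v(0) = u^{\text{in}}$ (since $B(0) = 0$) and $\Pi \triangleq \Gamma \pi$; conversely, any $(\mathcal{F}_{t})_{t \ge 0}$-adapted weak solution $v$ of this equation yields $u \triangleq \Gamma^{-1} v$ solving \eqref{[Equation (2), Y20b]}, and, as $e^{\pm B}$ are continuous in time, a uniform $H_{x}^{\varepsilon}$-bound for $v$ on a stochastic interval $[0,\mathfrak{t}]$ transfers to the bound \eqref{[Equation (7), Y20b]} for $u$. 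It thus suffices to construct such a $v$.

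I would set up the iteration $(v_{q}, \mathring{R}_{q})_{q \ge 0}$ solving $\partial_{t} v_{q} + (-\Delta)^{m} v_{q} + \tfrac{1}{2} v_{q} + \Gamma^{-1}\,\text{div}(v_{q} \otimes v_{q}) + \nabla \Pi_{q} = \text{div}\,\mathring{R}_{q}$, with the usual geometric inductive estimates on $\lVert v_{q} \rVert_{C_{t} L_{x}^{2}}$, $\lVert v_{q} \rVert_{C_{t,x}^{1}}$, and $\lVert \mathring{R}_{q} \rVert_{C_{t} L_{x}^{1}}$ in terms of parameters $\lambda_{q}, \delta_{q}$ chosen as geometric sequences depending on $m \in (0,1)$. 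The perturbation $w_{q+1}$ is assembled from the 2-d intermittent stationary flows of \cite{LQ20}, organized into principal, incompressibility, and temporal correctors in the $C^{0}$-based architecture of \cite[Chapter 7]{BV19b} rather than that of \cite{BV19a}, so that no $C_{t,x}^{1}$ bound on the Reynolds stress is ever needed. The random coefficient $\Gamma^{-1} = e^{B}$ is controlled by a stopping time of the form $\mathfrak{t}_{L} \triangleq \inf \{ t \ge 0 : \lvert B(t) \rvert \ge L \} \wedge \inf \{ t \ge 0 : \lVert B \rVert_{C_{t}^{1/2 - 2\delta}} \ge L \} \wedge L$, on which $e^{\pm B}$ are bounded in $C_{t}$ and in $C_{t}^{1/2 - 2\delta}$ by a deterministic constant; continuity of Brownian paths gives $\mathfrak{t}_{L} > 0$ a.s. and $\mathbf{P}(\mathfrak{t}_{L} \ge T) > \kappa$ once $L = L(T,\kappa)$ is large. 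The base iterate $v_{0}$ is chosen with $v_{0}(0) = u^{\text{in}}$, with all $w_{q+1}$ vanishing near $t = 0$ (so that the limit still starts from the deterministic $u^{\text{in}}$) and with an energy profile $e\colon [0,T] \to \mathbb{R}_{+}$ prescribed so that $e(T)$ is arbitrarily large; the gluing step localizing $\mathring{R}_{q}$ in time, the space-time mollification, and the stochastic bookkeeping in defining $\mathring{R}_{q+1}$ proceed as in \cite{HZZ19, Y20a}, the new feature being commutator stresses carrying factors of $e^{B}$ that are only $C_{t}^{1/2 - 2\delta}$ (cf.\ Remark \ref{Remark 1.2}).

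Letting $q \to \infty$ yields $v_{q} \to v$ in $C([0,\mathfrak{t}_{L}]; H_{x}^{\varepsilon})$ for some small $\varepsilon > 0$ with $\mathring{R}_{q} \to 0$ and $\lVert v(t) \rVert_{L_{x}^{2}}^{2}$ close to $e(t)$, so $v$ is an $(\mathcal{F}_{t})_{t \ge 0}$-adapted weak solution of the random PDE on $[0,\mathfrak{t}_{L}]$; taking $\mathfrak{t} \triangleq \mathfrak{t}_{L}$ and $u \triangleq e^{B} v$ gives the solution claimed in Theorem \ref{[Theorem 2.3, Y20b]}, with \eqref{[Equation (7), Y20b]} immediate from the uniform $H_{x}^{\varepsilon}$-bound and the boundedness of $e^{B}$ on $[0,\mathfrak{t}]$. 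For \eqref{[Equation (8), Y20b]}, on the event $\{ \mathfrak{t} \ge T \}$ one has $\lVert u(T) \rVert_{L_{x}^{2}} = e^{B(T)} \lVert v(T) \rVert_{L_{x}^{2}} \ge e^{-L} \lVert v(T) \rVert_{L_{x}^{2}}$, so since $e(T)$ can be prescribed as large as we wish while $u^{\text{in}}$, $K$, $T$, and $L$ are fixed, the inequality $\lVert u(T) \rVert_{L_{x}^{2}} > K e^{T/2} \lVert u^{\text{in}} \rVert_{L_{x}^{2}}$ follows. (The scale $e^{T/2}$ is the growth rate of $\mathbf{E} \lVert w(t) \rVert_{L_{x}^{2}}^{2}$ for $w$ solving the linear equation $dw + (-\Delta)^{m} w \, dt = w \, dB$, so \eqref{[Equation (8), Y20b]} asserts that the constructed $u$ is atypically large; the analogous mechanism produces Theorem \ref{[Theorem 2.1, Y20b]}.)

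The hard part is structural, not computational, and is the one flagged in Remark \ref{Remark 1.2}: the 2-d intermittent stationary flows extend the intermittent Beltrami waves of \cite{BV19a}, whose deployment in \cite{LQ20} relies on $C_{t,x}^{1}$ control of the Reynolds stress, whereas here the stress unavoidably contains terms linear in $e^{B}$, a process of temporal regularity only $C_{t}^{1/2 - 2\delta}$. One must therefore re-derive the entire perturbation and corrector apparatus of \cite{LQ20} inside the gluing-plus-temporal-corrector framework of \cite[Chapter 7]{BV19b}, while tracking the dependence of every parameter on $m \in (0,1)$ (the full sub-Lions range in two dimensions, so the intermittency of the building blocks must be pushed close to its limit). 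Verifying that the oscillation, transport, Nash, and corrector error estimates still close under these combined constraints, and that the stopping-time truncation preserves $(\mathcal{F}_{t})_{t \ge 0}$-adaptedness of the iterates, is where the real work lies; the fractional Laplacian is comparatively harmless and is absorbed exactly as in \cite{Y20a}.
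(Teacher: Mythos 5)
Your high-level plan matches the paper's: the exponential substitution $v = e^{-B} u$ turning the multiplicative noise into a bounded random coefficient $e^{B}$, a stopping time truncating $\lvert B \rvert$ and $\lVert B \rVert_{C_{t}^{1/2-2\delta}}$, and a convex-integration iteration built from the 2-d intermittent stationary flows of \cite{LQ20} with principal, incompressibility, and \emph{temporal} correctors so that no $C_{t,x}^{1}$ bound on the Reynolds stress is required. (One small terminological slip: neither the paper, \cite{HZZ19}, nor \cite[Chapter 7]{BV19b} uses a gluing step for Navier--Stokes; mollification plus a temporal corrector is the whole story.)

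Where you genuinely diverge is the mechanism producing \eqref{[Equation (8), Y20b]}, and here I think your route creates complications the paper deliberately avoids. You propose to (a) set $v_{0}(0) = u^{\mathrm{in}}$ and make all $w_{q+1}$ vanish near $t = 0$, and (b) prescribe an energy profile $e(t)$ so that $e(T)$ is as large as needed relative to $\lVert u^{\mathrm{in}} \rVert_{L_{x}^{2}}$. Item (a) would require temporal cut-offs on the perturbations, generating an extra family of commutator errors near $t=0$ that must be absorbed into $\mathring{R}_{q+1}$; the paper instead lets $v_{q}(0,x)$ evolve with $q$, and simply shows inductively (using that $\varphi_{l}$ is supported in $\mathbb{R}_{+}$ and $\Upsilon(0)=1$) that $v_{q}(0,x)$, hence $u^{\mathrm{in}} = \lim_{q} v_{q}(0,x)$, stays deterministic (Propositions \ref{[Proposition 5.6, Y20b]}--\ref{[Proposition 5.7, Y20b]}). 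Item (b) would require rebuilding the amplitude $\rho$ in \eqref{[Equation (65), Y20b]} to include an energy budget term $e(t) - \lVert v_{q} \rVert_{L_{x}^{2}}^{2}$ as in \cite[Section 4]{BV19a}, so that the perturbations \emph{carry} the bulk of the $L^{2}$ mass rather than being subordinate to $v_{l}$; that is a substantial change to the construction which you do not carry out, and it brings back precisely the $C_{t,x}^{1}$-type control of the stress that Remark \ref{Remark 1.2} explains is out of reach here. The paper sidesteps all of this by choosing $v_{0}(t,x)$ in \eqref{[Equation (127), Y20b]} with $\lVert v_{0}(t) \rVert_{L_{x}^{2}} = m_{L} M_{0}(t)^{1/2}/\sqrt{2}$ for $M_{0}(t) = e^{4Lt + 2L}$, keeping the perturbations small ($\lVert w_{q+1} \rVert_{C_{t}L_{x}^{2}} \le m_{L} M_{0}(t)^{1/2} \delta_{q+1}^{1/2}$), and then letting the exponential growth of $M_{0}$ and the parameter $L$ — via \eqref{[Equation (134), Y20b]} and the elementary chain $\lVert u(T) \rVert_{L_{x}^{2}} \ge e^{-L^{1/4}} \lVert v(T) \rVert_{L_{x}^{2}} \ge e^{L^{1/2}} \lVert u^{\mathrm{in}} \rVert_{L_{x}^{2}} > K e^{T/2} \lVert u^{\mathrm{in}} \rVert_{L_{x}^{2}}$ — deliver \eqref{[Equation (8), Y20b]} with no energy pumping required. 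So your plan is not wrong in spirit, but the two load-bearing steps you gesture at (temporal cut-off of the perturbations; prescribing an energy profile) are exactly where the estimates would need to be redone, and the paper's exponentially-growing base flow is both simpler and free of those obstructions.
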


\begin{theorem}\label{[Theorem 2.4, Y20b]}
Suppose that $n  = 2, F(u) = u$, $m \in (0,1)$, and $B$ is an $\mathbb{R}$-valued Wiener process on $(\Omega, \mathcal{F}, \textbf{P})$. Then non-uniqueness in law holds for \eqref{[Equation (2), Y20b]} on $[0,\infty)$. Moreover, for any $T > 0$ fixed, non-uniqueness in law holds for \eqref{[Equation (2), Y20b]} on $[0, T]$. 
\end{theorem}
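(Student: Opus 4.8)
The plan is to exhibit, for the deterministic divergence-free mean-zero field $u^{\text{in}} \in L_{x}^{2}$ produced below, two probabilistically weak (i.e.\ martingale) solutions of \eqref{[Equation (2), Y20b]} with $n = 2$ and $F(u) = u$ that both start from $u^{\text{in}}$ but have distinct laws on path space, the discrepancy being already visible at time $T$. This is precisely non-uniqueness in law, and the fact that the two laws differ at time $T$ yields the statement both on $[0,\infty)$ and on every fixed $[0,T]$. Throughout I would work on the canonical space $\Omega_{0} = C([0,\infty); \dot{H}_{x}^{-N})$ of divergence-free mean-zero fields with $N$ large, with canonical process $\xi$ and its natural filtration, and adopt the notion of a probabilistically weak solution to \eqref{[Equation (2), Y20b]} with prescribed initial law in the Stroock--Varadhan martingale-problem sense, exactly as in \cite{HZZ19}, with the fractional diffusion $(-\Delta)^{m}$ accommodated following \cite{Y20a}.

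Two auxiliary inputs are needed. \emph{First}, for every deterministic $u^{\text{in}}$ there exists a probabilistically weak solution $\tilde{P}$ supported on $\{\xi(0) = u^{\text{in}}\}$ that satisfies the stochastic energy inequality; this comes from a Galerkin approximation together with the Aubin--Lions compactness argument --- still applicable since $\dot{H}_{x}^{m}$ embeds compactly into $L_{x}^{2}$ on $\mathbb{T}^{2}$ for every $m \in (0,1)$ --- and lower semicontinuity of the $L_{x}^{2}$-norm, in the spirit of \cite{FG95, FR08}. Since $F(u) = u$, the nonlinear and pressure terms drop out of the energy balance, so taking expectations in the energy inequality and applying Gr\"onwall's lemma gives $\mathbb{E}^{\tilde{P}}[\lVert \xi(t) \rVert_{L_{x}^{2}}^{2}] \leq e^{t} \lVert u^{\text{in}} \rVert_{L_{x}^{2}}^{2}$ for all $t \geq 0$. \emph{Second}, a gluing lemma: a probabilistically weak solution defined up to a stopping time $\mathfrak{t}$ can be concatenated at $\mathfrak{t}$ with a solution from the first input issued from $\xi(\mathfrak{t})$, producing a probabilistically weak solution on $[0,\infty)$ with the same initial law --- proven as in \cite{HZZ19} by disintegration and reconstruction of martingale solutions, with adaptedness and the martingale property across $\mathfrak{t}$ checked carefully.

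I would then invoke Theorem \ref{[Theorem 2.3, Y20b]} with the explicit choices $\kappa = \tfrac{1}{2}$ and $K$ large enough that $\kappa K^{2} > 1$ (say $K = 2$): it supplies $\varepsilon \in (0,1)$, an a.s.\ strictly positive stopping time $\mathfrak{t}$ with $\mathbf{P}(\mathfrak{t} \geq T) > \tfrac{1}{2}$, a deterministic $u^{\text{in}}$, and an $(\mathcal{F}_{t})_{t \geq 0}$-adapted weak solution $u$ of \eqref{[Equation (2), Y20b]} from $u^{\text{in}}$ with $\text{esssup}_{\omega \in \Omega} \sup_{s \in [0,\mathfrak{t}]} \lVert u(s,\omega) \rVert_{H_{x}^{\varepsilon}} < \infty$ and, on $\{\mathfrak{t} \geq T\}$, the lower bound \eqref{[Equation (8), Y20b]}. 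After verifying that the law of $u$ is a probabilistically weak solution --- routine given the regularity \eqref{[Equation (7), Y20b]} and standard moment bounds on the stochastic integral --- I would apply the second input to extend $u$ beyond $\mathfrak{t}$ to a probabilistically weak solution on $[0,\infty)$ and call its law $P$; by construction $P$ and $\tilde{P}$ both have initial law $\delta_{u^{\text{in}}}$.

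It remains to separate $P$ from $\tilde{P}$, which I would do through $\mathbb{E}[\lVert \xi(T) \rVert_{L_{x}^{2}}^{2}]$. Restricting to $\{\mathfrak{t} \geq T\}$ and using \eqref{[Equation (8), Y20b]} together with $\mathbf{P}(\mathfrak{t} \geq T) > \tfrac{1}{2}$ and $\kappa K^{2} > 1$, one gets $\mathbb{E}^{P}[\lVert \xi(T) \rVert_{L_{x}^{2}}^{2}] > e^{T} \lVert u^{\text{in}} \rVert_{L_{x}^{2}}^{2} \geq \mathbb{E}^{\tilde{P}}[\lVert \xi(T) \rVert_{L_{x}^{2}}^{2}]$ whenever $u^{\text{in}} \neq 0$, while if $u^{\text{in}} = 0$ then $\tilde{P}(\xi(T) = 0) = 1$ whereas $P(\lVert \xi(T) \rVert_{L_{x}^{2}} > 0) \geq \mathbf{P}(\mathfrak{t} \geq T) > \tfrac{1}{2}$ by \eqref{[Equation (8), Y20b]}. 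In every case $P \neq \tilde{P}$, which is non-uniqueness in law on $[0,\infty)$; and since the two laws already disagree on the distribution of $\lVert \xi(T) \rVert_{L_{x}^{2}}$, carrying out the same construction with the convex integration of Theorem \ref{[Theorem 2.3, Y20b]} performed directly on $[0,T]$ (gluing on $[\mathfrak{t} \wedge T, T]$ when necessary) gives non-uniqueness in law on $[0,T]$ for each fixed $T > 0$. Beyond Theorem \ref{[Theorem 2.3, Y20b]} itself --- which carries the genuine analytic difficulty described in Remark \ref{Remark 1.2} --- I expect the main obstacle to be the gluing step: confirming that the concatenated process is a genuine martingale solution with the correct adaptedness. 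This should go through as in \cite{HZZ19}, the exponent $m \in (0,1)$ entering only through the diffusion term of the martingale problem and causing no additional complication at this stage.
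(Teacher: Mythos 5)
Your proposal follows essentially the same route as the paper: invoke Theorem \ref{[Theorem 2.3, Y20b]} with $\kappa K^{2} \geq 1$, verify that the law of the convex-integration solution $(u,B)$ is a probabilistically weak solution on $[0,\tau_{L}]$ (Proposition \ref{[Proposition 5.4, Y20b]}), extend it to $[0,\infty)$ by the gluing lemma (Propositions \ref{[Proposition 5.5, Y20b]}, Lemmas \ref{[Lemma 5.2, Y20b]}-\ref{[Lemma 5.3, Y20b]}), produce a Galerkin solution $\Theta$ with $\mathbb{E}^{\Theta}[\lVert \xi(T) \rVert_{L_{x}^{2}}^{2}] \leq e^{T}\lVert \xi^{\text{in}} \rVert_{L_{x}^{2}}^{2}$, and separate the two solutions via a lower bound on $\mathbb{E}^{P\otimes_{\tau_{L}}R}[\lVert \xi(T) \rVert_{L_{x}^{2}}^{2}]$. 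The two most significant places where your write-up diverges from the paper are both in the last step, and one of them hides the only genuinely non-routine idea in the proof.

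First, you set up the argument on $\Omega_{0}$ (paths of $\xi$ alone), but in the linear-multiplicative setting the probabilistically weak solutions in Definition \ref{[Definition 5.1, Y20b]} are measures on $\bar{\Omega}$, that is, joint path laws of $(\xi,\theta)$. This matters because in your concluding sentence you declare that having two such solutions with ``distinct laws on path space \ldots is precisely non-uniqueness in law''. That conflates two different notions. Distinct measures on $\bar{\Omega}$ is the failure of \emph{joint} uniqueness in law (equivalently, non-uniqueness of probabilistically weak solutions), whereas ``non-uniqueness in law'' in the statement of Theorem \ref{[Theorem 2.4, Y20b]} refers to the marginal law of $u$ alone. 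A priori two probabilistically weak solutions could share the same marginal law of $\xi$ and differ only in how $\xi$ and $\theta$ are coupled. The paper resolves this by invoking Cherny's theorem, in the infinite-dimensional form of \cite[Theorem C.1]{HZZ19}: uniqueness in law implies joint uniqueness in law, so lack of joint uniqueness in law implies non-uniqueness in law. Your proposal omits this step entirely. In fairness, the specific statistic you compare, $\mathbb{E}[\lVert\xi(T)\rVert_{L_{x}^{2}}^{2}]$, depends on $\xi$ alone, so the discrepancy you exhibit does separate the marginal laws directly without appealing to Cherny; but you should say this explicitly rather than equate ``distinct laws on $\bar{\Omega}$'' with ``non-uniqueness in law'', because as written the claim is false in general. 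This is exactly the distinction that the additive case (Theorem \ref{[Theorem 2.2, Y20b]}) never has to confront, and it is the one conceptually new ingredient in the multiplicative proof.

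Second, a minor point: your case split according to whether $u^{\text{in}}=0$ is superfluous. The initial datum produced by the construction is $u^{\text{in}}=v(0,\cdot)$, and $v_{0}(0,\cdot)$ from \eqref{[Equation (127), Y20b]} is nonzero, as is preserved in the limit by \eqref{[Equation (135), Y20b]}-\eqref{[Equation (136), Y20b]}. The estimate \eqref{[Equation (8), Y20b]} therefore directly gives $\mathbb{E}^{P\otimes_{\tau_{L}}R}[\lVert\xi(T)\rVert_{L_{x}^{2}}^{2}]>\kappa K^{2}e^{T}\lVert\xi^{\text{in}}\rVert_{L_{x}^{2}}^{2}\geq e^{T}\lVert\xi^{\text{in}}\rVert_{L_{x}^{2}}^{2}\geq\mathbb{E}^{\Theta}[\lVert\xi(T)\rVert_{L_{x}^{2}}^{2}]$, no further branching required.

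Aside from these two points, your description of the gluing step, of the Galerkin input, of the Gr\"onwall bound for the Galerkin solution, and of obtaining the $[0,T]$ statement from the $[0,\infty)$ statement (the laws already disagree at time $T$) all match the paper's argument.
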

\begin{remark}
After this work was completed, Cheskidov and Luo \cite{CL21} proved non-uniqueness for the 2-d deterministic Navier-Stokes equations in the class of $C_{t}L_{x}^{p}$ for $p \in [1,2)$. We point out that on one hand, they proved non-uniqueness with a full Laplacian while Theorems \ref{[Theorem 2.1, Y20b]}-\ref{[Theorem 2.4, Y20b]} are concerned with the GNS equations diffused via $(-\Delta)^{m}, m \in (0,1)$. On the other hand, the spatial regularity of the solutions constructed in \cite{CL21} are in $L_{x}^{p}$ for $p \in [1,2)$ while those in Theorems \ref{[Theorem 2.1, Y20b]}-\ref{[Theorem 2.4, Y20b]} are in $H_{x}^{\epsilon}$, although for $\epsilon \in (0,1)$ very small, as can be seen from their proofs. 
\end{remark} 

The rest of this manuscript is organized as follows: Section \ref{Preliminaries} with a minimum amount of notations, assumptions, and past results; Section \ref{Proof in the case of additive noise} with proofs of Theorems \ref{[Theorem 2.1, Y20b]} and \ref{[Theorem 2.2, Y20b]}; Section \ref{Proof in the case of linear multiplicative noise} with proofs of Theorems \ref{[Theorem 2.3, Y20b]} and \ref{[Theorem 2.4, Y20b]}; Appendix with additional past results and details of some proofs. 

\section{Preliminaries}\label{Preliminaries}
We denote $\mathbb{N} \triangleq \{1, 2, \hdots, \}$ and $\mathbb{N}_{0} \triangleq \{0\} \cup \mathbb{N}$. We write $A \lesssim_{a,b}B$ and $A \approx_{a,b} B$ to imply the existence of a constant $C= C(a,b) \geq 0$ such that $A\leq CB$ and $A = CB$, respectively. We write $A \overset{(\cdot)}{\lesssim} B$ to indicate that this inequality is due to an equation $(\cdot)$. For any $\mathbb{R}^{2}$-valued maps $f$ and $g$, we denote a tensor product by $f \otimes g $ while its trace-free part by 
\begin{equation}\label{estimate 1}
f \mathring{\otimes} g \triangleq 
\begin{pmatrix}
f^{1} g^{1} - \frac{1}{2} f\cdot g & f^{1} g^{2} \\
f^{2} g^{1} & f^{2} g^{2} - \frac{1}{2} f \cdot g 
\end{pmatrix}.
\end{equation} 
We write for $p\in[1,\infty]$ 
\begin{equation}\label{C-t,x}
\lVert f \rVert_{L^{p}} \triangleq \lVert f \rVert_{L_{t}^{\infty} L_{x}^{p}}, \hspace{1mm} \lVert f \rVert_{C^{N}} \triangleq \lVert f \rVert_{L_{t}^{\infty} C_{x}^{N}} \triangleq \sum_{0\leq \lvert \alpha \rvert \leq N} \lVert D^{\alpha} f \rVert_{L^{\infty}}, \hspace{1mm}\lVert f \rVert_{C_{t,x}^{N}} \triangleq \sum_{0\leq n + \lvert \alpha \rvert \leq N} \lVert \partial_{t}^{n} D^{\alpha} f \rVert_{L^{\infty}}. 
\end{equation} 
We also define $L_{\sigma}^{2} \triangleq \{f \in L_{x}^{2}: \hspace{0.5mm} \nabla\cdot f = 0 \}$, reserve $\mathbb{P} \triangleq \text{Id} - \nabla \Delta^{-1} \nabla\cdot $ as the Leray projection operator, and $\mathbb{P}_{\leq r}$  to be a Fourier operator with a Fourier symbol of $1_{\lvert \xi \rvert \leq r} (\xi)$. For any Polish space $H$, we write $\mathcal{B}(H)$ to represent the $\sigma$-algebra of Borel sets in $H$. We denote a mathematical expectation with respect to (w.r.t.) any probability measure $P$ by $\mathbb{E}^{P}$. We represent an $L^{2}(\mathbb{T}^{2})$-inner product, a cross variation of $A$ and $B$, and a quadratic variation of $A$ respectively by $\langle A, B \rangle, \langle \langle A, B \rangle \rangle,$ and $\langle \langle A \rangle \rangle \triangleq \langle \langle A, A \rangle \rangle$.  We define $\mathcal{P} (\Omega_{0})$ as the set of all probability measure on $(\Omega_{0}, \mathcal{B})$ where $\Omega_{0} \triangleq C([0,\infty); \hspace{1mm} H^{-3} (\mathbb{T}^{2})) \cap L_{\text{loc}}^{\infty} ([0,\infty); L_{\sigma}^{2})$ and $\mathcal{B}$ is the Borel $\sigma$-field of $\Omega_{0}$ from the topology of locally uniform convergence on $\Omega_{0}$. We define the canonical process $\xi: \hspace{1mm} \Omega_{0} \mapsto H^{-3} (\mathbb{T}^{2})$ by $\xi_{t} (\omega) \triangleq \omega(t).$ Similarly, for $t \geq 0$ we define $\Omega_{t} \triangleq C([t, \infty); H^{-3} (\mathbb{T}^{2})) \cap L_{\text{loc}}^{\infty} ([t, \infty); L_{\sigma}^{2})$ and the following Borel $\sigma$-algebras for $t \geq 0$: $\mathcal{B}^{t} \triangleq \sigma ( \{ \xi(s) : \hspace{0.5mm}s \geq t \})$; $\mathcal{B}_{t}^{0} \triangleq \sigma ( \{ \xi(s): \hspace{0.5mm} s \leq t \})$; $\mathcal{B}_{t} \triangleq \cap_{s > t} \mathcal{B}_{s}^{0}$. For any Hilbert space $U$ we denote by $L_{2}(U, L_{\sigma}^{2})$ the space of all Hilbert-Schmidt operators from $U$ to $L_{\sigma}^{2}$ with the norm $\lVert \cdot \rVert_{L_{2}(U, L_{\sigma}^{2})}$. We require $F: \hspace{0.5mm} L_{\sigma}^{2} \mapsto L_{2} (U, L_{\sigma}^{2})$ to be $\mathcal{B}(L_{\sigma}^{2}) / \mathcal{B} ( L_{2} (U, L_{\sigma}^{2}))$-measurable and that it satisfies for any $\phi \in C^{\infty} (\mathbb{T}^{2}) \cap L_{\sigma}^{2}$ 
\begin{equation}\label{[Equations (11) and (12), Y20b]}
\lVert F(\phi) \rVert_{L_{2} (U, L_{\sigma}^{2})} \leq C(1+ \lVert \phi \rVert_{L_{x}^{2}}) \hspace{2mm} \text{ and } \hspace{2mm}  \lim_{n\to\infty} \lVert F(\theta_{n})^{\ast} \phi - F(\theta)^{\ast} \phi \rVert_{U} = 0 
\end{equation} 
for some constant $C \geq 0$ if $\lim_{n\to\infty} \lVert \theta_{n} - \theta \rVert_{L_{x}^{2}} = 0$. 

The following notations will be useful in the case of a linear multiplicative noise. We assume the existence of another Hilbert space $U_{1}$ such that the embedding $U \hookrightarrow U_{1}$ is Hilbert-Schmidt. We define  $\bar{\Omega} \triangleq C([0,\infty); H^{-3} (\mathbb{T}^{2}) \times U_{1}) \cap L_{\text{loc}}^{\infty} ([0,\infty); L_{\sigma}^{2} \times U_{1})$ and $\mathcal{P} (\bar{\Omega})$ as the set of all probability measures on $(\bar{\Omega}, \bar{\mathcal{B}})$, where $\bar{\mathcal{B}}$ is the Borel $\sigma$-algebra on $\bar{\Omega}$. Analogously we define the canonical process on $\bar{\Omega}$ as $(\xi, \theta): \hspace{0.5mm} \bar{\Omega} \mapsto H^{-3} (\mathbb{T}^{2}) \times U_{1}$ by $(\xi_{t} (\omega), \theta_{t}(\omega)) \triangleq \omega(t)$. We extend the previous definitions of $\mathcal{B}^{t}, \mathcal{B}_{t}^{0}$ and $\mathcal{B}_{t}$ to $\bar{\mathcal{B}}^{t} \triangleq \sigma(\{ (\xi, \theta)(s): \hspace{0.5mm} s \geq t \})$, $\bar{\mathcal{B}}_{t}^{0} \triangleq \sigma ( \{ (\xi, \theta)(s): \hspace{0.5mm} s \leq t \})$, and $\bar{\mathcal{B}}_{t} \triangleq \cap_{s > t} \bar{\mathcal{B}}_{s}^{0}$ for $t \geq 0$, respectively. 

Next, we describe some notations and results concerning the 2-d intermittent stationary flows introduced in \cite{CDS12} (e.g., \cite[Lemma 4]{CDS12}) and extended in \cite{LQ20}. We let 
\begin{equation}\label{[Equation (3.12b), LQ20]}
\Lambda^{+} \triangleq \{ \frac{1}{5} (3e_{1} \pm 4e_{2}), \frac{1}{5} (4e_{1} \pm 3e_{2}) \} \hspace{2mm}  \text{ and } \hspace{2mm}  \Lambda^{-} \triangleq \{\frac{1}{5} (-3e_{1}\mp 4e_{2}), \frac{1}{5} (-4 e_{1} \mp 3e_{2}) \}, 
\end{equation} 
i.e. $\Lambda^{-} = -\Lambda^{+}$, and $\Lambda \triangleq \Lambda^{+} \cup \Lambda^{-}$, where $e_{j}$ for $j \in \{1,2\}$ is a standard basis of $\mathbb{R}^{2}$. It follows immediately that $\Lambda \subset \mathbb{S}^{1} \cap \mathbb{Q}^{2}$, $5 \Lambda \subset \mathbb{Z}^{2}$, and  
\begin{equation}\label{[Equation (4.1b), LQ20]}
\min_{\zeta, \zeta' \in \Lambda: \hspace{0.5mm} \zeta \neq - \zeta'} \lvert \zeta + \zeta' \rvert \geq \frac{\sqrt{2}}{5}
\end{equation} 
(cf. \cite[pg. 110]{BV19a}, \cite[Equation (9)]{LT20}). For all $\zeta \in \Lambda$ and any frequency parameter $\lambda \in 5 \mathbb{N}$, we define $b_{\zeta}$ and its potential $\psi_{\zeta}$ as 
\begin{equation}\label{[Equation (4.2), LQ20]}
b_{\zeta}(x) \triangleq b_{\zeta, \lambda} (x) \triangleq i \zeta^{\bot} e^{i \lambda \zeta \cdot x}, \hspace{3mm} \psi_{\zeta} (x) \triangleq \psi_{\zeta, \lambda} (x) \triangleq \frac{1}{\lambda} e^{i \lambda \zeta \cdot x}
\end{equation} 
(cf. \cite[Equation (14)]{CDS12}). It follows that for all $N \in \mathbb{N}_{0}$, 
\begin{subequations}
\begin{align}
& b_{\zeta}(x)  = \nabla^{\bot} \psi_{\zeta}(x), \hspace{3mm} \nabla\cdot b_{\zeta}(x) = 0, \hspace{3mm} \nabla^{\bot} \cdot b_{\zeta}(x) = \Delta \psi_{\zeta}(x) = -\lambda^{2} \psi_{\zeta}(x), \label{[Equation (4.3), LQ20]}\\
& \overline{b_{\zeta}}(x) = b_{-\zeta}(x), \hspace{2mm} \overline{\psi_{\zeta}}(x) = \psi_{-\zeta}(x), \hspace{2mm}  \lVert b_{\zeta} \rVert_{C_{x}^{N}} \overset{\eqref{C-t,x} }{\leq}(N+1) \lambda^{N}, \hspace{2mm} \lVert \psi_{\zeta} \rVert_{C_{x}^{N}} \overset{\eqref{C-t,x} }{\leq} (N+1) \lambda^{N-1}.   \label{[Equations (4.4) and (4.5), LQ20]} 
\end{align}
\end{subequations} 

\begin{lemma}\label{[Lemma 4.1, LQ20]}
\rm{(Geometric lemma from \cite[Lemma 4.1]{LQ20}; cf. \cite[Lemma 3.2]{DS13}, \cite[Lemma 6]{CDS12})} Denote by $\mathcal{M}$ the linear space of $2\times 2$ symmetric trace-free matrices. Then there exists a set of positive smooth functions $\{\gamma_{\zeta} \in C^{\infty} (\mathcal{M}): \hspace{0.5mm} \zeta \in \Lambda \}$ such that for each $\mathring{R} \in \mathcal{M}$, 
\begin{equation}\label{[Equations (4.6), (4.7) and (4.8), LQ20]}
\gamma_{-\zeta} (\mathring{R}) = \gamma_{\zeta} (\mathring{R}), \hspace{2mm} \mathring{R} = \sum_{\zeta \in \Lambda} (\gamma_{\zeta} (\mathring{R} ))^{2} (\zeta \mathring{\otimes} \zeta), \hspace{2mm} \text{ and } \hspace{2mm}  \gamma_{\zeta} (\mathring{R}) \lesssim (1+ \lvert \mathring{R} \rvert)^{\frac{1}{2}}. 
\end{equation}  
\end{lemma}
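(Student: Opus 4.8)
The plan is to reduce the statement to a local partition-of-unity argument around the zero matrix, exactly as in the three-dimensional Beltrami-flow literature. First I would observe that the identity $\mathring{R} = \sum_{\zeta \in \Lambda} (\gamma_{\zeta}(\mathring{R}))^{2} (\zeta \mathring{\otimes} \zeta)$ is a statement about representing an arbitrary element of the $2$-dimensional real vector space $\mathcal{M}$ of symmetric trace-free $2\times 2$ matrices as a non-negative linear combination of the rank-one symmetric trace-free matrices $\zeta \mathring{\otimes} \zeta$ for $\zeta \in \Lambda$. So the first step is the linear-algebra core: show that the finitely many matrices $\{\zeta \mathring{\otimes} \zeta : \zeta \in \Lambda^{+}\}$ positively span $\mathcal{M}$, i.e. that $0$ lies in the interior of their convex hull up to scaling, equivalently that for the four directions in $\Lambda^{+}$ given by \eqref{[Equation (3.12b), LQ20]} the corresponding points $\zeta \mathring{\otimes} \zeta \in \mathcal{M} \cong \mathbb{R}^{2}$ are not contained in any closed half-plane through the origin. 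Concretely, writing each symmetric trace-free matrix as $\begin{pmatrix} a & b \\ b & -a \end{pmatrix}$ and identifying it with $(a,b) \in \mathbb{R}^{2}$, one checks that for $\zeta = (\zeta_1,\zeta_2)$ one has $\zeta \mathring{\otimes} \zeta \leftrightarrow \big(\tfrac{1}{2}(\zeta_1^2 - \zeta_2^2),\, \zeta_1\zeta_2\big)$, and the four vectors from $\tfrac15(3,\pm4)$ and $\tfrac15(4,\pm3)$ map to $\tfrac12\big(-\tfrac{7}{25}, \pm\tfrac{24}{25}\big)$ and $\tfrac12\big(\tfrac{7}{25}, \pm\tfrac{24}{25}\big)$, which manifestly surround the origin. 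Hence there is an open ball $B(0,r_0) \subset \mathcal{M}$ on which one can choose smooth non-negative coefficient functions; in fact near $\mathring{R} = 0$ one can take each $\gamma_\zeta$ close to a fixed positive constant $\gamma_\zeta(0) > 0$, using that the representation at $0$ is $0 = \sum_\zeta c_\zeta (\zeta \mathring\otimes \zeta)$ with all $c_\zeta > 0$ (take $c_\zeta = c$ by the symmetry $\zeta \leftrightarrow -\zeta$ and the fact that the four $\Lambda^+$-vectors are symmetric under $\zeta_2 \mapsto -\zeta_2$ and under the swap giving the sign change in the first coordinate).

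The second step extends this from a neighbourhood of $0$ to all of $\mathcal{M}$ by the standard homogeneity/scaling trick. For $\mathring{R}$ with $|\mathring{R}| \geq r_0/2$, write $\mathring{R} = |\mathring{R}|\, \hat{R}$ with $\hat{R}$ on the unit sphere of $\mathcal{M}$; since the $\zeta \mathring\otimes \zeta$ positively span $\mathcal{M}$ there are, by a compactness argument on the (compact) unit sphere together with Cramer's rule applied to suitable $2$-element sub-bases, continuous — indeed one arranges smooth by a partition of unity on the sphere — non-negative functions $\hat{R} \mapsto d_\zeta(\hat R)$ with $\hat R = \sum_\zeta d_\zeta(\hat R)(\zeta \mathring\otimes\zeta)$, and then one glues the near-origin choice and the far-from-origin choice $\gamma_\zeta(\mathring R)^2 = |\mathring R|\, d_\zeta(\hat R)$ via a smooth cutoff $\chi(|\mathring R|)$; concretely set $(\gamma_\zeta(\mathring R))^2 = (1-\chi)\,c_\zeta^{\mathrm{loc}}(\mathring R) + \chi\,|\mathring R|\, d_\zeta(\mathring R / |\mathring R|)$ where both summands are non-negative, the combination is smooth (the singularity of $\hat R$ at $0$ is killed by $\chi$ vanishing near $0$), and on the overlap both represent $\mathring R$ by linearity so the convex combination still represents $\mathring R$. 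Taking the positive square root is smooth precisely because one keeps each summand bounded below by a positive constant on its support — this is where the \emph{positive} spanning (not merely spanning) is essential, and it forces the choice of $\Lambda$ with its particular rational directions. The symmetry $\gamma_{-\zeta}(\mathring R) = \gamma_\zeta(\mathring R)$ is then imposed for free by symmetrizing the construction (replace $\gamma_\zeta$ by its average with $\gamma_{-\zeta}$, legitimate since $\zeta \mathring\otimes\zeta = (-\zeta)\mathring\otimes(-\zeta)$), and the growth bound $\gamma_\zeta(\mathring R) \lesssim (1 + |\mathring R|)^{1/2}$ is immediate from the explicit formula: $(\gamma_\zeta)^2$ is bounded by a constant near the origin and grows linearly in $|\mathring R|$ at infinity because $d_\zeta$ is bounded on the sphere.

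I expect the only genuinely substantive point — and the main obstacle if one insisted on doing it from scratch — to be verifying the positive-spanning property of $\{\zeta \mathring\otimes\zeta : \zeta \in \Lambda\}$ for the specific set $\Lambda$ in \eqref{[Equation (3.12b), LQ20]}, i.e. confirming that these rational unit vectors, chosen so that $5\Lambda \subset \mathbb{Z}^2$ and so that \eqref{[Equation (4.1b), LQ20]} holds, simultaneously have their associated rank-one matrices surrounding the origin in $\mathcal{M}$. Everything else — the partition of unity, the scaling, the smoothing, the square root, the symmetrization, the growth estimate — is soft and routine once that geometric fact is in hand. Since the statement is quoted verbatim from \cite[Lemma 4.1]{LQ20} (itself modelled on \cite[Lemma 3.2]{DS13} and \cite[Lemma 6]{CDS12}), in the paper one would simply cite it; the above is the argument one would reconstruct if needed.
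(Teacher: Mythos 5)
The paper does not actually prove this lemma: it is imported verbatim from \cite[Lemma 4.1]{LQ20} (tracing back to \cite[Lemma 6]{CDS12}), and the only thing the present paper subsequently uses is the derived constant $M$ in \eqref{estimate 4}, which takes the sup of $\gamma_\zeta$ and $\nabla\gamma_\zeta$ over $B_{1/2}(0)$ (the reason \eqref{[Equation (66), Y20b]} is arranged to keep the argument $\mathring{R}_l/\rho$ inside that ball). So there is no in-paper proof to compare against, and you correctly identify this; I am assessing your reconstruction on its own terms.

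Your linear-algebra core is right. Identifying $\mathcal{M}\cong\mathbb{R}^2$ via $\left(\begin{smallmatrix}a & b\\ b & -a\end{smallmatrix}\right)\mapsto(a,b)$ and using $|\zeta|=1$, one indeed gets $\zeta\mathring\otimes\zeta\leftrightarrow\tfrac12(\zeta_1^2-\zeta_2^2,\,2\zeta_1\zeta_2)$, and the four $\Lambda^+$ directions of \eqref{[Equation (3.12b), LQ20]} map (up to the common factor $\tfrac1{50}$) to $(-7,24),\ (-7,-24),\ (7,24),\ (7,-24)$, one in each open quadrant; hence they positively span $\mathcal{M}$ and, crucially, $\sum_{\zeta\in\Lambda^+}\zeta\mathring\otimes\zeta=0$. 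That null relation is what underwrites \emph{strict} positivity: take $\gamma_\zeta(0)^2$ equal to a common positive constant, and globally add a positive multiple of the null decomposition to any merely non-negative one. You invoke this near the origin, but it is also needed in the far region, where a bare partition-of-unity choice of $d_\zeta$ on the unit circle of $\mathcal{M}$ would not by itself keep every $d_\zeta$ bounded away from zero — the fix is the same one-line trick, so this is a small underspecification rather than a gap. With that understood, the degree-one scaling $\gamma_\zeta(\mathring R)^2\approx|\mathring R|\,d_\zeta(\hat R)$ at infinity, the smooth cutoff glue, the symmetrization $\gamma_\zeta=\gamma_{-\zeta}$ (using $\zeta\mathring\otimes\zeta=(-\zeta)\mathring\otimes(-\zeta)$), and the growth bound $\gamma_\zeta\lesssim(1+|\mathring R|)^{1/2}$ all follow as you describe. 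In short, this is the argument of \cite[Lemma 6]{CDS12} specialized to the particular $\Lambda$ of \eqref{[Equation (3.12b), LQ20]}, and the one step that is genuinely $\Lambda$-specific — the explicit quadrant check — you carried out correctly.
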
 
For convenience we set 
\begin{equation}\label{estimate 4} 
C_{\Lambda} \triangleq 2 \sqrt{12} (4 \pi^{2} + 1)^{\frac{1}{2}} \lvert \Lambda \rvert \hspace{1mm}  \text{ and } \hspace{1mm}  M \triangleq C_{\Lambda} \sup_{\zeta \in \Lambda} ( \lVert \gamma_{\zeta} \rVert_{C(B_{\frac{1}{2}} (0))} + \lVert \nabla \gamma_{\zeta} \rVert_{C(B_{\frac{1}{2}} (0))}). 
\end{equation} 
Similarly to \cite[pg. 111]{BV19a} we consider a 2-d Dirichlet kernel for $r \in \mathbb{N}$
\begin{equation}\label{[Equations (4.9) and (4.9a), LQ20]} 
D_{r}(x) \triangleq \frac{1}{2r+1} \sum_{k \in \Omega_{r}} e^{ik\cdot x} \hspace{1mm} \text{ where } \hspace{1mm}
\Omega_{r} \triangleq \{k = 
\begin{pmatrix}
k^{1} & k^{2}
\end{pmatrix}^{T}: \hspace{0.5mm} k^{i} \in \mathbb{Z} \cap [-r, r] \text{ for } i = 1,2 \},
\end{equation}
where $T$ denotes a transpose, that satisfies 
\begin{equation}\label{[Equation (4.10), LQ20]}
\lVert D_{r} \rVert_{L_{x}^{p}} \lesssim r^{1- \frac{2}{p}}, \hspace{2mm} \text{ and } \hspace{2mm}  \lVert D_{r} \rVert_{L_{x}^{2}} = 2 \pi \hspace{3mm} \forall \hspace{1mm} p \in (1, \infty]. 
\end{equation} 
The role of $r$ is to parametrize the number of frequencies along edges of the cube $\Omega_{r}$. We introduce $\sigma$ such that $\lambda \sigma \in 5 \mathbb{N}$ to parametrize the spacing between frequencies, or equivalently such that the resulting rescaled kernel is $(\mathbb{T}/\lambda \sigma)^{2}$-periodic. In particular, this will be needed in application of Lemma \ref{[Lemma 6.2, LQ20]} in \eqref{[Equation (77), Y20b]}. Lastly, $\mu$ measures the amount of temporal oscillation in the building blocks. In sum, the parameters we introduced are required to satisfy 
\begin{equation}\label{[Equation (4.14), LQ20]}
1 \ll r \ll \mu \ll \sigma^{-1} \ll \lambda, \hspace{2mm} r \in \mathbb{N}, \hspace{2mm} \text{ and } \hspace{2mm} \lambda, \lambda \sigma \in 5 \mathbb{N}.
\end{equation}
Now we define the directed-rescaled Dirichlet kernel by 
\begin{equation}\label{[Equation (4.11), LQ20]}
\eta_{\zeta} (t,x) \triangleq \eta_{\zeta, \lambda, \sigma, r, \mu} (t,x) \triangleq 
\begin{cases}
D_{r} (\lambda \sigma (\zeta \cdot x + \mu t), \lambda \sigma \zeta^{\bot} \cdot x) & \text{ if } \zeta \in \Lambda^{+},\\
\eta_{-\zeta, \lambda, \sigma, r, \mu} (t,x) & \text{ if } \zeta \in \Lambda^{-}, 
\end{cases}
\end{equation} 
so that 
\begin{subequations}
\begin{align}
& \frac{1}{\mu} \partial_{t} \eta_{\zeta} (t,x) = \pm (\zeta\cdot\nabla) \eta_{\zeta} (t,x) \hspace{1mm} \forall \hspace{1mm} \zeta \in \Lambda^{\pm}, \label{[Equation (4.12), LQ20]} \\
& \fint_{\mathbb{T}^{2}} \eta_{\zeta}^{2} (t,x) dx = 1, \hspace{2mm} \text{ and } \hspace{2mm}  \lVert \eta_{\zeta} \rVert_{L_{t}^{\infty} L_{x}^{p}} \lesssim r^{1- \frac{2}{p}} \hspace{1mm} \forall \hspace{1mm} p \in (1,\infty]   \label{[Equation (4.13), LQ20]}
\end{align}
\end{subequations} 
(cf. \cite[Equations (3.8)-(3.10)]{BV19a}). Finally, we define the intermittent 2-d stationary flow as 
\begin{equation}\label{[Equation (4.15), LQ20]}
\mathbb{W}_{\zeta} (t,x) \triangleq \mathbb{W}_{\zeta, \lambda, \sigma, r, \mu} (t,x) \triangleq \eta_{\zeta,\lambda,\sigma,r,\mu} (t,x) b_{\zeta,\lambda}(x) 
\end{equation} 
(cf. \cite[Equation (3.11)]{BV19a}). Similarly to the 3-d case in \cite{BV19a} it follows that for all $\zeta, \zeta' \in \Lambda$ (see \cite[Equations (4.16)-(4.19)]{LQ20})
\begin{subequations}
\begin{align}
& \mathbb{P}_{\leq 2 \lambda} \mathbb{P}_{\geq \frac{\lambda}{2}} \mathbb{W}_{\zeta} = \mathbb{W}_{\zeta}, \label{[Equation (4.16), LQ20]}\\
& \mathbb{P}_{\leq 4 \lambda} \mathbb{P}_{\geq \frac{\lambda}{5}} (\mathbb{W}_{\zeta} \mathring{\otimes} \mathbb{W}_{\zeta'}) =\mathbb{W}_{\zeta} \mathring{\otimes} \mathbb{W}_{\zeta'} \hspace{3mm} \text{ if } \zeta + \zeta' \neq 0,\label{[Equation (4.17), LQ20]}\\
& \mathbb{P}_{\geq \frac{\lambda \sigma}{2}} (\mathbb{W}_{\zeta} \mathring{\otimes} \mathbb{W}_{\zeta'}) = \mathbb{P}_{\neq 0} (\mathbb{W}_{\zeta} \mathring{\otimes} \mathbb{W}_{\zeta'}), \label{[Equation (4.18), LQ20]}\\
& \mathbb{P}_{\neq 0} \eta_{\zeta} = \mathbb{P}_{\geq \frac{\lambda \sigma}{2}}\eta_{\zeta}. \label{[Equation (4.19), LQ20]}
\end{align}
\end{subequations} 

\begin{lemma}\label{[Lemma 4.2, LQ20]}
\rm{ (\cite[Lemma 4.2]{LQ20}; cf. \cite[Proposition 3.4]{BV19a})} Define $\mathbb{W}_{\zeta}$ by \eqref{[Equation (4.15), LQ20]}. Then for any $\{a_{\zeta} \}_{\zeta \in \Lambda} \subset \mathbb{C}$ such that $a_{-\zeta} = \bar{a}_{\zeta}$, a function $\sum_{\zeta \in \Lambda} a_{\zeta}$ is $\mathbb{R}$-valued and for all $\mathring{R} \in \mathcal{M}$, 
\begin{equation}\label{[Equation (4.21), LQ20]}
\sum_{\zeta \in \Lambda} (\gamma_{\zeta} (\mathring{R}))^{2} \fint_{\mathbb{T}^{2}} \mathbb{W}_{\zeta} \mathring{\otimes} \mathbb{W}_{-\zeta} dx = - \mathring{R}. 
\end{equation} 
\end{lemma}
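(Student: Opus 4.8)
\emph{Proof proposal.} The plan is to reduce the displayed identity \eqref{[Equation (4.21), LQ20]} to the geometric lemma, Lemma~\ref{[Lemma 4.1, LQ20]}, by evaluating the average $\fint_{\mathbb{T}^{2}} \mathbb{W}_{\zeta} \mathring{\otimes} \mathbb{W}_{-\zeta}\, dx$ directly and showing it equals $-\zeta \mathring{\otimes} \zeta$. The reality assertion is the easy part: from \eqref{[Equation (4.11), LQ20]} one reads off $\eta_{-\zeta} = \eta_{\zeta}$, and since the index set $\Omega_{r}$ in \eqref{[Equations (4.9) and (4.9a), LQ20]} is symmetric under $k \mapsto -k$, the Dirichlet kernel $D_{r}$, hence each $\eta_{\zeta}$, is real-valued; combined with $\overline{b_{\zeta}} = b_{-\zeta}$ from \eqref{[Equations (4.4) and (4.5), LQ20]} this gives $\overline{\mathbb{W}_{\zeta}} = \mathbb{W}_{-\zeta}$. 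Therefore, under complex conjugation $\sum_{\zeta \in \Lambda} a_{\zeta} \mathbb{W}_{\zeta}$ becomes $\sum_{\zeta \in \Lambda} \bar{a}_{\zeta} \mathbb{W}_{-\zeta} = \sum_{\zeta \in \Lambda} a_{-\zeta} \mathbb{W}_{-\zeta}$, which after the reindexing $\zeta \mapsto -\zeta$ (admissible because $\Lambda = -\Lambda$) coincides with the original sum; hence $\sum_{\zeta \in \Lambda} a_{\zeta} \mathbb{W}_{\zeta}$ — and a fortiori the scalar $\sum_{\zeta \in \Lambda} a_{\zeta} = \sum_{\zeta \in \Lambda^{+}} 2 \mathrm{Re}\, a_{\zeta}$ — is $\mathbb{R}$-valued.

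For the identity, the main point is that the spatial oscillations in $\mathbb{W}_{\zeta} \mathring{\otimes} \mathbb{W}_{-\zeta}$ cancel exactly, so only the mean of $\eta_{\zeta}^{2}$ survives. Using \eqref{[Equation (4.15), LQ20]} and $\eta_{-\zeta} = \eta_{\zeta}$,
\[
\mathbb{W}_{\zeta} \mathring{\otimes} \mathbb{W}_{-\zeta} = \eta_{\zeta}^{2}\, \bigl( b_{\zeta} \mathring{\otimes} b_{-\zeta} \bigr),
\]
while \eqref{[Equation (4.2), LQ20]} gives $b_{\zeta}(x) \otimes b_{-\zeta}(x) = (i \zeta^{\bot}) \otimes (- i \zeta^{\bot})\, e^{i \lambda \zeta \cdot x} e^{- i \lambda \zeta \cdot x} = \zeta^{\bot} \otimes \zeta^{\bot}$ and $b_{\zeta}(x) \cdot b_{-\zeta}(x) = \lvert \zeta^{\bot} \rvert^{2} = 1$, so that $b_{\zeta} \mathring{\otimes} b_{-\zeta} = \zeta^{\bot} \mathring{\otimes} \zeta^{\bot}$ is a constant matrix. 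Since $\fint_{\mathbb{T}^{2}} \eta_{\zeta}^{2}\, dx = 1$ by \eqref{[Equation (4.13), LQ20]}, we obtain $\fint_{\mathbb{T}^{2}} \mathbb{W}_{\zeta} \mathring{\otimes} \mathbb{W}_{-\zeta}\, dx = \zeta^{\bot} \mathring{\otimes} \zeta^{\bot}$. Because $\zeta \in \mathbb{S}^{1}$, the vectors $\zeta$ and $\zeta^{\bot}$ form an orthonormal basis of $\mathbb{R}^{2}$, hence $\zeta \otimes \zeta + \zeta^{\bot} \otimes \zeta^{\bot} = \mathrm{Id}$; subtracting $\mathrm{Id} = \tfrac{1}{2} \lvert \zeta \rvert^{2} \mathrm{Id} + \tfrac{1}{2} \lvert \zeta^{\bot} \rvert^{2} \mathrm{Id}$ and recalling the definition \eqref{estimate 1} of the trace-free product yields $\zeta \mathring{\otimes} \zeta + \zeta^{\bot} \mathring{\otimes} \zeta^{\bot} = 0$, i.e. $\zeta^{\bot} \mathring{\otimes} \zeta^{\bot} = - \zeta \mathring{\otimes} \zeta$. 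Inserting this and invoking \eqref{[Equations (4.6), (4.7) and (4.8), LQ20]},
\[
\sum_{\zeta \in \Lambda} (\gamma_{\zeta}(\mathring{R}))^{2} \fint_{\mathbb{T}^{2}} \mathbb{W}_{\zeta} \mathring{\otimes} \mathbb{W}_{-\zeta}\, dx = - \sum_{\zeta \in \Lambda} (\gamma_{\zeta}(\mathring{R}))^{2}\, (\zeta \mathring{\otimes} \zeta) = - \mathring{R},
\]
which is \eqref{[Equation (4.21), LQ20]}.

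There is no genuine obstacle in this lemma; it is a bookkeeping computation once Lemma~\ref{[Lemma 4.1, LQ20]} is available. The two places that merit a little care are (i) the cancellation $e^{i \lambda \zeta \cdot x} e^{- i \lambda \zeta \cdot x} = 1$, which makes $b_{\zeta} \otimes b_{-\zeta}$ independent of $x$, so that integrating against $\eta_{\zeta}^{2}$ picks out precisely the normalization $\fint_{\mathbb{T}^{2}} \eta_{\zeta}^{2}\, dx = 1$; and (ii) the elementary identity $\zeta^{\bot} \mathring{\otimes} \zeta^{\bot} = - \zeta \mathring{\otimes} \zeta$, valid only for unit vectors in two dimensions — this is exactly where the planar geometry of the construction enters, and it is responsible for the minus sign in \eqref{[Equation (4.21), LQ20]} in contrast with its three-dimensional counterpart in \cite{BV19a}.
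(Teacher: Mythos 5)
Your proof is correct, and it matches the computation the paper itself invokes: the paper does not reprove this lemma (it is cited from \cite[Lemma 4.2]{LQ20}), but just below, in the derivation of \eqref{estimate 3}, it states the key identity $b_{\zeta} \mathring{\otimes} b_{-\zeta} = -\zeta \mathring{\otimes} \zeta$ and combines it with the normalization $\fint_{\mathbb{T}^{2}} \eta_{\zeta}^{2}\, dx = 1$ and the geometric lemma \eqref{[Equations (4.6), (4.7) and (4.8), LQ20]} — exactly your argument. Your spelling-out of the planar identity $\zeta^{\bot} \mathring{\otimes} \zeta^{\bot} = -\zeta \mathring{\otimes} \zeta$ (which the paper asserts without comment as following from \eqref{[Equation (4.2), LQ20]}) and your reality check via $\overline{\mathbb{W}_{\zeta}} = \mathbb{W}_{-\zeta}$ and $\eta_{-\zeta} = \eta_{\zeta}$ are both correct and fill in the details the paper leaves implicit.
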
 
\begin{lemma}\label{[Lemma 4.3, LQ20]}
\rm{ (\cite[Lemma 4.3]{LQ20}; cf. \cite[Proposition 3.5]{BV19a})} Define $\eta_{\zeta}$ and $\mathbb{W}_{\zeta}$ respectively by \eqref{[Equation (4.11), LQ20]} and \eqref{[Equation (4.15), LQ20]}, and assume \eqref{[Equation (4.14), LQ20]}. Then for any $p \in (1,\infty]$, $k$, $N \in \{ 0, 1, 2, 3\}$, 
\begin{subequations}
\begin{align}
& \lVert \nabla^{N} \partial_{t}^{k} \mathbb{W}_{\zeta} \rVert_{L_{t}^{\infty} L_{x}^{p}} \lesssim_{N, k, p} \lambda^{N} (\lambda \sigma r \mu)^{k} r^{1- \frac{2}{p}}, \label{[Equation (4.22), LQ20]}\\
& \lVert \nabla^{N} \partial_{t}^{k} \eta_{\zeta} \rVert_{L_{t}^{\infty} L_{x}^{p}} \lesssim_{N, k, p} (\lambda \sigma r)^{N} (\lambda \sigma r \mu)^{k} r^{1- \frac{2}{p}}. \label{[Equation (4.23), LQ20]}
\end{align}
\end{subequations} 
\end{lemma}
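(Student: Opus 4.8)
The plan is to establish \eqref{[Equation (4.23), LQ20]} for $\eta_{\zeta}$ first and then deduce \eqref{[Equation (4.22), LQ20]} for $\mathbb{W}_{\zeta} = \eta_{\zeta} b_{\zeta}$ from it by the Leibniz rule together with the elementary bounds \eqref{[Equations (4.4) and (4.5), LQ20]} on $b_{\zeta}$. Throughout one may assume $\zeta \in \Lambda^{+}$: for $\zeta \in \Lambda^{-}$ one has $\eta_{\zeta} = \eta_{-\zeta}$ and $\mathbb{W}_{\zeta} = \eta_{-\zeta} b_{\zeta}$ with $-\zeta \in \Lambda^{+}$ by \eqref{[Equation (4.11), LQ20]} and \eqref{[Equation (4.15), LQ20]}, so every bound transfers verbatim. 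The first step is to trade temporal derivatives for spatial ones: by \eqref{[Equation (4.12), LQ20]}, $\partial_{t} \eta_{\zeta} = \pm \mu (\zeta\cdot\nabla)\eta_{\zeta}$, hence iterating, $\partial_{t}^{k} \eta_{\zeta} = (\pm\mu)^{k} (\zeta\cdot\nabla)^{k} \eta_{\zeta}$ and $\nabla^{N} \partial_{t}^{k} \eta_{\zeta} = (\pm\mu)^{k} \nabla^{N} (\zeta\cdot\nabla)^{k} \eta_{\zeta}$, which is $\mu^{k}$ times a fixed linear combination of spatial derivatives of $\eta_{\zeta}$ of total order $N+k$. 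Thus it suffices to prove $\lVert \nabla^{M} \eta_{\zeta} \rVert_{L_{t}^{\infty} L_{x}^{p}} \lesssim_{M,p} (\lambda\sigma r)^{M} r^{1-\frac{2}{p}}$ for $0 \le M \le 6$, since \eqref{[Equation (4.23), LQ20]} then follows with $M = N+k$ after restoring the $\mu^{k}$ factor.

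For this bound I would write $\eta_{\zeta}(t,x) = D_{r}(\Phi_{\zeta,t}(x))$ with $\Phi_{\zeta,t}(x) \triangleq (\lambda\sigma(\zeta\cdot x + \mu t),\, \lambda\sigma \zeta^{\bot}\cdot x)$, whose linear part is $\lambda\sigma$ times an orthogonal matrix since $(\zeta,\zeta^{\bot})$ is orthonormal. As $\Phi_{\zeta,t}$ is affine, the chain rule gives that $\partial^{\alpha} \eta_{\zeta}(t,\cdot)$ equals $(\lambda\sigma)^{\lvert\alpha\rvert}$ times a fixed linear combination of components of $(\nabla^{\lvert\alpha\rvert} D_{r})\circ\Phi_{\zeta,t}$, with no higher-order terms. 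The arithmetic conditions $\lambda,\lambda\sigma\in 5\mathbb{N}$ in \eqref{[Equation (4.14), LQ20]} together with $5\Lambda\subset\mathbb{Z}^{2}$ from \eqref{[Equation (3.12b), LQ20]} make $\Phi_{\zeta,t}$ descend to a torus endomorphism of integer degree, so the change of variables already underlying \eqref{[Equation (4.13), LQ20]} gives $\lVert (\partial^{\beta} D_{r})\circ\Phi_{\zeta,t}\rVert_{L^{p}(\mathbb{T}^{2})} \approx \lVert \partial^{\beta} D_{r}\rVert_{L^{p}(\mathbb{T}^{2})}$ uniformly in $t$. It then remains to bound $\lVert \partial^{\beta} D_{r}\rVert_{L_{x}^{p}}$; factoring $D_{r}(x) = \frac{1}{2r+1}\tilde D_{r}(x^{1})\tilde D_{r}(x^{2})$ into one-dimensional Dirichlet kernels of order $r$, whose Fourier support lies in $[-r,r]$, a standard Bernstein estimate yields $\lVert \partial_{x^{i}}^{j} \tilde D_{r}\rVert_{L^{p}(\mathbb{T})} \lesssim_{j} r^{j} \lVert \tilde D_{r}\rVert_{L^{p}(\mathbb{T})} \lesssim_{j} r^{j+1-\frac{1}{p}}$, and multiplying the two factors gives $\lVert \partial^{\beta} D_{r}\rVert_{L_{x}^{p}} \lesssim_{\lvert\beta\rvert} r^{\lvert\beta\rvert} r^{1-\frac{2}{p}}$, consistent with \eqref{[Equation (4.10), LQ20]} when $\beta = 0$. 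Combining these three estimates proves $\lVert \nabla^{M} \eta_{\zeta}\rVert_{L_{t}^{\infty} L_{x}^{p}} \lesssim_{M,p} (\lambda\sigma)^{M} r^{M} r^{1-\frac{2}{p}}$, hence \eqref{[Equation (4.23), LQ20]}.

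Finally, for \eqref{[Equation (4.22), LQ20]} I would use that $\mathbb{W}_{\zeta} = \eta_{\zeta} b_{\zeta}$ with $b_{\zeta}$ independent of $t$, so the Leibniz rule gives $\nabla^{N}\partial_{t}^{k} \mathbb{W}_{\zeta} = \sum_{j=0}^{N} \binom{N}{j} (\nabla^{j}\partial_{t}^{k}\eta_{\zeta})\otimes(\nabla^{N-j}b_{\zeta})$. Placing the $b_{\zeta}$-factor in $L_{x}^{\infty}$ via $\lVert b_{\zeta}\rVert_{C_{x}^{N-j}} \lesssim \lambda^{N-j}$ from \eqref{[Equations (4.4) and (4.5), LQ20]} and the $\eta_{\zeta}$-factor in $L_{x}^{p}$ via \eqref{[Equation (4.23), LQ20]},
\[
\lVert \nabla^{N}\partial_{t}^{k}\mathbb{W}_{\zeta}\rVert_{L_{t}^{\infty} L_{x}^{p}} \lesssim_{N,k,p} \sum_{j=0}^{N} (\lambda\sigma r)^{j} (\lambda\sigma r\mu)^{k} r^{1-\frac{2}{p}}\lambda^{N-j} = \lambda^{N} (\lambda\sigma r\mu)^{k} r^{1-\frac{2}{p}} \sum_{j=0}^{N} (\sigma r)^{j},
\]
and since $\sigma r \ll 1$ by \eqref{[Equation (4.14), LQ20]} (from $r \ll \sigma^{-1}$) and $N \le 3$, the factor $\sum_{j=0}^{N} (\sigma r)^{j} \le N+1$ is harmless, giving \eqref{[Equation (4.22), LQ20]}. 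The only step demanding genuine care is the change of variables in the second paragraph — verifying that composition with $\Phi_{\zeta,t}$ does not distort the $L^{p}$ norm of $\partial^{\beta} D_{r}$, uniformly in $t$ — which is precisely where the number-theoretic constraints on $\lambda$, $\lambda\sigma$, and $\Lambda$ are used; everything else is bookkeeping of powers of the parameters.
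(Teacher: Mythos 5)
Your argument is correct, and since the paper only cites \cite[Lemma 4.3]{LQ20} without reproducing a proof, there is no in-paper argument to compare against; your proof is the natural one and presumably close to what appears in \cite{LQ20} (and, in three dimensions, \cite[Proposition 3.5]{BV19a}). The core observations are all sound: the commutation identity $\partial_t^k\eta_\zeta = (\pm\mu)^k(\zeta\cdot\nabla)^k\eta_\zeta$ from \eqref{[Equation (4.12), LQ20]} correctly trades time for space derivatives at a cost $\mu^k$; the affine map $\Phi_{\zeta,t}$ has integer linear part of determinant $(\lambda\sigma)^2\neq 0$ thanks to $\lambda\sigma\in 5\mathbb{N}$ and $5\Lambda\subset\mathbb{Z}^2$, so composition with it is an exact $L^p(\mathbb{T}^2)$-isometry uniformly in $t$; the tensor-product factorization $D_r = \frac{1}{2r+1}\tilde D_r\otimes\tilde D_r$ combined with Bernstein on each one-dimensional factor yields $\|\partial^\beta D_r\|_{L^p}\lesssim r^{|\beta|}r^{1-2/p}$ (and the restriction $p>1$ is exactly where this breaks, consistent with the statement); and the Leibniz step for $\mathbb{W}_\zeta=\eta_\zeta b_\zeta$ closes because $\sigma r\leq 1$ by $r\ll\sigma^{-1}$ in \eqref{[Equation (4.14), LQ20]}, so the geometric sum $\sum_{j\leq N}(\sigma r)^j$ is an $N$-dependent constant. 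The only cosmetic issues are notational: the Leibniz expansion should really be written with multi-indices, the subscript in "$\lesssim_j r^{1-1/p}$" should not carry a $j$, and "$\approx$" in the change-of-variables step can in fact be sharpened to equality of $L^p$ norms. None of these affect correctness.
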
 

\section{Proofs of Theorems \ref{[Theorem 2.1, Y20b]}-\ref{[Theorem 2.2, Y20b]}}\label{Proof in the case of additive noise}

\subsection{Proof of Theorem \ref{[Theorem 2.2, Y20b]} assuming Theorem \ref{[Theorem 2.1, Y20b]} }\label{Subsection 4.1}
We first present general results for $F$ defined through \eqref{[Equations (11) and (12), Y20b]}; thereafter, we apply them in case $F \equiv 1$ and $B$ is a $GG^{\ast}$-Wiener process to prove Theorems \ref{[Theorem 2.1, Y20b]}-\ref{[Theorem 2.2, Y20b]}.  We fix $\varepsilon \in (0,1)$ for the following definitions, which are in the spirit of previous works such as \cite{FR08, GRZ09, SV97}. 
\begin{define}\label{[Definition 4.1, Y20b]}
Let $s \geq 0$ and $\xi^{\text{in}} \in L_{\sigma}^{2}$. Then $P \in \mathcal{P} (\Omega_{0})$ is a martingale solution to \eqref{[Equation (2), Y20b]} with initial condition $\xi^{\text{in}}$ at initial time $s$ if 
\begin{enumerate}
\item [] (M1) $P(\{ \xi(t) = \xi^{\text{in}} \hspace{1mm} \forall \hspace{1mm} t \in [0,s]\}) = 1$ and for all $n \in \mathbb{N}$ 
\begin{equation}\label{[Equation (13), Y20b]}
P ( \{ \xi \in \Omega_{0}: \hspace{0.5mm}\int_{0}^{n} \lVert F(\xi(r)) \rVert_{L_{2} (U, L_{\sigma}^{2})}^{2} dr < \infty \}) = 1, 
\end{equation} 
\item [] (M2) for every $\mathfrak{g}_{i} \in C^{\infty} (\mathbb{T}^{2} ) \cap L_{\sigma}^{2}$ and $t \geq s$ 
\begin{equation}\label{[Equation (14), Y20b]}
M_{t,s}^{i} \triangleq \langle \xi(t)- \xi(s), \mathfrak{g}_{i} \rangle + \int_{s}^{t} \langle \text{div} ( \xi(r) \otimes \xi(r)) + (-\Delta)^{m} \xi(r), \mathfrak{g}_{i} \rangle dr 
\end{equation} 
is a continuous, square-integrable $(\mathcal{B}_{t})_{t\geq s}$-martingale under $P$ such that $\langle \langle M_{t,s}^{i} \rangle \rangle = \int_{s}^{t} \lVert F(\xi(r))^{\ast} \mathfrak{g}_{i} \rVert_{U}^{2} dr$, 
\item [] (M3) for any $q \in \mathbb{N}$ there exists a function $t \mapsto C_{t,q} \in \mathbb{R}_{+}$ for all $t\geq s$ such that 
\begin{equation}\label{[Equation (15), Y20b]}
\mathbb{E}^{P} [ \sup_{r \in [0,t]} \lVert \xi(r) \rVert_{L_{x}^{2}}^{2q} + \int_{s}^{t} \lVert \xi(r) \rVert_{H_{x}^{\varepsilon}}^{2} dr] \leq C_{t,q} (1+ \lVert \xi^{\text{in}} \rVert_{L_{x}^{2}}^{2q}). 
\end{equation} 
\end{enumerate}
 The set of all such martingale solutions with the same constants $C_{t,q}$ in \eqref{[Equation (15), Y20b]} for every $q \in \mathbb{N}$ and $t\geq s$ will be denoted by $\mathcal{C}( s, \xi^{\text{in}}, \{C_{t,q} \}_{q \in \mathbb{N}, t \geq s})$. 
\end{define}  
In the case of an additive noise, if $\{\mathfrak{g}_{i} \}_{i=1}^{\infty}$ is a complete orthonormal system consisting of eigenvectors of $GG^{\ast}$, then $M_{t,s}\triangleq \sum_{i=1}^{\infty} M_{t,s}^{i} \mathfrak{g}_{i}$ becomes a $GG^{\ast}$-Wiener process w.r.t. the filtration $(\mathcal{B}_{t})_{t\geq s}$ under $P$. Given any stopping time $\tau: \hspace{0.5mm}\Omega_{0} \mapsto [0,\infty]$ we define the space of trajectories stopped at $\tau$ by 
\begin{equation}\label{[Equation (16), Y20b]}
\Omega_{0,\tau} \triangleq \{\omega( \cdot \wedge \tau(\omega)):\hspace{1mm} \omega \in \Omega_{0} \} 
\end{equation} 
and denote the $\sigma$-field associated to $\tau$ by $\mathcal{B}_{\tau}$. 
\begin{define}\label{[Definition 4.2, Y20b]}
Let $s \geq 0$, $\xi^{\text{in}} \in L_{\sigma}^{2}$, and $\tau \geq s$ be a stopping time of $(\mathcal{B}_{t})_{t \geq s}$. Then $P \in \mathcal{P} (\Omega_{0, \tau})$ is a martingale solution to \eqref{[Equation (2), Y20b]} on $[s, \tau]$ with initial condition $\xi^{\text{in}}$ at initial time $s$ if 
\begin{enumerate}
\item [] (M1) $P(\{ \xi(t) = \xi^{\text{in}} \hspace{1mm} \forall \hspace{1mm} t \in [0, s] \}) = 1$ and for all $n \in \mathbb{N}$
\begin{equation}\label{[Equation (17), 20b]}
P ( \{ \xi \in \Omega_{0}:\hspace{1mm} \int_{0}^{n \wedge \tau} \lVert F(\xi(r)) \rVert_{L_{2}(U, L_{\sigma}^{2})}^{2} dr < \infty \}) = 1, 
\end{equation} 
\item [] (M2) for every $\mathfrak{g}_{i} \in C^{\infty} (\mathbb{T}^{2}) \cap L_{\sigma}^{2}$ and $t \geq s$ 
\begin{equation}\label{[Equation (18), Y20b]} 
M_{t \wedge \tau, s}^{i} \triangleq \langle \xi(t\wedge \tau) - \xi^{\text{in}}, \mathfrak{g}_{i} \rangle + \int_{s}^{t \wedge \tau} \langle \text{div} (\xi(r) \otimes \xi(r)) + (-\Delta)^{m} \xi(r), \mathfrak{g}_{i} \rangle dr 
\end{equation} 
is a continuous, square-integrable $(\mathcal{B}_{t})_{t \geq s}$-martingale under $P$ such that $\langle \langle M_{t\wedge \tau, s}^{i} \rangle \rangle$ $=$ $\int_{s}^{t \wedge \tau} \lVert F(\xi(r))^{\ast} \mathfrak{g}_{i} \rVert_{U}^{2} dr$, 
\item [] (M3) for any $q \in \mathbb{N}$ there exists a function $t \mapsto C_{t,q} \in \mathbb{R}_{+}$ for all $t\geq s$ such that
\begin{equation}\label{[Equation (19), Y20b]}
\mathbb{E}^{P} [ \sup_{r \in [0, t \wedge \tau]} \lVert \xi(r) \rVert_{L_{x}^{2}}^{2q} + \int_{s}^{t \wedge \tau} \lVert \xi(r) \rVert_{H_{x}^{\varepsilon}}^{2} dr] \leq C_{t,q} (1+ \lVert \xi^{\text{in}} \rVert_{L_{x}^{2}}^{2q}). 
\end{equation} 
\end{enumerate} 
\end{define}

\begin{proposition}\label{[Proposition 4.1, Y20b]}
For any $(s, \xi^{\text{in}}) \in [0, \infty) \times L_{\sigma}^{2}$, there exits $P \in \mathcal{P} (\Omega_{0})$ which is a martingale solution to \eqref{[Equation (2), Y20b]} with initial condition $\xi^{\text{in}}$ at initial time $s$ according to Definition \ref{[Definition 4.1, Y20b]}. Additionally, if there exists a family $\{(s_{n}, \xi_{n}) \}_{n \in \mathbb{N}} \subset [0,\infty) \times L_{\sigma}^{2}$ such that $\lim_{n\to\infty} \lVert (s_{n}, \xi_{n}) - (s, \xi^{\text{in}}) \rVert_{\mathbb{R} \times L_{x}^{2}} = 0$ and $P_{n} \in \mathcal{C} ( s_{n}, \xi_{n}, \{C_{t,q} \}_{q \in \mathbb{N}, t \geq s_{n}} )$, then there exists a subsequence $\{P_{n_{k}} \}_{k\in\mathbb{N}}$ that converges weakly to some $P \in \mathcal{C} ( s, \xi^{\text{in}}, \{C_{t,q}\}_{q \in \mathbb{N}, t \geq s } )$. 
\end{proposition}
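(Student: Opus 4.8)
The plan is to establish the two assertions separately, both by now-standard arguments adapted from the stochastic compactness/martingale-problem literature (e.g.\ \cite{FR08, GRZ09, SV97}), deferring all the genuinely new convex-integration work to later sections.

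For \emph{existence} of a martingale solution with a given $(s,\xi^{\mathrm{in}})$, I would run a Galerkin scheme: project \eqref{[Equation (2), Y20b]} onto $\mathbb{P}_{\leq N}L_\sigma^2$, obtaining a finite-dimensional SDE with locally Lipschitz coefficients whose global well-posedness follows from the energy estimate. The key a priori bounds come from It\^o's formula applied to $\lVert\xi^N(t)\rVert_{L_x^2}^{2q}$: the $(-\Delta)^m$ term contributes the dissipative $\int_s^t\lVert\xi^N\rVert_{\dot H_x^m}^2\,dr$, the nonlinearity drops out by the divergence-free and skew-symmetry structure, and the noise is controlled via \eqref{[Equations (11) and (12), Y20b]} together with Burkholder--Davis--Gundy, yielding exactly the form of \eqref{[Equation (15), Y20b]} with $\varepsilon$ any number in $(0,m]$ (here one uses $m>0$, so some positive fractional smoothing is available). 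Stochastic compactness then gives tightness of the laws of $\{\xi^N\}$ in $\Omega_0=C([0,\infty);H^{-3})\cap L^\infty_{\mathrm{loc}}([0,\infty);L_\sigma^2)$ — the Aubin--Lions--type argument needs the uniform $L_t^2H_x^\varepsilon$ bound for the time-continuity in $H^{-3}$ and the uniform $L_t^\infty L_x^2$ bound for the weak-star piece. Passing to the limit in (M1)--(M3), with Prokhorov and Skorokhod (or a Jakubowski--Skorokhod representation, since $\Omega_0$ is not metrizable as a whole but the relevant subspaces are), and identifying the limit martingale and its quadratic variation using the second condition in \eqref{[Equations (11) and (12), Y20b]} to handle $F(\xi^N)^\ast\mathfrak g_i\to F(\xi)^\ast\mathfrak g_i$, produces the desired $P\in\mathcal{P}(\Omega_0)$; lower semicontinuity of the norms under weak convergence preserves \eqref{[Equation (15), Y20b]} with the \emph{same} constants $C_{t,q}$, so in fact $P\in\mathcal{C}(s,\xi^{\mathrm{in}},\{C_{t,q}\})$.

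For the \emph{stability/compactness} statement, given $(s_n,\xi_n)\to(s,\xi^{\mathrm{in}})$ and $P_n\in\mathcal{C}(s_n,\xi_n,\{C_{t,q}\})$, the uniform bounds \eqref{[Equation (15), Y20b]} — uniform precisely because the constants $C_{t,q}$ are shared and $\lVert\xi_n\rVert_{L_x^2}^{2q}$ is bounded — give tightness of $\{P_n\}$ on $\Omega_0$ by the same Aubin--Lions compactness input as above (the initial times $s_n$ converging causes no trouble since one works on a fixed compact $[0,t]$ eventually containing all $s_n$). Extract a weakly convergent subsequence $P_{n_k}\rightharpoonup P$. Verifying (M1) for $P$ uses $\xi_n\to\xi^{\mathrm{in}}$ in $L_x^2$ and $s_n\to s$; verifying (M3) uses weak lower semicontinuity of $\omega\mapsto\sup_{r\le t}\lVert\xi(r)\rVert_{L_x^2}^{2q}+\int\lVert\xi(r)\rVert_{H_x^\varepsilon}^2$ (which is lower semicontinuous on $\Omega_0$, hence the bound survives the limit, again with the same constants, so $P\in\mathcal C(s,\xi^{\mathrm{in}},\{C_{t,q}\})$); verifying (M2) is the delicate point — one must show $M_{t,s}^i$ is a martingale under $P$ with the prescribed bracket, which is done by checking, for bounded $\mathcal B_r$-measurable continuous test functionals, that the martingale identity $\mathbb E^{P}[(M_{t,s}^i-M_{r,s}^i)\,\Phi]=0$ and the analogous identity for $(M_{t,s}^i)^2-\int\lVert F(\xi)^\ast\mathfrak g_i\rVert_U^2$ pass to the limit. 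Here the quadratic nonlinearity inside $M^i$ is handled because $\xi(r)\otimes\xi(r)$ tested against $\nabla\mathfrak g_i$ is continuous in the $L_{\mathrm{loc}}^2 L_\sigma^2$ topology on bounded sets of $L_t^\infty L_x^2\cap L_t^2 H_x^\varepsilon$, and uniform integrability needed to pass the expectations to the limit comes from the $2q$-th moment bounds with $q\ge2$.

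\emph{Main obstacle.} The hard part is the identification of the limit martingale problem (M2) — specifically, establishing enough uniform integrability and continuity to pass the nonlinear term $\mathrm{div}(\xi\otimes\xi)$ and the bracket $\int\lVert F(\xi)^\ast\mathfrak g_i\rVert_U^2\,dr$ through the weak limit, since $\Omega_0$ carries only the weak-in-$L_\sigma^2$/strong-in-$H^{-3}$ topology and the product $\xi\otimes\xi$ is not weakly continuous on $L_x^2$ alone. This is resolved exactly as in \cite[Section 3]{FR08} and \cite{GRZ09}: the extra $L_t^2 H_x^\varepsilon$ regularity in \eqref{[Equation (15), Y20b]} upgrades weak convergence to strong convergence in $L_t^2 L_x^2$ on compact time intervals via Aubin--Lions, which makes $\xi\otimes\xi$ converge strongly in $L_t^1 L_x^1$, and the moment bound with $q\ge2$ supplies the uniform integrability to interchange limit and expectation; the noise term is treated using the continuity hypothesis in \eqref{[Equations (11) and (12), Y20b]}. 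I would carry out the two parts in the order existence first, then stability, since the tightness and limit-identification machinery is essentially common to both and can be stated once.
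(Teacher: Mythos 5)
Your proposal is correct and follows essentially the same route as the paper: the paper likewise cites \cite{FR08, GRZ09, Z12} (Galerkin plus stochastic compactness) for existence, and its Appendix proves the stability assertion exactly as you outline — tightness on $\mathbb{M}=C_{\mathrm{loc}}([0,\infty);H^{-3})\cap L^2_{\mathrm{loc}}([0,\infty);L_\sigma^2)$ from the uniform moment bounds in \eqref{[Equation (15), Y20b]} together with a H\"older-in-time estimate in $H^{-3}$ (split into the drift $J(\xi)=-\mathbb{P}\,\mathrm{div}(\xi\otimes\xi)-(-\Delta)^m\xi$ handled via $\|\xi\otimes\xi\|_{H^{-2}}\lesssim\|\xi\|_{L^2}^2$, $\|\xi\|_{H^{2m-3}}\lesssim 1+\|\xi\|_{L^2}^2$, and the martingale part handled by Kolmogorov's test), followed by Prokhorov, Skorokhod, and limit identification of (M1)--(M3) with the fractional dissipation moved onto the smooth test function $\mathfrak{g}_i$. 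Your identification of the nonlinear term and the bracket as the delicate point, resolved through the Aubin--Lions-type strong $L^2_{t,x}$ convergence afforded by the $L^2_t H^\varepsilon_x$ bound, is precisely what the paper's Lemma \ref{[Lemma A.1, HZZ19]} and the computations \eqref{[Equation (184), Y20b]}--\eqref{[Equation (189), Y20b]} carry out.
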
 

\begin{proof}[Proof of Proposition \ref{[Proposition 4.1, Y20b]}]
We omit the proof of the existence of a martingale solution because it has become very standard by now; we refer to \cite[Theorem 4.1]{FR08} for 3-d NS equations, \cite[Theorem 6.2]{GRZ09} for a more general case of spatial dimension, as well as \cite[Theorem 4.2.4]{Z12} for the case of a diffusive term in the form of a fractional Laplacian with an arbitrary small exponent (see also \cite[Theorem 3.1]{Y19}). The stability result can also be proven following the proof of \cite[Theorem 3.1]{HZZ19} (also \cite[Proposition 4.1]{Y20a}); because the estimates can differ slightly due to the arbitrary weak diffusion in the current case, we leave a sketch of proof elaborating on treatments of diffusive terms in the Appendix for completeness. 
\end{proof}
Proposition \ref{[Proposition 4.1, Y20b]} leads to the following results; the proofs of analogous results in \cite{HZZ19} did not depend on spatial dimension or specific form of diffusive terms and thus directly apply to our case. 

\begin{lemma}\label{[Lemma 4.2, Y20b]} 
\rm{(\cite[Proposition 3.2]{HZZ19})} Let $\tau$ be a bounded stopping time of $(\mathcal{B}_{t})_{t\geq 0}$. Then for every $\omega \in \Omega_{0}$ there exists $Q_{\omega} \in \mathcal{P}(\Omega_{0})$ such that 
\begin{subequations}
\begin{align}
&Q_{\omega} (\{ \omega' \in \Omega_{0}: \hspace{0.5mm}  \xi(t, \omega') = \omega(t) \hspace{1mm} \forall \hspace{1mm} t \in [0, \tau(\omega)] \}) = 1, \label{[Equation (20a), Y20b]}\\
&Q_{\omega} (A) = R_{\tau(\omega), \xi(\tau(\omega), \omega)} (A) \hspace{1mm} \forall \hspace{1mm} A \in \mathcal{B}^{\tau(\omega)}, \label{[Equation (20b), Y20b]}
\end{align}
\end{subequations}
where $R_{\tau(\omega), \xi(\tau(\omega), \omega)} \in \mathcal{P}(\Omega_{0})$ is a martingale solution to \eqref{[Equation (2), Y20b]} with initial condition $\xi(\tau(\omega), \omega)$ at initial time $\tau(\omega)$. Furthermore, for every $B \in \mathcal{B}$ the map $\omega \mapsto Q_{\omega}(B)$ is $\mathcal{B}_{\tau}$-measurable.
\end{lemma}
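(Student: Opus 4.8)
The plan is to follow the standard construction of regular conditional probability distributions combined with the concatenation technique for martingale problems, as developed by Stroock and Varadhan and adapted to the stochastic Navier--Stokes setting in \cite{FR08, HZZ19}. First I would fix a bounded stopping time $\tau$ of $(\mathcal{B}_t)_{t\geq 0}$ and, for each $\omega \in \Omega_0$, invoke the existence part of Proposition \ref{[Proposition 4.1, Y20b]} to obtain a martingale solution $R_{\tau(\omega),\xi(\tau(\omega),\omega)} \in \mathcal{P}(\Omega_0)$ with initial condition $\xi(\tau(\omega),\omega) \in L^2_\sigma$ at initial time $\tau(\omega)$; the fact that $\xi(\tau(\omega),\omega)$ indeed lies in $L^2_\sigma$ for $P$-a.e.\ trajectory follows from the definition of $\Omega_0$ as a subset of $L^\infty_{\mathrm{loc}}([0,\infty);L^2_\sigma)$ together with (M3). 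Next I would define $Q_\omega$ to be the law obtained by \emph{concatenating} the deterministic path $\omega$ on $[0,\tau(\omega)]$ with a sample from $R_{\tau(\omega),\xi(\tau(\omega),\omega)}$ on $[\tau(\omega),\infty)$; concretely, $Q_\omega$ is the pushforward of $R_{\tau(\omega),\xi(\tau(\omega),\omega)}$ under the measurable map that replaces a trajectory's restriction to $[0,\tau(\omega)]$ by $\omega|_{[0,\tau(\omega)]}$. Property \eqref{[Equation (20a), Y20b]} is then immediate from this construction, and \eqref{[Equation (20b), Y20b]} holds because on the $\sigma$-field $\mathcal{B}^{\tau(\omega)}$ generated by the coordinates after time $\tau(\omega)$ the concatenated measure agrees with $R_{\tau(\omega),\xi(\tau(\omega),\omega)}$ by construction.

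The substantive points to verify are: (i) that $Q_\omega$ is itself a martingale solution, or at least that it satisfies (M1)--(M3) after time $\tau(\omega)$, so that the statement is internally consistent with the framework; (ii) that the map $\omega \mapsto Q_\omega(B)$ is $\mathcal{B}_\tau$-measurable for every $B \in \mathcal{B}$. For (ii) I would argue in stages: the map $\omega \mapsto (\tau(\omega),\xi(\tau(\omega),\omega)) \in [0,\infty)\times L^2_\sigma$ is $\mathcal{B}_\tau$-measurable (here one uses that $\xi(\tau(\cdot),\cdot)$ is $\mathcal{B}_\tau$-measurable, a standard fact about stopped canonical processes on the path space), and the map $(s,\xi^{\mathrm{in}}) \mapsto R_{s,\xi^{\mathrm{in}}} \in \mathcal{P}(\Omega_0)$ can be chosen to be measurable with respect to the Borel $\sigma$-field on $\mathcal{P}(\Omega_0)$ induced by weak convergence --- this is precisely where the stability/compactness half of Proposition \ref{[Proposition 4.1, Y20b]} is used, via a measurable selection theorem (Kuratowski--Ryll-Nardzewski or Jankov--von Neumann) applied to the set-valued map $(s,\xi^{\mathrm{in}}) \mapsto \mathcal{C}(s,\xi^{\mathrm{in}},\{C_{t,q}\})$, whose graph is closed by the stability statement. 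Composing these measurable maps with the continuous concatenation operation and with evaluation $P \mapsto P(B)$ yields the claimed $\mathcal{B}_\tau$-measurability.

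The main obstacle I anticipate is not any single deep estimate but rather the careful bookkeeping of the concatenation argument at the level of the martingale property (M2): one must check that for $t \geq \tau(\omega)$ the process $M^i_{t,\tau(\omega)}$ remains a square-integrable martingale under $Q_\omega$ with the prescribed quadratic variation, and that pasting it to the (deterministic, hence trivially adapted) behavior on $[0,\tau(\omega)]$ does not destroy the martingale property across the junction time --- this requires the strong Markov-type compatibility of the martingale problem, i.e.\ that conditioning at the stopping time $\tau$ leaves a martingale solution started afresh, which is a consequence of the optional sampling theorem together with the tower property over $\mathcal{B}_\tau$. Since, as the paper notes, the proofs of the analogous statements in \cite{HZZ19} (namely \cite[Proposition 3.2]{HZZ19}) are insensitive to the spatial dimension $n$ and to the precise form of the diffusion operator $(-\Delta)^m$ --- these enter only through the linear drift term $\int \langle (-\Delta)^m \xi(r),\mathfrak{g}_i\rangle\, dr$ in \eqref{[Equation (14), Y20b]}, which is handled identically --- I would simply transcribe that argument, citing \cite{SV97} and \cite{FR08} for the measurable-selection and concatenation machinery, and only flag explicitly the one place where Definition \ref{[Definition 4.1, Y20b]}(M3) with our exponent $\varepsilon$ (rather than the full $\dot H^1$ bound of the classical case) is invoked.
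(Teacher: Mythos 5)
Your proposal follows the same strategy as the cited \cite[Proposition 3.2]{HZZ19}: build $Q_\omega$ as the concatenation $\delta_\omega\otimes_{\tau(\omega)}R_{\tau(\omega),\xi(\tau(\omega),\omega)}$ (which the paper records explicitly as \eqref{[Equation (21), Y20b]}), then obtain the measurability of $\omega\mapsto Q_\omega(B)$ from a measurable selection of martingale solutions whose existence and stability are supplied by Proposition \ref{[Proposition 4.1, Y20b]}. This is correct and is, modulo one point of scoping discussed below, the same route.

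The one thing to flag is a misattribution of effort: the martingale-property bookkeeping ``across the junction'' --- checking that $M^i_{t,\tau(\omega)}$ remains a square-integrable martingale under the concatenated measure, via optional sampling and the tower property over $\mathcal{B}_\tau$ --- is the substance of Lemma \ref{[Lemma 4.3, Y20b]} (i.e.\ \cite[Proposition 3.4]{HZZ19}) and of the verification of \eqref{[Equation (22), Y20b]} in Proposition \ref{[Proposition 4.6, Y20b]}, not of Lemma \ref{[Lemma 4.2, Y20b]} itself. Lemma \ref{[Lemma 4.2, Y20b]} asserts only \eqref{[Equation (20a), Y20b]}, \eqref{[Equation (20b), Y20b]}, and $\mathcal{B}_\tau$-measurability of $\omega\mapsto Q_\omega(\cdot)$; as you yourself observe, both \eqref{[Equation (20a), Y20b]} and \eqref{[Equation (20b), Y20b]} are immediate once the pushforward under the path-splicing map is set up, and the genuine work here is the measurable selection, for which the stability half of Proposition \ref{[Proposition 4.1, Y20b]} (same constants $C_{t,q}$, hence a closed-graph set-valued map) is exactly what a Kuratowski--Ryll-Nardzewski or Jankov--von Neumann argument requires. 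A minor further remark: you hedge that $\xi(\tau(\omega),\omega)\in L^2_\sigma$ ``for $P$-a.e.\ trajectory,'' but the lemma is stated for \emph{every} $\omega\in\Omega_0$; this is fine because $\Omega_0\subset C_{\mathrm{loc}}H^{-3}\cap L^\infty_{\mathrm{loc}}L^2_\sigma$ forces $\xi(t,\omega)\in L^2_\sigma$ for every $t$ (weak-$\ast$ lower semicontinuity along the continuous $H^{-3}$ representative), so no $P$-exceptional set needs to be carved out.
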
 
Let us mention that in the proof of Lemma \ref{[Lemma 4.2, Y20b]}, $Q_{\omega}$ is derived as the unique probability measure 
\begin{equation}\label{[Equation (21), Y20b]}
Q_{\omega} = \delta_{\omega} \otimes_{\tau(\omega)} R_{\tau(\omega), \xi(\tau(\omega), \omega)} \in \mathcal{P} (\Omega_{0}),
\end{equation}
where $\delta_{\omega}$ is the Dirac mass, such that \eqref{[Equation (20a), Y20b]}-\eqref{[Equation (20b), Y20b]} hold. 

\begin{lemma}\label{[Lemma 4.3, Y20b]}
\rm{(\cite[Proposition 3.4]{HZZ19})} Let $\xi^{\text{in}} \in L_{\sigma}^{2}$ and $P$ be a martingale solution to \eqref{[Equation (2), Y20b]} on $[0,\tau]$ with initial condition $\xi^{\text{in}}$ at initial time $0$ according to Definition \ref{[Definition 4.2, Y20b]}. Assume the hypothesis of Lemma \ref{[Lemma 4.2, Y20b]} and additionally that there exists a Borel set $\mathcal{N} \subset \Omega_{0, \tau}$ such that $P(\mathcal{N}) = 0$ and $Q_{\omega}$ from Lemma \ref{[Lemma 4.2, Y20b]} satisfies for every $\omega \in \Omega_{0} \setminus \mathcal{N}$ 
\begin{equation}\label{[Equation (22), Y20b]}
Q_{\omega} (\{\omega' \in \Omega_{0}: \hspace{0.5mm}  \tau(\omega') = \tau(\omega) \}) = 1. 
\end{equation} 
Then a probability measure $P \otimes_{\tau} R \in \mathcal{P} (\Omega_{0})$ defined by 
\begin{equation}\label{[Equation (23), Y20b]}
P \otimes_{\tau} R(\cdot) \triangleq \int_{\Omega_{0}} Q_{\omega} (\cdot) P(d\omega) 
\end{equation} 
satisfies $P \otimes_{\tau} R \rvert_{\Omega_{0,\tau}} = P \rvert_{\Omega_{0, \tau}}$ and it is a martingale solution to \eqref{[Equation (2), Y20b]} on $[0,\infty)$ with initial condition $\xi^{\text{in}}$ at initial time 0. 
\end{lemma}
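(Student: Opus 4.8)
The plan is to follow the standard ``gluing martingale solutions'' argument of Stroock--Varadhan as adapted to the stochastic PDE setting in \cite{HZZ19, SV97}, verifying directly that $P\otimes_\tau R$ defined by \eqref{[Equation (23), Y20b]} satisfies (M1)--(M3) of Definition \ref{[Definition 4.1, Y20b]}. First I would establish the restriction identity $P\otimes_\tau R\rvert_{\Omega_{0,\tau}} = P\rvert_{\Omega_{0,\tau}}$: since each $Q_\omega$ agrees with $\delta_\omega$ on $\mathcal{B}_{\tau}$ by \eqref{[Equation (20a), Y20b]}, integrating against $P$ reproduces $P$ on trajectories stopped at $\tau$; here the hypothesis \eqref{[Equation (22), Y20b]} guarantees that the stopped trajectory under $Q_\omega$ really is $\omega(\cdot\wedge\tau(\omega))$ for $P$-a.e.\ $\omega$. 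Property (M1) (the initial condition and the integrability \eqref{[Equation (13), Y20b]}) follows from this restriction identity together with the fact that $R_{\tau(\omega),\xi(\tau(\omega),\omega)}$ is itself a martingale solution, so the relevant event has full measure under each $Q_\omega$ and hence under the average.

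The heart of the argument is (M2). I would fix $\mathfrak{g}_i\in C^\infty(\mathbb{T}^2)\cap L_\sigma^2$ and show that the process $M^i_{t,0}$ from \eqref{[Equation (14), Y20b]} is a continuous square-integrable $(\mathcal{B}_t)_{t\ge 0}$-martingale under $P\otimes_\tau R$ with the prescribed quadratic variation. The clean way is to use the decomposition at the stopping time $\tau$: on $[0,\tau]$ the martingale property holds because $P\otimes_\tau R$ agrees with $P$ there and $P$ is a martingale solution on $[0,\tau]$; on $[\tau,\infty)$ one conditions on $\mathcal{B}_\tau$ and uses \eqref{[Equation (20b), Y20b]}, which says that under $Q_\omega$ the ``shifted'' process after time $\tau(\omega)$ is governed by the martingale solution $R_{\tau(\omega),\xi(\tau(\omega),\omega)}$ started at $\xi(\tau(\omega),\omega)$. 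Concretely, for $s\le t$ one checks
\begin{equation*}
\mathbb{E}^{P\otimes_\tau R}\big[ M^i_{t,0} - M^i_{s,0} \,\big|\, \mathcal{B}_s\big] = 0
\end{equation*}
by splitting into the cases governed by whether $s,t$ lie before or after $\tau$, using the tower property with respect to $\mathcal{B}_\tau$ and the optional stopping / strong Markov-type structure encoded in Lemma \ref{[Lemma 4.2, Y20b]}; the measurability of $\omega\mapsto Q_\omega(B)$ with respect to $\mathcal{B}_\tau$ is exactly what makes the conditional expectations well defined. The identity for $\langle\langle M^i_{t,0}\rangle\rangle$ follows by applying the same reasoning to $(M^i_{t,0})^2 - \int_0^t \lVert F(\xi(r))^\ast\mathfrak{g}_i\rVert_U^2\,dr$, or equivalently by noting that the quadratic-variation process is additive across $\tau$ and matches the prescribed integral on each piece.

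Finally (M3): the moment bound \eqref{[Equation (15), Y20b]} is inherited because on $[0,\tau]$ it holds under $P$ (which is a martingale solution on $[0,\tau]$ with the stated constants $C_{t,q}$), while on $[\tau,\infty)$ one integrates the corresponding bound for $R_{\tau(\omega),\xi(\tau(\omega),\omega)}$ against $P(d\omega)$, using $\mathbb{E}^P[\lVert\xi(\tau)\rVert_{L_x^2}^{2q}]\le C_{\tau,q}(1+\lVert\xi^{\text{in}}\rVert_{L_x^2}^{2q})$ from the $[0,\tau]$ bound and the concavity/submultiplicativity of the constants to reassemble a single bound of the form $C_{t,q}(1+\lVert\xi^{\text{in}}\rVert_{L_x^2}^{2q})$ over all of $[0,t]$. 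Since the statement of the present lemma is verbatim \cite[Proposition 3.4]{HZZ19} and none of these manipulations see the spatial dimension $n$ or the precise form of the diffusive operator $(-\Delta)^m$ beyond its appearing linearly inside the drift in \eqref{[Equation (14), Y20b]}, the proof of \cite{HZZ19} transfers without change; I would simply refer to it. The main obstacle, such as it is, is purely bookkeeping: carefully tracking the $\mathcal{B}_\tau$-conditioning so that the concatenation $\delta_\omega\otimes_{\tau(\omega)}R_{\tau(\omega),\xi(\tau(\omega),\omega)}$ is genuinely $(\mathcal{B}_t)$-adapted and the martingale property survives the gluing at the random time $\tau$ — which is precisely where hypothesis \eqref{[Equation (22), Y20b]} ($Q_\omega$ does not change the value of $\tau$) is indispensable.
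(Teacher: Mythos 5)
Your proposal is correct and takes essentially the same approach as the paper, which offers no proof of its own but simply cites \cite[Proposition 3.4]{HZZ19} and observes that the argument there is insensitive to the spatial dimension and to the specific form of the diffusive operator. Your sketch of the Stroock--Varadhan gluing argument (restriction identity, tower-property at $\tau$, additivity of quadratic variation, and the role of \eqref{[Equation (22), Y20b]} in preserving the stopping time) is an accurate reconstruction of what the cited reference contains, and you arrive at the same deferral to \cite{HZZ19} that the paper makes.
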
 
Now we split \eqref{[Equation (2), Y20b]} to 
\begin{subequations}
\begin{align}
& dz + (-\Delta)^{m} z dt + \nabla \pi^{1} dt = dB, \hspace{2mm} \nabla\cdot z = 0 \text{ for } t > 0, \hspace{2mm} z(0,x) \equiv 0, \label{[Equation (24), Y20b]}\\
& \partial_{t} v + (-\Delta)^{m} v + \text{div} ((v+z) \otimes (v+z)) + \nabla \pi^{2} = 0, \nabla\cdot v =0 \text{ for } t > 0, v(0,x) = u^{\text{in}}(x) \label{[Equation (25), Y20b]}
\end{align} 
\end{subequations} 
so that $u = v+ z$ solves \eqref{[Equation (2), Y20b]} with $\pi = \pi^{1} + \pi^{2}$ starting from $u^{\text{in}}$ at $t = 0$. We fix a $GG^{\ast}$-Wiener process $B$ on $(\Omega, \mathcal{F}, \textbf{P})$ with $(\mathcal{F}_{t})_{t\geq 0}$ as the canonical filtration of $B$ augmented by all the $\textbf{P}$-negligible sets and apply Definitions \ref{[Definition 4.1, Y20b]}-\ref{[Definition 4.2, Y20b]}, Proposition \ref{[Proposition 4.1, Y20b]}, and Lemmas \ref{[Lemma 4.2, Y20b]}-\ref{[Lemma 4.3, Y20b]} with $F \equiv 1$ and such $B$. 
 
\begin{proposition}\label{[Proposition 4.4, Y20b]}
Suppose that $m \in (0,1)$ and that Tr$((-\Delta)^{2- m + 2 \sigma} GG^{\ast}) < \infty$ for some $\sigma > 0$. Then for all $\delta \in (0,\frac{1}{2})$ and $T> 0$, 
\begin{equation}\label{[Equation (26), Y20b]}
\mathbb{E}^{\textbf{P}} [ \lVert z \rVert_{C_{T} H_{x}^{\frac{4+ \sigma}{2}}} + \lVert z \rVert_{C_{T}^{\frac{1}{2} - \delta} H_{x}^{\frac{2+ \sigma}{2}}}] < \infty. 
\end{equation} 
\end{proposition}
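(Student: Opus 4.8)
The plan is to obtain \eqref{[Equation (26), Y20b]} by treating $z$ as the solution of the linear stochastic equation \eqref{[Equation (24), Y20b]}, which after applying the Leray projection $\mathbb{P}$ (so that the pressure $\pi^{1}$ is eliminated and $z$ is divergence-free for all $t$, since $z(0)\equiv 0$ and $B$ is $L_{\sigma}^{2}$-valued) reads $dz + (-\Delta)^{m} z\, dt = dB$, i.e. $z$ is the stochastic convolution $z(t) = \int_{0}^{t} e^{-(t-s)(-\Delta)^{m}}\, dB(s)$. Here $e^{-t(-\Delta)^{m}}$ is the analytic semigroup generated by the fractional Laplacian, which on $\mathbb{T}^{2}$ acts diagonally in the Fourier basis with symbol $e^{-t|\xi|^{2m}}$. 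The whole point of the trace hypothesis $\operatorname{Tr}((-\Delta)^{2-m+2\sigma} GG^{\ast}) < \infty$ is that it controls $z$ in the Sobolev space $H_{x}^{\frac{4+\sigma}{2}}$: schematically, one has a regularity gain of $\frac{2-m+2\sigma}{2}\cdot 2 = 2-m+2\sigma$ derivatives worth of trace-summability, and combining with the smoothing of the semigroup (which buys essentially $m$ spatial derivatives relative to the noise because $m>0$) yields $z\in H_{x}^{\frac{4+\sigma}{2}}$ with moments of every order.

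Concretely, the key steps in order are as follows. First, apply $\mathbb{P}$ to \eqref{[Equation (24), Y20b]} and record the mild/stochastic-convolution representation of $z$. Second, use the factorization method (Da Prato--Zabczyk, or the Da Prato--Kwapie\'n--Zabczyk trick) or a direct Fourier-series computation together with the Burkholder--Davis--Gundy / It\^o isometry and Gaussianity (hypercontractivity) to bound $\mathbb{E}^{\textbf{P}}\lVert z\rVert_{C_{T}H_{x}^{\frac{4+\sigma}{2}}}^{p}$ for arbitrary $p\in[2,\infty)$; the It\^o isometry gives, for fixed $t$,
\begin{equation*}
\mathbb{E}^{\textbf{P}}\lVert z(t)\rVert_{H_{x}^{\frac{4+\sigma}{2}}}^{2} = \int_{0}^{t} \lVert (-\Delta)^{\frac{4+\sigma}{4}} e^{-s(-\Delta)^{m}} G\rVert_{L_{2}(U,L_{x}^{2})}^{2}\, ds \lesssim \int_{0}^{t} s^{-\frac{4+\sigma-(4-2m+4\sigma)}{2m}}\, ds\cdot \operatorname{Tr}((-\Delta)^{2-m+2\sigma}GG^{\ast}),
\end{equation*}
where one writes $(-\Delta)^{\frac{4+\sigma}{4}} e^{-s(-\Delta)^{m}} = \big[(-\Delta)^{\frac{4+\sigma}{4}-\frac{2-m+2\sigma}{2}}e^{-s(-\Delta)^{m}}\big]\,(-\Delta)^{\frac{2-m+2\sigma}{2}}$ and bounds the bracket in operator norm by $\sup_{\xi}|\xi|^{2(\frac{m}{2}-\frac{3\sigma}{2})}e^{-s|\xi|^{2m}} \lesssim s^{-(\frac{1}{2}-\frac{3\sigma}{2m})}$; choosing $\sigma$ small keeps the exponent strictly less than one so the time integral converges on $[0,T]$. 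Third, for the H\"older-in-time seminorm $\lVert z\rVert_{C_{T}^{\frac12-\delta}H_{x}^{\frac{2+\sigma}{2}}}$, estimate $\mathbb{E}^{\textbf{P}}\lVert z(t)-z(t')\rVert_{H_{x}^{\frac{2+\sigma}{2}}}^{p} \lesssim |t-t'|^{(\frac12-\delta)p}$ by the standard splitting of $z(t)-z(t')$ into the increment of the stochastic integral on $[t',t]$ plus the action of $(e^{-(t-t')(-\Delta)^{m}}-\mathrm{Id})$ on $z(t')$, using that the extra two derivatives of room (from $\frac{2+\sigma}{2}$ up to $\frac{4+\sigma}{2}$, i.e. $(-\Delta)$ worth of smoothing) pay for the $|t-t'|^{1/2}$ from $(\mathrm{Id}-e^{-r(-\Delta)^{m}})(-\Delta)^{-1}$ and similar bookkeeping; then conclude with the Kolmogorov continuity theorem applied on the Banach space $H_{x}^{\frac{2+\sigma}{2}}$ (taking $p$ large enough that $\frac12-\delta-\frac1p>0$). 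Finally, combine the two moment bounds.

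The main obstacle is purely the fractional-power bookkeeping: one must verify that the exponent $\frac{4+\sigma}{2}$ in the target space is exactly what the hypothesis $\operatorname{Tr}((-\Delta)^{2-m+2\sigma}GG^{\ast})<\infty$ permits \emph{for every} $m\in(0,1)$, i.e. that the time-integrability exponents stay below $1$ uniformly as $m\downarrow 0$ after shrinking $\sigma$, and that the H\"older exponent $\frac12-\delta$ can be reached for the intermediate space $H_{x}^{\frac{2+\sigma}{2}}$. This is delicate precisely because the diffusion $(-\Delta)^{m}$ is weak (small $m$), so the semigroup smoothing is feeble and almost all of the regularity must come from the trace condition on $GG^{\ast}$ rather than from parabolic smoothing; the computation above shows it works since the trace condition alone already places the noise essentially in $H_{x}^{2-m+2\sigma}$-valued increments, which is above $\frac{4+\sigma}{2}$ once $m<1$. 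Everything else — the It\^o isometry, BDG, Gaussian hypercontractivity to upgrade second moments to $p$-th moments, and Kolmogorov's criterion — is standard and dimension-independent, so the proof is essentially that of \cite[Proposition 4.4]{Y20a} with the exponent $\frac{5}{2}-m$ there replaced by $2-m$ here because $n=2$.
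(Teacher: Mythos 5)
Your proposal follows essentially the same route as the paper's: the stochastic-convolution representation of $z$, the Da Prato--Kwapie\'n--Zabczyk factorization method (equivalently, It\^o isometry plus Gaussian hypercontractivity) to control $\lVert z\rVert_{C_T H_x^{\frac{4+\sigma}{2}}}$, and Kolmogorov's continuity criterion for the H\"older-in-time estimate, exactly as in \cite[Proposition 34]{D13} and \cite[Proposition 4.4]{Y20a}. One caution on the closing heuristic: the claim that the trace condition alone already places the noise in $H_x^{2-m+2\sigma}$, a space above $H_x^{\frac{4+\sigma}{2}}$ once $m<1$, is not correct as stated, since $(2-m+2\sigma)-\frac{4+\sigma}{2}=\frac{3\sigma}{2}-m$ is negative for small $\sigma$; what actually closes the gap is the semigroup factor $(-\Delta)^{\frac{m}{2}-\frac{3\sigma}{4}}e^{-s(-\Delta)^m}$ in your main display, whose operator norm contributes $s^{-(\frac12-\frac{3\sigma}{4m})}$ (note $4m$, not $2m$, in the denominator; your inline exponents are off by a factor of two although the final displayed time-integrability exponent $1-\frac{3\sigma}{2m}$ is right) and stays integrable after squaring precisely because $\sigma>0$.
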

\begin{proof}[Proof of Proposition \ref{[Proposition 4.4, Y20b]}]
Similarly to \cite[Proposition 3.6]{HZZ19} and \cite[Proposition 4.4]{Y20a}, this follows from a straight-forward modification of the proof of \cite[Proposition 34]{D13} and an application of Kolmogorov's test \cite[Theorem 3.3]{DZ14}. Because our diffusion is significantly weaker than the cases in \cite{HZZ19, Y20a}, we require a stronger hypothesis on $G$. In short, one can define 
\begin{equation}
Y(s) \triangleq \frac{\sin(\pi \alpha)}{\pi} \int_{0}^{s} e^{- (-\Delta)^{m} (s-r)} (s-r)^{-\alpha} \mathbb{P} dB(r) \hspace{1mm} \text{ where } \hspace{1mm}  \alpha \in (0, \frac{3\sigma}{4m}), 
\end{equation} 
show that $\mathbb{E}^{\textbf{P}}[ \lVert (-\Delta)^{\frac{4+ \sigma}{4}} Y \rVert_{L_{T}^{2k} L_{x}^{2}}^{2k}] \lesssim_{k} 1$ for all $k \in \mathbb{N}$ using Tr$((-\Delta)^{2- m + 2 \sigma} GG^{\ast}) < \infty$ from hypothesis, use the identities of 
\begin{equation*}
z(t) = \int_{0}^{t} e^{- (t-r) (-\Delta)^{m}} \mathbb{P} dB(r) \hspace{2mm} \text{ and } \hspace{2mm} \int_{r}^{t} (t-s)^{\alpha -1} (s-r)^{-\alpha} ds = \frac{\pi}{\sin(\alpha \pi)} \text{ for any } \alpha \in (0,1) 
\end{equation*} 
respectively from \eqref{[Equation (24), Y20b]} and \cite[pg. 131]{DZ14} to write 
\begin{equation*}
\int_{0}^{t} (t-s)^{\alpha -1} e^{- (-\Delta)^{m} (t-s)} Y(s) ds = z(t), 
\end{equation*} 
and conclude the first bound. This immediately gives for any $\beta < \frac{1}{2}$  
\begin{equation}
\mathbb{E}^{\textbf{P}} [ \sup_{t, t+h \in [0,T]} \lVert (-\Delta)^{\frac{2+ \sigma}{4}} (z(t+h) - z(t)) \rVert_{L_{x}^{2}}^{2k}] \lesssim_{\sigma, m, \beta, k, T} \lvert h \rvert^{2\beta k}
\end{equation} 
(we refer to \cite[Equation (55)]{D13} and \cite[Proposition A.1.1]{DZ96}) so that applying Kolmogorov's test deduces the second bound. We refer to \cite[Proposition 3.4]{D13} for complete details.
\end{proof}
Next, for every $\omega \in \Omega_{0}$ we define 
\begin{subequations}
\begin{align}
& M_{t,0}^{\omega} \triangleq \omega(t) - \omega(0) + \int_{0}^{t} \mathbb{P} \text{div} (\omega(r) \otimes \omega(r)) + (-\Delta)^{m} \omega(r) dr, \label{[Equation (28), Y20b]}\\
& Z^{\omega} (t) \triangleq M_{t,0}^{\omega} - \int_{0}^{t} \mathbb{P} (-\Delta)^{m} e^{- (t-r) (-\Delta)^{m} } M_{r,0}^{\omega} dr. \label{[Equation (29), Y20b]} 
\end{align} 
\end{subequations}
If $P$ is a martingale solution to \eqref{[Equation (2), Y20b]}, then $M$ is a $GG^{\ast}$-Wiener process under $P$ and it follows from \eqref{[Equation (28), Y20b]}-\eqref{[Equation (29), Y20b]} that we can write 
\begin{equation}\label{[Equation (30), Y20b]}
Z(t) = \int_{0}^{t} \mathbb{P} e^{- (t-r) (-\Delta)^{m}} dM_{r,0}. 
\end{equation} 
We can deduce from Proposition \ref{[Proposition 4.4, Y20b]} that $P$-a.s. $Z \in C_{T} H_{x}^{\frac{4+ \sigma}{2}} \cap C_{T}^{\frac{1}{2} - \delta} H_{x}^{\frac{2+ \sigma}{2}}$. For $n \in \mathbb{N}$ and $\delta \in (0, \frac{1}{12})$ we define 
\begin{subequations}
\begin{align}
\tau_{L}^{n} (\omega) \triangleq& \inf\{ t \geq 0: \hspace{0.5mm} C_{S}\lVert Z^{\omega} (t) \rVert_{H_{x}^{\frac{4+ \sigma}{2}}} > (L - \frac{1}{n})^{\frac{1}{4}} \} \nonumber\\
& \hspace{10mm} \wedge \inf \{t \geq 0: \hspace{0.5mm} C_{S}\lVert Z^{\omega} \rVert_{C_{t}^{\frac{1}{2} - 2\delta}H_{x}^{\frac{2+ \sigma}{2}}} >  (L - \frac{1}{n})^{\frac{1}{2}} \} \wedge L, \label{[Equation (31), Y20b]}\\
\tau_{L} \triangleq& \lim_{n\to\infty} \tau_{L}^{n}, \label{[Equation (32), Y20b]}
\end{align} 
\end{subequations}
where $C_{S} > 0$ is the Sobolev constant such that $\lVert f \rVert_{L_{x}^{\infty}} \leq C_{S} \lVert f \rVert_{H_{x}^{\frac{2+ \sigma}{2}}}$ for all $f \in H^{\frac{2+ \sigma}{2}}(\mathbb{T}^{2})$, so that $(\tau_{L}^{n})_{n\in\mathbb{N}}$ is non-decreasing in $n$. By \cite[Lemma 3.5]{HZZ19} it follows that $\tau_{L}^{n}$ is a stopping time of $(\mathcal{B}_{t})_{t\geq 0}$ for all $n \in \mathbb{N}$ and hence so is $\tau_{L}$. 

Next, we shall assume Theorem \ref{[Theorem 2.1, Y20b]} on a probability space $(\Omega, \mathcal{F}, (\mathcal{F}_{t})_{t \geq 0}, \textbf{P})$ and denote by $P$ the law of the solution $u$ constructed from Theorem \ref{[Theorem 2.1, Y20b]}. 

\begin{proposition}\label{[Proposition 4.5, Y20b]}
Let $\tau_{L}$ be defined by \eqref{[Equation (32), Y20b]}. Then $P$, the law of $u$, is a martingale solution of \eqref{[Equation (2), Y20b]} on $[0, \tau_{L}]$ according to Definition \ref{[Definition 4.2, Y20b]}. 
\end{proposition}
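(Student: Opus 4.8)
The plan is to verify directly the three conditions (M1)--(M3) of Definition \ref{[Definition 4.2, Y20b]} for the law $P$ of the solution $u$ obtained from Theorem \ref{[Theorem 2.1, Y20b]}, with $s = 0$, $\xi^{\text{in}} = u^{\text{in}}$, $F \equiv 1$, and $\tau = \tau_L$. The key observation that makes this possible is that the stopping time $\tau_L$ is, by its very definition in \eqref{[Equation (31), Y20b]}--\eqref{[Equation (32), Y20b]}, tailored to control exactly the quantity $Z$ that governs the regularity of $u = v + z$ on $[0,\tau_L]$. First I would fix the probability space $(\Omega,\mathcal F,(\mathcal F_t)_{t\ge0},\textbf P)$ carrying the $GG^\ast$-Wiener process $B$ and the solution $u$ from Theorem \ref{[Theorem 2.1, Y20b]}. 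Since $u$ solves \eqref{[Equation (2), Y20b]} starting from the deterministic $u^{\text{in}}$, the trajectory-space object $M_{t,0}$ of \eqref{[Equation (28), Y20b]} coincides $\textbf P$-a.s. with the stochastic integral $\int_0^t F(u)\,dB = B(t)$, hence $M$ is a $GG^\ast$-Wiener process under $P$, and $Z$ of \eqref{[Equation (30), Y20b]} coincides with $z$ solving \eqref{[Equation (24), Y20b]}; this is the bridge between the pathwise construction and the martingale formulation.

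Next I would check the three conditions. (M1): $P(\{\xi(0)=u^{\text{in}}\})=1$ is immediate from the deterministic initial datum, and since $F\equiv1$ the integrability \eqref{[Equation (17), 20b]} is trivial. (M2): because $M_{t,0}$ agrees with $B(t)$ and $B$ is a $GG^\ast$-Wiener process, for each $\mathfrak g_i\in C^\infty(\mathbb T^2)\cap L^2_\sigma$ the process $M^i_{t\wedge\tau_L,0}$ in \eqref{[Equation (18), Y20b]} equals $\langle B(t\wedge\tau_L),\mathfrak g_i\rangle$, which is a continuous square-integrable $(\mathcal B_t)$-martingale with quadratic variation $\int_0^{t\wedge\tau_L}\|\mathfrak g_i\|_U^2\,dr = \int_0^{t\wedge\tau_L}\|F(\xi(r))^\ast\mathfrak g_i\|_U^2\,dr$; the optional stopping theorem applied at the bounded stopping time $t\wedge\tau_L$ preserves the martingale property, and $\tau_L^n$ being a stopping time of $(\mathcal B_t)_{t\ge0}$ (via \cite[Lemma 3.5]{HZZ19}) together with \eqref{[Equation (32), Y20b]} makes $\tau_L$ one as well. (M3): here I would use the splitting $u = v + z$, the bound \eqref{[Equation (4), Y20b]} of Theorem \ref{[Theorem 2.1, Y20b]} giving a pathwise uniform $H^\varepsilon_x$-bound on $u$ up to its own stopping time, and crucially the definition of $\tau_L$ which forces $C_S\|Z^\omega\|_{C_t^{1/2-2\delta}H_x^{(2+\sigma)/2}}$ and $C_S\|Z^\omega(t)\|_{H_x^{(4+\sigma)/2}}$ to stay below $L^{1/2}$ and $L^{1/4}$ respectively on $[0,\tau_L]$; combined with the deterministic energy estimate for $v$ from \eqref{[Equation (25), Y20b]} this yields the required bound $\mathbb E^P[\sup_{r\le t\wedge\tau_L}\|\xi(r)\|_{L^2_x}^{2q} + \int_0^{t\wedge\tau_L}\|\xi(r)\|_{H^\varepsilon_x}^2\,dr]\le C_{t,q}(1+\|\xi^{\text{in}}\|_{L^2_x}^{2q})$, with constants depending on $L$, $T$, and $q$ but not on $\omega$.

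The main obstacle I anticipate is condition (M3), specifically obtaining the moment bound on $\sup_r\|\xi(r)\|^{2q}_{L^2_x}$ and on $\int\|\xi(r)\|^2_{H^\varepsilon_x}$ in a way that is genuinely uniform in $\omega$ and compatible with the class $\mathcal C(s,\xi^{\text{in}},\{C_{t,q}\})$. The pathwise $L^\infty$-in-time $H^\varepsilon_x$ estimate \eqref{[Equation (4), Y20b]} from Theorem \ref{[Theorem 2.1, Y20b]} holds only up to the stopping time $\mathfrak t$ produced there, so one must verify that $\tau_L$ (or the relevant comparison of $\tau_L$ with $\mathfrak t$ and $T$) keeps us inside the region where that bound is valid, and then convert the essential-supremum-in-$\omega$ statement into the expectation bound \eqref{[Equation (19), Y20b]} — an easy step once the pathwise bound is in hand but one that requires care about which $q$ and which time horizon. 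A secondary technical point is checking that $P$ is supported on $\Omega_{0,\tau_L}$ in the sense of \eqref{[Equation (16), Y20b]}, i.e. that trajectories are genuinely stopped; this follows from the construction but should be stated. I would handle the diffusive term $(-\Delta)^m u$ in \eqref{[Equation (18), Y20b]} by noting that for $u\in L^\infty_tH^\varepsilon_x$ with $\varepsilon$ small and $m\in(0,1)$ the pairing $\langle(-\Delta)^m\xi(r),\mathfrak g_i\rangle = \langle\xi(r),(-\Delta)^m\mathfrak g_i\rangle$ is well-defined and integrable in $r$, so no additional regularity beyond what Theorem \ref{[Theorem 2.1, Y20b]} provides is needed.
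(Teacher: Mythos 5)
Your proposal is correct and follows essentially the same route as the paper: the paper's proof simply introduces the auxiliary stopping time $T_L$ from \eqref{[Equation (33), Y20b]}, observes that $\mathfrak{t}=T_L$ and (via the identification $M_{\cdot,0}=B$ and hence $Z^{\omega}=z$ on trajectories of $u$) that $\tau_L(u)=T_L$ $\textbf{P}$-a.s., and then refers to \cite[Proposition 3.7]{HZZ19} for the verification of (M1)--(M3) that you carry out. The one imprecision is your appeal to ``the deterministic energy estimate for $v$ from \eqref{[Equation (25), Y20b]}'' in the (M3) step: the convex-integration solution $v$ does not satisfy an energy inequality, but this is not actually needed---the pathwise bound \eqref{[Equation (4), Y20b]} (equivalently \eqref{[Equation (47), Y20b]} together with \eqref{[Equation (38), Y20b]}) already gives a deterministic $\omega$-uniform bound on $\lVert u\rVert_{H^{\varepsilon}_x}$ (and hence $L^2_x$) on $[0,\tau_L]$, and combined with $\tau_L\le L$ this is all (M3) requires.
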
 

\begin{proof}[Proof of Proposition \ref{[Proposition 4.5, Y20b]}]
For $C_{S} > 0$ from \eqref{[Equation (31), Y20b]}, $L > 1$, and $\delta \in (0, \frac{1}{12})$, we define 
\begin{equation}\label{[Equation (33), Y20b]}
T_{L} \triangleq \inf \{t \geq 0: \hspace{0.5mm} C_{S} \lVert z(t) \rVert_{H_{x}^{\frac{4+ \sigma}{2}}} \geq L^{\frac{1}{4}} \} \wedge \inf\{t \geq 0: \hspace{0.5mm} C_{S} \lVert z \rVert_{C_{t}^{\frac{1}{2} - 2 \delta} H_{x}^{\frac{2+ \sigma}{2}}} \geq L^{\frac{1}{2}} \} \wedge L. 
\end{equation} 
Due to Proposition \ref{[Proposition 4.4, Y20b]} we see that $\textbf{P}$-a.s. $T_{L} > 0$ and $T_{L} \nearrow + \infty$ as $L \nearrow + \infty$. The stopping time $\mathfrak{t}$ in the statement of Theorem \ref{[Theorem 2.1, Y20b]} is actually $T_{L}$ for $L > 0$ sufficiently large. The rest of the proof of Proposition \ref{[Proposition 4.5, Y20b]} follows that of \cite[Proposition 3.7]{HZZ19} (see also \cite[Proposition 4.5]{Y20a}). 
\end{proof}

\begin{proposition}\label{[Proposition 4.6, Y20b]}
Let $\tau_{L}$ be defined by \eqref{[Equation (32), Y20b]} and $P$ denote the law of $u$ constructed from Theorem \ref{[Theorem 2.1, Y20b]}. Then the probability measure $P \otimes_{\tau_{L}} R$ in \eqref{[Equation (23), Y20b]} is a martingale solution to \eqref{[Equation (2), Y20b]} on $[0,\infty)$ according to Definition \ref{[Definition 4.1, Y20b]}. 
\end{proposition}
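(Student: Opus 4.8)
The plan is to verify that $P\otimes_{\tau_L}R$, as defined in \eqref{[Equation (23), Y20b]}, satisfies conditions (M1)--(M3) of Definition \ref{[Definition 4.1, Y20b]} with initial condition $u^{\text{in}}$ at initial time $0$. The natural strategy is to invoke Lemma \ref{[Lemma 4.3, Y20b]}: by Proposition \ref{[Proposition 4.5, Y20b]}, $P$ is a martingale solution to \eqref{[Equation (2), Y20b]} on $[0,\tau_L]$ in the sense of Definition \ref{[Definition 4.2, Y20b]}, and Lemma \ref{[Lemma 4.2, Y20b]} provides, for each $\omega$, the measure $Q_\omega=\delta_\omega\otimes_{\tau_L(\omega)}R_{\tau_L(\omega),\xi(\tau_L(\omega),\omega)}$ obtained by concatenating the stopped path with a fresh martingale solution (which exists by Proposition \ref{[Proposition 4.1, Y20b]}) started at time $\tau_L(\omega)$ from the value $\xi(\tau_L(\omega),\omega)$. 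So the whole proof reduces to checking the single hypothesis of Lemma \ref{[Lemma 4.3, Y20b]} that is not already in hand, namely the existence of a $P$-null Borel set $\mathcal{N}\subset\Omega_{0,\tau_L}$ outside of which $Q_\omega(\{\omega':\tau_L(\omega')=\tau_L(\omega)\})=1$.

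First I would observe that $\tau_L$ is built in \eqref{[Equation (31), Y20b]}--\eqref{[Equation (32), Y20b]} purely out of $Z^\omega$, which by \eqref{[Equation (28), Y20b]}--\eqref{[Equation (30), Y20b]} depends on $\omega$ only through the stochastic-convolution-type process associated with the noise, so on the stopped path the value $\tau_L(\omega)$ is determined by $\omega|_{[0,\tau_L(\omega)]}$. Under $Q_\omega$ the path coincides with $\omega$ up to time $\tau_L(\omega)$ by \eqref{[Equation (20a), Y20b]}, and the concatenated solution $R_{\tau_L(\omega),\xi(\tau_L(\omega),\omega)}$ continues to solve \eqref{[Equation (2), Y20b]} past $\tau_L(\omega)$; one then needs that $Z$ does not instantaneously jump back below the threshold defining $\tau_L$, i.e. that with $Q_\omega$-probability one the stopping condition is genuinely first met at $\tau_L(\omega)$. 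This is where the replacement of $\tau_L^n$ with strict/non-strict inequalities and the passage to the limit $\tau_L=\lim_n\tau_L^n$ matters: the $n$-truncation is designed exactly so that the set where the concatenation fails this property is $P$-null. I would spell this out following the argument of \cite[Proposition 3.8]{HZZ19} (and \cite[Proposition 4.6]{Y20a}), which establishes precisely this kind of compatibility; since $\tau_L$ here has the same structural form (it is a hitting time of a continuous, adapted functional of the noise, truncated at the deterministic time $L$), that argument transfers verbatim, the only change being the Sobolev exponents $\tfrac{4+\sigma}{2}$ and $\tfrac{2+\sigma}{2}$ dictated by Proposition \ref{[Proposition 4.4, Y20b]} in place of the ones used there.

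Having produced $\mathcal{N}$, I would apply Lemma \ref{[Lemma 4.3, Y20b]} directly: it yields that $P\otimes_{\tau_L}R\in\mathcal{P}(\Omega_0)$ restricts to $P$ on $\Omega_{0,\tau_L}$ and is a martingale solution to \eqref{[Equation (2), Y20b]} on $[0,\infty)$ with initial condition $u^{\text{in}}$ at initial time $0$ according to Definition \ref{[Definition 4.1, Y20b]}. I expect the main obstacle to be the measurability and null-set bookkeeping in the verification of \eqref{[Equation (22), Y20b]}: one must check that $\omega\mapsto\tau_L(\omega)$ restricted to the support of $Q_\omega$ is constant off a $\mathcal{B}_{\tau_L}$-measurable $P$-null set, which requires carefully using that $Q_\omega$ is $\mathcal{B}_{\tau_L}$-measurable in $\omega$ (the last assertion of Lemma \ref{[Lemma 4.2, Y20b]}) together with the fact that $\tau_L^n<\tau_L$ on $\{\tau_L<L\}$, so that the threshold in \eqref{[Equation (31), Y20b]} is strictly exceeded immediately after $\tau_L$; the remaining verifications of (M1)--(M3) are inherited from $P$ and from the defining properties of $R_{\tau_L(\omega),\xi(\tau_L(\omega),\omega)}$ and are routine.
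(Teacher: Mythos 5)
Your proposal is correct and takes essentially the same route as the paper: recognize that $\tau_L$ is a bounded stopping time, invoke Proposition \ref{[Proposition 4.5, Y20b]} to see that $P$ is a martingale solution on $[0,\tau_L]$, and then reduce the claim to verifying \eqref{[Equation (22), Y20b]} so that Lemma \ref{[Lemma 4.3, Y20b]} applies, with the verification of \eqref{[Equation (22), Y20b]} carried over from \cite[Proposition 3.8]{HZZ19} and \cite[Proposition 4.6]{Y20a}. The additional detail you supply about how the $n$-truncated thresholds $(L-\tfrac{1}{n})^{1/4}$ and the fact that $\tau_L$ is a hitting time of a continuous adapted functional of the path make the exceptional set $P$-null is exactly the substance of the cited argument.
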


\begin{proof}[Proof of Proposition \ref{[Proposition 4.6, Y20b]}]
Because $\tau_{L}$ is a stopping time of $(\mathcal{B}_{t})_{t \geq 0}$ that is bounded by $L$ due to \eqref{[Equation (31), Y20b]}, and $P$ is a martingale solution to \eqref{[Equation (2), Y20b]} on $[0, \tau_{L}]$ due to Proposition \ref{[Proposition 4.5, Y20b]}, Lemma \ref{[Lemma 4.3, Y20b]} gives us the desired result once we verify \eqref{[Equation (22), Y20b]}. The rest of the proof follows that of \cite[Proposition 3.8]{HZZ19} (see also \cite[Proposition 4.6]{Y20a}). 
\end{proof}

Taking Theorem \ref{[Theorem 2.1, Y20b]} for granted we are ready to prove Theorem \ref{[Theorem 2.2, Y20b]}. 
\begin{proof}[Proof of Theorem \ref{[Theorem 2.2, Y20b]} assuming Theorem \ref{[Theorem 2.1, Y20b]}]
This follows from the proof of \cite[Theorem 1.2]{HZZ19} (see also the proof of \cite[Theorem 2.2]{Y20a}).  In short, we can fix $T > 0$ arbitrarily, any $\kappa \in (0,1)$ and $K > 1$ such that $\kappa K^{2} \geq 1$, rely on Theorem \ref{[Theorem 2.1, Y20b]} and Proposition \ref{[Proposition 4.6, Y20b]} to deduce the existence of $L > 1$ and a measure $P \otimes_{\tau_{L}} R$ that is a martingale solution to \eqref{[Equation (2), Y20b]} on $[0,\infty)$ and coincides with $P$, the law of the solution constructed in Theorem \ref{[Theorem 2.1, Y20b]}, over a random interval $[0, \tau_{L} ]$. Therefore, $P \otimes_{\tau_{L}} R$ starts  with a deterministic initial condition $\xi^{\text{in}}$ from the proof of Theorem \ref{[Theorem 2.1, Y20b]}. It follows that 
\begin{align}
P \otimes_{\tau_{L}} R( \{ \tau_{L} \geq T \}) \overset{\eqref{[Equation (23), Y20b]}}{=} \int_{\Omega_{0}} Q_{\omega} (\{ \omega' \in \Omega_{0}: \tau_{L} (\omega') \geq T\}) P (d \omega) = \textbf{P} ( \{ T_{L} \geq T \})  
> \kappa    \label{[Equation (34), Y20b]}
\end{align} 
where the last inequality is due to Theorem \ref{[Theorem 2.1, Y20b]}. Consequently, 
\begin{align}
\mathbb{E}^{P \otimes_{\tau_{L}} R} [ \lVert \xi(T) \rVert_{L_{x}^{2}}^{2}] \overset{\eqref{[Equation (5), Y20b]} \eqref{[Equation (34), Y20b]} }{>} \kappa [ K \lVert \xi^{\text{in}} \rVert_{L_{x}^{2}} + K (T \text{Tr} (GG^{\ast} ))^{\frac{1}{2}}]^{2} \geq  \kappa K^{2}(\lVert \xi^{\text{in}} \rVert_{L_{x}^{2}}^{2} + T \text{Tr} (GG^{\ast})). \label{[Equation (35), Y20b]}
\end{align} 
On the other hand, it is well known that a Galerkin approximation can give us another martingale solution $\Theta$ (e.g., \cite{FR08}) which starts from the same initial condition $\xi^{\text{in}}$ and satisfies 
\begin{equation*}
\mathbb{E}^{\Theta} [ \lVert \xi(T)\rVert_{L_{x}^{2}}^{2}] \leq \lVert \xi^{\text{in}} \rVert_{L_{x}^{2}}^{2} + T\text{Tr}(GG^{\ast}).
\end{equation*} 
Because $\kappa K^{2} \geq 1$, this implies $P \otimes_{\tau_{L}} R\neq \Theta$ and hence a lack uniqueness in law for \eqref{[Equation (2), Y20b]}. 
\end{proof}

\subsection{Proof of Theorem \ref{[Theorem 2.1, Y20b]} assuming Proposition \ref{[Proposition 4.8, Y20b]}}\label{Subsection 4.2}
Considering \eqref{[Equation (25), Y20b]}, for $q \in \mathbb{N}_{0}$ we will construct a solution $(v_{q}, \mathring{R}_{q})$ to 
\begin{equation}\label{[Equation (36), Y20b]}
\partial_{t} v_{q} + (-\Delta)^{m} v_{q} + \text{div} ((v_{q} + z) \otimes (v_{q} + z)) + \nabla \pi_{q} = \text{div} \mathring{R}_{q}, \hspace{3mm} \nabla\cdot v_{q} = 0, \hspace{3mm} t > 0, 
\end{equation} 
where $\mathring{R}_{q}$ is assumed to be a trace-free symmetric matrix. For any $a \in 10\mathbb{N}, b \in \mathbb{N}$, $\beta \in (0,1)$, and $L \geq 1$, to be selected more precisely in Sub-Subsection \ref{Sub-subsection 4.3.1}, we define 
\begin{equation}\label{[Equations (37) and (39), Y20b]}
\lambda_{q} \triangleq a^{b^{q}}, \hspace{3mm} \delta_{q} \triangleq \lambda_{q}^{-2\beta}, \hspace{3mm} M_{0}(t) \triangleq L^{4} e^{4Lt},  
\end{equation} 
from which we see that $\lambda_{q+1} \in 10 \mathbb{N} \subset 5 \mathbb{N}$, as required in \eqref{[Equation (4.14), LQ20]}. The reason why we take $a \in 10 \mathbb{N}$ rather than $a \in 5 \mathbb{N}$ will be e.g. explained after \eqref{estimate 19}. Due to Sobolev embedding in $\mathbb{T}^{2}$ we see from \eqref{[Equation (33), Y20b]} that for any $\delta \in (0, \frac{1}{12})$ and $t \in [0, T_{L}]$ 
\begin{equation}\label{[Equation (38), Y20b]}
\lVert z(t) \rVert_{L_{x}^{\infty}} \leq L^{\frac{1}{4}}, \hspace{3mm} \lVert \nabla z(t) \rVert_{L_{x}^{\infty}} \leq L^{\frac{1}{4}}, \hspace{3mm} \lVert z \rVert_{C_{t}^{\frac{1}{2} - 2 \delta} L_{x}^{\infty}} \leq L^{\frac{1}{2}}.
\end{equation} 
Let us observe that if $a^{\beta b} > 3$ and $b \geq 2$, then $\sum_{1\leq \iota \leq q} \delta_{\iota}^{\frac{1}{2}} < \frac{1}{2}$ for any $q \in \mathbb{N}$. We set the convention that $\sum_{1\leq \iota \leq 0} \triangleq 0$, denote by $c_{R} > 0$ a universal small constant to be described subsequently throughout the proof of Proposition \ref{[Proposition 4.8, Y20b]} (e.g., \eqref{[Equation (72), Y20b]}), and assume the following bounds over $t \in [0, T_{L}]$ inductively: 
\begin{subequations}\label{[Equation (40), Y20b]}
\begin{align}
& \lVert v_{q} \rVert_{C_{t}L_{x}^{2}} \leq M_{0}(t)^{\frac{1}{2}} (1 + \sum_{1 \leq \iota \leq q} \delta_{\iota}^{\frac{1}{2}}) \leq 2 M_{0}(t)^{\frac{1}{2}}, \label{[Equation (40a), Y20b]}\\
& \lVert v_{q} \rVert_{C_{t,x}^{1}} \leq M_{0}(t)^{\frac{1}{2}} \lambda_{q}^{4}, \label{[Equation (40b), Y20b]}\\
& \lVert \mathring{R}_{q} \rVert_{C_{t}L_{x}^{1}} \leq M_{0}(t) c_{R} \delta_{q+1}. \label{[Equation (40c), Y20b]}
\end{align} 
\end{subequations} 
We denote an anti-divergence operator by $\mathcal{R}$ in the following proposition (see Lemma \ref{[Definition 9, Lemma 10, CDS12]}). 

\begin{proposition}\label{[Proposition 4.7, Y20b]}
Let 
\begin{equation}\label{[Equation (41), Y20b]}
v_{0} (t,x) \triangleq \frac{L^{2} e^{2Lt}}{2\pi} 
\begin{pmatrix}
\sin(x^{2}) &
0
\end{pmatrix}^{T}. 
\end{equation}
Then together with 
 \begin{equation}\label{[Equation (42), Y20b]}
 \mathring{R}_{0}(t,x) \triangleq \frac{ 2L^{3} e^{2Lt}}{2\pi} 
 \begin{pmatrix}
 0 & - \cos(x^{2}) \\
 - \cos(x^{2}) & 0 
 \end{pmatrix}  
 + (\mathcal{R} (-\Delta)^{m} v_{0} + v_{0} \mathring{\otimes} z + z \mathring{\otimes} v_{0} + z \mathring{\otimes} z)(t,x), 
\end{equation} 
 it satisfies \eqref{[Equation (36), Y20b]} at level $q = 0$. Moreover, \eqref{[Equation (40), Y20b]} are satisfied at level $q = 0$ provided 
 \begin{equation}\label{[Equation (43), Y20b]}
 (50) 9 \pi^{2} < 50 \pi^{2} a^{2 \beta b} \leq c_{R} L \leq c_{R} (a^{4} \pi - 1)
 \end{equation} 
 where the inequality $9 < a^{2\beta b}$ is assumed only for the justification of the second inequality in \eqref{[Equation (40a), Y20b]}. Furthermore, $v_{0}(0,x)$ and $\mathring{R}_{0}(0,x)$ are both deterministic. 
\end{proposition}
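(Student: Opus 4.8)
The plan is to establish Proposition~\ref{[Proposition 4.7, Y20b]} by direct computation, since $q=0$ is only the base case of the iteration. First I would record three structural facts about the shear (Kolmogorov-type) profile $v_{0}$ of \eqref{[Equation (41), Y20b]}: since its only nonzero component $v_{0}^{1}$ depends on $x^{2}$ alone, one has $\nabla\cdot v_{0}=0$, and since $(v_{0}\otimes v_{0})_{ij}=v_{0}^{1}v_{0}^{1}\delta_{i1}\delta_{j1}$ carries no $x^{1}$-dependence, $\text{div}(v_{0}\otimes v_{0})=0$; moreover $\sin(x^{2})$ is supported on the Fourier modes $\xi=(0,\pm1)$, on which $\lvert\xi\rvert^{2m}=1$, so the fractional Laplacian acts as the identity, $(-\Delta)^{m}v_{0}=v_{0}$, whence $v_{0}=\text{div}\,\mathcal{R}(-\Delta)^{m}v_{0}$ because $v_{0}$ is mean-zero. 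Plugging $v_{0}$ into the left-hand side of \eqref{[Equation (36), Y20b]} at $q=0$, expanding $\text{div}((v_{0}+z)\otimes(v_{0}+z))$, and using \eqref{estimate 1} to split each of $v_{0}\otimes z$, $z\otimes v_{0}$, $z\otimes z$ into its trace-free part plus a gradient, I would choose $\pi_{0}:=-v_{0}\cdot z-\tfrac{1}{2}\lvert z\rvert^{2}$ so that all gradient contributions cancel, which leaves $\partial_{t}v_{0}+(-\Delta)^{m}v_{0}+\text{div}(v_{0}\mathring{\otimes} z+z\mathring{\otimes} v_{0}+z\mathring{\otimes} z)$. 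Since $\partial_{t}v_{0}=\tfrac{2L^{3}e^{2Lt}}{2\pi}(\sin x^{2},0)^{T}$ is exactly the divergence of the explicit matrix in \eqref{[Equation (42), Y20b]} and $(-\Delta)^{m}v_{0}=\text{div}\,\mathcal{R}(-\Delta)^{m}v_{0}$, this equals $\text{div}\,\mathring{R}_{0}$ with $\mathring{R}_{0}$ as in \eqref{[Equation (42), Y20b]}; one then checks that $\mathring{R}_{0}$ is symmetric and trace-free, the explicit matrix and each of $z\mathring{\otimes} z$ and $v_{0}\mathring{\otimes} z+z\mathring{\otimes} v_{0}$ being so, while $\mathcal{R}$ produces symmetric trace-free matrices by construction (Lemma~\ref{[Definition 9, Lemma 10, CDS12]}).

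Next I would verify \eqref{[Equation (40), Y20b]} at $q=0$. From $\fint_{\mathbb{T}^{2}}\sin^{2}(x^{2})\,dx=\tfrac{1}{2}$ one gets $\lVert v_{0}(t)\rVert_{L_{x}^{2}}=\tfrac{1}{\sqrt{2}}L^{2}e^{2Lt}=\tfrac{1}{\sqrt{2}}M_{0}(t)^{\frac{1}{2}}\le M_{0}(t)^{\frac{1}{2}}$, which is the first bound in \eqref{[Equation (40a), Y20b]} with the sum empty; the second bound reduces to $1\le2$, whose general-$q$ analogue $1+\sum_{1\le\iota\le q}\delta_{\iota}^{\frac{1}{2}}\le2$ needs only $a^{\beta b}>3$, i.e.\ the first inequality in \eqref{[Equation (43), Y20b]}. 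For \eqref{[Equation (40b), Y20b]}, differentiating \eqref{[Equation (41), Y20b]} gives $\lVert v_{0}\rVert_{C_{t,x}^{1}}\le\tfrac{L^{2}e^{2Lt}}{2\pi}(2+2L)=\tfrac{(1+L)L^{2}e^{2Lt}}{\pi}$, whereas $M_{0}(t)^{\frac{1}{2}}\lambda_{0}^{4}=L^{2}e^{2Lt}a^{4}$ since $\lambda_{0}=a$; hence the bound follows from $1+L\le\pi a^{4}$, i.e.\ $L\le a^{4}\pi-1$, the last inequality in \eqref{[Equation (43), Y20b]}. For \eqref{[Equation (40c), Y20b]}, I would bound the five terms of $\mathring{R}_{0}$ in $C_{t}L_{x}^{1}$ separately: the explicit matrix by a constant times $L^{3}e^{2Lt}$; $\mathcal{R}(-\Delta)^{m}v_{0}=\mathcal{R}v_{0}$ by a constant times $\lVert v_{0}\rVert_{L_{x}^{2}}\lesssim L^{2}e^{2Lt}$, using boundedness of $\mathcal{R}$ on this mean-zero unit-frequency field; and, invoking $\lVert z(t)\rVert_{L_{x}^{\infty}}\le L^{1/4}$ from \eqref{[Equation (38), Y20b]}, the terms $v_{0}\mathring{\otimes} z$, $z\mathring{\otimes} v_{0}$, $z\mathring{\otimes} z$ by constants times $L^{9/4}e^{2Lt}$, $L^{9/4}e^{2Lt}$, $L^{1/2}$. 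Since $L\ge1$ and $e^{2Lt}\ge1$, this yields $\lVert\mathring{R}_{0}\rVert_{C_{t}L_{x}^{1}}\lesssim L^{3}e^{2Lt}$, and comparing with $M_{0}(t)c_{R}\delta_{1}=c_{R}L^{4}e^{4Lt}a^{-2\beta b}$ the desired bound reduces to $a^{2\beta b}\lesssim c_{R}L$; tracking the implicit constant gives precisely $50\pi^{2}a^{2\beta b}\le c_{R}L$, the middle inequality in \eqref{[Equation (43), Y20b]}.

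Finally, $v_{0}(0,x)=\tfrac{L^{2}}{2\pi}(\sin x^{2},0)^{T}$ is manifestly deterministic, and since $z(0,x)\equiv0$ by \eqref{[Equation (24), Y20b]}, every term of $\mathring{R}_{0}(0,x)$ involving $z$ vanishes, leaving only $\tfrac{2L^{3}}{2\pi}\bigl(\begin{smallmatrix}0&-\cos x^{2}\\-\cos x^{2}&0\end{smallmatrix}\bigr)+\mathcal{R}(-\Delta)^{m}v_{0}(0,x)$, which is deterministic as well.

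I do not expect a genuine analytic obstacle here; the argument is essentially bookkeeping. The two points that require a little care are: (i) recognizing that $(-\Delta)^{m}$ acts trivially on $v_{0}$ because $\sin(x^{2})$ sits at unit frequency — this is precisely what lets one fold $(-\Delta)^{m}v_{0}$ into $\mathring{R}_{0}$ via $\mathcal{R}$ with no loss in $\lambda$, keeping $\lVert\mathring{R}_{0}\rVert_{C_{t}L_{x}^{1}}$ of order $L^{3}e^{2Lt}$ rather than growing with the frequency; and (ii) chasing the numerical constants so that the three requirements collected in \eqref{[Equation (43), Y20b]} are exactly the ones used here and are jointly compatible with the parameter choices made later in Sub-Subsection~\ref{Sub-subsection 4.3.1}.
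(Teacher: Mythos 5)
Your proof is correct and follows essentially the same route as the paper: verify the equation at $q=0$ with $\pi_0 = -(v_0\cdot z + \tfrac12|z|^2)$, check that $\mathring{R}_0$ is symmetric and trace-free via Lemma~\ref{[Definition 9, Lemma 10, CDS12]}, compute $\lVert v_0(t)\rVert_{L^2_x}=M_0(t)^{1/2}/\sqrt2$ and $\lVert v_0\rVert_{C^1_{t,x}}=L^2e^{2Lt}(L+1)/\pi$ to verify \eqref{[Equation (40a), Y20b]}--\eqref{[Equation (40b), Y20b]}, and then use \eqref{[Equation (38), Y20b]} to bound the $z$-terms in $\mathring{R}_0$ so that \eqref{[Equation (40c), Y20b]} reduces to the middle inequality of \eqref{[Equation (43), Y20b]}. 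The one spot where you diverge slightly, and in fact improve, is in handling $\mathcal{R}(-\Delta)^m v_0$: you observe that $\sin(x^2)$ is a $-\Delta$-eigenfunction at eigenvalue $1$, so $(-\Delta)^m v_0=v_0$ exactly, and then simply invoke the boundedness of $\mathcal{R}$; the paper instead uses $\Delta v_0 = -v_0$ together with interpolation in \eqref{estimate 2} to obtain $\lVert\mathcal{R}(-\Delta)^m v_0\rVert_{L^2_x}\le 4\lVert v_0\rVert_{L^2_x}$. Both give the same qualitative bound $\lVert\mathring{R}_0\rVert_{C_tL^1_x}\lesssim L^3e^{2Lt}\le M_0(t)c_R\delta_1$, though your route is cleaner and avoids the extra factor; neither observation changes the conclusion nor the constants finally collected in \eqref{[Equation (43), Y20b]}.
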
 

\begin{proof}[Proof of Proposition \ref{[Proposition 4.7, Y20b]}]
Using the facts that the divergence of a matrix $(A^{ij})_{1\leq i,j\leq 2}$ is a 2-d vector, of which $k$-th component is $\sum_{j=1}^{2} \partial_{j} A^{kj}$ and that $\text{div} (v_{0} \otimes v_{0})  = 0$, one can immediately verify that $v_{0}$ and $\mathring{R}_{0}$ from \eqref{[Equation (41), Y20b]}-\eqref{[Equation (42), Y20b]} satisfy \eqref{[Equation (36), Y20b]} at level $q = 0$ if we choose $\pi_{0} = - (v_{0} \cdot z + \frac{1}{2} \lvert z \rvert^{2})$. We also point out that $v_{0}$ is divergence-free while $\mathring{R}_{0}$ is trace-free and symmetric due to Lemma \ref{[Definition 9, Lemma 10, CDS12]}, as required. Next, we can compute  
\begin{equation}\label{[Equations (44a) and (44b), Y20b]} 
\lVert v_{0} (t) \rVert_{L_{x}^{2}} =  \frac{M_{0}(t)^{\frac{1}{2}}}{\sqrt{2}} \leq M_{0}(t)^{\frac{1}{2}}, \hspace{3mm} \lVert v_{0} \rVert_{C_{t,x}^{1}} 
= \frac{L^{2} e^{2L t} (L+1)}{\pi}  \overset{\eqref{[Equation (43), Y20b]}}{\leq} M_{0}(t)^{\frac{1}{2}} \lambda_{0}^{4}, 
\end{equation} 
and thus \eqref{[Equation (40a), Y20b]}-\eqref{[Equation (40b), Y20b]} at level $q = 0$ hold. Next, we can compute 
\begin{equation}\label{estimate 34}
\lVert \mathring{R}_{0} (t) \rVert_{L_{x}^{1}} \overset{\eqref{[Equation (38), Y20b]} \eqref{[Equations (44a) and (44b), Y20b]}}{\leq} 16L^{3} e^{2L t} + 2 \pi \lVert \mathcal{R} (-\Delta)^{m} v_{0} \rVert_{L_{x}^{2}} + 20 \pi M_{0}(t)^{\frac{1}{2}} L^{\frac{1}{4}} + 5 (2\pi)^{2} L^{\frac{1}{2}}. 
\end{equation} 
Using the facts that $v_{0}$ is mean-zero, divergence-free, and satisfies $\Delta v_{0} = - v_{0}$ we can rely on   \eqref{estimate 2} and interpolation to deduce  
\begin{equation}\label{estimate 35}
\lVert \mathcal{R} (-\Delta)^{m} v_{0} \rVert_{L_{x}^{2}} \leq 2( \lVert v_{0} \rVert_{L_{x}^{2}} + \lVert \Delta v_{0} \rVert_{L_{x}^{2}})
= 4 \lVert v_{0} \rVert_{L_{x}^{2}}. 
\end{equation} 
Therefore, due to the second inequality of \eqref{[Equation (43), Y20b]}, continuing from \eqref{estimate 34} we obtain 
\begin{equation}
\lVert \mathring{R}_{0} (t) \rVert_{L_{x}^{1}} \overset{\eqref{[Equations (37) and (39), Y20b]} \eqref{[Equations (44a) and (44b), Y20b]} \eqref{estimate 35}}{\leq} 16L M_{0}(t)^{\frac{1}{2}} + 8\pi M_{0}(t)^{\frac{1}{2}} + 20 \pi M_{0}(t)^{\frac{1}{2}} L^{\frac{1}{4}} + 5 (2\pi)^{2} L^{\frac{1}{2}}  \overset{\eqref{[Equation (43), Y20b]}}{\leq}   M_{0}(t) c_{R} \delta_{1}. 
\end{equation} 
This verifies \eqref{[Equation (40c), Y20b]} at level $q= 0$. Finally, it is clear that $v_{0}(0,x)$ is deterministic, and consequently $\mathring{R}_{0}(0,x)$ is also deterministic because $z(0,x) \equiv 0$ from \eqref{[Equation (24), Y20b]}. 
 \end{proof} 

\begin{proposition}\label{[Proposition 4.8, Y20b]}
Let $L >  (50) 9 \pi^{2} c_{R}^{-1}$ and suppose that $(v_{q}, \mathring{R}_{q})$ is an $(\mathcal{F}_{t})_{t\geq 0}$-adapted process that solves \eqref{[Equation (36), Y20b]} and satisfies \eqref{[Equation (40), Y20b]}. Then there exists a choice of parameters $a, b,$ and $\beta$ such that \eqref{[Equation (43), Y20b]} is fulfilled and an $(\mathcal{F}_{t})_{t\geq 0}$-adapted process $(v_{q+1}, \mathring{R}_{q+1})$ that satisfies \eqref{[Equation (36), Y20b]}, \eqref{[Equation (40), Y20b]} at level $q+1$, and 
\begin{equation}\label{[Equation (45), Y20b]}
\lVert v_{q+1} (t) - v_{q}(t) \rVert_{L_{x}^{2}} \leq M_{0}(t)^{\frac{1}{2}} \delta_{q+1}^{\frac{1}{2}}. 
\end{equation}
Moreover, if $v_{q}(0,x)$ and $\mathring{R}_{q}(0,x)$ are deterministic, then so are $v_{q+1}(0,x)$ and $\mathring{R}_{q+1}(0,x)$. 
\end{proposition}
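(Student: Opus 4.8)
\textbf{Proof proposal for Proposition \ref{[Proposition 4.8, Y20b]}.}

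The plan is to follow the Nash-type convex integration iteration as adapted to the stochastic 2-d setting, combining the intermittent 2-d stationary flows of Lemmas \ref{[Lemma 4.1, LQ20]}--\ref{[Lemma 4.3, LQ20]} with the scheme of \cite[Chapter 7]{BV19b} and \cite{HZZ19}. First I would fix the parameters: choose $\beta>0$ small, $b\in\mathbb{N}$ large, and $a\in 10\mathbb{N}$ large (in that order), so that \eqref{[Equation (43), Y20b]} holds together with all the algebraic inequalities among $\lambda_q,\delta_q$ needed below; also set the microscopic parameters $r,\mu,\sigma$ (as powers of $\lambda_{q+1}$) so that \eqref{[Equation (4.14), LQ20]} is satisfied. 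Then I would introduce the glued/mollified pair $(v_\ell,\mathring R_\ell)$ obtained by mollifying $(v_q,\mathring R_q)$ and $z$ in space at scale $\ell$ (and possibly in time), recording the standard mollification estimates that transfer \eqref{[Equation (40), Y20b]} to the mollified objects with negligible loss and control $\|v_q-v_\ell\|$ in $C_tL^2_x$ by $\delta_{q+2}$, say.

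Next comes the definition of the perturbation. Using the geometric Lemma \ref{[Lemma 4.1, LQ20]} I would set amplitude functions $a_\zeta(t,x) \triangleq \rho^{1/2}\gamma_\zeta\!\big(-\mathring R_\ell/\rho\big)$ where $\rho$ is a suitable spatially-constant (or slowly varying) function of size $\sim M_0(t)c_R\delta_{q+1}$ chosen so that the argument of $\gamma_\zeta$ stays in $B_{1/2}(0)$; the principal perturbation is $w_{q+1}^{(p)} \triangleq \sum_{\zeta\in\Lambda} a_\zeta\,\mathbb{W}_\zeta$ with $\mathbb{W}_\zeta$ from \eqref{[Equation (4.15), LQ20]}, and by Lemma \ref{[Lemma 4.2, LQ20]} its low-frequency self-interaction cancels $\mathring R_\ell$ up to commutators. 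I would then add the incompressibility corrector $w_{q+1}^{(c)}$ (so that $w_{q+1}^{(p)}+w_{q+1}^{(c)}$ is exactly divergence-free, exploiting $b_\zeta=\nabla^\perp\psi_\zeta$ from \eqref{[Equation (4.3), LQ20]}) and the temporal corrector $w_{q+1}^{(t)}$ designed via \eqref{[Equation (4.12), LQ20]} to absorb the high-frequency part of $\partial_t$-type and oscillation errors, exactly as the temporal corrector in \cite[Chapter 7]{BV19b}; set $v_{q+1}\triangleq v_\ell + w_{q+1}$, $w_{q+1}\triangleq w_{q+1}^{(p)}+w_{q+1}^{(c)}+w_{q+1}^{(t)}$. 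Adaptedness is preserved because all building blocks are deterministic functions of $(t,x)$ and the amplitudes depend on $(v_q,\mathring R_q,z)$, which are $(\mathcal F_t)$-adapted; if $v_q(0,\cdot),\mathring R_q(0,\cdot)$ are deterministic, then since $z(0,\cdot)\equiv 0$ so are $v_{q+1}(0,\cdot)$ and $\mathring R_{q+1}(0,\cdot)$.

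Then I would verify the estimates. The $L^2_x$ bound \eqref{[Equation (45), Y20b]} and hence \eqref{[Equation (40a), Y20b]} at level $q+1$ follow from $\|w_{q+1}^{(p)}\|_{C_tL^2_x}\lesssim \rho^{1/2}\lesssim M_0(t)^{1/2}\delta_{q+1}^{1/2}$ using \eqref{[Equation (4.13), LQ20]} and the $L^2$-normalization of $\eta_\zeta$, with the correctors being lower order in $\lambda_{q+1}$; the $C^1_{t,x}$ bound \eqref{[Equation (40b), Y20b]} follows from Lemma \ref{[Lemma 4.3, LQ20]} with the crude loss $\lambda_{q+1}^4$ (note the exponent $4$ in \eqref{[Equation (40b), Y20b]}, enlarged from the 3-d value precisely to accommodate the weaker diffusion and the 2-d building blocks). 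For \eqref{[Equation (40c), Y20b]} I would define $\mathring R_{q+1}$ through the anti-divergence operator $\mathcal R$ of Lemma \ref{[Definition 9, Lemma 10, CDS12]} applied to the several error terms — the linear error $(-\Delta)^m w_{q+1}$, the oscillation error (using \eqref{[Equation (4.17), LQ20]}--\eqref{[Equation (4.19), LQ20]} and Lemma \ref{[Lemma 4.2, LQ20]} to extract the cancellation with $\mathring R_\ell$, with the remaining high-frequency piece gaining a factor $\lambda_{q+1}^{-1}$ from $\mathcal R$), the transport/Nash error, the corrector errors, and the commutator terms $R_{\mathrm{com}}$ arising from mollification, including the term $R_{\mathrm{com2}}$ built from $z$ — and then estimate each in $C_tL^1_x$ (using an improved Hölder inequality / stationary-phase bound for products of $\eta_\zeta$'s, as in \cite[Lemma 6.2, LQ20]} referenced at \eqref{[Equation (77), Y20b]}) so that the total is $\le M_0(t)c_R\delta_{q+2}$, absorbing universal constants into the largeness of $a$.

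The main obstacle, as flagged in Remark \ref{Remark 1.2}, is the control of the diffusion/commutator errors in $C_tL^1_x$ rather than in $C^1_{t,x}$: because the fractional dissipation exponent $m\in(0,1)$ is weak and because $R_{\mathrm{com2}}$ involves $z$, which lies only in $C_t^{1/2-2\delta}$ in time (see \eqref{[Equation (38), Y20b]}), one cannot mimic the $C^1_{t,x}$ bounds on the Reynolds stress used in \cite{BV19a,LQ20}; instead one must arrange the parameter hierarchy $r\ll\mu\ll\sigma^{-1}\ll\lambda_{q+1}$ and the choice of the power of $\lambda_{q+1}$ defining $\ell$ so that the linear error $\|\mathcal R(-\Delta)^m w_{q+1}\|_{C_tL^1_x}$, which scales like $\lambda_{q+1}^{2m-1}$ times the amplitude size, still beats $\delta_{q+2}$ — this is exactly where $m<1$ (indeed $m$ arbitrarily small) is used and where the required Sobolev exponent in Proposition \ref{[Proposition 4.4, Y20b]}, namely $\mathrm{Tr}((-\Delta)^{2-m+2\sigma}GG^\ast)<\infty$, enters through the a priori bounds \eqref{[Equation (38), Y20b]} on $z$. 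Once the parameter inequalities are checked to be simultaneously satisfiable (which is the bookkeeping heart of the argument), the induction closes.
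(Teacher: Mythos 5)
Your outline follows the same overall scheme as the paper: mollify, build a principal perturbation from the intermittent 2-d stationary flows with amplitudes from the geometric lemma, add incompressibility and temporal correctors, then estimate each piece of the new Reynolds stress after applying $\mathcal R$. But two points would cause trouble if carried out as written.

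First, the amplitude has the wrong sign. You propose $a_\zeta=\rho^{1/2}\gamma_\zeta\bigl(-\mathring R_\ell/\rho\bigr)$; the correct choice (paper's \eqref{[Equation (70), Y20b]}) is $a_\zeta=\rho^{1/2}\gamma_\zeta\bigl(\mathring R_\ell/\rho\bigr)$, with no minus sign. The reason is the 2-d structural identity $b_\zeta\mathring\otimes b_{-\zeta}=-\zeta\mathring\otimes\zeta$ underlying Lemma \ref{[Lemma 4.2, LQ20]}, which yields $\sum_\zeta(\gamma_\zeta(\mathring R))^2\fint\mathbb{W}_\zeta\mathring\otimes\mathbb{W}_{-\zeta}\,dx=-\mathring R$. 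With the paper's sign, the zero-mode of $w_{q+1}^{(p)}\mathring\otimes w_{q+1}^{(p)}$ equals $-\mathring R_\ell$, so $w_{q+1}^{(p)}\mathring\otimes w_{q+1}^{(p)}+\mathring R_\ell$ is mean-zero and high-frequency, as needed for the oscillation estimate. With your minus sign, the zero-mode equals $+\mathring R_\ell$ and the error would be $2\mathring R_\ell$ rather than zero; nothing cancels and \eqref{[Equation (40c), Y20b]} fails at level $q+1$. You have likely transplanted the 3-d convention $\gamma_\xi(\mathrm{Id}-\mathring R_\ell/\rho)$: deleting $\mathrm{Id}$ leaves $-\mathring R_\ell/\rho$, but the extra minus coming from the $\nabla^\perp$ in $b_\zeta$ flips it. This is exactly the subtlety Remarks \ref{Remark 4.3}--\ref{Remark 4.4} isolate, so it is worth being precise about it.

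Second, adaptedness is not automatic from the ingredients being adapted; it hinges on the time mollifier being one-sided. If $\varphi_\ell$ were symmetric, $v_\ell(t)$ and $\mathring R_\ell(t)$ would depend on $v_q,\mathring R_q,z$ at times greater than $t$ and the iterate would not be $(\mathcal F_t)_{t\ge 0}$-adapted. The paper explicitly takes $\varphi_\ell$ compactly supported in $\mathbb R_+$; your hedge ``possibly in time'' leaves this gap open. Beyond these two items, a couple of smaller inaccuracies are worth noting but do not change the structure: the mollification loss should be measured against $\delta_{q+1}^{1/2}$, not $\delta_{q+2}$; and the hypothesis $\mathrm{Tr}((-\Delta)^{2-m+2\sigma}GG^\ast)<\infty$ enters through the regularity of $z$ (hence through $R_{\mathrm{com1}}$, $R_{\mathrm{com2}}$ and the $z$-parts of $R_{\mathrm{lin}}$), not through the interpolation bound for $\mathcal R(-\Delta)^m w_{q+1}$, which only uses $m<1$ via $m^\ast=\max(2m-1,0)$. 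Finally, since $\mathcal R$ is not bounded on $L^1$, the Reynolds stress estimates should be carried out in $L^{p^\ast}_x$ for a carefully chosen $p^\ast\in(1,2)$ and transferred to $L^1_x$ by H\"older at the end, rather than directly in $C_tL^1_x$.
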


Taking Proposition \ref{[Proposition 4.8, Y20b]} granted we can now prove Theorem \ref{[Theorem 2.1, Y20b]}. 
\begin{proof}[Proof of Theorem \ref{[Theorem 2.1, Y20b]} assuming Proposition \ref{[Proposition 4.8, Y20b]}] 
The proof is similar to that of \cite[Theorem 1.1]{HZZ19} (see also the proof of \cite[Theorem 2.1]{Y20a}); we sketch it for completeness. Given $T > 0, K > 1$, and $\kappa \in (0,1)$, starting from $(v_{0}, \mathring{R}_{0})$ in Proposition \ref{[Proposition 4.7, Y20b]}, Proposition \ref{[Proposition 4.8, Y20b]} gives us $(v_{q}, \mathring{R}_{q})$ for $q \geq 1$ that satisfies \eqref{[Equation (40), Y20b]} and \eqref{[Equation (45), Y20b]}. Then, for all $\varepsilon \in (0, \frac{\beta}{4+ \beta})$ and $t \in [0, T_{L}]$, by Gagliardo-Nirenberg's inequality, and the fact that $b^{q+1} \geq b(q+1)$ for all $q \geq 0$ and $b\geq 2$, we can deduce 
\begin{equation}\label{[Equation (46), Y20b]}
\sum_{q \geq 0} \lVert v_{q+1}(t) - v_{q}(t) \rVert_{H_{x}^{\varepsilon}} \lesssim \sum_{q\geq 0} M_{0}(t)^{\frac{1-\varepsilon}{2}} \delta_{q+1}^{\frac{1-\varepsilon}{2}} (M_{0}(t)^{\frac{1}{2}} \lambda_{q+1}^{4})^{\varepsilon} \lesssim M_{0}(t)^{\frac{1}{2}}. 
\end{equation} 
Therefore, we can deduce the existence of $\lim_{q\to\infty} v_{q} \triangleq v \in C([0, T_{L}]; H^{\varepsilon} (\mathbb{T}^{2}))$ for which there exists a deterministic constant $C_{L} > 0$ such that 
\begin{equation}\label{[Equation (47), Y20b]}
\sup_{t \in [0, T_{L}]} \lVert v(t) \rVert_{H_{x}^{\varepsilon}} \leq C_{L}. 
\end{equation} 
As each $v_{q}$ is $(\mathcal{F}_{t})_{t\geq 0}$-adapted, it follows that $v$ is also $(\mathcal{F}_{t})_{t\geq 0}$-adapted. Furthermore, for all $t \in [0, T_{L}]$, $\lVert \mathring{R}_{q} \rVert_{C_{t}L_{x}^{1}} \to 0$ as $q\to+\infty$ due to \eqref{[Equation (40c), Y20b]}. Therefore, $v$ is a weak solution to \eqref{[Equation (25), Y20b]} over $[0, T_{L}]$; consequently, we see from \eqref{[Equation (24), Y20b]} that $u = v+ z$ solves \eqref{[Equation (2), Y20b]}. Now for $c_{R} > 0$ to be determined from the proof of Proposition \ref{[Proposition 4.8, Y20b]}, we can choose $L = L(T, K, c_{R}, \text{Tr} (GG^{\ast} )) > (50) 9 \pi^{2} c_{R}^{-1}$ larger if necessary to satisfy 
\begin{equation}\label{[Equation (48), Y20b]}
\frac{3}{2} + \frac{1}{L} < ( \frac{1}{\sqrt{2}} - \frac{1}{2}) e^{LT} \text{ and } L^{\frac{1}{4}} 2 \pi + K (T \text{Tr} (GG^{\ast} ))^{\frac{1}{2}} \leq (e^{LT} - K) \lVert u^{\text{in}} \rVert_{L_{x}^{2}} + L e^{LT}
\end{equation} 
where $u^{\text{in}}(x) = v(0,x)$ as $z(0,x)  \equiv 0$ from \eqref{[Equation (25), Y20b]}. Because $\lim_{L\to\infty} T_{L} = + \infty$ $\textbf{P}$-a.s. due to Proposition \ref{[Proposition 4.4, Y20b]}, for the fixed $T> 0$ and $\kappa > 0$, increasing $L$ larger if necessary allows us to obtain $\textbf{P} ( \{T_{L} \geq T \}) > \kappa$. Now because $z(t)$ from \eqref{[Equation (24), Y20b]} is $(\mathcal{F}_{t})_{t\geq 0}$-adapted, we see that $u$ is $(\mathcal{F}_{t})_{t\geq 0}$-adapted. Moreover, \eqref{[Equation (47), Y20b]} and \eqref{[Equation (38), Y20b]} imply \eqref{[Equation (4), Y20b]}. Next, we compute 
\begin{equation}\label{[Equation (49), Y20b]}
\lVert v(t) - v_{0}(t) \rVert_{L_{x}^{2}}  \overset{\eqref{[Equation (45), Y20b]}}{\leq} M_{0}(t)^{\frac{1}{2}} \sum_{q \geq 0} a^{-b^{q+1} \beta} 
\leq M_{0}(t)^{\frac{1}{2}} \sum_{q\geq 0} a^{-b(q+1) \beta}  \overset{\eqref{[Equation (43), Y20b]}}{<} M_{0}(t)^{\frac{1}{2}} (\frac{1}{2})
\end{equation}
for all $t \in [0, T_{L}]$. We also see by utilizing \eqref{[Equation (48), Y20b]} that 
\begin{equation}\label{[Equation (50), Y20b]}
( \lVert v(0) \rVert_{L_{x}^{2}} + L) e^{LT} \overset{\eqref{[Equations (44a) and (44b), Y20b]} \eqref{[Equation (49), Y20b]}}{\leq} (\frac{3}{2} M_{0} (0)^{\frac{1}{2}} + L) e^{LT} \overset{\eqref{[Equation (48), Y20b]}}{<} (\frac{1}{\sqrt{2}} - \frac{1}{2}) M_{0}(T)^{\frac{1}{2}} \overset{\eqref{[Equations (44a) and (44b), Y20b]} \eqref{[Equation (49), Y20b]}}{<} \lVert v(T) \rVert_{L_{x}^{2}}. 
\end{equation}
Therefore, on $\{T_{L} \geq T \}$ 
\begin{equation}\label{[Equation (51), Y20b]}
\lVert u(T) \rVert_{L_{x}^{2}} \overset{\eqref{[Equation (50), Y20b]}}{>} ( \lVert v(0) \rVert_{L_{x}^{2}} + L)e^{LT} - \lVert z(T) \rVert_{L_{x}^{\infty}} 2\pi 
\overset{\eqref{[Equation (24), Y20b]} \eqref{[Equation (38), Y20b]}\eqref{[Equation (48), Y20b]} }{\geq} K \lVert u^{\text{in}} \rVert_{L_{x}^{2}} + K ( T \text{Tr} ( GG^{\ast} ))^{\frac{1}{2}}, 
\end{equation} 
which implies \eqref{[Equation (5), Y20b]}. At last, because $v_{0} (0,x)$ is deterministic from Proposition \ref{[Proposition 4.7, Y20b]}, Proposition \ref{[Proposition 4.8, Y20b]} implies that $u^{\text{in}} (x) = v(0,x)$ remains deterministic. 
\end{proof}

\subsection{Proof of Proposition \ref{[Proposition 4.8, Y20b]}}

\subsubsection{Choice of parameters}\label{Sub-subsection 4.3.1}
Let us define 
\begin{equation}\label{[Equation (2.2), LQ20]}
m^{\ast} \triangleq 
\begin{cases}
2m -1 & \text{ if } m \in (\frac{1}{2}, 1), \\
0 & \text{ if } m \in (0, \frac{1}{2}]; 
\end{cases} 
\end{equation} 
it follows that $m^{\ast} \in [0, 1)$. Furthermore, we fix 
\begin{equation}\label{[Equation (2.3), LQ20]}
\eta \in \mathbb{Q}_{+} \cap (\frac{1- m^{\ast}}{16}, \frac{1-m^{\ast}}{8}] 
\end{equation} 
from which we see that $\eta \in (0, \frac{1}{8}]$. We also fix $L > (50)9 \pi^{2} c_{R}^{-1}$ and  
\begin{equation}\label{alpha}
\alpha \triangleq \frac{1-m}{400}. 
\end{equation} 
We set 
\begin{equation}\label{estimate 22}
r \triangleq \lambda_{q+1}^{1- 6 \eta}, \hspace{2mm} \mu \triangleq \lambda_{q+1}^{1-4\eta}, \hspace{2mm} \text{ and } \hspace{2mm}  \sigma \triangleq \lambda_{q+1}^{2\eta - 1},
\end{equation} 
from which we immediately observe that $1 \ll r \ll \mu \ll \sigma^{-1} \ll \lambda_{q+1}$ from \eqref{[Equation (4.14), LQ20]} is satisfied.  Moreover, for the $\alpha > 0$ fixed we can choose $b \in \{\iota \in \mathbb{N}: \hspace{0.5mm} \iota > \frac{16}{\alpha}\}$ such that $r \in \mathbb{N}$ and $\lambda_{q+1} \sigma \in 10 \mathbb{N}$ so that the conditions of $r \in \mathbb{N}$ and $\lambda_{q+1} \sigma \in 5 \mathbb{N}$  from \eqref{[Equation (4.14), LQ20]} are satisfied. Indeed, because $\eta \in \mathbb{Q}_{+} \cap (0, \frac{1}{8}]$, we can write $1-6\eta = \frac{n_{1}}{d_{1}}$ and $2\eta = \frac{n_{2}}{d_{2}}$ for some $n_{1}, n_{2}, d_{1}, d_{2} \in \mathbb{N}$, and then take $b \in\mathbb{N}$ to be a multiple of $d_{1}d_{2}$; it follows that $r = \lambda_{q+1}^{1- 6 \eta} = a^{b^{q+1} (1-6\eta)} \in \mathbb{N}$ and $\lambda_{q+1} \sigma = \lambda_{q+1}^{2\eta} = a^{b^{q+1} 2\eta} \in 10 \mathbb{N}$ as $a \in 10 \mathbb{N}$. For the $\alpha$ from \eqref{alpha} and such $b> 0$ fixed, we take $\beta > 0$ sufficiently small so that 
\begin{equation}\label{estimate 21}
\alpha > 16 \beta b. 
\end{equation} 
We also choose 
\begin{equation}\label{[Equation (56), Y20b]}
l \triangleq \lambda_{q+1}^{- \frac{3\alpha}{2}} \lambda_{q}^{-2}. 
\end{equation} 
Together with the condition that $b > \frac{16}{\alpha}$, by taking $a \in 10 \mathbb{N}$ sufficiently large we obtain 
\begin{equation}\label{[Equation (57), Y20b]}
l \lambda_{q}^{4} \leq \lambda_{q+1}^{-\alpha} \hspace{2mm} \text{ and } \hspace{2mm} l^{-1} \leq \lambda_{q+1}^{2\alpha}.
\end{equation} 

\begin{remark}\label{Remark 4.1}
We will have numerous requirements that $\alpha \in (0, C \eta)$ for various constants $C > 0$; e.g., the second inequality of \eqref{[Equation (77), Y20b]} will require that we bound 
\begin{align*}
\lambda_{q+1}^{-\frac{1}{2}} \sigma^{-\frac{1}{2}} l^{-\frac{11}{2}} \overset{\eqref{[Equation (57), Y20b]} \eqref{estimate 22}}{\leq} \lambda_{q+1}^{-\frac{1}{2}} \lambda_{q+1}^{\frac{1}{2} - \eta} \lambda_{q+1}^{11\alpha} 
\end{align*}
by a constant that does not depend on relevant parameters and therefore we need $\alpha \leq \frac{\eta}{11}$. Thus, to be able to fix the value of $\alpha$ explicitly as we did in \eqref{alpha}, we decided to restrict $\eta$ to have the lower bound of $\frac{1-m^{\ast}}{16}$ in \eqref{[Equation (2.3), LQ20]}, differently from \cite[Equation (2.3)]{LQ20}. It follows that $\alpha$ defined in \eqref{alpha} indeed satisfies $\alpha \leq \frac{\eta}{11}$ as 
\begin{align*}
\alpha \overset{\eqref{alpha}}{=} \frac{1-m}{400} \overset{\eqref{[Equation (2.2), LQ20]}}{\leq} (\frac{1-m^{\ast}}{16}) (\frac{1}{11}) \overset{\eqref{[Equation (2.3), LQ20]}}{\leq} \frac{\eta}{11}, 
\end{align*}
and we will see that our choice of $\alpha$ in \eqref{alpha} will satisfy all other instances when it needs to be sufficiently smaller w.r.t. $\eta$.  
\end{remark} 

Concerning \eqref{[Equation (43), Y20b]}, taking $a \in 10 \mathbb{N}$ sufficiently large gives $c_{R} L \leq c_{R}(a^{4} \pi - 1)$  while $\beta>  0$ sufficiently small allows $(50) 9 \pi^{2} < 50 \pi^{2} a^{2\beta b} \leq c_{R} L$. Because we chose $L$ such that $L > (50)9 \pi^{2} c_{R}^{-1}$, this is possible. Thus, we shall hereafter consider such $m^{\ast}, \eta, \alpha, b$, and $l$ fixed, preserving our freedom to take $a \in 10 \mathbb{N}$ larger and $\beta > 0$ smaller as necessary. 

\begin{remark}\label{Remark 4.2}
Let us remark on some differences in our choice of parameters and those of other works. First, the work of \cite{LQ20} did not have a parameter that is equivalent to our $\alpha$ (The ``$\alpha$'' in \cite[Equation (2.3)]{LQ20} is actually our $\eta$ defined in \eqref{[Equation (2.3), LQ20]}). Our $\alpha$ in \eqref{alpha} plays the role of defining $l = \lambda_{q+1}^{- \frac{3\alpha}{2}} \lambda_{q}^{-2}$ in \eqref{[Equation (56), Y20b]}. Instead, the choice of $l = \lambda_{q}^{-20}$ is taken in \cite[Equation (3.1)]{LQ20}, which has appeared in others' previous works (e.g., \cite[Equation (4.16)]{BV19a}). As we described already in Remark \ref{Remark 4.1}, parts of our proof such as \eqref{[Equation (77), Y20b]} required $\alpha$ to be taken small w.r.t. $\eta$ and because $\eta$ in \eqref{[Equation (2.3), LQ20]} depends on $m^{\ast}$ defined in \eqref{[Equation (2.2), LQ20]} which in turn depends on $m$, we chose $l = \lambda_{q+1}^{- \frac{3\alpha}{2}} \lambda_{q}^{-2}$ where $\alpha$ depends on $m$ via \eqref{alpha} following \cite[Equation (4.17)]{HZZ19} and \cite[Equation (69)]{Y20a}.

On the other had, the works of \cite{BV19a, HZZ19} did not have a parameter that is equivalent to our $\eta$ in \eqref{[Equation (2.3), LQ20]} because  \cite{BV19a, HZZ19} were concerned with the Navier-Stokes equations and hence there was no parameter $m$. For further references we note that after this work was completed, a parameter that is analogous to $\eta$ in \eqref{[Equation (2.3), LQ20]} continued to see utility in others' works (e.g., \cite[Equations (2.3)]{CL21a} and \cite[Equation (92)]{Y21}). 
\end{remark} 

\subsubsection{Mollification}
We let $\{\phi_{\epsilon} \}_{\epsilon > 0}$ and $\{\varphi_{\epsilon} \}_{\epsilon > 0}$, specifically $\phi_{\epsilon} (\cdot) \triangleq \frac{1}{\epsilon^{2}} \phi(\frac{\cdot}{\epsilon})$ and $\varphi_{\epsilon}(\cdot) \triangleq \frac{1}{\epsilon} \varphi(\frac{\cdot}{\epsilon})$, respectively be families of standard mollifiers on $\mathbb{R}^{2}$ and $\mathbb{R}$ with mass one where the latter is compactly supported on $\mathbb{R}_{+}$. Then we mollify $v_{q}, \mathring{R}_{q}$, and $z$ to obtain 
\begin{equation}\label{[Equation (58), Y20b]}
v_{l} \triangleq (v_{q} \ast_{x} \phi_{l}) \ast_{t} \varphi_{l}, \hspace{3mm} \mathring{R}_{l} \triangleq (\mathring{R}_{q} \ast_{x} \phi_{l}) \ast_{t} \varphi_{l}, \hspace{3mm} z_{l} \triangleq (z \ast_{x} \phi_{l}) \ast_{t} \varphi_{l}. 
\end{equation} 
It follows from \eqref{[Equation (36), Y20b]} that $v_{l}$ satisfies  
\begin{equation}\label{[Equation (59), Y20b]}
\partial_{t} v_{l} + (-\Delta)^{m} v_{l} + \text{div} ((v_{l} + z_{l}) \otimes (v_{l} + z_{l} )) + \nabla \pi_{l} =  \text{div} (\mathring{R}_{l} + R_{\text{com1}})
\end{equation}
if 
\begin{subequations}\label{estimate 36}
\begin{align}
\pi_{l} \triangleq& (\pi_{q} \ast_{x} \phi_{l}) \ast_{t} \varphi_{l} - \frac{1}{2} (\lvert v_{l} + z_{l} \rvert^{2} - ( \lvert v_{q} + z \rvert^{2} \ast_{x} \phi_{l}) \ast_{t} \varphi_{l}), \label{[Equation (60b), Y20b]} \\
R_{\text{com1}} \triangleq& R_{\text{commutator1}} \triangleq (v_{l} + z_{l}) \mathring{\otimes} (v_{l} + z_{l}) - (((v_{q} + z) \mathring{\otimes} (v_{q} + z) )\ast_{x} \phi_{l} ) \ast_{t} \varphi_{l}. \label{[Equation (60a), Y20b]} 
\end{align}
\end{subequations} 
We can estimate for all $t \in [0, T_{L}]$ and $N \geq 1$, by using the fact that $\beta \ll \alpha$ from \eqref{estimate 21} and taking $a \in 10 \mathbb{N}$ sufficiently large 
\begin{subequations}
\begin{align}
& \lVert v_{q} - v_{l} \rVert_{C_{t}L_{x}^{2}}  \overset{\eqref{[Equation (40b), Y20b]}}{\lesssim} l M_{0}(t)^{\frac{1}{2}} \lambda_{q}^{4} \overset{\eqref{[Equation (57), Y20b]}}{\leq}  \frac{1}{4} M_{0}(t)^{\frac{1}{2}} \delta_{q+1}^{\frac{1}{2}}, \label{[Equation (61a), Y20b]}\\
&\lVert v_{l} \rVert_{C_{t}L_{x}^{2}} \leq \lVert v_{q} \rVert_{C_{t}L_{x}^{2}} \overset{\eqref{[Equation (40a), Y20b]}}{\leq} M_{0}(t)^{\frac{1}{2}} (1+ \sum_{1 \leq \iota \leq q} \delta_{\iota}^{\frac{1}{2}}), \label{[Equation (61b), Y20b]} \\
&\lVert v_{l} \rVert_{C_{t,x}^{N}} \overset{\eqref{[Equation (40b), Y20b]}}{\lesssim} l^{-N+1} M_{0}(t)^{\frac{1}{2}} \lambda_{q}^{4} \overset{\eqref{alpha} \eqref{[Equation (56), Y20b]}}{\leq} l^{-N} M_{0}(t)^{\frac{1}{2}} \lambda_{q+1}^{-\alpha}. \label{[Equation (61c), Y20b]}
\end{align}
\end{subequations} 

\subsubsection{Perturbation}
We let $\chi$ be a smooth function such that 
\begin{equation}\label{[Equation (64), Y20b]}
\chi(z) \triangleq 
\begin{cases}
1 & \text{ if } z \in [0, 1],\\
z & \text{ if } z \in [2, \infty), 
\end{cases}
\end{equation}
and $z \leq 2 \chi(z) \leq 4 z$ for $z \in (1,2)$. We define for $t \in [0, T_{L}]$ and $\omega \in \Omega$ 
\begin{equation}\label{[Equation (65), Y20b]}
\rho(\omega, t, x) \triangleq 4 c_{R} \delta_{q+1} M_{0}(t) \chi((c_{R} \delta_{q+1} M_{0} (t))^{-1} \lvert \mathring{R}_{l} (\omega, t, x) \rvert). 
\end{equation} 
Then it follows that 
\begin{equation}\label{[Equation (66), Y20b]}
\lvert \frac{ \mathring{R}_{l} (\omega, t,x) }{\rho(\omega, t, x) } \rvert = \frac{ \lvert \mathring{R}_{l} (\omega, t, x) \rvert}{ 4c_{R} \delta_{q+1} M_{0}(t) \chi((c_{R} \delta_{q+1} M_{0}(t))^{-1} \lvert \mathring{R}_{l} (\omega, t, x) \rvert)} \leq \frac{1}{2}. 
\end{equation} 
We can estimate for any $p \in [1, \infty]$ and $t \in [0, T_{L}]$ 
\begin{align}
\lVert \rho(\omega) \rVert_{C_{t}L_{x}^{p}} \overset{\eqref{[Equation (64), Y20b]}}{\leq}& \sup_{s \in [0,t]} 4c_{R} \delta_{q+1} M_{0}(s) \lVert 1 + 3 (c_{R} \delta_{q+1} M_{0}(s))^{-1} \lvert \mathring{R}_{l} (\omega, s, x) \rvert \rVert_{L_{x}^{p}} \nonumber\\
& \hspace{25mm} \leq12 ((4\pi^{2})^{\frac{1}{p}} c_{R} \delta_{q+1} M_{0}(t) + \lVert \mathring{R}_{l} (\omega) \rVert_{C_{t}L_{x}^{p}}). \label{[Equation (67), Y20b]} 
\end{align}
Next, for any $N \geq 0$ and $t \in [0, T_{L}]$, due to the embedding of $W^{3,1} (\mathbb{T}^{2}) \hookrightarrow L^{\infty} (\mathbb{T}^{2})$, 
\begin{equation}\label{[Equation (68), Y20b]}
\lVert \mathring{R}_{l} \rVert_{C_{t,x}^{N}} \overset{\eqref{C-t,x}}{\lesssim} \sum_{0\leq n + \lvert \alpha \rvert \leq N} \lVert \partial_{t}^{n} D^{\alpha} (-\Delta)^{\frac{3}{2}} \mathring{R}_{l} \rVert_{L_{t}^{\infty} L_{x}^{1}} \overset{\eqref{[Equation (40c), Y20b]}}{\lesssim} l^{-N - 3} M_{0}(t) c_{R} \delta_{q+1}. 
\end{equation}
For any $N \geq 0$, $k \in \{0,1,2\}$, and $t \in [0, T_{L}]$ we can deduce by taking $a \in 10 \mathbb{N}$ sufficiently large 
\begin{equation}\label{[Equation (69), Y20b]}
\lVert \rho \rVert_{C_{t}C_{x}^{N}} \lesssim c_{R} \delta_{q+1} M_{0}(t) l^{-3-N} \hspace{2mm} \text{ and } \hspace{2mm} \lVert \rho \rVert_{C_{t}^{1}C_{x}^{k}}  \lesssim c_{R} \delta_{q+1} M_{0}(t) l^{-4(k+1)}.
\end{equation} 
Indeed, the first inequality can be computed using \eqref{[Equation (67), Y20b]}-\eqref{[Equation (68), Y20b]} when $N = 0$, while \eqref{[Equation (64), Y20b]}-\eqref{[Equation (65), Y20b]} and \cite[Equation (129)]{BDIS15} in case $N \geq 1$; the second inequality can be computed by directly applying $\partial_{t}$ and $\nabla$ and then relying on \eqref{[Equation (68), Y20b]}. Next, we define the amplitude function by 
\begin{equation}\label{[Equation (70), Y20b]}
a_{\zeta} (\omega, t, x) \triangleq a_{\zeta, q+1} (\omega, t, x) \triangleq \rho(\omega, t, x)^{\frac{1}{2}} \gamma_{\zeta} ( \frac{ \mathring{R}_{l} (\omega, t, x)}{\rho(\omega, t, x)}). 
\end{equation} 
\begin{remark}\label{Remark 4.3}
We note that analogous definitions of $a_{\zeta}$ in previous works had ``$\text{Id}- \cdot$'' in their arguments; e.g., 
\begin{align*}
\text{``}a_{(\xi)} (\omega, t, x) := a_{\xi, q+1} (\omega, t, x) := \rho(\omega, t, x)^{1/2} \gamma_{\xi} \left(\text{Id} - \frac{ \mathring{R}_{l}(\omega, t, x)}{\rho(\omega, t, x)} \right) (2\pi)^{-\frac{3}{4}}\text{''}
\end{align*}
in \cite[Equation (4.26)]{HZZ19} (see also \cite[Equation (4.12)]{BV19a}). The geometric lemma in the 3-d case that was used in \cite{BV19a, HZZ19}, specifically \cite[Proposition 3.2]{BV19a} and \cite[Lemma B.1]{HZZ19}, had a ball around an identity matrix in the space of $3\times 3$ symmetric matrices as the domain of $\gamma_{\zeta}$. On the other hand, the available geometric lemma in the 2-d case, specifically Lemma \ref{[Lemma 4.1, LQ20]} from \cite[Lemma 4.1]{LQ20}, requires that the argument of $\gamma_{\zeta}$ be not only symmetric but also trace-free. Because $\text{Id} - \frac{ \mathring{R}_{l}(\omega, t, x)}{\rho(\omega, t, x)}$ would not be trace-free, we chose $\frac{\mathring{R}_{l} (\omega, t, x)}{\rho(\omega, t ,x)}$ as the argument. 

Furthermore, our choice of the argument of $a_{\zeta}$ also differs from that of \cite[Equation (5.1)]{LQ20} because theirs includes not only $\mathring{R}_{l}$ but also $R_{\text{com1}}$. We chose to refrain from including $R_{\text{com1}}$ within the argument of $\gamma_{\zeta}$ because in contrast to \cite[Equation (3.6)]{LQ20}, our $R_{\text{com1}}$ in \eqref{[Equation (60a), Y20b]} includes $z$ and requires separate delicate treatments (see \eqref{[Equation (105), Y20b]}). 
\end{remark} 
Next, we have the following identity:
\begin{equation}\label{estimate 3}
\sum_{\zeta, \zeta' \in \Lambda} a_{\zeta}(\omega, t, x) a_{\zeta'}(\omega, t, x) \fint_{\mathbb{T}^{2}} \mathbb{W}_{\zeta} \mathring{\otimes} \mathbb{W}_{\zeta'}(t,x) dx = -\mathring{R}_{l} (\omega, t, x).
\end{equation}
Indeed, the fact that $b_{\zeta} (x) \mathring{\otimes} b_{-\zeta}(x) \overset{\eqref{[Equation (4.2), LQ20]}}{=} - \zeta \mathring{\otimes} \zeta$ leads to 
\begin{equation*}
\sum_{\zeta, \zeta' \in \Lambda} \gamma_{\zeta} (\mathring{R}) \gamma_{\zeta'} (\mathring{R}) \fint_{\mathbb{T}^{2}} \mathbb{W}_{\zeta} \mathring{\otimes} \mathbb{W}_{\zeta'} (t,x) dx \overset{\eqref{[Equations (4.6), (4.7) and (4.8), LQ20]} \eqref{[Equation (4.11), LQ20]} \eqref{[Equation (4.13), LQ20]} \eqref{[Equation (4.15), LQ20]}}{=} - \mathring{R}
\end{equation*} 
which in turn gives \eqref{estimate 3} by using \eqref{[Equation (70), Y20b]}. 
\begin{remark}\label{Remark 4.4}
Let us note that this identity \eqref{estimate 3} differs slightly from the analogous ones previous works, e.g., 
\begin{align*}
\text{``}(2\pi)^{\frac{3}{2}} \sum_{\xi \in \Lambda} a_{(\xi)}^{2}\fint_{\mathbb{T}^{3}} W_{(\xi)} \otimes W_{(\xi)} dx = \rho \text{Id} - \mathring{R}_{l}\text{''}
\end{align*} 
in \cite[Equation (4.27)]{HZZ19} (cf. also \cite[Equation (4.14)]{BV19a}, \cite[Equation (7.30)]{BV19b}, \cite[Equation (5.3)]{LQ20}). The identity \eqref{estimate 3} will be necessary in deriving \eqref{[Equation (7.12a), LQ20]} and ultimately \eqref{estimate 27}-\eqref{estimate 28}.
\end{remark}
Concerning $a_{\zeta}$ we can estimate for all $t \in [0, T_{L}]$ with $C_{\Lambda}$ and $M$ from \eqref{estimate 4}
\begin{equation}\label{[Equation (72), Y20b]}
\lVert a_{\zeta} \rVert_{C_{t}L_{x}^{2}} \overset{\eqref{estimate 4} \eqref{[Equation (66), Y20b]} \eqref{[Equation (67), Y20b]}}{\leq} [12 ( 4 \pi^{2} c_{R} \delta_{q+1} M_{0} (t) + \lVert \mathring{R}_{l} (\omega) \rVert_{C_{t} L_{x}^{1}} )]^{\frac{1}{2}} \frac{M}{C_{\Lambda}}  \overset{\eqref{estimate 4} \eqref{[Equation (40c), Y20b]} }{\leq} \frac{c_{R}^{\frac{1}{4}} M_{0}(t)^{\frac{1}{2}} \delta_{q+1}^{\frac{1}{2}}}{2 \lvert \Lambda \rvert} 
\end{equation} 
by requiring $c_{R}^{\frac{1}{4}} \leq \frac{1}{M}$. We also have for all $t \in [0, T_{L}]$, $N \in \mathbb{N}_{0}$, and $k  \in \{ 0, 1, 2\}$, 
\begin{equation}\label{[Equation (73), Y20b]}
\lVert a_{\zeta} \rVert_{C_{t}C_{x}^{N}} \leq c_{R}^{\frac{1}{4}} \delta_{q+1}^{\frac{1}{2}} M_{0}(t)^{\frac{1}{2}} l^{-\frac{3}{2} - 4N} \hspace{2mm} \text{ and } \hspace{2mm} \lVert a_{\zeta} \rVert_{C_{t}^{1}C_{x}^{k}} \leq c_{R}^{\frac{1}{4}} \delta_{q+1}^{\frac{1}{2}} M_{0}(t)^{\frac{1}{2}} l^{- (k+1) 4}.  
\end{equation} 
Indeed, the first inequality in case $N = 0$ follows from \eqref{estimate 4}, \eqref{[Equation (66), Y20b]}, \eqref{[Equation (69), Y20b]}-\eqref{[Equation (70), Y20b]}, while the first inequality in case $N \in \mathbb{N}$ follows from \eqref{[Equation (66), Y20b]}, \eqref{[Equation (69), Y20b]}-\eqref{[Equation (70), Y20b]}, an application of \cite[Equations (129)-(130)]{BDIS15}, and the fact that $\rho(t) \geq 2c_{R} \delta_{q+1} M_{0}(t)$ due to \eqref{[Equation (64), Y20b]}-\eqref{[Equation (65), Y20b]}. Finally, the second inequality can be verified by applying $\partial_{t}$ and $\nabla$, and relying on \eqref{[Equation (66), Y20b]}, \eqref{[Equation (69), Y20b]}-\eqref{[Equation (70), Y20b]}. 

Next, we recall $\psi_{\zeta}, \eta_{\zeta}, \mathbb{W}_{\zeta}$, and $\mu$ respectively from \eqref{[Equation (4.2), LQ20]}, \eqref{[Equation (4.11), LQ20]}, \eqref{[Equation (4.15), LQ20]}, and \eqref{estimate 22}, and  define the perturbation 
\begin{equation}\label{[Equation (76), Y20b]}
w_{q+1} \triangleq w_{q+1}^{(p)} + w_{q+1}^{(c)} + w_{q+1}^{(t)} \text{ and } v_{q+1} \triangleq v_{l}+  w_{q+1} 
\end{equation} 
where 
\begin{equation}\label{[Equation (74), Y20b]}
 w_{q+1}^{(p)} \triangleq \sum_{\zeta \in \Lambda} a_{\zeta} \mathbb{W}_{\zeta}, \hspace{1mm} w_{q+1}^{(c)} \triangleq\sum_{\zeta \in \Lambda} \nabla^{\bot} (a_{\zeta} \eta_{\zeta}) \psi_{\zeta}, \hspace{1mm} w_{q+1}^{(t)} \triangleq \mu^{-1} (\sum_{\zeta \in \Lambda^{+} } - \sum_{\zeta \in \Lambda^{-}}) \mathbb{P} \mathbb{P}_{\neq 0} (a_{\zeta}^{2} \mathbb{P}_{\neq 0} \eta_{\zeta}^{2} \zeta). 
 \end{equation}
 We have the identity of 
\begin{equation}\label{[Equation (5.9), LQ20]}
 (w_{q+1}^{(p)} + w_{q+1}^{(c)}) (t,x) \overset{\eqref{[Equation (4.3), LQ20]} \eqref{[Equation (4.15), LQ20]} }{=}  \nabla^{\bot} (\sum_{\zeta \in \Lambda} a_{\zeta}(t,x) \eta_{\zeta}(t,x) \psi_{\zeta}(x)). 
\end{equation}
It follows that $w_{q+1}$ is divergence-free and mean-zero. Now by \eqref{[Equation (4.2), LQ20]} and \eqref{[Equation (4.11), LQ20]} we see that $\mathbb{W}_{\zeta}$ in \eqref{[Equation (4.15), LQ20]} is $(\mathbb{T}/\lambda_{q+1} \sigma)^{2}$-periodic. Thus, we can apply Lemma \ref{[Lemma 6.2, LQ20]} to deduce 
\begin{equation}\label{[Equation (77), Y20b]}
\lVert w_{q+1}^{(p)} \rVert_{C_{t}L_{x}^{2}} \overset{ \eqref{[Equation (4.22), LQ20]}\eqref{[Equation (72), Y20b]}  }{\lesssim} \sum_{\zeta \in \Lambda} \frac{c_{R}^{\frac{1}{4}} M_{0}(t)^{\frac{1}{2}} \delta_{q+1}^{\frac{1}{2}}}{\lvert \Lambda \rvert} + \lambda_{q+1}^{-\frac{1}{2}} \sigma^{-\frac{1}{2}} c_{R}^{\frac{1}{4}} \delta_{q+1}^{\frac{1}{2}} M_{0}(t)^{\frac{1}{2}} l^{-\frac{11}{2}}  \overset{\eqref{[Equation (57), Y20b]}}{\lesssim} c_{R}^{\frac{1}{4}} \delta_{q+1}^{\frac{1}{2}} M_{0}(t)^{\frac{1}{2}},  
\end{equation} 
where the last inequality used the fact that $11 \alpha - \eta \leq 0$ due to \eqref{[Equation (2.2), LQ20]}-\eqref{alpha}; preserving $c_{R}^{\frac{1}{4}}$ here will be needed in deriving \eqref{[Equation (81), Y20b]}. Next, for all $p \in (1,\infty)$ and $t \in [0, T_{L}]$ we can estimate 
\begin{subequations}\label{[Equation (78), Y20b]}
\begin{align}
& \lVert w_{q+1}^{(p)} \rVert_{C_{t}L_{x}^{p}} 
\overset{\eqref{[Equation (74), Y20b]}}{\leq} \sup_{s \in [0,t]} \sum_{\zeta \in \Lambda} \lVert a_{\zeta} (s) \rVert_{L_{x}^{\infty}} \lVert \mathbb{W}_{\zeta}(s) \rVert_{L_{x}^{p}} \overset{\eqref{[Equation (4.22), LQ20]} \eqref{[Equation (73), Y20b]} }{\lesssim} \delta_{q+1}^{\frac{1}{2}} M_{0}(t)^{\frac{1}{2}} l^{-\frac{3}{2}} r^{1- \frac{2}{p}},    \label{[Equation (78a), Y20b]}\\
& \lVert w_{q+1}^{(c)} \rVert_{C_{t}L_{x}^{p}} \overset{\eqref{[Equation (74), Y20b]}}{\lesssim} \sup_{s \in [0,t]} \sum_{\zeta \in \Lambda} \lVert \nabla^{\bot} (a_{\zeta} \eta_{\zeta})(s) \rVert_{L_{x}^{p}} \lVert \psi_{\zeta} \rVert_{L_{x}^{\infty}} \overset{\eqref{[Equations (4.4) and (4.5), LQ20]} \eqref{[Equation (4.23), LQ20]} \eqref{[Equation (73), Y20b]}}{\lesssim} \delta_{q+1}^{\frac{1}{2}} M_{0}(t)^{\frac{1}{2}} l^{-\frac{11}{2}} \sigma r^{2- \frac{2}{p}},  \label{[Equation (78b), Y20b]}\\
&\lVert w_{q+1}^{(t)} \rVert_{C_{t}L_{x}^{p}} \overset{\eqref{[Equation (74), Y20b]}}{\lesssim} \mu^{-1} \sum_{\zeta \in \Lambda} \lVert a_{\zeta} \rVert_{C_{t}L_{x}^{\infty}}^{2} \lVert \eta_{\zeta} \rVert_{C_{t}L_{x}^{2p}}^{2} 
\overset{\eqref{[Equation (4.23), LQ20]} \eqref{[Equation (73), Y20b]}}{\lesssim} \mu^{-1} \delta_{q+1} M_{0}(t) l^{-3} r^{2- \frac{2}{p}}. \label{[Equation (78c), Y20b]}
\end{align}
\end{subequations} 
The estimates \eqref{[Equation (78b), Y20b]}-\eqref{[Equation (78c), Y20b]} allow us to deduce for all $p \in (1,\infty)$ and $t \in [0, T_{L}]$ 
\begin{align}
&\lVert  w_{q+1}^{(c)} \rVert_{C_{t}L_{x}^{p}} + \lVert w_{q+1}^{(t)} \rVert_{C_{t}L_{x}^{p}} \nonumber\\
\overset{\eqref{[Equation (78b), Y20b]} \eqref{[Equation (78c), Y20b]}\eqref{[Equation (57), Y20b]} }{\lesssim}& \delta_{q+1}^{\frac{1}{2}} M_{0}(t)^{\frac{1}{2}} l^{-3} r^{2- \frac{2}{p}} [ \lambda_{q+1}^{5 \alpha + 2 \eta - 1}  + \lambda_{q+1}^{4 \eta - 1} \delta_{q+1}^{\frac{1}{2}} M_{0}(t)^{\frac{1}{2}}] \lesssim \delta_{q+1} M_{0}(t)   l^{-3} r^{2- \frac{2}{p}} \lambda_{q+1}^{4\eta -1} \label{[Equation (80), Y20b]}
\end{align} 
where the second inequality used that $5 \alpha + 2 \eta < 4 \eta - \beta$ due to \eqref{[Equation (2.3), LQ20]}, \eqref{alpha}, and \eqref{estimate 21}. We deduce from the estimate \eqref{[Equation (80), Y20b]} by taking $a \in 10 \mathbb{N}$ sufficiently large that for all $t \in [0, T_{L}]$ 
\begin{align}\label{[Equation (81), Y20b]}
 \lVert w_{q+1} \rVert_{C_{t}L_{x}^{2}}  \overset{\eqref{[Equation (76), Y20b]} \eqref{[Equation (77), Y20b]} \eqref{[Equation (80), Y20b]}}{\lesssim}& c_{R}^{\frac{1}{4}} \delta_{q+1}^{\frac{1}{2}} M_{0}(t)^{\frac{1}{2}} + \delta_{q+1} M_{0}(t) l^{-3} r \lambda_{q+1}^{4\eta -1} \nonumber\\
\overset{\eqref{estimate 22} \eqref{[Equation (57), Y20b]}}{\leq}& \delta_{q+1}^{\frac{1}{2}} M_{0}(t)^{\frac{1}{2}} [ \frac{3}{8} + C M_{0}(L)^{\frac{1}{2}} \lambda_{q+1}^{6\alpha - 2 \eta}] \leq \frac{3}{4} \delta_{q+1}^{\frac{1}{2}} M_{0}(t)^{\frac{1}{2}}  
\end{align} 
where the second inequality is by taking $c_{R} \ll 1$ and the third inequality used that $6 \alpha - 2 \eta < 0$ due to \eqref{[Equation (2.3), LQ20]}-\eqref{alpha}. We are now ready to verify \eqref{[Equation (40a), Y20b]} at level $q + 1$ and \eqref{[Equation (45), Y20b]} as follows:  
\begin{subequations}\label{estimate 53}
\begin{align}
&\lVert v_{q+1} \rVert_{C_{t}L_{x}^{2}} \overset{\eqref{[Equation (76), Y20b]}}{\leq} \lVert v_{l} \rVert_{C_{t}L_{x}^{2}} + \lVert w_{q+1} \rVert_{C_{t}L_{x}^{2}} \overset{\eqref{[Equation (61b), Y20b]} \eqref{[Equation (81), Y20b]}}{\leq} M_{0}(t)^{\frac{1}{2}} (1+ \sum_{1 \leq \iota \leq q+1} \delta_{\iota}^{\frac{1}{2}}),\\
& \lVert v_{q+1} (t) - v_{q}(t) \rVert_{L_{x}^{2}} \overset{\eqref{[Equation (76), Y20b]}}{\leq} \lVert w_{q+1}(t) \rVert_{L_{x}^{2}} + \lVert v_{l} (t) - v_{q}(t) \rVert_{L_{x}^{2}} \overset{\eqref{[Equation (61a), Y20b]} \eqref{[Equation (81), Y20b]}}{\leq} M_{0}(t)^{\frac{1}{2}} \delta_{q+1}^{\frac{1}{2}}. 
\end{align} 
\end{subequations}  
Next, we estimate norms of higher order. First, for all $t \in [0, T_{L}]$ 
\begin{subequations}\label{[Equation (82), Y20b]}
\begin{align}
 \lVert w_{q+1}^{(p)} \rVert_{C_{t,x}^{1}}  \overset{\eqref{[Equation (74), Y20b]}}{\lesssim}& \sum_{\zeta \in \Lambda} \lVert a_{\zeta} \rVert_{C_{t,x}^{1}} \lVert \mathbb{W}_{\zeta} \rVert_{L_{t}^{\infty} L_{x}^{\infty}} + \lVert a_{\zeta} \rVert_{L_{t}^{\infty} L_{x}^{\infty}} \lVert \mathbb{W}_{\zeta} \rVert_{C_{t,x}^{1}} \nonumber\\
\overset{\eqref{[Equation (4.22), LQ20]} \eqref{[Equation (73), Y20b]} \eqref{[Equation (57), Y20b]}}{\lesssim}&  \delta_{q+1}^{\frac{1}{2}} M_{0}(t)^{\frac{1}{2}} \lambda_{q+1}^{1-6\eta} l^{-\frac{3}{2}} [ \lambda_{q+1}^{8\alpha} + \lambda_{q+1}^{2-8\eta}] \lesssim \delta_{q+1}^{\frac{1}{2}} M_{0}(t)^{\frac{1}{2}} \lambda_{q+1}^{3-14\eta} l^{-\frac{3}{2}}, \label{[Equation (82a), Y20b]} \\
\lVert w_{q+1}^{(c)} \rVert_{C_{t,x}^{1}} \overset{\eqref{[Equation (74), Y20b]}}{\leq}& \sum_{\zeta \in \Lambda} \lVert  (\nabla^{\bot} a_{\zeta} \eta_{\zeta} + a_{\zeta} \nabla^{\bot} \eta_{\zeta}) \psi_{\zeta} \rVert_{C_{t,x}^{1}} \label{[Equation (82b), Y20b]} \\
\overset{\eqref{[Equations (4.4) and (4.5), LQ20]}  \eqref{[Equation (4.23), LQ20]} \eqref{[Equation (73), Y20b]}}{\lesssim}& \delta_{q+1}^{\frac{1}{2}} M_{0}(t)^{\frac{1}{2}} r [ l^{- \frac{19}{2}} \lambda_{q+1}^{-1} + l^{- \frac{11}{2}} \sigma \mu r + l^{-\frac{3}{2}} \lambda_{q+1} \sigma^{2} r^{2} \mu]  \lesssim \delta_{q+1}^{\frac{1}{2}} M_{0}(t)^{\frac{1}{2}} \lambda_{q+1}^{3- 18\eta} l^{- \frac{3}{2}}, \nonumber
\end{align}
\end{subequations}
where the last inequality in \eqref{[Equation (82a), Y20b]} used the fact that $8 \alpha < 2 - 8 \eta$ which can be verified by \eqref{[Equation (2.2), LQ20]}-\eqref{alpha}. Next, due to $\mathbb{P} \mathbb{P}_{\neq 0}$ not being bounded in $C_{t,x}^{1}$, we go down to $L^{p}$ space for $p \in (1, \infty)$ in the expense of $\lambda_{q+1}^{\alpha}$ and estimate for all $t \in [0, T_{L}]$  
\begin{align}
& \lVert w_{q+1}^{(t)} \rVert_{C_{t,x}^{1}} \overset{\eqref{[Equation (74), Y20b]}}{\lesssim} \mu^{-1} \sum_{\zeta \in \Lambda} \lambda_{q+1}^{\alpha} [ \lVert a_{\zeta} \rVert_{C_{t}C_{x}} \lVert a_{\zeta} \rVert_{C_{t}^{1}C_{x}} \lVert \eta_{\zeta} \rVert_{C_{t}C_{x}}^{2} + \lVert a_{\zeta} \rVert_{C_{t}C_{x}}^{2} \lVert \eta_{\zeta} \rVert_{C_{t}C_{x}} \lVert \eta_{\zeta} \rVert_{C_{t}^{1} C_{x}} \label{[Equation (83), Y20b]}\\
& \hspace{12mm} + \lVert a_{\zeta} \rVert_{C_{t}C_{x}} \lVert a_{\zeta} \rVert_{C_{t}C_{x}^{1}} \lVert \eta_{\zeta} \rVert_{C_{t}C_{x}}^{2} + \lVert a_{\zeta} \rVert_{C_{t}C_{x}}^{2} \lVert \eta_{\zeta} \rVert_{C_{t}C_{x}} \lVert \eta_{\zeta} \rVert_{C_{t}C_{x}^{1}}] \nonumber\\
&\hspace{8mm} \overset{\eqref{[Equation (4.23), LQ20]} \eqref{[Equation (57), Y20b]} \eqref{[Equation (73), Y20b]}}{\lesssim} \lambda_{q+1}^{4 \eta - 1} \lambda_{q+1}^{\alpha} \delta_{q+1} M_{0}(t) l^{-3} (\lambda_{q+1}^{1- 6 \eta})^{2} [\lambda_{q+1}^{8\alpha} + \lambda_{q+1}^{2- 8 \eta}]  \lesssim \lambda_{q+1}^{3 - 16 \eta + \alpha} \delta_{q+1} M_{0}(t) l^{-3} \nonumber 
\end{align}
where the last inequality used the fact that $8 \alpha < 2 - 8 \eta$ due to \eqref{[Equation (2.2), LQ20]}-\eqref{alpha}. 
Therefore, by taking $a \in 10 \mathbb{N}$ sufficiently large we conclude that \eqref{[Equation (40b), Y20b]} at level $q + 1$ holds as follows:
\begin{equation}\label{estimate 5}
\lVert v_{q+1} \rVert_{C_{t,x}^{1}} \overset{\eqref{[Equation (61c), Y20b]} \eqref{[Equation (82), Y20b]} \eqref{[Equation (83), Y20b]}}{\leq} M_{0} (t)^{\frac{1}{2}} [ l^{-1} \lambda_{q+1}^{-\alpha} + C \lambda_{q+1}^{3- 14 \eta} l^{- \frac{3}{2}} + C \lambda_{q+1}^{3- 16 \eta+ \alpha} M_{0}(t)^{\frac{1}{2}} l^{-3} ] 
\leq M_{0}(t)^{\frac{1}{2}} \lambda_{q+1}^{4} 
\end{equation} 
where the second inequality is due to 
\begin{subequations}\label{estimate 38}
\begin{align}
& l^{-1} \lambda_{q+1}^{-\alpha} \overset{\eqref{[Equation (57), Y20b]}}{\leq} \lambda_{q+1}^{\alpha} \overset{\eqref{alpha}}{\leq} \frac{1}{4} \lambda_{q+1}^{4}, \label{estimate 23}\\
& C\lambda_{q+1}^{3- 14\eta} l^{- \frac{3}{2}} \overset{\eqref{[Equation (57), Y20b]}}{\leq} C \lambda_{q+1}^{3 - 14 \eta} \lambda_{q+1}^{3\alpha} \overset{\eqref{[Equation (2.2), LQ20]} \eqref{[Equation (2.3), LQ20]} \eqref{alpha}}{\leq} \frac{1}{4} \lambda_{q+1}^{4}, \label{estimate 24}\\
& C \lambda_{q+1}^{3- 16 \eta + \alpha} M_{0}(t)^{\frac{1}{2}} l^{-3} \overset{\eqref{[Equation (57), Y20b]}}{\leq} C \lambda_{q+1}^{3- 16 \eta + 7\alpha} M_{0}(L)^{\frac{1}{2}} \overset{\eqref{[Equation (2.2), LQ20]} \eqref{[Equation (2.3), LQ20]} \eqref{alpha}}{\leq} \frac{1}{4} \lambda_{q+1}^{4}.\label{estimate 25}
\end{align}
\end{subequations} 
Finally, we estimate for all $p \in (1, \infty)$ and $t \in [0, T_{L}]$ 
\begin{subequations}\label{[Equation (88), Y20b]}
\begin{align}
& \lVert w_{q+1}^{(p)} + w_{q+1}^{(c)} \rVert_{C_{t}W_{x}^{1,p}} \overset{\eqref{[Equation (5.9), LQ20]}}{=} \lVert \nabla^{\bot} (\sum_{\zeta \in \Lambda} a_{\zeta} \eta_{\zeta} \psi_{\zeta}) \rVert_{C_{t}W_{x}^{1,p}}  \label{[Equation (88a), Y20b]}\\
& \overset{\eqref{[Equations (4.4) and (4.5), LQ20]} \eqref{[Equation (4.23), LQ20]} \eqref{[Equation (73), Y20b]} }{\lesssim} \delta_{q+1}^{\frac{1}{2}} M_{0}(t)^{\frac{1}{2}} r^{1- \frac{2}{p}} [ l^{- \frac{19}{2}} \lambda_{q+1}^{-1} + l^{- \frac{3}{2}} \sigma^{2} r^{2} \lambda_{q+1} + l^{- \frac{3}{2}} \lambda_{q+1}] \lesssim \delta_{q+1}^{\frac{1}{2}} M_{0}(t)^{\frac{1}{2}} r^{1- \frac{2}{p}} l^{- \frac{3}{2}} \lambda_{q+1},  \nonumber\\
 &\lVert w_{q+1}^{(t)} \rVert_{C_{t}W_{x}^{1,p}} \overset{ \eqref{[Equation (74), Y20b]}}{\lesssim} \mu^{-1} \sum_{\zeta \in \Lambda} \lVert a_{\zeta} \rVert_{C_{t}C_{x}} \lVert a_{\zeta} \rVert_{C_{t}C_{x}^{1}} \lVert \eta_{\zeta} \rVert_{C_{t}L_{x}^{2p}}^{2} + \lVert a_{\zeta} \rVert_{C_{t}C_{x}}^{2} \lVert \eta_{\zeta} \rVert_{C_{t}L_{x}^{2p}} \lVert \eta_{\zeta} \rVert_{C_{t}W_{x}^{1,2p}} \label{[Equation (88b), Y20b]} \\
 & \hspace{20mm} \overset{\eqref{[Equation (4.23), LQ20]}\eqref{[Equation (73), Y20b]} }{\lesssim} \mu^{-1} \delta_{q+1} M_{0}(t) l^{-3} r^{2- \frac{2}{p}} [ l^{-4} + \lambda_{q+1} \sigma r]  \overset{\eqref{[Equation (57), Y20b]}}{\lesssim} \mu^{-1} \delta_{q+1} M_{0}(t) l^{-3} r^{3- \frac{2}{p}} \lambda_{q+1} \sigma. \nonumber
\end{align}
\end{subequations} 

\subsubsection{Reynolds stress}
We can compute from \eqref{[Equation (36), Y20b]}, \eqref{[Equation (59), Y20b]}, and \eqref{[Equation (76), Y20b]} that 
\begin{align}
& \text{div} \mathring{R}_{q+1} - \nabla \pi_{q+1} \label{estimate 49}\\
=& \underbrace{(-\Delta)^{m} w_{q+1} + \partial_{t} (w_{q+1}^{(p)} + w_{q+1}^{(c)}) + \text{div} ((v_{l} + z_{l}) \otimes w_{q+1} + w_{q+1} \otimes (v_{l} + z_{l}))}_{\text{div} (R_{\text{lin}} ) + \nabla \pi_{\text{lin}}} \nonumber\\
&+ \underbrace{\text{div} (( w_{q+1}^{(c)} + w_{q+1}^{(t)}) \otimes w_{q+1} + w_{q+1}^{(p)} \otimes (w_{q+1}^{(c)} + w_{q+1}^{(t)}))}_{\text{div}(R_{\text{cor}}) + \nabla \pi_{\text{cor}}} + \underbrace{\text{div}(w_{q+1}^{(p)} \otimes w_{q+1}^{(p)} + \mathring{R}_{l}) + \partial_{t} w_{q+1}^{(t)}}_{\text{div}(R_{\text{osc}}) + \nabla \pi_{\text{osc}}} \nonumber \\
&+ \underbrace{\text{div} ( v_{q+1} \otimes z - v_{q+1} \otimes z_{l} + z \otimes v_{q+1} - z_{l} \otimes v_{q+1} + z \otimes z - z_{l} \otimes z_{l})}_{\text{div} (R_{\text{com2}} ) + \nabla \pi_{\text{com2}}} + \text{div} R_{\text{com1}} - \nabla \pi_{l} \nonumber 
\end{align} 
within which we specify 
\begin{subequations}\label{[Equation (91), Y20b]}
\begin{align}
R_{\text{lin}} \triangleq& R_{\text{linear}} \nonumber\\
\triangleq& \mathcal{R} ( -\Delta)^{m} w_{q+1}+ \mathcal{R} \partial_{t} (w_{q+1}^{(p)} + w_{q+1}^{(c)}) + (v_{l} + z_{l}) \mathring{\otimes} w_{q+1} + w_{q+1} \mathring{\otimes} (v_{l} + z_{l}), \label{[Equation (91a), Y20b]}\\
\pi_{\text{lin}} \triangleq& \pi_{\text{linear}} \triangleq (v_{l} + z_{l}) \cdot w_{q+1}, \label{[Equation (91b), Y20b]}\\
 R_{\text{cor}} \triangleq& R_{\text{corrector}} \triangleq (w_{q+1}^{(c)} + w_{q+1}^{(t)} ) \mathring{\otimes} w_{q+1} + w_{q+1}^{(p)} \mathring{\otimes} (w_{q+1}^{(c)} + w_{q+1}^{(t)}), \label{[Equation (91c), Y20b]}\\
 \pi_{\text{cor}} \triangleq & \pi_{\text{corrector}} \triangleq \frac{1}{2} [ (w_{q+1}^{(c)} + w_{q+1}^{(t)}) \cdot w_{q+1} + w_{q+1}^{(p)} \cdot (w_{q+1}^{(c)} + w_{q+1}^{(t)}) ], \label{[Equation (91d), Y20b]}\\
R_{\text{com2}} \triangleq&  R_{\text{commutator2}} \triangleq v_{q+1} \mathring{\otimes} (z - z_{l}) + (z - z_{l}) \mathring{\otimes} v_{q+1} + (z- z_{l}) \mathring{\otimes} z + z_{l} \mathring{\otimes} (z - z_{l}), \label{[Equation (91e), Y20b]} \\
\pi_{\text{com2}} \triangleq&  \pi_{\text{commutator2}} \triangleq v_{q+1} \cdot (z- z_{l}) + \frac{1}{2} \lvert z \rvert^{2} - \frac{1}{2} \lvert z_{l} \rvert^{2}.  \label{[Equation (91f), Y20b]}
\end{align}
\end{subequations}  
Concerning $R_{\text{osc}}$ that is arguably the most technical, first we can write 
\begin{equation}\label{estimate 6}
\text{div} ( w_{q+1}^{(p)} \otimes w_{q+1}^{(p)}) = \text{div} (w_{q+1}^{(p)} \mathring{\otimes} w_{q+1}^{(p)}) + \nabla \frac{1}{2} \lvert w_{q+1}^{(p)} \rvert^{2}, 
\end{equation} 
while 
\begin{equation}\label{[Equation (7.12a), LQ20]}
w_{q+1}^{(p)} \mathring{\otimes} w_{q+1}^{(p)} + \mathring{R}_{l} \overset{\eqref{estimate 3}\eqref{[Equation (74), Y20b]} }{=} \sum_{\zeta, \zeta' \in \Lambda} a_{\zeta} a_{\zeta'} \mathbb{P}_{\neq 0} (\mathbb{W}_{\zeta} \mathring{\otimes} \mathbb{W}_{\zeta'}) 
= \sum_{\zeta, \zeta' \in \Lambda} a_{\zeta} a_{\zeta'} \mathbb{P}_{\geq \frac{ \lambda_{q+1} \sigma}{2}} (\mathbb{W}_{\zeta} \mathring{\otimes} \mathbb{W}_{\zeta'}) 
\end{equation} 
because the minimal separation between active frequencies of $\mathbb{W}_{\zeta} \otimes \mathbb{W}_{\zeta'}$ and the zero frequency is given by $\lambda_{q+1}\sigma$ for $\zeta' = - \zeta$ and by $\frac{\lambda_{q+1}}{5} \geq \lambda_{q+1} \sigma$ for $\zeta' \neq - \zeta$ due to \eqref{[Equation (4.17), LQ20]}-\eqref{[Equation (4.18), LQ20]}  (cf. \cite[Equation (5.12)]{BV19a}). This leads to 
\begin{align}
& \text{div} (w_{q+1}^{(p)} \mathring{\otimes} w_{q+1}^{(p)} + \mathring{R}_{l}) \overset{\eqref{[Equation (7.12a), LQ20]}}{=} \mathbb{P}_{\neq 0} (\sum_{\zeta, \zeta' \in \Lambda} \nabla (a_{\zeta} a_{\zeta'}) \cdot \mathbb{P}_{\geq \frac{\lambda_{q+1} \sigma}{2}} (\mathbb{W}_{\zeta} \mathring{\otimes} \mathbb{W}_{\zeta'})  \label{[Equation (3.12b), LQ20]} \\
& \hspace{35mm} + a_{\zeta} a_{\zeta'} \nabla\cdot \mathbb{P}_{\geq \frac{\lambda_{q+1}\sigma}{2}} (\mathbb{W}_{\zeta} \mathring{\otimes} \mathbb{W}_{\zeta'} ))= \frac{1}{2} \sum_{\zeta, \zeta' \in \Lambda} \mathcal{E}_{\zeta, \zeta', 1} + \frac{1}{2} \sum_{\zeta, \zeta' \in \Lambda} \mathcal{E}_{\zeta, \zeta', 2}, \nonumber 
\end{align}
where 
\begin{subequations}
\begin{align}
& \mathcal{E}_{\zeta, \zeta', 1} \triangleq \mathbb{P}_{\neq 0} (\nabla (a_{\zeta} a_{\zeta'}) \cdot \mathbb{P}_{\geq \frac{\lambda_{q+1} \sigma}{2}} (\mathbb{W}_{\zeta} \mathring{\otimes} \mathbb{W}_{\zeta'} + \mathbb{W}_{\zeta'} \mathring{\otimes} \mathbb{W}_{\zeta})), \label{estimate 7}\\
& \mathcal{E}_{\zeta, \zeta', 2} \triangleq \mathbb{P}_{\neq 0} ( a_{\zeta} a_{\zeta'} \nabla\cdot (\mathbb{W}_{\zeta} \mathring{\otimes} \mathbb{W}_{\zeta'} + \mathbb{W}_{\zeta'} \mathring{\otimes} \mathbb{W}_{\zeta})), \label{estimate 8}
\end{align}
\end{subequations} 
in which we used symmetry, and also dropped the unnecessary frequency projection $\mathbb{P}_{\geq \frac{\lambda_{q+1} \sigma}{2}}$ in $\mathcal{E}_{\zeta, \zeta', 2}$. Now for any $\zeta, \zeta' \in \Lambda \subset \mathbb{S}^{1}$, we can compute
\begin{align}
&(\zeta^{\bot} \otimes \zeta'^{\bot} + \zeta'^{\bot} \otimes \zeta^{\bot}) (\zeta + \zeta') = 
\begin{pmatrix}
\zeta^{1}\zeta^{2}\zeta'^{2} + \zeta^{2}\zeta'^{1}\zeta'^{2}-(\zeta^{2})^{2} \zeta'^{1} - \zeta^{1} (\zeta'^{2})^{2} \\
- (\zeta^{1})^{2} \zeta'^{2} - \zeta^{2} (\zeta'^{1})^{2} + \zeta^{1} \zeta^{2} \zeta'^{1} + \zeta^{1} \zeta'^{1} \zeta'^{2} 
\end{pmatrix} \label{[Equation (7.13a), LQ20]}\\
& \hspace{10mm} = \begin{pmatrix}
\zeta^{1} [ \zeta^{2}\zeta'^{2} +(\zeta'^{1})^{2} - 1] + \zeta'^{1}[\zeta^{2}\zeta'^{2} + (\zeta^{1})^{2} - 1] \\
\zeta^{2} [(\zeta'^{2})^{2} + \zeta^{1}\zeta'^{1} - 1 ] + \zeta'^{2}[(\zeta^{2})^{2} + \zeta^{1}\zeta'^{1} - 1] 
\end{pmatrix} =  (\zeta^{\bot} \cdot \zeta'^{\bot} - 1)\text{Id} (\zeta + \zeta'). \nonumber 
\end{align}  
It follows that 
\begin{align}
\nabla\cdot (b_{\zeta} \mathring{\otimes} b_{\zeta'} + b_{\zeta'} \mathring{\otimes} b_{\zeta})(x) =& \nabla \cdot (b_{\zeta} \otimes b_{\zeta'} + b_{\zeta'} \otimes b_{\zeta} - b_{\zeta} \cdot b_{\zeta'} \text{Id} )(x) \nonumber\\
\overset{\eqref{[Equation (4.2), LQ20]} \eqref{[Equation (7.13a), LQ20]}}{=}& i\lambda_{q+1}e^{i\lambda_{q+1} (\zeta + \zeta') \cdot x} (\zeta + \zeta')  \overset{\eqref{[Equation (4.2), LQ20]}}{=} \nabla (\lambda_{q+1}^{2} \psi_{\zeta} \psi_{\zeta'})(x).  \label{[Equation (7.13b), LQ20]}
\end{align} 
Consequently,
\begin{equation}\label{[Equation (7.13c), LQ20]}
 \nabla\cdot (\mathbb{W}_{\zeta} \mathring{\otimes} \mathbb{W}_{\zeta'} + \mathbb{W}_{\zeta'} \mathring{\otimes} \mathbb{W}_{\zeta}) \overset{\eqref{[Equation (7.13b), LQ20]}}{=} ( b_{\zeta} \mathring{\otimes} b_{\zeta'} + b_{\zeta'} \mathring{\otimes} b_{\zeta}) \cdot \nabla (\eta_{\zeta} \eta_{\zeta'}) + (\eta_{\zeta} \eta_{\zeta'}) \nabla (\lambda_{q+1}^{2} \psi_{\zeta} \psi_{\zeta'}).
\end{equation}
After splitting $\frac{1}{2} \sum_{\zeta, \zeta' \in \Lambda} \mathcal{E}_{\zeta, \zeta', 2} = \frac{1}{2}( \sum_{\zeta, \zeta' \in \Lambda: \hspace{0.5mm}  \zeta + \zeta' \neq 0} + \sum_{\zeta, \zeta' \in \Lambda: \hspace{0.5mm} \zeta + \zeta' = 0}) \mathcal{E}_{\zeta, \zeta', 2}$, this allows us to write  
\begin{align}
& \frac{1}{2} \sum_{\zeta, \zeta' \in \Lambda: \hspace{0.5mm} \zeta + \zeta' \neq 0} \mathcal{E}_{\zeta, \zeta', 2} \label{[Equation (7.13d), LQ20]}\\
\overset{ \eqref{[Equation (4.17), LQ20]}\eqref{estimate 8} }{=}&  \frac{1}{2} \sum_{\zeta, \zeta' \in \Lambda: \hspace{0.5mm} \zeta + \zeta' \neq 0}  \mathbb{P}_{\neq 0} ( a_{\zeta} a_{\zeta'} \nabla\cdot \mathbb{P}_{\geq \frac{\lambda_{q+1}}{10}}(\mathbb{W}_{\zeta} \mathring{\otimes} \mathbb{W}_{\zeta'} + \mathbb{W}_{\zeta'} \mathring{\otimes} \mathbb{W}_{\zeta}))
\overset{\eqref{[Equation (7.13c), LQ20]}}{=}  \frac{1}{2} \sum_{\zeta, \zeta' \in \Lambda} \sum_{k=1}^{4} \mathcal{E}_{\zeta, \zeta', 2, k} \nonumber 
\end{align} 
where
\begin{subequations}\label{estimate 43}
\begin{align}
& \mathcal{E}_{\zeta, \zeta', 2, 1} \triangleq \mathbb{P}_{\neq 0} (a_{\zeta} a_{\zeta'} \mathbb{P}_{\geq \frac{\lambda_{q+1}}{10}} [(b_{\zeta} \mathring{\otimes} b_{\zeta'} + b_{\zeta'} \mathring{\otimes} b_{\zeta}) \cdot \nabla (\eta_{\zeta} \eta_{\zeta'})]1_{\zeta + \zeta' \neq 0}, \label{estimate 39} \\
& \mathcal{E}_{\zeta, \zeta', 2, 2} \triangleq  \nabla \mathbb{P}_{\neq 0} (a_{\zeta} a_{\zeta'} \mathbb{P}_{\geq \frac{\lambda_{q+1}}{10}} (\eta_{\zeta} \eta_{\zeta'} \lambda_{q+1}^{2} \psi_{\zeta} \psi_{\zeta'}))1_{\zeta + \zeta' \neq 0},  \label{estimate 40}\\
& \mathcal{E}_{\zeta, \zeta', 2, 3} \triangleq - \mathbb{P}_{\neq 0} (\nabla (a_{\zeta} a_{\zeta'} ) \mathbb{P}_{\geq \frac{\lambda_{q+1}}{10}} (\eta_{\zeta} \eta_{\zeta'} \lambda_{q+1}^{2} \psi_{\zeta} \psi_{\zeta'}) )1_{\zeta + \zeta' \neq 0}, \label{estimate 41} \\
& \mathcal{E}_{\zeta, \zeta', 2, 4} \triangleq  - \mathbb{P}_{\neq 0} (a_{\zeta} a_{\zeta'} \mathbb{P}_{\geq \frac{\lambda_{q+1}}{10}} (\nabla (\eta_{\zeta} \eta_{\zeta'}) \lambda_{q+1}^{2} \psi_{\zeta} \psi_{\zeta'}))1_{\zeta + \zeta' \neq 0}\label{estimate 42}
\end{align}
\end{subequations} 
(cf. \cite[pg. 131]{BV19a}). On the other hand, in case $\zeta + \zeta' = 0$ we have  $\nabla (\lambda_{q+1}^{2}\psi_{\zeta} \psi_{-\zeta}) \overset{\eqref{[Equation (4.2), LQ20]}}{=} 0$, while we can multiply \eqref{[Equation (4.12), LQ20]} by $2 \eta_{\zeta}$ to deduce $\mu^{-1} \partial_{t} \lvert \eta_{\zeta} \rvert^{2}= \pm (\zeta\cdot\nabla) \lvert \eta_{\zeta} \rvert^{2}$ for all $\zeta \in \Lambda^{\pm}$. Hence,  
\begin{align}
&\nabla\cdot ( \mathbb{W}_{\zeta} \mathring{\otimes} \mathbb{W}_{-\zeta} + \mathbb{W}_{-\zeta} \mathring{\otimes} \mathbb{W}_{\zeta}) \overset{\eqref{[Equation (4.15), LQ20]}\eqref{[Equation (7.13b), LQ20]}}{=}  [b_{\zeta} \mathring{\otimes} b_{-\zeta} + b_{-\zeta} \mathring{\otimes} b_{\zeta}] \nabla (\eta_{\zeta} \eta_{-\zeta})   \nonumber \\
\overset{\eqref{[Equation (4.2), LQ20]}}{=}& 2 \zeta^{\bot} \mathring{\otimes} \zeta^{\bot}\nabla \eta_{\zeta}^{2}    =  [ \text{Id} - 2 \zeta \otimes \zeta]\nabla \eta_{\zeta}^{2}  = \nabla \eta_{\zeta}^{2} - 2 (\zeta\cdot\nabla) \eta_{\zeta}^{2} \zeta = \nabla \eta_{\zeta}^{2} \mp 2 \mu^{-1} (\partial_{t} \eta_{\zeta}^{2}) \zeta. \label{[Equation (7.14b), LQ20]}
\end{align} 
This allows us to write 
\begin{align}
 \frac{1}{2} \sum_{\zeta, \zeta' \in \Lambda: \hspace{0.5mm} \zeta + \zeta' = 0} \mathcal{E}_{\zeta, \zeta', 2}  \overset{\eqref{[Equations (4.6), (4.7) and (4.8), LQ20]}\eqref{estimate 8} }{=}& \frac{1}{2} \sum_{\zeta \in \Lambda} \mathbb{P}_{\neq 0} (a_{\zeta}^{2} \nabla\cdot ( \mathbb{W}_{\zeta} \mathring{\otimes} \mathbb{W}_{-\zeta} + \mathbb{W}_{-\zeta} \mathring{\otimes} \mathbb{W}_{\zeta} )) \label{[Equation (7.14c), LQ20]}\\
 \overset{\eqref{[Equation (7.14b), LQ20]}}{=}& \frac{1}{2} \sum_{\zeta \in \Lambda} \nabla (a_{\zeta}^{2} \mathbb{P}_{\geq \frac{\lambda_{q+1} \sigma}{2}} \eta_{\zeta}^{2}) - \mathbb{P}_{\neq 0} (\nabla a_{\zeta}^{2} \mathbb{P}_{\geq \frac{\lambda_{q+1} \sigma}{2}} \eta_{\zeta}^{2}) \nonumber \\
& - \mu^{-1} (\sum_{\zeta \in \Lambda^{+}} - \sum_{\zeta \in \Lambda^{-}}) \partial_{t} \mathbb{P}_{\neq 0} (a_{\zeta}^{2} \mathbb{P}_{\neq 0} (\eta_{\zeta}^{2} \zeta) ) - \mathbb{P}_{\neq 0} (\partial_{t} a_{\zeta}^{2} \mathbb{P}_{\geq \frac{\lambda_{q+1} \sigma}{2}} (\eta_{\zeta}^{2} \zeta))  \nonumber 
\end{align}  
where we also used that $\eta_{\zeta}$ is $(\mathbb{T}/\lambda_{q+1} \sigma)^{2}$-periodic and hence $\mathbb{P}_{\geq \frac{\lambda_{q+1} \sigma}{2}} \eta_{\zeta}^{2} = \mathbb{P}_{\neq 0} \eta_{\zeta}^{2}$. At last, we obtain by using the definition of $\mathbb{P} = \text{Id} - \nabla \Delta^{-1} \nabla\cdot$
\begin{align}
& \frac{1}{2} \sum_{\zeta, \zeta' \in \Lambda: \hspace{0.5mm} \zeta + \zeta' = 0} \mathcal{E}_{\zeta, \zeta', 2} + \partial_{t} w_{q+1}^{(t)} \nonumber\\
\overset{\eqref{[Equation (74), Y20b]} \eqref{[Equation (7.14c), LQ20]}}{=}& \frac{1}{2} \sum_{\zeta \in \Lambda} \nabla (a_{\zeta}^{2} \mathbb{P}_{\geq \frac{\lambda_{q+1} \sigma}{2}} \eta_{\zeta}^{2}) - \mathbb{P}_{\neq 0} (\nabla a_{\zeta}^{2} \mathbb{P}_{\geq \frac{\lambda_{q+1} \sigma}{2}} \eta_{\zeta}^{2}) \nonumber\\
& - \mu^{-1} ( \sum_{\zeta \in \Lambda^{+}} - \sum_{\zeta \in \Lambda^{-}}) \partial_{t} \mathbb{P}_{\neq 0} (a_{\zeta}^{2} \mathbb{P}_{\neq 0} (\eta_{\zeta}^{2} \zeta)) - \mathbb{P}_{\neq 0} (\partial_{t} a_{\zeta}^{2} \mathbb{P}_{\geq \frac{\lambda_{q+1} \sigma}{2}} (\eta_{\zeta}^{2} \zeta)) \nonumber \\
&+ \mu^{-1} (\sum_{\zeta \in \Lambda^{+}} - \sum_{\zeta \in \Lambda^{-}}) (\text{Id} - \nabla \Delta^{-1} \nabla\cdot) \partial_{t} \mathbb{P}_{\neq 0} (a_{\zeta}^{2} \mathbb{P}_{\neq 0} \eta_{\zeta}^{2} \zeta) = \sum_{k=1}^{4}A_{k}\label{[Equation (7.14e), LQ20]}
\end{align}
where 
\begin{subequations}\label{estimate 48}
\begin{align}
& A_{1} \triangleq \frac{1}{2} \sum_{\zeta \in \Lambda} \nabla (a_{\zeta}^{2} \mathbb{P}_{\geq \frac{\lambda_{q+1}\sigma}{2}} \eta_{\zeta}^{2}), \label{estimate 44} \\
& A_{2} \triangleq - \frac{1}{2} \sum_{\zeta \in \Lambda} \mathbb{P}_{\neq 0} (\nabla a_{\zeta}^{2} \mathbb{P}_{\geq \frac{\lambda_{q+1} \sigma}{2}} \eta_{\zeta}^{2}),  \label{estimate 45}\\
& A_{3} \triangleq \mu^{-1} (\sum_{\zeta \in \Lambda^{+}} - \sum_{\zeta \in \Lambda^{-}}) \mathbb{P}_{\neq 0} (\partial_{t} a_{\zeta}^{2} \mathbb{P}_{\geq \frac{\lambda_{q+1} \sigma}{2}} (\eta_{\zeta}^{2} \zeta)),\label{estimate 46} \\
& A_{4} \triangleq - \nabla \Delta^{-1} \nabla\cdot \mu^{-1} (\sum_{\zeta \in \Lambda^{+}} - \sum_{\zeta \in \Lambda^{-}}) \mathbb{P}_{\neq 0} \partial_{t} (a_{\zeta}^{2} \mathbb{P}_{\neq 0} \eta_{\zeta}^{2} \zeta). \label{estimate 47}
\end{align}
\end{subequations}  
Therefore,
\begin{align}
& \text{div} (w_{q+1}^{(p)} \otimes w_{q+1}^{(p)} + \mathring{R}_{l}) + \partial_{t}w_{q+1}^{(t)} \nonumber \\
\overset{\eqref{estimate 6} \eqref{[Equation (3.12b), LQ20]}}{=}& \frac{1}{2} \sum_{\zeta, \zeta' \in \Lambda} \mathcal{E}_{\zeta, \zeta', 1} + \frac{1}{2} \sum_{\zeta, \zeta' \in \Lambda} \mathcal{E}_{\zeta, \zeta', 2} + \partial_{t} w_{q+1}^{(t)} + \nabla \frac{1}{2} \lvert w_{q+1}^{(p)}\rvert^{2} \nonumber \\
\overset{\eqref{[Equation (7.13d), LQ20]}\eqref{[Equation (7.14e), LQ20]} }{=}& \frac{1}{2} \sum_{\zeta, \zeta' \in \Lambda} \mathcal{E}_{\zeta, \zeta', 1} + \frac{1}{2} \sum_{\zeta, \zeta' \in \Lambda} \sum_{k= 1, 3, 4} \mathcal{E}_{\zeta, \zeta', 2, k} + A_{2} + A_{3} \nonumber \\
&+ \nabla [ \frac{1}{2} \lvert w_{q+1}^{(p)} \rvert^{2} + \frac{1}{2} \sum_{\zeta, \zeta' \in \Lambda} \mathbb{P}_{\neq 0} (a_{\zeta} a_{\zeta'} \mathbb{P}_{\geq \frac{\lambda_{q+1}}{10}} (\eta_{\zeta} \eta_{\zeta'} \lambda_{q+1}^{2} \psi_{\zeta} \psi_{\zeta'})) \nonumber \\
&  + \frac{1}{2} \sum_{\zeta \in \Lambda} a_{\zeta}^{2} \mathbb{P}_{\geq \frac{\lambda_{q+1} \sigma}{2}} \eta_{\zeta}^{2} - \Delta^{-1} \nabla\cdot \mu^{-1} (\sum_{\zeta \in \Lambda^{+}} - \sum_{\zeta \in \Lambda^{-}}) \mathbb{P}_{\neq 0} \partial_{t} (a_{\zeta}^{2} \mathbb{P}_{\neq 0} \eta_{\zeta}^{2} \zeta )],   \label{estimate 26}
\end{align} 
which finally leads us to define 
\begin{subequations}
\begin{align}
R_{\text{osc}} \triangleq& R_{\text{oscillation}} \triangleq \mathcal{R} (\frac{1}{2} \sum_{\zeta, \zeta' \in \Lambda} \mathcal{E}_{\zeta, \zeta', 1} + \frac{1}{2} \sum_{\zeta, \zeta' \in \Lambda} \sum_{k= 1, 3, 4} \mathcal{E}_{\zeta, \zeta', 2, k} + A_{2} + A_{3} ), \label{estimate 27}\\
\pi_{\text{osc}} \triangleq& \pi_{\text{oscillation}} \triangleq \frac{1}{2} \lvert w_{q+1}^{(p)} \rvert^{2} + \frac{1}{2} \sum_{\zeta, \zeta' \in \Lambda} \mathbb{P}_{\neq 0} (a_{\zeta} a_{\zeta'} \mathbb{P}_{\geq \frac{\lambda_{q+1} \sigma}{2}} (\eta_{\zeta} \eta_{\zeta'} \lambda_{q+1}^{2} \psi_{\zeta} \psi_{\zeta'}))1_{\zeta + \zeta' \neq 0}   \label{estimate 28}\\
& \hspace{12mm} + \frac{1}{2} \sum_{\zeta \in \Lambda} a_{\zeta}^{2} \mathbb{P}_{\geq \frac{\lambda_{q+1} \sigma}{2}} \eta_{\zeta}^{2} - \Delta^{-1} \nabla\cdot \mu^{-1} (\sum_{\zeta \in \Lambda^{+}} - \sum_{\zeta \in \Lambda^{-}}) \mathbb{P}_{\neq 0} \partial_{t} (a_{\zeta}^{2} \mathbb{P}_{\neq 0} \eta_{\zeta}^{2} \zeta ). \nonumber 
\end{align} 
\end{subequations} 
Considering \eqref{estimate 49} we define  
\begin{equation}\label{[Equation (92), Y20b]}
\pi_{q+1} \triangleq \pi_{l} - \pi_{\text{lin}} - \pi_{\text{cor}} - \pi_{\text{osc}} - \pi_{\text{com2}} \text{ and } \mathring{R}_{q+1} \triangleq R_{\text{lin}} + R_{\text{cor}} + R_{\text{osc}} + R_{\text{com2}} + R_{\text{com1}}. 
\end{equation} 

Now we choose 
\begin{equation}\label{p ast}
p^{\ast} \triangleq \frac{16 (1- 6\eta)}{300 \alpha + 16 (1-7\eta)}, 
\end{equation} 
which can be readily verified to be an element in $(1,2)$ using \eqref{[Equation (2.2), LQ20]}-\eqref{alpha}. For $R_{\text{lin}}$ we first estimate by Gagliardo-Nirenberg's inequality for all $t \in [0, T_{L}]$ 
\begin{align}
 \lVert \mathcal{R} (-\Delta)^{m} w_{q+1} \rVert_{C_{t}L_{x}^{p^{\ast}}} \lesssim& \lVert w_{q+1} \rVert_{C_{t}L_{x}^{p^{\ast}}}^{1-m^{\ast}} (\lVert \nabla (w_{q+1}^{(p)} + w_{q+1}^{(c)}) \rVert_{C_{t}L_{x}^{p^{\ast}}} + \lVert \nabla w_{q+1}^{(t)} \rVert_{C_{t}L_{x}^{p^{\ast}}})^{m^{\ast}} \nonumber\\
&   \overset{\eqref{[Equation (76), Y20b]} \eqref{[Equation (78a), Y20b]} \eqref{[Equation (80), Y20b]}\eqref{[Equation (88), Y20b]} }{\lesssim} \delta_{q+1}^{\frac{1}{2}} M_{0}(t)^{\frac{1}{2}} r^{1- \frac{2}{p^{\ast}}} (l^{-\frac{3}{2}} + \delta_{q+1}^{\frac{1}{2}} M_{0}(t)^{\frac{1}{2}} l^{-3} r \lambda_{q+1}^{4 \eta -1} )^{1- m^{\ast}} \nonumber\\
& \hspace{22mm} \times (l^{-\frac{3}{2}} \lambda_{q+1} + \mu^{-1} \delta_{q+1}^{\frac{1}{2}} M_{0}(t)^{\frac{1}{2}} l^{-3} r^{2} \lambda_{q+1} \sigma)^{m^{\ast}}. \label{estimate 9}
\end{align} 
Second, for all $t \in [0, T_{L}]$  
\begin{align}
 \lVert \mathcal{R} \partial_{t} (w_{q+1}^{(p)} + w_{q+1}^{(c)}) & \rVert_{C_{t}L_{x}^{p^{\ast}}} 
\overset{\eqref{[Equation (5.9), LQ20]}}{\lesssim}  \sum_{\zeta \in \Lambda} \lVert \partial_{t}(a_{\zeta} \eta_{\zeta} ) \psi_{\zeta} \rVert_{C_{t} L_{x}^{p^{\ast}}} \nonumber \\
\overset{\eqref{[Equations (4.4) and (4.5), LQ20]} \eqref{[Equation (4.23), LQ20]} \eqref{[Equation (73), Y20b]}}{\lesssim}&  \delta_{q+1}^{\frac{1}{2}} M_{0}(t)^{\frac{1}{2}} r^{1- \frac{2}{p^{\ast}}} [ l^{-4} \lambda_{q+1}^{-1} + l^{- \frac{3}{2}} \sigma \mu r] \lesssim \delta_{q+1}^{\frac{1}{2}} M_{0}(t)^{\frac{1}{2}} r^{1- \frac{2}{p^{\ast}}} l^{- \frac{3}{2}} \lambda_{q+1}^{1- 8 \eta}.  \label{estimate 10}
\end{align} 
Finally, we can estimate for all $t \in [0, T_{L}]$ 
\begin{align}
& \lVert ( v_{l} + z_{l}) \mathring{\otimes} w_{q+1} + w_{q+1} \mathring{\otimes} (v_{l} + z_{l}) \rVert_{C_{t}L_{x}^{p^{\ast}}}  \lesssim (\lVert v_{q} \rVert_{C_{t}C_{x}} + \lVert z  \rVert_{C_{t}C_{x}}) \lVert w_{q+1} \rVert_{C_{t}L_{x}^{p^{\ast}}} \nonumber\\
& \hspace{15mm} \overset{\eqref{[Equation (38), Y20b]} \eqref{[Equation (40b), Y20b]} \eqref{[Equation (78a), Y20b]} \eqref{[Equation (80), Y20b]}}{\lesssim}  M_{0}(t)^{\frac{1}{2}} \lambda_{q}^{4} [ \delta_{q+1}^{\frac{1}{2}} M_{0}(t)^{\frac{1}{2}} l^{-\frac{3}{2}} r^{1- \frac{2}{p^{\ast}}} + \delta_{q+1} M_{0}(t) l^{-3} r^{2- \frac{2}{p^{\ast}}} \lambda_{q+1}^{4 \eta - 1}].  \label{estimate 11}
\end{align} 
Due to \eqref{estimate 9}-\eqref{estimate 11} we obtain for all $t \in [0, T_{L}]$  
\begin{align}
\lVert R_{\text{lin}} \rVert_{C_{t}L_{x}^{p^{\ast}}} \nonumber
\overset{\eqref{[Equation (91a), Y20b]}}{\leq}& \lVert \mathcal{R} (-\Delta)^{m} w_{q+1} \rVert_{C_{t}L_{x}^{p^{\ast}}} + \lVert \mathcal{R} \partial_{t} (w_{q+1}^{(p)} + w_{q+1}^{(c)}) \rVert_{C_{t}L_{x}^{p^{\ast}}} \nonumber\\
& + \lVert (v_{l} + z_{l}) \mathring{\otimes} w_{q+1} + w_{q+1} \mathring{\otimes} (v_{l} + z_{l}) \rVert_{C_{t}L_{x}^{p^{\ast}}}  \nonumber\\
\overset{\eqref{estimate 9}\eqref{estimate 10}\eqref{estimate 11}}{\lesssim}& \delta_{q+1}^{\frac{1}{2}} M_{0}(t)^{\frac{1}{2}} r^{1- \frac{2}{p^{\ast}}}( l^{-\frac{3}{2}} + \delta_{q+1}^{\frac{1}{2}} M_{0}(t)^{\frac{1}{2}}l^{-3} r \lambda_{q+1}^{4 \eta -1} )^{1 - m^{\ast}}\nonumber  \\
& \hspace{20mm}  \times (l^{- \frac{3}{2}} \lambda_{q+1} + \mu^{-1} \delta_{q+1}^{\frac{1}{2}} M_{0}(t)^{\frac{1}{2}} l^{-3} r^{2} \lambda_{q+1} \sigma )^{m^{\ast}}  \nonumber  \\
&+ \delta_{q+1}^{\frac{1}{2}} M_{0}(t)^{\frac{1}{2}} r^{1- \frac{2}{p^{\ast}}} l^{- \frac{3}{2}} \lambda_{q+1}^{1- 8 \eta}  \nonumber\\
&+ M_{0}(t)^{\frac{1}{2}} \lambda_{q}^{4} [ \delta_{q+1}^{\frac{1}{2}} M_{0}(t)^{\frac{1}{2}} l^{-\frac{3}{2}} r^{1- \frac{2}{p^{\ast}}} + \delta_{q+1} M_{0}(t) l^{-3} r^{2- \frac{2}{p^{\ast}}} \lambda_{q+1}^{4 \eta - 1}]  \nonumber  \\
\lesssim& M_{0}(t)^{\frac{1}{2}} r^{1 - \frac{2}{p^{\ast}}} l^{-\frac{3}{2}} \lambda_{q+1}^{m^{\ast}} + M_{0}(t)^{\frac{1}{2}}r^{1- \frac{2}{p^{\ast}}} l^{-\frac{3}{2}} \lambda_{q+1}^{1- 8 \eta}  + M_{0}(t) r^{1- \frac{2}{p^{\ast}}} l^{-\frac{3}{2}} \lambda_{q}^{4}. \label{estimate 12}
\end{align}  
Now within the right hand side of \eqref{estimate 12}, first we can estimate using $2 \beta b < \frac{\alpha}{8}$ from \eqref{estimate 21} and taking $a \in 10 \mathbb{N}$ sufficiently large 
\begin{align}
& M_{0}(t)^{\frac{1}{2}} r^{1- \frac{2}{p^{\ast}}} l^{- \frac{3}{2}} \lambda_{q+1}^{m^{\ast}} \nonumber\\
=&
\begin{cases}
M_{0}(t)^{\frac{1}{2}} r^{1- \frac{2}{p^{\ast}}} l^{-\frac{3}{2}} \overset{\eqref{[Equation (57), Y20b]}}{\lesssim} M_{0}(t) \delta_{q+2}   \lambda_{q+2}^{2\beta} \lambda_{q+1}^{(1- 6 \eta)(1- \frac{2}{p^{\ast}})} \lambda_{q+1}^{3\alpha} & \text{ if } m \in (0, \frac{1}{2}), \\
M_{0}(t)^{\frac{1}{2}} r^{1- \frac{2}{p^{\ast}}} l^{- \frac{3}{2}} \lambda_{q+1}^{2m-1} \overset{\eqref{[Equation (57), Y20b]}}{\lesssim} M_{0}(t) \delta_{q+2}  \lambda_{q+2}^{2 \beta}  \lambda_{q+1}^{(1-6\eta)(1 - \frac{2}{p^{\ast}})} \lambda_{q+1}^{3\alpha} \lambda_{q+1}^{2m-1}    &   \text{ if } m \in [\frac{1}{2}, 1),  
\end{cases} \nonumber\\
\overset{\eqref{p ast}}{\lesssim}& M_{0}(t) \delta_{q+2} \lambda_{q+1}^{- \frac{275 \alpha}{8}} \ll (2\pi)^{-2 ( \frac{p^{\ast} - 1}{p^{\ast}})} \frac{M_{0}(t) c_{R} \delta_{q+2}}{15}.\label{estimate 13}
\end{align} 
Second within \eqref{estimate 12} we estimate using $2 \beta b < \frac{\alpha}{8}$ from \eqref{estimate 21} and taking $a \in 10 \mathbb{N}$ sufficiently large 
\begin{align}
M_{0}(t)^{\frac{1}{2}} r^{1- \frac{2}{p^{\ast}}} l^{-\frac{3}{2}} \lambda_{q+1}^{1- 8 \eta} \overset{\eqref{[Equation (57), Y20b]}}{\lesssim}& M_{0}(t) \delta_{q+2} \lambda_{q+1}^{\frac{\alpha}{8}} (\lambda_{q+1}^{1- 6 \eta})^{1- \frac{2}{p^{\ast}}} \lambda_{q+1}^{3\alpha} \lambda_{q+1}^{1- 8 \eta}  \nonumber \\
\overset{\eqref{p ast}}{\approx}& M_{0}(t) \delta_{q+2} \lambda_{q+1}^{- \frac{275 \alpha}{8}}   \ll (2\pi)^{-2 ( \frac{p^{\ast} - 1}{p^{\ast}})} \frac{M_{0}(t) c_{R} \delta_{q+2}}{15}.  \label{estimate 14}
\end{align}
Third within \eqref{estimate 12} we estimate also using $2 \beta b < \frac{\alpha}{8}$ from \eqref{estimate 21} and taking $a \in 10 \mathbb{N}$ sufficiently large 
\begin{align}
M_{0}(t) r^{1- \frac{2}{p^{\ast}}} l^{-\frac{3}{2}} \lambda_{q}^{4} \overset{\eqref{[Equation (57), Y20b]}}{\lesssim}&  M_{0}(t) \delta_{q+2} \lambda_{q+1}^{\frac{\alpha}{8} + 4\alpha} (\lambda_{q+1}^{1- 6 \eta})^{1- \frac{2}{p^{\ast}}}  \nonumber\\
\overset{\eqref{p ast}}{\lesssim}& M_{0}(t) \delta_{q+2} \lambda_{q+1}^{- \frac{267 \alpha}{8}}  \ll (2\pi)^{-2 ( \frac{p^{\ast} - 1}{p^{\ast}})} \frac{M_{0}(t) c_{R} \delta_{q+2}}{15}.  \label{estimate 15}
\end{align}
By applying \eqref{estimate 13}-\eqref{estimate 15} to \eqref{estimate 12}, we obtain 
\begin{equation}\label{[Equation (98), Y20b]}
\lVert R_{\text{lin}} \rVert_{C_{t}L_{x}^{p^{\ast}}} \leq (2\pi)^{-2 (\frac{p^{\ast} - 1}{p^{\ast}})} \frac{M_{0}(t) c_{R} \delta_{q+2}}{5}. 
\end{equation} 

Next, for all $t \in [0, T_{L}]$ we estimate by H$\ddot{\mathrm{o}}$lder's inequality, utilizing $2 \beta b < \frac{\alpha}{8}$ due to \eqref{estimate 21}, and taking $a \in 10 \mathbb{N}$ sufficiently large, 
\begin{align} 
 \lVert R_{\text{cor}} \rVert_{C_{t}L_{x}^{p^{\ast}}} 
\overset{\eqref{[Equation (76), Y20b]}\eqref{[Equation (91c), Y20b]}  }{\lesssim}& ( \lVert w_{q+1}^{(c)} \rVert_{C_{t}L_{x}^{2p^{\ast}}} + \lVert w_{q+1}^{(t)} \rVert_{C_{t}L_{x}^{2p^{\ast}}}) ( \lVert w_{q+1}^{(c)} \rVert_{C_{t}L_{x}^{2p^{\ast}}} + \lVert w_{q+1}^{(t)} \rVert_{C_{t}L_{x}^{2p^{\ast}}} + \lVert w_{q+1}^{(p)} \rVert_{C_{t}L_{x}^{2p^{\ast}}}) \nonumber\\
\overset{\eqref{[Equation (57), Y20b]} \eqref{[Equation (78), Y20b]} \eqref{[Equation (80), Y20b]} }{\lesssim}& [ M_{0}(t)^{\frac{1}{2}} r^{2- \frac{1}{p^{\ast}}} l^{-3} ( \lambda_{q+1}^{5\alpha} \lambda_{q+1}^{2 \eta -1} + M_{0}(t)^{\frac{1}{2}} \lambda_{q+1}^{4 \eta -1} )]\nonumber\\
& \times [ M_{0}(t)^{\frac{1}{2}} r^{1- \frac{1}{p^{\ast}}} l^{- \frac{3}{2}} (\lambda_{q+1}^{- 2 \eta} \lambda_{q+1}^{3\alpha} M_{0}(t)^{\frac{1}{2}} + 1 )] \nonumber\\
\overset{\eqref{p ast}}{\lesssim}& \delta_{q+2} M_{0}(t) \lambda_{q+1}^{-\frac{227 \alpha}{8}} M_{0}(t)^{\frac{1}{2}} \leq (2\pi)^{-2 (\frac{p^{\ast} -1}{p^{\ast}})} \frac{M_{0} (t) c_{R}\delta_{q+2}  }{5}. \label{[Equation (100), Y20b]}
\end{align} 

Next, we estimate $R_{\text{oscillation}}$ from \eqref{estimate 27}. First, we rely on Lemma \ref{[Lemma 7.4, LQ20]}, use that $2 \beta b < \frac{\alpha}{8}$ due to \eqref{estimate 21}, and take $a \in 10 \mathbb{N}$ sufficiently large to deduce for all $t \in [0, T_{L}]$  
\begin{align} 
& \lVert \mathcal{R} ( \frac{1}{2} \sum_{\zeta, \zeta' \in \Lambda} \mathcal{E}_{\zeta, \zeta', 1} ) \rVert_{C_{t}L_{x}^{p^{\ast}}} 
\overset{\eqref{estimate 7}}{\lesssim} ( \frac{\lambda_{q+1} \sigma}{2})^{-1} \sum_{\zeta,\zeta' \in \Lambda} \lVert \nabla (a_{\zeta} a_{\zeta'} ) \rVert_{C_{t}C_{x}^{2}} \lVert \mathbb{W}_{\zeta} \mathring{\otimes} \mathbb{W}_{\zeta'} + \mathbb{W}_{\zeta'} \mathring{\otimes} \mathbb{W}_{\zeta} \rVert_{C_{t}L_{x}^{p^{\ast}}} \nonumber\\
&  \overset{\eqref{[Equation (4.22), LQ20]} \eqref{[Equation (73), Y20b]} }{\lesssim} \lambda_{q+1}^{-2\eta} \delta_{q+1} M_{0}(t) l^{-15} r^{2- \frac{2}{p^{\ast}}}  
\overset{\eqref{[Equation (57), Y20b]} \eqref{p ast}}{\lesssim} \delta_{q+2} M_{0}(t) \lambda_{q+1}^{- \frac{59\alpha}{8}}   \leq (2\pi)^{-2 (\frac{p^{\ast} - 1}{p^{\ast}})} \frac{c_{R} \delta_{q+2} M_{0}(t)}{25}.  \label{estimate 16}
\end{align} 
Here the hypothesis of Lemma \ref{[Lemma 7.4, LQ20]} requires that $\frac{\lambda_{q+1} \sigma}{2} \in \mathbb{N}$ which is satisfied because $\lambda_{q+1} \sigma \in 10 \mathbb{N}$ by our choice; we also clearly see that $\lambda_{q+1} \sigma \in 5 \mathbb{N}$ would not have been sufficient for this purpose. Similarly to \eqref{estimate 16}, relying on Lemma \ref{[Lemma 7.4, LQ20]} we can estimate for all $t \in [0, T_{L}]$  
\begin{align}
&\lVert \mathcal{R} (\frac{1}{2} \sum_{\zeta, \zeta' \in \Lambda} \mathcal{E}_{\zeta, \zeta', 2, 3}) \rVert_{C_{t}L_{x}^{p^{\ast}}} \overset{\eqref{estimate 41}}{\lesssim} \sum_{\zeta, \zeta' \in \Lambda} (\frac{\lambda_{q+1}}{10})^{-1} \lVert \nabla (a_{\zeta} a_{\zeta'} ) \rVert_{C_{t}C_{x}^{2}} \lVert \eta_{\zeta} \eta_{\zeta'} \lambda_{q+1}^{2} \psi_{\zeta} \psi_{\zeta'} \rVert_{C_{t}L_{x}^{p^{\ast}}}  \label{estimate 19} \\ 
& \hspace{6mm} \overset{\eqref{[Equations (4.4) and (4.5), LQ20]} \eqref{[Equation (4.23), LQ20]} \eqref{[Equation (73), Y20b]}}{\lesssim}  \lambda_{q+1}^{-1} \delta_{q+1} M_{0}(t) l^{-15} r^{2- \frac{2}{p^{\ast}}} \lesssim \delta_{q+2} M_{0}(t) \lambda_{q+1}^{- \frac{59\alpha}{8} - 1 + 2 \eta }   \leq (2\pi)^{-2 (\frac{p^{\ast} - 1}{p^{\ast}})} \frac{c_{R} \delta_{q+2} M_{0}(t)}{25}.  \nonumber 
\end{align}
Here the hypothesis of Lemma \ref{[Lemma 7.4, LQ20]} requires $\frac{\lambda_{q+1}}{10} \in \mathbb{N}$ and thus $\lambda_{q+1} \in 10\mathbb{N}$ instead of $\lambda_{q+1} \in 5 \mathbb{N}$ was needed. Next, for all $t \in [0, T_{L}]$ we estimate also relying on Lemma \ref{[Lemma 7.4, LQ20]}, using that $2 \beta b < \frac{\alpha}{8}$ due to \eqref{estimate 21}, and taking $a \in 10 \mathbb{N}$ sufficiently large, 
\begin{align}
& \lVert \mathcal{R}(\frac{1}{2} \sum_{\zeta, \zeta' \in \Lambda} \mathcal{E}_{\zeta, \zeta', 2, 1}) \rVert_{C_{t}L_{x}^{p^{\ast}}} 
\overset{\eqref{estimate 39}}{\lesssim} \sum_{\zeta, \zeta' \in \Lambda} (\frac{\lambda_{q+1}}{10})^{-1} \lVert a_{\zeta} a_{\zeta'} \rVert_{C_{t}C_{x}^{2}} \lVert (b_{\zeta} \mathring{\otimes} b_{\zeta'} + b_{\zeta'} \mathring{\otimes} b_{\zeta}) \cdot \nabla (\eta_{\zeta} \eta_{\zeta'}) \rVert_{C_{t}L_{x}^{p^{\ast}}} \nonumber\\
&\overset{\eqref{[Equations (4.4) and (4.5), LQ20]} \eqref{[Equation (4.23), LQ20]} \eqref{[Equation (73), Y20b]}}{\lesssim} M_{0}(t) l^{-11} \lambda_{q+1}^{-4\eta} r^{2- \frac{2}{p^{\ast}}} \overset{\eqref{[Equation (57), Y20b]}}{\lesssim} \delta_{q+2} M_{0}(t) \lambda_{q+1}^{-\frac{123 \alpha}{8} - 2 \eta} \leq (2\pi)^{-2 (\frac{p^{\ast} - 1}{p^{\ast}})} \frac{c_{R} \delta_{q+2} M_{0}(t)}{25}.\label{estimate 17} 
\end{align} 
Next, relying also on Lemma \ref{[Lemma 7.4, LQ20]} we can estimate for all $t \in [0, T_{L}]$ similarly to \eqref{estimate 17}
\begin{align}
 \lVert \mathcal{R} (\frac{1}{2} \sum_{\zeta, \zeta' \in \Lambda}  \mathcal{E}_{\zeta, \zeta', 2, 4}) \rVert_{C_{t}L_{x}^{p^{\ast}}} 
\overset{\eqref{estimate 42}}{\lesssim}& \sum_{\zeta, \zeta' \in \Lambda} (\frac{\lambda_{q+1}}{10})^{-1} \lVert a_{\zeta} a_{\zeta'} \rVert_{C_{t}C_{x}^{2}} \lVert \nabla (\eta_{\zeta} \eta_{\zeta'}) \lambda_{q+1}^{2} \psi_{\zeta} \psi_{\zeta'} \rVert_{C_{t}L_{x}^{p^{\ast}}} \label{estimate 18}\\
& \overset{\eqref{[Equations (4.4) and (4.5), LQ20]} \eqref{[Equation (4.23), LQ20]} \eqref{[Equation (73), Y20b]}}{\lesssim} M_{0}(t) l^{-11} \lambda_{q+1}^{-4 \eta} r^{2- \frac{2}{p^{\ast}}} \overset{\eqref{[Equation (57), Y20b]}}{\leq} (2\pi)^{-2 (\frac{p^{\ast} - 1}{p^{\ast}})} \frac{c_{R} \delta_{q+2} M_{0}(t)}{25}. \nonumber 
\end{align}  
Next, we estimate for all $t \in [0, T_{L}]$ by applying Lemma \ref{[Lemma 7.4, LQ20]}, using that $2 \beta b < \frac{\alpha}{8}$ due to \eqref{estimate 21}, and taking $a \in 10 \mathbb{N}$ sufficiently large 
\begin{align}
& \lVert \mathcal{R} (A_{2} +A_{3}) \rVert_{C_{t}L_{x}^{p^{\ast}}} \overset{\eqref{estimate 48}}{\lesssim}  (\frac{\lambda_{q+1} \sigma}{2})^{-1}  \sum_{\zeta \in \Lambda} \lVert \nabla a_{\zeta}^{2} \rVert_{C_{t}C_{x}^{2}} \lVert \eta_{\zeta}^{2} \rVert_{C_{t}L_{x}^{p^{\ast}}} + \mu^{-1}  \lVert \partial_{t} a_{\zeta}^{2} \rVert_{C_{t}C_{x}^{2}} \lVert \eta_{\zeta}^{2} \rVert_{C_{t}L_{x}^{p^{\ast}}} \label{estimate 20}\\
&\overset{\eqref{[Equation (73), Y20b]} }{\lesssim} \lambda_{q+1}^{-2\eta} [M_{0}(t) l^{-15} + \lambda_{q+1}^{4\eta -1} M_{0}(t) l^{-\frac{27}{2}}] r^{2- \frac{2}{p^{\ast}}} \overset{\eqref{[Equation (57), Y20b]}}{\lesssim} M_{0}(t) \delta_{q+2} \lambda_{q+1}^{- \frac{59\alpha}{8}}  \leq (2\pi)^{-2 (\frac{p^{\ast} -1}{p^{\ast}})} \frac{c_{R} \delta_{q+2} M_{0}(t)}{25}. \nonumber  
\end{align} 
Therefore, we conclude from \eqref{estimate 16}-\eqref{estimate 20} applied to \eqref{estimate 27} that 
\begin{equation}\label{estimate 29}
\lVert R_{\text{osc}} \rVert_{C_{t}L_{x}^{p^{\ast}}} \leq (2\pi)^{-2 (\frac{p^{\ast} - 1}{p^{\ast}})} \frac{c_{R} \delta_{q+2} M_{0}(t)}{5}. 
\end{equation} 

Next, for all $t \in [0, T_{L}]$ we estimate using that $\delta \in (0, \frac{1}{12})$, $2\beta b < \frac{\alpha}{8}$ from \eqref{estimate 21}, $\alpha b > 16$ due to our choice of $b$, and taking  $a \in 10 \mathbb{N}$ sufficiently large 
\begin{align}
& \lVert R_{\text{com1}} \rVert_{C_{t}L_{x}^{1}} \overset{\eqref{[Equation (60a), Y20b]}}{\lesssim} l (\lVert v_{q} \rVert_{C_{t,x}^{1}} + \lVert z \rVert_{C_{t}C_{x}^{1}}) (\lVert v_{q} \rVert_{C_{t}L_{x}^{2}} + \lVert z \rVert_{C_{t,x}}) + l^{\frac{1}{2} - 2 \delta} \lVert z \rVert_{C_{t}^{\frac{1}{2} - 2 \delta} C_{x}} (\lVert v_{q} \rVert_{C_{t}L_{x}^{2}} + \lVert z \rVert_{C_{t,x}}) \nonumber \\
& \hspace{2mm} \overset{\eqref{[Equation (38), Y20b]}}{\lesssim} l^{\frac{1}{2} - 2 \delta} M_{0}(t) \lambda_{q}^{4}  \overset{\eqref{[Equation (56), Y20b]}}{ \lesssim} \delta_{q+2} M_{0}(t) a^{b^{q} [ - \frac{\alpha b}{2} + \frac{10}{3} + \frac{\alpha b}{8}]} \lesssim \delta_{q+2} M_{0}(t) a^{b^{q} [ - \frac{8}{3}]} \leq \frac{M_{0}(t) c_{R} \delta_{q+2}}{5}.  \label{[Equation (105), Y20b]} 
\end{align}

Lastly, for all $t \in [0, T_{L}]$ we can estimate by using that $l^{\frac{1}{2} - 2\delta} \lambda_{q}^{4} \ll \frac{c_{R} \delta_{q+2}}{5}$ in \eqref{[Equation (105), Y20b]}, \eqref{[Equation (40a), Y20b]} at level $q+1$ that we already verified,  and taking $a \in 10 \mathbb{N}$ sufficiently large 
\begin{align}
\lVert R_{\text{com2}} \rVert_{C_{t}L_{x}^{1}} \overset{\eqref{[Equation (91e), Y20b]}}{\lesssim}& \sup_{s\in [0,t]} [ \lVert v_{q+1}(s) \rVert_{L_{x}^{1}} + \lVert z(s) \rVert_{L_{x}^{1}}] l^{\frac{1}{2} - 2\delta} (\lVert z \rVert_{C_{t}^{\frac{1}{2} - 2 \delta} L_{x}^{\infty}} + \lVert z \rVert_{C_{t}C_{x}^{\frac{1}{2} - 2 \delta}}) \nonumber\\
& \hspace{20mm} \overset{\eqref{[Equation (38), Y20b]}}{\lesssim} M_{0}(t) l^{\frac{1}{2} - 2 \delta} \leq \frac{M_{0}(t) c_{R} \delta_{q+2}}{5}. \label{[Equation (106), Y20b]}
\end{align}  
Therefore, we can now conclude from \eqref{[Equation (98), Y20b]}, \eqref{[Equation (100), Y20b]}, \eqref{estimate 29}-\eqref{[Equation (106), Y20b]} that
\begin{align}
& \lVert \mathring{R}_{q+1} \rVert_{C_{t}L_{x}^{1}}  \overset{\eqref{[Equation (92), Y20b]}}{\leq} (2\pi)^{2(\frac{p^{\ast} - 1}{p^{\ast}})}[ \lVert R_{\text{lin}} \rVert_{C_{t}L_{x}^{p^{\ast}}} + \lVert R_{\text{cor}} \rVert_{C_{t}L_{x}^{p^{\ast}}} + \lVert R_{\text{osc}} \rVert_{C_{t}L_{x}^{p^{\ast}}}] \nonumber\\
& \hspace{65mm} + \frac{2M_{0}(t) c_{R}\delta_{q+2}}{5} \leq M_{0}(t) c_{R} \delta_{q+2} \label{[Equation (107), Y20b]}
\end{align} 
due to H$\ddot{\mathrm{o}}$lder's inequality. This verifies \eqref{[Equation (40c), Y20b]} at level $q+ 1$. 

At last, similarly to the argument in \cite{HZZ19} we can conclude by commenting on how $(v_{q+1}, \mathring{R}_{q+1})$ is $(\mathcal{F}_{t})_{t\geq 0}$-adapted and that $(v_{q+1}, \mathring{R}_{q+1})(0,x)$ are both deterministic if $(v_{q}, \mathring{R}_{q})(0,x)$ are deterministic. First, we recall that $z$ in \eqref{[Equation (24), Y20b]} is $(\mathcal{F}_{t})_{t\geq 0}$-adapted. Due to the compact support of $\varphi_{l}$ in $\mathbb{R}_{+}$, it follows that $z_{l}$ from \eqref{[Equation (58), Y20b]} is $(\mathcal{F}_{t})_{t\geq 0}$-adapted. Similarly, because $(v_{q}, \mathring{R}_{q})$ are both $(\mathcal{F}_{t})_{t\geq 0}$-adapted by hypothesis, it follows that $(v_{l}, \mathring{R}_{l})$ from \eqref{[Equation (58), Y20b]} are both $(\mathcal{F}_{t})_{t\geq 0}$-adapted. Because $M_{0}(t)$ from \eqref{[Equations (37) and (39), Y20b]} is deterministic, it follows that $\rho$ from \eqref{[Equation (65), Y20b]} is also $(\mathcal{F}_{t})_{t\geq 0}$-adapted. Due to $\rho$ and $\mathring{R}_{l}$ being $(\mathcal{F}_{t})_{t\geq 0}$-adapted, $a_{\zeta}$ from \eqref{[Equation (70), Y20b]} is also $(\mathcal{F}_{t})_{t\geq 0}$-adapted. Because $\mathbb{W}_{\zeta}, \eta_{\zeta}$, and $\psi_{\zeta}$ respectively from \eqref{[Equation (4.15), LQ20]}, \eqref{[Equation (4.11), LQ20]}, and \eqref{[Equation (4.2), LQ20]} are all deterministic, it follows that all of $w_{q+1}^{(p)}, w_{q+1}^{(c)}$, and $w_{q+1}^{(t)}$ from \eqref{[Equation (74), Y20b]} are $(\mathcal{F}_{t})_{t\geq 0}$-adapted. Consequently, $w_{q+1}$ from \eqref{[Equation (76), Y20b]} is $(\mathcal{F}_{t})_{t\geq 0}$-adapted, which in turn implies that $v_{q+1}$ from \eqref{[Equation (76), Y20b]} is $(\mathcal{F}_{t})_{t\geq 0}$-adapted. Moreover, it is also clear from the compact support of $\varphi_{l}$ in $\mathbb{R}_{+}$ that if $v_{q}(0,x)$ and $\mathring{R}_{q}(0,x)$ are deterministic, then so are $v_{l} (0,x), \mathring{R}_{l}(0,x)$, and $\partial_{t}\mathring{R}_{l}(0,x)$. Because $z(0,x) \equiv 0$ by \eqref{[Equation (24), Y20b]}, $R_{\text{com1}}(0,x)$ from \eqref{[Equation (60a), Y20b]} is also deterministic. Because $M_{0}(t)$ is deterministic, we see that $\rho(0,x)$ and $\partial_{t} \rho(0,x)$ from \eqref{[Equation (65), Y20b]} are also deterministic; this implies that $a_{\zeta}(0,x)$ and $\partial_{t}a_{\zeta}(0,x)$ from \eqref{[Equation (70), Y20b]} are also deterministic. As $\mathbb{W}_{\zeta}, \eta_{\zeta}$, and $\psi_{\zeta}$ respectively from \eqref{[Equation (4.15), LQ20]}, \eqref{[Equation (4.11), LQ20]}, and \eqref{[Equation (4.2), LQ20]} are all deterministic, we see that all of $w_{q+1}^{(p)}(0,x)$, $\partial_{t}w_{q+1}^{(p)}(0,x)$, $w_{q+1}^{(c)}(0,x)$, $\partial_{t}w_{q+1}^{(c)}(0,x)$, and $w_{q+1}^{(t)}(0,x)$ from \eqref{[Equation (74), Y20b]} are deterministic and consequently $w_{q+1}(0,x)$ from \eqref{[Equation (76), Y20b]} is deterministic. Because $v_{l}(0,x)$ is deterministic, it follows that $v_{q+1}(0,x)$ from \eqref{[Equation (76), Y20b]} is deterministic. Moreover, we see that all of $R_{\text{lin}}(0,x)$, $R_{\text{cor}}(0,x)$, and $R_{\text{com2}}(0,x)$ from \eqref{[Equation (91), Y20b]} are deterministic. Finally, $\sum_{\zeta, \zeta' \in \Lambda} \mathcal{E}_{\zeta, \zeta', 1} \rvert_{t=0}$, $\sum_{\zeta, \zeta' \in \Lambda} \sum_{k=1,3,4} \mathcal{E}_{\zeta, \zeta', 2, k} \rvert_{t=0}$, and $A_{2} + A_{3} \rvert_{t=0}$ respectively from \eqref{estimate 7}, \eqref{estimate 43}, and \eqref{estimate 48} are all deterministic and hence $R_{\text{osc}}(0,x)$ from \eqref{estimate 27} is deterministic, and consequently, so is $\mathring{R}_{q+1}(0,x)$ from \eqref{[Equation (92), Y20b]}. 

\section{Proofs of Theorems \ref{[Theorem 2.3, Y20b]}-\ref{[Theorem 2.4, Y20b]}}\label{Proof in  the case of linear multiplicative noise} 
\subsection{Proof of Theorem \ref{[Theorem 2.2, Y20b]} assuming Theorem \ref{[Theorem 2.1, Y20b]} }
Let us recall the definitions of $U_{1}, \bar{\Omega}$, and $\bar{\mathcal{B}}_{t}$ from Section \ref{Preliminaries}. We first present general results for $F$ defined through \eqref{[Equations (11) and (12), Y20b]} and $\theta$; thereafter, we apply them in case $F(u) = u$ and $B$ is an $\mathbb{R}$-valued Wiener process to prove Theorems \ref{[Theorem 2.3, Y20b]}-\ref{[Theorem 2.4, Y20b]}. We fix any $\varepsilon \in (0,1)$ for the purpose of the following definitions. 
\begin{define}\label{[Definition 5.1, Y20b]}
Let $s \geq 0$, $\xi^{\text{in}} \in L_{\sigma}^{2}$, and $\theta^{\text{in}} \in U_{1}$. A probability measure $P \in \mathcal{P} (\bar{\Omega})$ is a probabilistically weak solution to \eqref{[Equation (2), Y20b]} with initial condition $(\xi^{\text{in}}, \theta^{\text{in}})$ at initial time $s$ if  
\begin{itemize}
\item [] (M1) $P(\{ \xi(t) = \xi^{\text{in}}, \theta(t) = \theta^{\text{in}} \hspace{1mm} \forall \hspace{1mm} t \in [0,s]\}) = 1$ and for all $n \in \mathbb{N}$ 
\begin{equation}\label{[Equation (108), Y20b]}
P ( \{ (\xi, \theta) \in \bar{\Omega}: \hspace{0.5mm} \int_{0}^{n} \lVert F(\xi(r)) \rVert_{L_{2} (U, L_{\sigma}^{2})}^{2} dr < \infty \} ) = 1, 
\end{equation} 
\item [] (M2) under $P$, $\theta$ is a cylindrical $(\bar{\mathcal{B}}_{t})_{t\geq s}$-Wiener process on $U$ starting from initial condition $\theta^{\text{in}}$ at initial time $s$ and for every $\mathfrak{g}_{i} \in C^{\infty} (\mathbb{T}^{2}) \cap L_{\sigma}^{2}$ and $t\geq s$, 
\begin{equation}\label{[Equation (109), Y20b]}
\langle \xi(t) - \xi(s), \mathfrak{g}_{i} \rangle + \int_{s}^{t} \langle \text{div} (\xi(r) \otimes \xi(r)) + (-\Delta)^{m} \xi(r), \mathfrak{g}_{i} \rangle dr = \int_{s}^{t} \langle \mathfrak{g}_{i}, F(\xi(r)) d\theta(r) \rangle, 
\end{equation} 
\item [] (M3) for any $q \in \mathbb{N}$ there exists a function $t \mapsto C_{t,q} \in \mathbb{R}_{+}$ for all $t\geq s$ such that
\begin{equation}\label{[Equation (110), Y20b]}
\mathbb{E}^{P} [ \sup_{r \in [0,t]} \lVert \xi(r) \rVert_{L_{x}^{2}}^{2q} + \int_{s}^{t} \lVert \xi(r) \rVert_{H_{x}^{\varepsilon}}^{2} dr] \leq C_{t,q} (1+ \lVert \xi^{\text{in}} \rVert_{L_{x}^{2}}^{2q}). 
\end{equation}
\end{itemize}
The set of all such probabilistically weak solutions with the same constant $C_{t,q}$ in \eqref{[Equation (110), Y20b]} for every $q \in \mathbb{N}$ and $t \geq s$ is denoted by $\mathcal{W}(s, \xi^{\text{in}}, \theta^{\text{in}}, \{C_{t,q}\}_{q\in\mathbb{N}, t \geq s})$. 
\end{define} 
For any stopping time $\tau$ we set 
\begin{equation}\label{[Equation (111), Y20b]}
\bar{\Omega}_{\tau} \triangleq \{ \omega (\cdot \wedge \tau (\omega)) : \hspace{0.5mm} \omega \in \bar{\Omega} \} 
\end{equation} 
and denote the $\sigma$-field associated to $\tau$ by $\bar{\mathcal{B}}_{\tau}$. 
\begin{define}\label{[Definition 5.2, Y20b]}
Let $s \geq 0$, $\xi^{\text{in}} \in L_{\sigma}^{2}$, and $\theta^{\text{in}} \in U_{1}$. Let $\tau \geq s$ be a stopping time of $(\bar{\mathcal{B}}_{t})_{t \geq s}$. A probability measure $P \in \mathcal{P} (\bar{\Omega}_{\tau})$ is a probabilistically weak solution to \eqref{[Equation (2), Y20b]} on $[s, \tau]$ with initial condition $(\xi^{\text{in}}, \theta^{\text{in}})$ at initial time $s$ if 
\begin{itemize}
\item [] (M1) $P(\{ \xi(t) = \xi^{\text{in}}, \theta(t) = \theta^{\text{in}} \hspace{1mm} \forall \hspace{1mm} t \in [0,s] \}) = 1$ and for all $n \in \mathbb{N}$
\begin{equation}\label{[Equation (112), Y20b]}
P ( \{ ( \xi, \theta) \in \bar{\Omega}: \hspace{0.5mm} \int_{0}^{n \wedge \tau} \lVert F(\xi(r)) \rVert_{L_{2}(U, L_{\sigma}^{2})}^{2} dr < \infty \}) = 1, 
\end{equation}
\item  [] (M2) under $P$, $\langle \theta(\cdot \wedge \tau), l_{i} \rangle_{U}$, where $\{l_{i} \}_{i\in \mathbb{N}}$ is an orthonormal basis of $U$,  is a continuous, square-integrable $(\bar{\mathcal{B}}_{t})_{t \geq s}$-martingale with initial condition $\langle \theta^{\text{in}}, l_{i} \rangle$ at initial time $s$ with its quadratic variation process given by $(t \wedge \tau - s) \lVert l_{i} \rVert_{U}^{2}$ and for every $\mathfrak{g}_{i} \in C^{\infty} (\mathbb{T}^{2} ) \cap L_{\sigma}^{2}$ and $t \geq s$ 
\begin{equation}\label{[Equation (113), Y20b]}
\langle \xi(t\wedge \tau) - \xi(s),\mathfrak{g}_{i} \rangle + \int_{s}^{t \wedge \tau} \langle \text{div} (\xi(r) \otimes \xi(r)) + (-\Delta)^{m} \xi(r), \mathfrak{g}_{i} \rangle dr = \int_{s}^{t \wedge \tau} \langle \mathfrak{g}_{i}, F(\xi(r)) d\theta(r) \rangle, 
\end{equation}
\item [] (M3) for any $q \in \mathbb{N}$ there exists a function $t\mapsto C_{t,q} \in \mathbb{R}_{+}$ for all $t\geq s$ such that
\begin{equation}\label{[Equation (114), Y20b]}
\mathbb{E}^{P} [ \sup_{r \in [0, t \wedge \tau]} \lVert \xi(r) \rVert_{L_{x}^{2}}^{2q} + \int_{s}^{t \wedge \tau} \lVert \xi(r) \rVert_{H_{x}^{\varepsilon}}^{2} dr] \leq C_{t,q} (1+ \lVert \xi^{\text{in}} \rVert_{L_{x}^{2}}^{2q}). 
\end{equation} 
\end{itemize} 
\end{define} 
The joint uniqueness in law for \eqref{[Equation (2), Y20b]} is equivalent to the uniqueness of probabilistically weak solution in Definition \ref{[Definition 5.1, Y20b]}, which holds if probabilistically weak solutions starting from the same initial distributions are unique. 
\begin{proposition}\label{[Proposition 5.1, Y20b]}
For every $(s, \xi^{\text{in}}, \theta^{\text{in}}) \in [0,\infty) \times L_{\sigma}^{2} \times U_{1}$, there exists a probabilistically weak solution $P \in \mathcal{P} (\bar{\Omega})$ to \eqref{[Equation (2), Y20b]} with  initial condition $(\xi^{\text{in}}, \theta^{\text{in}})$ at initial time $s$ according to Definition \ref{[Definition 5.1, Y20b]}. Moreover, if there exists a family $(s_{n}, \xi_{n}, \theta_{n}) \subset [0,\infty) \times L_{\sigma}^{2} \times U_{1}$ such that $\lim_{n\to\infty} \lVert (s_{n}, \xi_{n}, \theta_{n}) - (s, \xi^{\text{in}}, \theta^{\text{in}}) \rVert_{\mathbb{R} \times L_{x}^{2} \times U_{1}} = 0$ and $P_{n} \in \mathcal{W} (s_{n}, \xi_{n}, \theta_{n}, \{C_{t,q}\}_{q\in\mathbb{N}, t \geq s_{n}})$, then there exists a subsequence $\{P_{n_{k}} \}_{k\in\mathbb{N}}$ that converges weakly to some $P \in \mathcal{W}(s,\xi^{\text{in}}, \theta^{\text{in}}$, $\{C_{t,q}\}_{q\in\mathbb{N}, t \geq s})$. 
\end{proposition}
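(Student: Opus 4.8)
The plan is to follow the well-established two-part scheme of \cite{HZZ19} (and its adaptation in \cite{Y20a}): first produce a probabilistically weak solution by a Galerkin-type approximation combined with a stochastic compactness argument, and then upgrade this to the stability statement by passing to the limit along the perturbed initial data $(s_n,\xi_n,\theta_n)$. Since the existence claim for a fixed $(s,\xi^{\text{in}},\theta^{\text{in}})$ is a special case of the stability claim (take the constant sequence), I would concentrate on the stability part, and obtain existence simultaneously by first constructing \emph{some} approximating sequence $P_n\in\mathcal{W}(s_n,\xi_n,\theta_n,\{C_{t,q}\})$ via a finite-dimensional Galerkin scheme with the $\mathbb{R}$-valued (or cylindrical) noise $\theta$, for which the energy estimates giving the uniform bounds (M3) are standard and insensitive to the precise exponent $m>0$ of the fractional Laplacian.

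First I would establish the uniform moment bounds. Applying It\^o's formula to $\lVert\xi^{N}(r)\rVert_{L_x^2}^{2q}$ for the Galerkin truncation $\xi^N$, the nonlinear term drops by the divergence-free and skew-symmetry structure, the diffusive term contributes a non-negative $\int_s^t\lVert\xi^N\rVert_{\dot H_x^{m}}^2\,dr$-type quantity on the left (for $m\le 1$ one still controls the weaker norm $\lVert\xi^N\rVert_{H_x^\varepsilon}$ appearing in (M3), by choosing $\varepsilon\le m$ — this is exactly the point where the weak-diffusion case needs the extra care promised in the Appendix), and the stochastic and It\^o-correction terms are handled by the linear-growth bound \eqref{[Equations (11) and (12), Y20b]} together with Burkholder--Davis--Gundy and Gr\"onwall. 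This yields \eqref{[Equation (110), Y20b]} with constants $C_{t,q}$ that depend only on $t,q$ and the data norm, uniformly in $N$ and (because $\lVert(s_n,\xi_n,\theta_n)-(s,\xi^{\text{in}},\theta^{\text{in}})\rVert\to0$ means the $\xi_n$ are bounded in $L_x^2$) uniformly in $n$.

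Next I would extract a convergent subsequence. The laws $\{\mathcal{L}(\xi^{N})\}$ (and then $\{P_n\}$) are tight on $\bar\Omega=C([0,\infty);H^{-3})\cap L^\infty_{\text{loc}}([0,\infty);L_\sigma^2)$: the uniform $L^\infty_tL_x^2$ bound gives weak-$*$ compactness in the $L^\infty_{\text{loc}}L_\sigma^2$ component, and equicontinuity in $C([0,\infty);H^{-3})$ follows from the equation \eqref{[Equation (113), Y20b]} — the drift terms are bounded in $H^{-3}$ using the $L_x^2$ bound (the quadratic term $\xi\otimes\xi$ lands in $L_x^1\hookrightarrow H^{-3}$ in two dimensions, the fractional diffusion $(-\Delta)^m\xi$ with $m<1$ lands in $H^{-3}$ since $2m-3<0$), and the stochastic integral is H\"older-$\tfrac12^-$ in any negative Sobolev space by BDG. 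Prokhorov then gives a weakly convergent subsequence; by Skorokhod (or the Jakubowski--Skorokhod representation on the quasi-Polish space $\bar\Omega$, as used in \cite{HZZ19}) one realizes the convergence almost surely on a new probability space and passes to the limit in \eqref{[Equation (113), Y20b]}, verifying (M1)--(M2) for the limit $P$; (M3) for $P$ follows from the uniform bounds via Fatou, and the martingale property of $\langle\theta(\cdot\wedge\tau),l_i\rangle_U$ together with its prescribed quadratic variation passes to the limit by the standard martingale-identification argument (testing against bounded continuous functionals of the past). The convergence of the initial data handles the shift to $(s,\xi^{\text{in}},\theta^{\text{in}})$ in (M1).

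The main obstacle is the passage to the limit in the nonlinear term $\mathrm{div}(\xi\otimes\xi)$, which requires genuine strong convergence in $L^2_{\text{loc}}([0,\infty);L_x^2)$ rather than mere weak convergence; with diffusion weaker than a full Laplacian one loses the usual $L^2_tH_x^1$ Aubin--Lions compactness, and must instead interpolate the uniform $L^\infty_tL_x^2$ bound with the uniform $L^2_tH_x^\varepsilon$ bound (valid for $\varepsilon\le m$) against the $H^{-3}$-time-regularity to recover compactness in, say, $L^2_tH_x^{\varepsilon'}$ for some $0<\varepsilon'<\varepsilon$, which is exactly the regularity encoded in (M3). This is where the argument differs quantitatively — though not structurally — from \cite{HZZ19, Y20a}, and it is the reason the excerpt defers the detailed diffusion estimates to the Appendix. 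Otherwise the proof is a routine adaptation, and I would simply cite \cite{HZZ19} (Propositions~3.1 and the multiplicative-noise analogue in their Section~4) and \cite{Y20a} for the steps that are dimension- and $m$-independent.
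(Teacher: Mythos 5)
Your stability argument matches the paper's in all essentials: tightness on $\bar\Omega$ via the uniform $L^\infty_tL^2_x$ and $L^2_tH^\varepsilon_x$ bounds from (M3) together with H\"older time-regularity of increments in $H^{-3}$, Prokhorov and Skorokhod, and martingale identification; this is exactly the HZZ19 route the paper cites and elaborates in the Appendix (see Lemma \ref{[Lemma A.1, HZZ19]} and the surrounding computations), with the fractional-diffusion modifications you flag being precisely what the Appendix supplies. Where you diverge is the existence half. You propose to build the pair $(\xi,\theta)$ from scratch by a Galerkin scheme driven by the explicit noise and then run the same compactness machinery, whereas the paper instead invokes the already-established Proposition \ref{[Proposition 4.1, Y20b]} to obtain a \emph{martingale} solution $P\in\mathcal{P}(\Omega_0)$ and upgrades it to a probabilistically weak solution on $\bar\Omega$ via the martingale representation theorem (\cite[Theorem 8.2]{DZ14}), which manufactures the cylindrical Wiener process $\theta$ a posteriori. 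The paper's route is shorter because the hard analytic work (energy estimates, tightness, passage to the limit, including the delicate treatment of $(-\Delta)^m$ with $m<1$) is already encapsulated in Proposition \ref{[Proposition 4.1, Y20b]} and not redone; your route is self-contained but duplicates that work on $\bar\Omega$. One small slip to watch: existence is not literally a special case of the stability clause, because stability assumes $P_n$ are genuine probabilistically weak solutions to the full equation \eqref{[Equation (2), Y20b]}, while your Galerkin approximations are solutions of a truncated system. The two arguments share the same tightness and limit-passage skeleton, but they are parallel applications of it, not one subsumed in the other; phrasing it as you do would require a separate verification that the Galerkin laws satisfy (M1)--(M3) with the right uniform constants before the limit, which is exactly the existence proof.
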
 
\begin{proof}[Proof of Proposition \ref{[Proposition 5.1, Y20b]}]
The existence of the probabilistically weak solution according to Definition \ref{[Definition 5.1, Y20b]} follows from Proposition \ref{[Proposition 4.1, Y20b]} and an application of martingale representation theorem (e.g., \cite[Theorem 8.2]{DZ14}) while the proof of stability result can follow that of \cite[Theorem 5.1]{HZZ19} with appropriate modifications concerning the differences in spatial dimension and fractional Laplacian, similarly to the proof of Proposition \ref{[Proposition 4.1, Y20b]} (see also \cite[Proposition 5.1]{Y20a}). 
\end{proof}
Next, we have the following results as a consequence of Proposition \ref{[Proposition 5.1, Y20b]}; the proofs of analogous results from \cite{HZZ19} did not rely on the specific form of the diffusive term or the spatial dimension and thus apply to our case. 
\begin{lemma}\label{[Lemma 5.2, Y20b]}
\rm{(\cite[Proposition 5.2]{HZZ19})} Let $\tau$ be a bounded stopping time of $(\bar{\mathcal{B}}_{t})_{t \geq 0}$. Then for every $\omega \in \bar{\Omega}$ there exists $Q_{\omega} \in \mathcal{P}(\bar{\Omega})$ such that 
\begin{subequations}
\begin{align}
& Q_{\omega} ( \{ \omega' \in \bar{\Omega}: \hspace{0.5mm} \hspace{1mm}  ( \xi, \theta) (t, \omega') = (\xi, \theta) (t,\omega) \hspace{1mm} \forall \hspace{1mm} t \in [0, \tau(\omega)] \}) = 1, \\
& Q_{\omega} (A) = R_{\tau(\omega), \xi(\tau(\omega), \omega), \theta(\tau(\omega), \omega)} (A) \hspace{1mm} \forall \hspace{1mm} A \in \bar{\mathcal{B}}^{\tau(\omega)}, 
\end{align} 
\end{subequations}
where $R_{\tau(\omega), \xi(\tau(\omega), \omega), \theta(\tau(\omega), \omega)} \in \mathcal{P} (\bar{\Omega})$ is a probabilistically weak solution to \eqref{[Equation (2), Y20b]} with initial condition $(\xi(\tau(\omega), \omega), \theta(\tau(\omega), \omega))$ at initial time $\tau(\omega)$. Moreover, for every $A \in \bar{\mathcal{B}}$ the map $\omega \mapsto Q_{\omega}(A)$ is $\bar{\mathcal{B}}_{\tau}$-measurable. 
\end{lemma}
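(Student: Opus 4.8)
The plan is to reproduce the construction of \cite[Proposition 5.2]{HZZ19} essentially verbatim, since the argument there is purely measure-theoretic and interacts with neither the spatial dimension nor the specific form of the diffusive operator $(-\Delta)^{m}$; one only needs to confirm that each ingredient has an analogue in the present setting. The first step is to produce a universally measurable selection $(s, \xi^{\text{in}}, \theta^{\text{in}}) \mapsto R_{s, \xi^{\text{in}}, \theta^{\text{in}}} \in \mathcal{P}(\bar{\Omega})$ assigning to each triple a probabilistically weak solution to \eqref{[Equation (2), Y20b]} with initial condition $(\xi^{\text{in}}, \theta^{\text{in}})$ at initial time $s$ in the sense of Definition \ref{[Definition 5.1, Y20b]}. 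The existence of at least one such solution for every triple, together with the closedness under weak convergence of the solution set $\mathcal{W}(s, \xi^{\text{in}}, \theta^{\text{in}}, \{C_{t,q}\})$ as the data vary, is exactly the content of Proposition \ref{[Proposition 5.1, Y20b]}; this makes the associated set-valued map have nonempty values and a closed, hence Borel, graph, so that the Jankov--von Neumann measurable selection theorem applies as in \cite[Chapter 12]{SV97}.

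Given the bounded stopping time $\tau$ of $(\bar{\mathcal{B}}_{t})_{t \geq 0}$, I would then define, for each $\omega \in \bar{\Omega}$,
\[
Q_{\omega} \triangleq \delta_{\omega} \otimes_{\tau(\omega)} R_{\tau(\omega),\, \xi(\tau(\omega), \omega),\, \theta(\tau(\omega), \omega)},
\]
the measure on $(\bar{\Omega}, \bar{\mathcal{B}})$ obtained by gluing the Dirac mass $\delta_{\omega}$ on $[0, \tau(\omega)]$ to the selected solution restarted at time $\tau(\omega)$ from $(\xi(\tau(\omega), \omega), \theta(\tau(\omega), \omega))$; existence and uniqueness of the glued measure is standard, cf. \cite[Theorem 6.1.2]{SV97}. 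The two displayed properties of the statement are then immediate: $Q_{\omega}$ is supported on trajectories coinciding with $\omega$ up to $\tau(\omega)$, and its restriction to $\bar{\mathcal{B}}^{\tau(\omega)}$ agrees with $R_{\tau(\omega), \xi(\tau(\omega), \omega), \theta(\tau(\omega), \omega)}$. I would also verify that the restarted measure is again a probabilistically weak solution: the identity \eqref{[Equation (109), Y20b]}, stopped and restarted at $\tau(\omega)$, remains a martingale with the same bracket so that (M1)--(M2) of Definition \ref{[Definition 5.1, Y20b]} persist, while the energy estimate (M3) follows from combining the two pieces and using that $\tau$ is bounded.

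Next I would check the $\bar{\mathcal{B}}_{\tau}$-measurability of $\omega \mapsto Q_{\omega}(A)$ for all $A \in \bar{\mathcal{B}}$: since $\tau$ is a stopping time, $\omega \mapsto (\tau(\omega), \xi(\tau(\omega), \omega), \theta(\tau(\omega), \omega))$ is $\bar{\mathcal{B}}_{\tau}$-measurable; the selection of the first step is universally measurable; and both $\omega \mapsto \delta_{\omega}$ and the concatenation operation $\mathcal{P}(\bar{\Omega}) \times \mathcal{P}(\bar{\Omega}) \to \mathcal{P}(\bar{\Omega})$ are Borel. Composing these gives measurability of $\omega \mapsto Q_{\omega}(A)$ on a generating $\pi$-system of $\bar{\mathcal{B}}$, and a monotone-class argument extends it to all of $\bar{\mathcal{B}}$.

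The delicate point, and the only place genuinely specific to the present equations, is the closed-graph (stability) input to the selection theorem in the first step, namely Proposition \ref{[Proposition 5.1, Y20b]}: one must verify that the energy bounds (M3) and the martingale/Wiener identities (M1)--(M2) of Definition \ref{[Definition 5.1, Y20b]} pass to weak limits uniformly in converging initial data, where the fractional diffusion $(-\Delta)^{m}$ with $m \in (0,1)$ and the two-dimensional geometry enter only through the a priori estimates, handled exactly as for Proposition \ref{[Proposition 4.1, Y20b]}. Everything downstream of that is a transcription of \cite[Proposition 5.2]{HZZ19}.
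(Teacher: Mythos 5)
Your proposal reconstructs precisely the argument the paper invokes: the paper gives no proof of Lemma \ref{[Lemma 5.2, Y20b]} beyond citing \cite[Proposition 5.2]{HZZ19} and observing that the only equation-specific ingredient is Proposition \ref{[Proposition 5.1, Y20b]}, and your outline (measurable selection of solutions from the closed-graph stability in Proposition \ref{[Proposition 5.1, Y20b]}, gluing $\delta_{\omega}$ to the selected restarted solution at time $\tau(\omega)$ as in \eqref{[Equation (21), Y20b]}, then composing measurable maps plus a monotone-class argument for $\bar{\mathcal{B}}_{\tau}$-measurability) is exactly that argument, so you take essentially the same approach. One small point of bookkeeping: the sentence in which you verify that the glued measure satisfies (M1)--(M3) is not needed for Lemma \ref{[Lemma 5.2, Y20b]} itself --- the lemma only asserts that the \emph{selected} measure $R_{\tau(\omega), \xi(\tau(\omega),\omega), \theta(\tau(\omega),\omega)}$ is a solution (which the selection guarantees by construction), while the assertion that the glued measure $P \otimes_{\tau} R$ is again a solution is the content of the subsequent Lemma \ref{[Lemma 5.3, Y20b]} and requires the additional hypothesis \eqref{[Equation (116), Y20b]}.
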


\begin{lemma}\label{[Lemma 5.3, Y20b]}
\rm{(\cite[Proposition 5.3]{HZZ19})} Let $\xi^{\text{in}} \in L_{\sigma}^{2}$ and $P$ be a probabilistically weak solution to \eqref{[Equation (2), Y20b]} on $[0,\tau]$ with initial condition $(\xi^{\text{in}}, 0)$ at initial time $0$ according to Definition \ref{[Definition 5.2, Y20b]}. In addition to the hypothesis of Lemma \ref{[Lemma 5.2, Y20b]}, suppose that there exists a Borel set $\mathcal{N} \subset \bar{\Omega}_{\tau}$ such that $P(\mathcal{N}) = 0$ and $Q_{\omega}$ from Lemma \ref{[Lemma 5.2, Y20b]} satisfies for every $\omega \in \bar{\Omega}_{\tau} \setminus \mathcal{N}$ 
\begin{equation}\label{[Equation (116), Y20b]}
Q_{\omega} (\{ \omega' \in \bar{\Omega}: \hspace{0.5mm}  \tau(\omega') = \tau(\omega) \}) = 1. 
\end{equation} 
Then the probability measure $P\otimes_{\tau}R \in \mathcal{P} (\bar{\Omega})$ defined by 
\begin{equation}\label{[Equation (117), Y20b]}
P \otimes_{\tau} R (\cdot) \triangleq \int_{\bar{\Omega}} Q_{\omega} (\cdot) P(d \omega)
\end{equation} 
satisfies $P\otimes_{\tau}R \rvert_{\bar{\Omega}_{\tau}} = P \rvert_{\bar{\Omega}_{\tau}}$ and it is a probabilistically weak solution to \eqref{[Equation (2), Y20b]} on $[0,\infty)$ with initial condition $(\xi^{\text{in}}, 0)$ at initial time $0$. 
\end{lemma}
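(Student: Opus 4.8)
\emph{Proof proposal.} The plan is to carry out the classical concatenation argument for (probabilistically weak) solutions, exactly as in \cite[Proposition 5.3]{HZZ19}; since that argument invokes only the structure of Definitions \ref{[Definition 5.1, Y20b]}--\ref{[Definition 5.2, Y20b]} and Lemma \ref{[Lemma 5.2, Y20b]}, never the precise form of the diffusion nor the spatial dimension, it transfers unchanged to $(-\Delta)^{m}$ with $m \in (0,1)$ and $n = 2$. First I would record that $P \otimes_{\tau} R \rvert_{\bar{\Omega}_\tau} = P \rvert_{\bar{\Omega}_\tau}$: by the first defining property of $Q_\omega$ in Lemma \ref{[Lemma 5.2, Y20b]}, $Q_\omega$ agrees with the Dirac mass $\delta_\omega$ on any event determined by the trajectory up to time $\tau(\omega)$, so integrating \eqref{[Equation (117), Y20b]} against $P$ yields the identity; in particular (M1) holds trivially on $\{0\}$ with $\theta^{\text{in}} = 0$. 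This identity also supplies the splitting device used everywhere below: any integral $\int_0^t (\cdots)\, dr$ occurring in Definition \ref{[Definition 5.1, Y20b]} is written as $\int_0^{t \wedge \tau} + \int_{t \wedge \tau}^{t}$, the first piece controlled by the hypothesis that $P$ solves \eqref{[Equation (2), Y20b]} on $[0,\tau]$ and the second, after disintegrating through \eqref{[Equation (117), Y20b]}, by the hypothesis that for $P$-a.e.\ $\omega$ the measure $R_{\tau(\omega), \xi(\tau(\omega),\omega), \theta(\tau(\omega),\omega)}$ solves \eqref{[Equation (2), Y20b]} from the random initial time $\tau(\omega)$; the assumption \eqref{[Equation (116), Y20b]} that $Q_\omega$ is supported on $\{\tau(\cdot) = \tau(\omega)\}$ is exactly what makes the two pieces agree along $\{r = \tau\}$.

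With this device conditions (M1) and (M3) are routine. For \eqref{[Equation (112), Y20b]} integrability on $[0, t \wedge \tau]$ is (M1) for $P$ and on $[t \wedge \tau, t]$ is (M1) for $R$, using the $\bar{\mathcal{B}}_\tau$-measurability of $\omega \mapsto Q_\omega(\cdot)$ from Lemma \ref{[Lemma 5.2, Y20b]}. For \eqref{[Equation (110), Y20b]} I would bound $\mathbb{E}^{P \otimes_\tau R}[\sup_{r \leq t} \lVert \xi(r) \rVert_{L_{x}^{2}}^{2q} + \int_0^t \lVert \xi(r) \rVert_{H_{x}^{\varepsilon}}^{2}\, dr]$ by splitting at $t \wedge \tau$: the $[0, t\wedge\tau]$ contribution is (M3) for $P$, and the $[t\wedge\tau, t]$ contribution follows by disintegrating and applying (M3) for $R$ with initial datum $\xi(\tau(\omega), \omega)$, the boundedness of $\tau$ ensuring $\mathbb{E}^{P}[\lVert \xi(\tau) \rVert_{L_{x}^{2}}^{2q}] < \infty$; summing and absorbing constants gives \eqref{[Equation (110), Y20b]} for a new family $\{C_{t,q}\}_{q \in \mathbb{N}, t \geq 0}$.

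The substantive step is (M2). For each $\mathfrak{g}_i \in C^{\infty}(\mathbb{T}^{2}) \cap L_{\sigma}^{2}$ and each element $l_i$ of a fixed orthonormal basis of $U$ I would show that
\[
N^{i}_{t} \triangleq \langle \xi(t) - \xi^{\text{in}}, \mathfrak{g}_i \rangle + \int_0^t \langle \text{div}(\xi(r) \otimes \xi(r)) + (-\Delta)^{m} \xi(r), \mathfrak{g}_i \rangle\, dr
\]
and $\langle \theta(t), l_i \rangle_U$ are continuous, square-integrable $(\bar{\mathcal{B}}_t)_{t \geq 0}$-martingales under $P \otimes_\tau R$ with the quadratic and cross variations dictated by (M2) of Definition \ref{[Definition 5.2, Y20b]}, i.e.\ $\langle\langle N^i \rangle\rangle_t = \int_0^t \lVert F(\xi(r))^{\ast} \mathfrak{g}_i \rVert_U^{2}\, dr$, $\langle\langle \langle \theta, l_i \rangle, \langle \theta, l_j \rangle \rangle\rangle_t = t \langle l_i, l_j \rangle_U$, and $\langle\langle N^i, \langle \theta, l_j \rangle \rangle\rangle_t = \int_0^t \langle \mathfrak{g}_i, F(\xi(r)) l_j \rangle\, dr$; L\'evy's characterization then upgrades $\theta$ to a cylindrical $(\bar{\mathcal{B}}_t)$-Wiener process on $U$ and the martingale representation theorem recovers \eqref{[Equation (109), Y20b]} for all $t \geq 0$. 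To prove the martingale property, fix $0 \leq s < t$ and $A \in \bar{\mathcal{B}}_s$ and decompose $\mathbb{E}^{P \otimes_\tau R}[(N^i_t - N^i_s) 1_A]$ over the events $\{t \leq \tau\}$, $\{s \leq \tau < t\}$ and $\{\tau < s\}$: on the first, use $P \otimes_\tau R \rvert_{\bar{\Omega}_\tau} = P \rvert_{\bar{\Omega}_\tau}$ and that $P$ solves on $[0,\tau]$; on the third, disintegrate via \eqref{[Equation (117), Y20b]} and use that $Q_\omega$ behaves as $R_{\tau(\omega), \dots}$ after $\tau(\omega)$ together with the $\bar{\mathcal{B}}_\tau$-measurability of $\omega \mapsto Q_\omega$; on the overlap event insert $N^i_\tau$, write $N^i_t - N^i_s = (N^i_t - N^i_\tau) + (N^i_\tau - N^i_s)$, and combine optional stopping for $P$ on the part before $\tau$ with the preceding disintegration for $R$ on the part after. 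The same bookkeeping applied to $N^i_t N^j_t$, $N^i_t \langle \theta(t), l_j \rangle$ and $\langle \theta(t), l_i \rangle \langle \theta(t), l_j \rangle$ minus their candidate brackets delivers the variation identities, since those brackets are themselves time integrals of functionals of $\xi$ and hence concatenate at $\tau$ in the same way.

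I expect the only genuine difficulty to be the treatment of the overlap region $\{s \leq \tau < t\}$ — simultaneously justifying optional stopping for the $\xi$-martingale, disintegration across the random time $\tau$, and the matching of all mixed brackets, while keeping track of where hypothesis \eqref{[Equation (116), Y20b]} and the $\bar{\mathcal{B}}_\tau$-measurability from Lemma \ref{[Lemma 5.2, Y20b]} are used. This is purely measure-theoretic, identical to \cite[Proposition 5.3]{HZZ19} and ultimately to the Stroock--Varadhan concatenation theorem \cite{SV97}; no analytic input beyond the a priori bound \eqref{[Equation (110), Y20b]} and the structural conditions \eqref{[Equations (11) and (12), Y20b]} on $F$ enters, so the argument is unaffected by the weak fractional diffusion or by working in two dimensions.
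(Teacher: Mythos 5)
Your proposal is correct and follows the same Stroock--Varadhan concatenation argument as \cite[Proposition 5.3]{HZZ19}, which the paper adopts by citation without repeating the proof, noting only that the argument is indifferent to the spatial dimension and to the form of the diffusion. The decomposition of $\mathbb{E}^{P\otimes_\tau R}[(N^i_t - N^i_s)1_A]$ over $\{t\le\tau\}$, $\{s\le\tau<t\}$, $\{\tau<s\}$, with optional stopping for $P$ before $\tau$ and disintegration through $Q_\omega$ after $\tau$, together with the bracket identities and L\'evy's characterization, is exactly the mechanism in the cited source.
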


Now we fix an $\mathbb{R}$-valued Wiener process $B$ on $(\Omega, \mathcal{F}, \mathbb{P})$ and apply Definitions \ref{[Definition 5.1, Y20b]}-\ref{[Definition 5.2, Y20b]}, Proposition \ref{[Proposition 5.1, Y20b]}, and Lemmas \ref{[Lemma 5.2, Y20b]}-\ref{[Lemma 5.3, Y20b]} with $F(u) = u$ and such $B$. For $n \in \mathbb{N}, L > 1$, and $\delta \in (0, \frac{1}{12})$ we define similarly to \eqref{[Equation (31), Y20b]}-\eqref{[Equation (32), Y20b]} 
\begin{subequations}\label{estimate 51}
\begin{align}
&\tau_{L}^{n} (\omega) \triangleq \inf \{t \geq 0: \hspace{0.5mm}  \lvert \theta(t,\omega) \rvert > (L - \frac{1}{n})^{\frac{1}{4}} \} \wedge \inf\{t > 0: \hspace{0.5mm}  \lVert \theta(\omega) \rVert_{C_{t}^{\frac{1}{2} - 2\delta}} > (L - \frac{1}{n})^{\frac{1}{2}} \} \wedge L, \label{[Equation (118a), Y20b]}\\
&\tau_{L} \triangleq \lim_{n\to\infty} \tau_{L}^{n}.  \label{[Equation (118b), Y20b]}
\end{align}
\end{subequations} 
It follows from \cite[Lemma 3.5]{HZZ19} that $\tau_{L}^{n}$ is a stopping time of $(\bar{\mathcal{B}}_{t})_{t\geq 0}$ and thus so is $\tau_{L}$. For the fixed $(\Omega, \mathcal{F}, \textbf{P})$ we assume Theorem \ref{[Theorem 2.3, Y20b]} and denote by $u$ the solution constructed from Theorem \ref{[Theorem 2.3, Y20b]} on $[0, \mathfrak{t}]$ where $\mathfrak{t} = T_{L}$ for $L$ sufficiently large and 
\begin{equation}\label{[Equation (119), Y20b]}
T_{L} \triangleq \inf\{t > 0: \hspace{0.5mm}  \lvert B(t) \rvert \geq L^{\frac{1}{4}} \} \wedge \inf\{t > 0: \hspace{0.5mm} \lVert B \rVert_{C_{t}^{\frac{1}{2} - 2\delta}} \geq L^{\frac{1}{2}} \} \wedge L \text{ with } \delta \in (0, \frac{1}{12}). 
\end{equation} 
We observe that $T_{L} \nearrow + \infty$ $\textbf{P}$-a.s. as $L \nearrow + \infty$. Let us also denote the law of $(u,B)$ by $P$. 
\begin{proposition}\label{[Proposition 5.4, Y20b]}
Let $\tau_{L}$ be defined by \eqref{[Equation (118b), Y20b]}. Then $P$, the law of $(u,B)$, is a probabilistically weak solution to \eqref{[Equation (2), Y20b]} on $[0, \tau_{L}]$ according to Definition \ref{[Definition 5.2, Y20b]}. 
\end{proposition}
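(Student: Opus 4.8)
The plan is to verify conditions (M1)--(M3) of Definition \ref{[Definition 5.2, Y20b]} directly for $P$, the law of $(u,B)$, with $s=0$, $\xi^{\text{in}}=u^{\text{in}}$, $\theta^{\text{in}}=0$, stopped at $\tau_L$, following the proof of \cite[Proposition 5.4]{HZZ19} (see also \cite[Proposition 5.4]{Y20a}); I only indicate where the weaker diffusion and the two-dimensional setting enter. The preliminary observation is that, since $\theta$ is the canonical process on the second coordinate of $\bar{\Omega}$ and $P$ is the law of $(u,B)$, under $P$ the process $\theta$ agrees in law with $B$, and the stopping time $\tau_L$ of $(\bar{\mathcal{B}}_t)_{t\geq 0}$ from \eqref{[Equation (118b), Y20b]} satisfies $\tau_L\circ (u,B) = T_L$ $\textbf{P}$-a.s., where $T_L$ is defined in \eqref{[Equation (119), Y20b]}; the passage to the limit $n\to\infty$ in \eqref{[Equation (118a), Y20b]} with $L-\tfrac{1}{n}$ is exactly what reconciles the strict inequalities in \eqref{[Equation (118a), Y20b]} with the non-strict ones in \eqref{[Equation (119), Y20b]}. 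In particular $u$ is defined on $[0,\mathfrak{t}]=[0,T_L]$, so all quantities below are well-defined on $[0,t\wedge\tau_L]$.

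For (M1), the identity at $t=s=0$ is immediate since $u(0,x)=u^{\text{in}}$ is deterministic and $B(0)=0$; the integrability \eqref{[Equation (112), Y20b]} holds because, with $F(u)=u$, the norm $\lVert F(\xi(r))\rVert_{L_2(U,L_\sigma^2)}$ is controlled by $\lVert\xi(r)\rVert_{L_x^2}$, which by \eqref{[Equation (7), Y20b]} is essentially bounded uniformly over $r\in[0,n\wedge\tau_L]$ and $\omega$. For (M3), the same uniform bound \eqref{[Equation (7), Y20b]} shows that $\sup_{r\in[0,t\wedge\tau_L]}\lVert\xi(r)\rVert_{L_x^2}^{2q}$ and $\int_0^{t\wedge\tau_L}\lVert\xi(r)\rVert_{H_x^\varepsilon}^2\,dr$ are bounded by a deterministic constant depending only on $t$, $q$, and $L$ (recall $\tau_L\leq L$ by \eqref{[Equation (118a), Y20b]}), so \eqref{[Equation (114), Y20b]} holds with an appropriate choice of $C_{t,q}$.

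For (M2), the fact that $\langle\theta(\cdot\wedge\tau_L),l_i\rangle_U$ is a continuous, square-integrable $(\bar{\mathcal{B}}_t)_{t\geq 0}$-martingale with quadratic variation $(t\wedge\tau_L)\lVert l_i\rVert_U^2$ follows by pushing forward the corresponding property of $B$ under $P$ and applying the optional stopping theorem, using that $(u,B)$ is $(\mathcal{F}_t)_{t\geq 0}$-adapted (as checked in the construction of $u$ for Theorem \ref{[Theorem 2.3, Y20b]}), so that $\bar{\mathcal{B}}_t$ under $P$ is, up to $P$-null sets, a sub-$\sigma$-algebra of $\mathcal{F}_t$. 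The weak formulation \eqref{[Equation (113), Y20b]} is the distributional form of \eqref{[Equation (2), Y20b]} tested against $\mathfrak{g}_i$ and evaluated up to the random time $t\wedge\tau_L\leq\mathfrak{t}$, which $u$ satisfies by construction; the one nontrivial point is the identification of the stochastic integral $\int_s^{t\wedge\tau_L}\langle\mathfrak{g}_i,F(\xi(r))\,d\theta(r)\rangle$ computed under $P$ with $\int_0^{t\wedge T_L}\langle\mathfrak{g}_i,u(r)\,dB(r)\rangle$, carried out as in \cite[Proposition 5.4]{HZZ19} via the standard fact that the It\^o integral is a measurable functional of its integrand and driving path compatible with the filtrations. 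The main obstacle is precisely this identification of the stochastic integral together with the bookkeeping that $\tau_L$ and $T_L$ coincide along the trajectory $(u,B)$; once these are in place the verification of (M1)--(M3) is routine, and we omit the remaining details, referring to \cite[Proposition 5.4]{HZZ19} and \cite[Proposition 5.4]{Y20a}.
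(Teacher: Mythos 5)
Your proposal takes essentially the same approach as the paper, which itself defers to \cite[Propositions 3.7 and 5.4]{HZZ19} and \cite[Proposition 4.5]{Y20a} after singling out the key identification $\theta(t,(u,B)) = B(t)$ for $t\in[0,T_L]$, i.e.\ \eqref{[Equation (120), Y20b]}; you correctly isolate this same fact (equivalently, $\tau_L\circ(u,B)=T_L$ $\textbf{P}$-a.s., with the $L-\tfrac1n$ passage reconciling the strict and non-strict inequalities in \eqref{[Equation (118a), Y20b]} versus \eqref{[Equation (119), Y20b]}) and then verify (M1)--(M3) directly using \eqref{[Equation (7), Y20b]} and the boundedness $\tau_L\le L$. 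The reasoning is sound and matches the paper's intent.
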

\begin{proof}[Proof of Proposition \ref{[Proposition 5.4, Y20b]}]
The proof is similar to that of Proposition \ref{[Proposition 4.5, Y20b]} making use of the fact that 
\begin{equation}\label{[Equation (120), Y20b]}
\theta(t, (u, B)) = B(t) \hspace{1mm} \forall \hspace{1mm} t \in [0, T_{L}] \hspace{1mm} \textbf{P}\text{-almost surely}
\end{equation}
(see also the proofs of \cite[Propositions 3.7 and 5.4]{HZZ19} and \cite[Proposition 4.5]{Y20a}). 
\end{proof}
Next, we extend $P$ on $[0,\tau_{L}]$ to $[0,\infty)$. 
\begin{proposition}\label{[Proposition 5.5, Y20b]}
Let $\tau_{L}$ be defined by \eqref{[Equation (118b), Y20b]} and $P$ denote the law of $(u,B)$ constructed from Theorem \ref{[Theorem 2.3, Y20b]}. Then the probability measure $P \otimes_{\tau_{L}} R$ in \eqref{[Equation (117), Y20b]} is a probabilistically weak solution to \eqref{[Equation (2), Y20b]} on $[0,\infty)$ according to Definition \ref{[Definition 5.1, Y20b]}.
\end{proposition}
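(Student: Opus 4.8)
The plan is to obtain Proposition \ref{[Proposition 5.5, Y20b]} as a direct application of Lemma \ref{[Lemma 5.3, Y20b]}, in complete parallel with the additive-noise argument of Proposition \ref{[Proposition 4.6, Y20b]}. First I would record that $\tau_{L}$ from \eqref{[Equation (118b), Y20b]} is a stopping time of $(\bar{\mathcal{B}}_{t})_{t \geq 0}$ — this is already noted after \eqref{estimate 51} by appealing to \cite[Lemma 3.5]{HZZ19} — and that it is bounded by $L$ because of the last factor $\wedge L$ in \eqref{[Equation (118a), Y20b]}. Together with Proposition \ref{[Proposition 5.4, Y20b]}, which states that $P$, the law of $(u,B)$, is a probabilistically weak solution to \eqref{[Equation (2), Y20b]} on $[0,\tau_{L}]$ in the sense of Definition \ref{[Definition 5.2, Y20b]}, the hypotheses of Lemma \ref{[Lemma 5.2, Y20b]} are met, so the concatenated measure $P \otimes_{\tau_{L}} R$ of \eqref{[Equation (117), Y20b]} is well defined and $\bar{\mathcal{B}}$-measurable in $\omega$.

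The one point requiring verification before Lemma \ref{[Lemma 5.3, Y20b]} can be invoked is the compatibility condition \eqref{[Equation (116), Y20b]}: for $P$-almost every $\omega$, the kernel $Q_{\omega}$ must be supported on trajectories $\omega'$ with $\tau_{L}(\omega') = \tau_{L}(\omega)$. By the first property in Lemma \ref{[Lemma 5.2, Y20b]}, $Q_{\omega}$ is concentrated on the set where $(\xi,\theta)(t,\omega') = (\xi,\theta)(t,\omega)$ for all $t \in [0,\tau_{L}(\omega)]$; since $\tau_{L}$ depends on the trajectory only through its $\theta$-component and, by its definition in \eqref{[Equation (118a), Y20b]} as a first-crossing time of the thresholds $(L-\tfrac1n)^{1/4}$ and $(L-\tfrac1n)^{1/2}$, is determined by the behaviour of $\theta$ on $[0,\tau_{L}(\omega)]$, one obtains $\tau_{L}(\omega') = \tau_{L}(\omega)$ for such $\omega'$. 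Making this rigorous uses the limit structure $\tau_{L} = \lim_{n\to\infty}\tau_{L}^{n}$ of \eqref{[Equation (118b), Y20b]} together with the strict inequalities defining the $\tau_{L}^{n}$, exactly as in the proof of \cite[Proposition 3.8]{HZZ19}; I would reproduce that argument, with the process $Z^{\omega}$ there replaced by $\theta$ here, and also invoke \eqref{[Equation (120), Y20b]} to identify $\theta$ with $B$ along the trajectories carried by $P$.

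With \eqref{[Equation (116), Y20b]} in hand, Lemma \ref{[Lemma 5.3, Y20b]} yields at once that $P \otimes_{\tau_{L}} R$ restricted to $\bar{\Omega}_{\tau_{L}}$ coincides with $P$ and is a probabilistically weak solution to \eqref{[Equation (2), Y20b]} on $[0,\infty)$ with the deterministic initial condition $(\xi^{\text{in}},0)$, i.e. it satisfies (M1)--(M3) of Definition \ref{[Definition 5.1, Y20b]}; the uniform constants $\{C_{t,q}\}$ in (M3) survive the concatenation because they are valid for $P$ on $[0,\tau_{L}]$ and, by Proposition \ref{[Proposition 5.1, Y20b]}, for the solutions $R_{\tau_{L}(\omega),\,\xi(\tau_{L}(\omega),\omega),\,\theta(\tau_{L}(\omega),\omega)}$ with constants independent of $\omega$. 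The main — and essentially only non-mechanical — obstacle is the verification of \eqref{[Equation (116), Y20b]}, namely that the splicing time $\tau_{L}$ is almost surely locked in by the trajectory up to that time; everything else is a transcription of the additive-noise argument of Proposition \ref{[Proposition 4.6, Y20b]} into the enlarged path space $\bar{\Omega}$.
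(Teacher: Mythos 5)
Your proposal is correct and follows essentially the same route as the paper: check that $\tau_L$ is a bounded stopping time, invoke Proposition \ref{[Proposition 5.4, Y20b]} to see $P$ is a probabilistically weak solution on $[0,\tau_L]$, and then apply Lemmas \ref{[Lemma 5.2, Y20b]}--\ref{[Lemma 5.3, Y20b]}, with the only non-trivial step being the verification of \eqref{[Equation (116), Y20b]}, which the paper (like you) reduces to the argument of Proposition \ref{[Proposition 4.6, Y20b]} and \cite[Propositions 3.8 and 5.5]{HZZ19}. Your added remarks about why \eqref{[Equation (116), Y20b]} holds — $\tau_L$ depends only on $\theta$ restricted to $[0,\tau_L(\omega)]$, and the limit of the $\tau_L^n$ with strict thresholds handles the boundary cases — correctly anticipate the content of that cited argument.
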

\begin{proof}[Proof of Proposition \ref{[Proposition 5.5, Y20b]}]
Because $\tau_{L}$ is a stopping time of $(\bar{\mathcal{B}}_{t})_{t\geq 0}$ that is bounded by $L$ due to \eqref{[Equation (118a), Y20b]}, the hypothesis of Lemma \ref{[Lemma 5.2, Y20b]} is verified. By Proposition \ref{[Proposition 5.4, Y20b]}, $P$ is a probabilistically weak solution to \eqref{[Equation (2), Y20b]} on $[0,\tau_{L}]$.  Therefore, Lemma \ref{[Lemma 5.3, Y20b]} gives us the desired result once we verify the existence of a Borel set $\mathcal{N} \subset \bar{\Omega}_{\tau}$ such that $P(\mathcal{N}) = 0$ and \eqref{[Equation (116), Y20b]} holds for every $\omega \in  \bar{\Omega}_{\tau} \setminus \mathcal{N}$, and that can be achieved similarly to the proof of Proposition \ref{[Proposition 4.6, Y20b]} (see also the proofs of \cite[Propositions 3.8 and 5.5]{HZZ19} and \cite[Proposition 4.6]{Y20a}).
\end{proof}

Taking Theorem \ref{[Theorem 2.3, Y20b]} for granted, we are now able to prove Theorem \ref{[Theorem 2.4, Y20b]}. 
\begin{proof}[Proof of Theorem \ref{[Theorem 2.4, Y20b]} assuming Theorem \ref{[Theorem 2.3, Y20b]} ]
The proof is similar to that of Theorem \ref{[Theorem 2.2, Y20b]} assuming Theorem \ref{[Theorem 2.1, Y20b]} in Subsection \ref{Subsection 4.1}; we sketch it for completeness. We fix $T> 0$ arbitrarily, any $\kappa \in (0,1)$, and $K > 1$ such that $\kappa K^{2} \geq 1$. The probability measure $P \otimes_{\tau_{L}} R$ from Proposition \ref{[Proposition 5.5, Y20b]} satisfies 
\begin{equation*}
P \otimes_{\tau_{L}} R ( \{ \tau_{L} \geq T \}) 
\overset{\eqref{[Equation (117), Y20b]}}{=} \textbf{P} ( \{ \tau_{L} (u, B)) \geq T \}) \overset{\eqref{estimate 51} \eqref{[Equation (119), Y20b]} \eqref{[Equation (120), Y20b]}}{=} \textbf{P} ( \{T_{L} \geq T \}) > \kappa, 
\end{equation*} 
where the last inequality is due to Theorem \ref{[Theorem 2.3, Y20b]}. This leads us to $\mathbb{E}^{P \otimes_{\tau_{L}} R} [ \lVert \xi(T) \rVert_{L_{x}^{2}}^{2}]$ $>$ $\kappa K^{2} e^{T} \lVert \xi^{\text{in}} \rVert_{L_{x}^{2}}^{2}$, where $\xi^{\text{in}}$ is the deterministic initial condition constructed through Theorem \ref{[Theorem 2.3, Y20b]}. On the other hand, via a classical Galerkin approximation scheme (e.g., \cite{FR08}) one can readily construct a probabilistically weak solution $\Theta$ to \eqref{[Equation (2), Y20b]} starting also from $\xi^{\text{in}}$ such that $\mathbb{E}^{\Theta} [ \lVert \xi(T) \rVert_{L_{x}^{2}}^{2}] \leq e^{T} \lVert \xi^{\text{in}} \rVert_{L_{x}^{2}}^{2}$. Because $\kappa K^{2} \geq 1$, this implies the lack of uniqueness of probabilistically weak solution to \eqref{[Equation (2), Y20b]} and equivalently the lack of joint uniqueness in law for \eqref{[Equation (2), Y20b]}, and consequently the non-uniqueness in law for \eqref{[Equation (2), Y20b]} by \cite[Theorem C.1]{HZZ19}, which is an infinite-dimensional version of \cite[Theorem 3.1]{C03} due to Cherny. 
\end{proof}

\subsection{Proof of Theorem \ref{[Theorem 2.3, Y20b]} assuming Proposition \ref{[Proposition 5.7, Y20b]}}

We define $\Upsilon(t)\triangleq e^{B(t)}$ and $v  \triangleq\Upsilon^{-1} u$ for $t \geq 0$. It follows from Ito's product formula (e.g., \cite[Theorem 4.4.13]{A09}) on \eqref{[Equation (2), Y20b]} that 
\begin{equation}\label{[Equation (121), Y20b]}
\partial_{t} v + \frac{1}{2} v + (-\Delta)^{m} v + \Upsilon \text{div} (v\otimes v) + \Upsilon^{-1} \nabla \pi = 0, \hspace{3mm} \nabla\cdot v =0, \hspace{3mm} t> 0.
\end{equation} 
Considering \eqref{[Equation (121), Y20b]}, for every $q \in \mathbb{N}_{0}$ we will construct $(v_{q}, \mathring{R}_{q})$ that solves 
\begin{equation}\label{[Equation (122), Y20b]}
\partial_{t} v_{q} + \frac{1}{2} v_{q} + (-\Delta)^{m} v_{q} + \Upsilon \text{div} (v_{q}\otimes v_{q}) + \nabla p_{q} = \text{div} \mathring{R}_{q}, \hspace{3mm} \nabla\cdot v_{q} =0, \hspace{3mm} t>0, 
\end{equation} 
when $\mathring{R}_{q}$ is assumed to be a trace-free symmetric matrix. Similarly to \eqref{[Equations (37) and (39), Y20b]} in the additive case, we continue to define $\lambda_{q} \triangleq a^{b^{q}}, \delta_{q} \triangleq \lambda_{q}^{-2\beta}$ for $a \in 10 \mathbb{N}$, $b \in \mathbb{N}$, and $\beta \in (0,1)$ so that the requirement of $\lambda_{q+1} \in 5 \mathbb{N}$ of \eqref{[Equation (4.14), LQ20]} is satisfied, while differently from \eqref{[Equations (37) and (39), Y20b]} we define 
\begin{equation}\label{[Equation (125), Y20b]}
M_{0}(t) \triangleq e^{4Lt + 2L} \hspace{2mm} \text{ and } \hspace{2mm} m_{L} \triangleq \sqrt{3} L^{\frac{1}{4}} e^{\frac{1}{2} L^{\frac{1}{4}}}.
\end{equation}  
Due to \eqref{[Equation (119), Y20b]} we obtain for all $L > 1, \delta \in (0, \frac{1}{12})$, and $t \in [0, T_{L}]$ 
\begin{equation}\label{[Equation (123), Y20b]}
\lvert B(t) \rvert \leq L^{\frac{1}{4}} \text{ and } \lVert B \rVert_{C_{t}^{\frac{1}{2} - 2 \delta}} \leq L^{\frac{1}{2}}
\end{equation}
which immediately implies 
\begin{equation}\label{[Equation (124), Y20b]}
\lVert \Upsilon \rVert_{C_{t}^{\frac{1}{2} - 2\delta}} + \lvert \Upsilon(t) \rvert + \lvert \Upsilon^{-1}(t) \rvert \leq e^{L^{\frac{1}{4}}} L^{\frac{1}{2}} + 2e^{L^{\frac{1}{4}}} \leq m_{L}^{2}.
\end{equation} 
For induction we assume that $(v_{q}, \mathring{R}_{q})$ satisfy the following bounds on $[0, T_{L}]$: 
\begin{subequations}\label{[Equation (126), Y20b]}
\begin{align}
& \lVert v_{q} \rVert_{C_{t}L_{x}^{2}} \leq m_{L} M_{0}(t)^{\frac{1}{2}} (1+ \sum_{1 \leq   \iota \leq q} \delta_{\iota}^{\frac{1}{2}}) \leq 2m_{L} M_{0}(t)^{\frac{1}{2}}, \label{[Equation (126a), Y20b]}\\
& \lVert v_{q} \rVert_{C_{t,x}^{1}} \leq m_{L} M_{0}(t)^{\frac{1}{2}} \lambda_{q}^{4}, \label{[Equation (126b), Y20b]}\\
& \lVert \mathring{R}_{q} \rVert_{C_{t}L_{x}^{1}} \leq M_{0}(t) c_{R} \delta_{q+1}, \label{[Equation (126c), Y20b]}
\end{align}
\end{subequations}  
where $c_{R} > 0$ is again a universal constant to be determined subsequently and we assumed again $a^{\beta b} > 3$, as formally stated in \eqref{[Equation (129), Y20b]}, in order to deduce $\sum_{1\leq \iota} \delta_{\iota}^{\frac{1}{2}} < \frac{1}{2}$. 

\begin{proposition}\label{[Proposition 5.6, Y20b]}
Let $L > 1$ and define 
\begin{equation}\label{[Equation (127), Y20b]}
v_{0}(t,x) \triangleq \frac{m_{L} e^{2L t + L}}{2\pi} \begin{pmatrix}
\sin(x^{2}) & 0
\end{pmatrix}^{T}.
\end{equation} 
Then together with 
\begin{equation}\label{[Equation (128), Y20b]}
\mathring{R}_{0}(t,x) \triangleq \frac{ m_{L} (2L + \frac{1}{2}) e^{2L t + L}}{2\pi} 
\begin{pmatrix}
0 & - \cos(x^{2}) \\
-\cos(x^{2}) & 0 
\end{pmatrix} 
+ \mathcal{R} (-\Delta)^{m} v_{0}(t,x), 
\end{equation} 
it satisfies \eqref{[Equation (122), Y20b]} at level $q = 0$. Moreover, \eqref{[Equation (126), Y20b]} is satisfied at level $q = 0$ provided 
\begin{equation}\label{[Equation (129), Y20b]}
72\sqrt{3} < 8 \sqrt{3} a^{2 \beta b} \leq \frac{c_{R} e^{L - \frac{1}{2} L^{\frac{1}{4}}}}{L^{\frac{1}{4}} (2L + \frac{1}{2}+ \pi)}, \hspace{3mm} L \leq a^{4} \pi - 1, 
\end{equation} 
where the inequality $9 < a^{2 \beta b}$ is assumed for the sake of second inequality in \eqref{[Equation (126a), Y20b]}. Furthermore, $v_{0}(0,x)$ and $\mathring{R}_{0}(0,x)$ are both deterministic. 
\end{proposition}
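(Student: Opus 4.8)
The plan is to follow the proof of Proposition~\ref{[Proposition 4.7, Y20b]} almost verbatim, the one simplification being that in the multiplicative setting there is no decomposition involving a stochastic part $z$, and moreover neither $v_0$ nor $\mathring{R}_0$ in \eqref{[Equation (127), Y20b]}--\eqref{[Equation (128), Y20b]} involves $B$ or $\Upsilon$, so the stochastic term $\Upsilon\,\text{div}(v_0\otimes v_0)$ in \eqref{[Equation (122), Y20b]} will contribute nothing at level $q=0$. First I would check that $(v_0,\mathring{R}_0)$ solves \eqref{[Equation (122), Y20b]} at level $q=0$ with $p_0\equiv 0$. Since $v_0$ has the form $\bigl(f(x^2),0\bigr)^T$, it is mean-zero and divergence-free, the matrix $v_0\otimes v_0$ has its only nonzero entry in the $(1,1)$ slot depending on $x^2$ alone, so $\text{div}(v_0\otimes v_0)=0$, and $\Delta v_0=-v_0$. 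Hence $\partial_t v_0+\frac12 v_0=(2L+\frac12)v_0$, a direct computation of the divergence of the trigonometric matrix in \eqref{[Equation (128), Y20b]} gives $(2L+\frac12)v_0$, and $\text{div}\,\mathcal{R}(-\Delta)^m v_0=(-\Delta)^m v_0$ because $v_0$ is mean-zero (Lemma~\ref{[Definition 9, Lemma 10, CDS12]}); putting these together verifies \eqref{[Equation (122), Y20b]}, while $\mathring{R}_0$ is trace-free and symmetric by construction of its trigonometric part together with the mapping properties of $\mathcal{R}$.

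Next I would verify the inductive bounds \eqref{[Equation (126), Y20b]} at level $q=0$. For \eqref{[Equation (126a), Y20b]} one computes $\lVert v_0(t)\rVert_{L_x^2}=\frac{m_L}{\sqrt2}M_0(t)^{1/2}\le m_L M_0(t)^{1/2}$ using $\lVert\sin(x^2)\rVert_{L^2(\mathbb{T}^2)}=\sqrt2\,\pi$ and $M_0(t)^{1/2}=e^{2Lt+L}$, and the cruder second inequality in \eqref{[Equation (126a), Y20b]} follows from $9<a^{2\beta b}$ exactly as in Proposition~\ref{[Proposition 4.7, Y20b]}. For \eqref{[Equation (126b), Y20b]} one has $\lVert v_0\rVert_{C_{t,x}^1}\le\frac{m_L(2L+1)}{2\pi}e^{2Lt+L}$, which is at most $m_L M_0(t)^{1/2}\lambda_0^4$ once $L\le a^4\pi-1$, the second condition in \eqref{[Equation (129), Y20b]}. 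For \eqref{[Equation (126c), Y20b]} I would bound $\lVert\mathring{R}_0(t)\rVert_{L_x^1}$ by the $L_x^1$ norm of the trigonometric matrix, which is $\lesssim m_L(2L+\frac12)e^{2Lt+L}$, plus $\lVert\mathcal{R}(-\Delta)^m v_0\rVert_{L_x^1}\lesssim\lVert\mathcal{R}(-\Delta)^m v_0\rVert_{L_x^2}\lesssim\lVert v_0\rVert_{L_x^2}$, the latter obtained exactly as in \eqref{estimate 35} from \eqref{estimate 2}, interpolation, and $\Delta v_0=-v_0$; collecting terms one gets $\lVert\mathring{R}_0(t)\rVert_{L_x^1}\le 8\sqrt3\,L^{1/4}e^{\frac12 L^{1/4}}(2L+\frac12+\pi)e^{2Lt+L}$, and the first inequality in \eqref{[Equation (129), Y20b]} is precisely the requirement that this be dominated by $M_0(t)c_R\delta_1=c_R e^{4Lt+2L}a^{-2\beta b}$ for all $t\ge 0$ (using $e^{2Lt+L}\ge e^L$ and $m_L=\sqrt3 L^{1/4}e^{\frac12 L^{1/4}}$).

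Finally, determinism is immediate: $m_L$ and $M_0(t)$ depend only on $L$, and $v_0$, $\mathring{R}_0$ in \eqref{[Equation (127), Y20b]}--\eqref{[Equation (128), Y20b]} involve neither $B$ nor $\Upsilon$, so $v_0(0,x)$ and $\mathring{R}_0(0,x)$ (indeed $v_0$ and $\mathring{R}_0$ at every time) are deterministic. There is no real obstacle here, this being a base-case verification; the only point requiring attention is tracking the absolute constant in the $L_x^1$ estimate of $\mathring{R}_0$ so that it is genuinely below the constant $8\sqrt3$ appearing in \eqref{[Equation (129), Y20b]}, the multiplicative-noise analogue of \eqref{[Equation (43), Y20b]}. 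Since $L>1$ may be taken large and $\beta>0$ small with $a\in 10\mathbb{N}$ large, \eqref{[Equation (129), Y20b]} is satisfiable, so the argument goes through.
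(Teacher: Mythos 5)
Your proposal is correct and follows essentially the same route as the paper's proof: verify that $v_0$ is divergence-free with $\text{div}(v_0\otimes v_0)=0$ and $\Delta v_0=-v_0$ to see that $(v_0,\mathring{R}_0)$ solves \eqref{[Equation (122), Y20b]} with $p_0\equiv 0$ via Lemma \ref{[Definition 9, Lemma 10, CDS12]}, then verify \eqref{[Equation (126), Y20b]} at level $q=0$ using \eqref{estimate 35} and the parameter constraints \eqref{[Equation (129), Y20b]}. One minor arithmetic slip: $\lVert v_0\rVert_{C_{t,x}^1}=\frac{m_L(2L+2)}{2\pi}e^{2Lt+L}$ (the paper's \eqref{[Equation (130), Y20b]} writes $\frac{m_L(1+L)M_0(t)^{1/2}}{\pi}$), not $\frac{m_L(2L+1)}{2\pi}e^{2Lt+L}$, but this does not affect the conclusion since the condition $L\le a^4\pi-1$ you cite is exactly what the corrected constant requires.
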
 

\begin{proof}[Proof of Proposition \ref{[Proposition 5.6, Y20b]}]
The proof is similar to that of Proposition \ref{[Proposition 4.7, Y20b]}. Let us observe that $v_{0}$ is divergence-free, while $\mathring{R}_{0}$ is trace-free and symmetric. It may be immediately verified that $(v_{0}, \mathring{R}_{0})$ solves \eqref{[Equation (122), Y20b]} with $p_{0} \equiv 0$ by using the fact that $(v_{0} \cdot \nabla) v_{0} = 0$ and Lemma \ref{[Definition 9, Lemma 10, CDS12]}. Next, for all $t \in [0, T_{L}]$ we can compute similarly to \eqref{[Equations (44a) and (44b), Y20b]}
\begin{equation}\label{[Equation (130), Y20b]}
\lVert v_{0}(t) \rVert_{L_{x}^{2}} =  \frac{m_{L} M_{0}(t)^{\frac{1}{2}}}{\sqrt{2}} \leq m_{L} M_{0}(t)^{\frac{1}{2}}, \hspace{1mm}  \lVert v_{0} \rVert_{C_{t,x}^{1}} = \frac{m_{L} (1+L) M_{0}(t)^{\frac{1}{2}}}{\pi} \overset{\eqref{[Equation (129), Y20b]}}{\leq} m_{L} M_{0}(t)^{\frac{1}{2}} \lambda_{0}^{4}.
\end{equation}
Finally, using $ \lVert \mathcal{R} ( -\Delta)^{m} v_{0} \rVert_{L_{x}^{2}}  \leq 4 \lVert v_{0} \rVert_{L_{x}^{2}}$ due to $\Delta v_{0} = -v_{0}$ and \eqref{estimate 35} we can compute 
\begin{align}
\lVert \mathring{R}_{0} (t) \rVert_{L_{x}^{1}} \leq m_{L} (2L + \frac{1}{2}) M_{0}(t)^{\frac{1}{2}} 8 + (2\pi) 4 \lVert v_{0}(t) \rVert_{L_{x}^{2}} \overset{\eqref{[Equation (129), Y20b]} \eqref{[Equation (130), Y20b]} }{\leq} M_{0}(t) c_{R} \delta_{1}. 
\end{align}  
\end{proof}

We point out that 
\begin{equation}\label{[Equation (132), Y20b]}
72 \sqrt{3} < \frac{c_{R} e^{L - \frac{1}{2} L^{\frac{1}{4}}}}{L^{\frac{1}{4}} ( 2L + \frac{1}{2} + \pi)}
\end{equation} 
is not sufficient but necessary to satisfy \eqref{[Equation (129), Y20b]}.
\begin{proposition}\label{[Proposition 5.7, Y20b]}
Let $L > 1$ satisfy \eqref{[Equation (132), Y20b]} and suppose that $(v_{q}, \mathring{R}_{q})$ is an $(\mathcal{F}_{t})_{t\geq 0}$-adapted solution to \eqref{[Equation (122), Y20b]} that satisfies \eqref{[Equation (126), Y20b]}. Then there exists a choice of parameters $a, b$, and $\beta$ such that \eqref{[Equation (129), Y20b]} is fulfilled and an $(\mathcal{F}_{t})_{t\geq 0}$-adapted process $(v_{q+1}, \mathring{R}_{q+1})$ that satisfies \eqref{[Equation (122), Y20b]}, \eqref{[Equation (126), Y20b]} at level $q + 1$, and 
\begin{equation}\label{[Equation (133), Y20b]}
\lVert v_{q+1}(t) - v_{q}(t) \rVert_{L_{x}^{2}} \leq m_{L} M_{0}(t)^{\frac{1}{2}} \delta_{q+1}^{\frac{1}{2}} \hspace{3mm} \forall \hspace{1mm} t \in [0,T_{L}].
\end{equation} 
Furthermore, if $v_{q}(0, x)$ and $\mathring{R}_{q}(0,x)$ are deterministic, then so are $v_{q+1}(0,x)$ and $\mathring{R}_{q+1}(0,x)$. 
\end{proposition}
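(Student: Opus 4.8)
The plan is to reproduce, essentially line by line, the convex integration step carried out in the proof of Proposition \ref{[Proposition 4.8, Y20b]}, replacing the role played there by the additive object $z$ with the multiplicative factor $\Upsilon(t)=e^{B(t)}$ and accounting for the extra zeroth-order term $\tfrac12 v_{q}$ in \eqref{[Equation (122), Y20b]}. I would keep the parameters $m^{\ast},\eta,\alpha,b,l,r,\mu,\sigma$ from Sub-Subsection \ref{Sub-subsection 4.3.1} and the choice of $p^{\ast}$ in \eqref{p ast} unchanged, and again select $a\in 10\mathbb{N}$ large and $\beta\in(0,1)$ small at the end so that \eqref{[Equation (129), Y20b]} holds, which is possible since $L$ satisfies \eqref{[Equation (132), Y20b]}. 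First I would mollify $v_{q},\mathring{R}_{q}$ in space and time as in \eqref{[Equation (58), Y20b]} to obtain $v_{l},\mathring{R}_{l}$, and \emph{also} mollify $\Upsilon$ in time to a smooth function $\Upsilon_{l}$; by \eqref{[Equation (123), Y20b]}--\eqref{[Equation (124), Y20b]} one has $\lvert\Upsilon_{l}(t)\rvert+\lvert\Upsilon_{l}^{-1}(t)\rvert\le m_{L}^{2}$, $\lVert\partial_{t}\Upsilon_{l}\rVert_{C_{t}^{0}}\lesssim m_{L}^{2}l^{-1}$, and $\lVert\Upsilon-\Upsilon_{l}\rVert_{C_{t}^{0}}\lesssim m_{L}^{2}l^{\frac12-2\delta}$ on $[0,T_{L}]$, the last estimate being the analogue of the role \eqref{[Equation (38), Y20b]} played for $z$. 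Mollifying \eqref{[Equation (122), Y20b]} then gives the counterpart of \eqref{[Equation (59), Y20b]} for $v_{l}$ with a mollification commutator $R_{\mathrm{com1}}$ now built from $\Upsilon,\Upsilon_{l},v_{q},v_{l}$, whose low temporal regularity is absorbed exactly as in \eqref{[Equation (105), Y20b]} by the factor $l^{\frac12-2\delta}$.

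Next I would define the amplitudes
\[
a_{\zeta}(\omega,t,x)\triangleq \rho(\omega,t,x)^{\frac12}\,\Upsilon_{l}(t)^{-\frac12}\,\gamma_{\zeta}\!\left(\frac{\mathring{R}_{l}(\omega,t,x)}{\rho(\omega,t,x)}\right),
\]
with $\rho$ as in \eqref{[Equation (65), Y20b]}; the extra factor $\Upsilon_{l}^{-1/2}$ is forced by the multiplier $\Upsilon$ in front of the nonlinearity, and it is mollified precisely so that $a_{\zeta}$ stays smooth despite $B$ being only H\"older continuous in $t$. Lemmas \ref{[Lemma 4.1, LQ20]}--\ref{[Lemma 4.2, LQ20]} then yield $\sum_{\zeta,\zeta'\in\Lambda}a_{\zeta}a_{\zeta'}\fint_{\mathbb{T}^{2}}\mathbb{W}_{\zeta}\mathring{\otimes}\mathbb{W}_{\zeta'}\,dx=-\Upsilon_{l}^{-1}\mathring{R}_{l}$, the substitute for \eqref{estimate 3}. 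Since $\Upsilon_{l}$ is $x$-independent and bounded above and below by fixed powers of $m_{L}$, all the amplitude bounds \eqref{[Equation (72), Y20b]}--\eqref{[Equation (73), Y20b]} persist with $M_{0}(t)^{1/2}$ replaced by $m_{L}M_{0}(t)^{1/2}$ and a further fixed $m_{L}$-power (the time derivative hitting $\Upsilon_{l}^{-1/2}$ costs only $l^{-1}$, dominated by $l^{-4}$). I would then define $w_{q+1}^{(p)},w_{q+1}^{(c)},w_{q+1}^{(t)}$ and $v_{q+1}\triangleq v_{l}+w_{q+1}$ verbatim as in \eqref{[Equation (74), Y20b]}--\eqref{[Equation (76), Y20b]}, note that $w_{q+1}$ is divergence-free and mean-zero via \eqref{[Equation (5.9), LQ20]}, and reproduce \eqref{[Equation (77), Y20b]}--\eqref{estimate 5} and \eqref{[Equation (88), Y20b]} with the harmless $m_{L}$-factors absorbed by enlarging $a$; this establishes \eqref{[Equation (126a), Y20b]}--\eqref{[Equation (126b), Y20b]} at level $q+1$ and \eqref{[Equation (133), Y20b]}.

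For the Reynolds stress I would recompute $\operatorname{div}\mathring{R}_{q+1}-\nabla p_{q+1}$ from \eqref{[Equation (122), Y20b]} following the decomposition \eqref{estimate 49}, with two modifications. The term $\tfrac12 v_{q+1}$ produces the new linear error $\tfrac12 w_{q+1}$, which I place inside $R_{\mathrm{lin}}$ as $\tfrac12\mathcal{R}w_{q+1}$ and bound by \eqref{[Equation (78a), Y20b]}, \eqref{[Equation (80), Y20b]} and the $L^{p^{\ast}}$-boundedness of $\mathcal{R}$; it is subdominant relative to \eqref{estimate 9}. The multiplier $\Upsilon$ sits in front of all quadratic terms: writing $\Upsilon\,\operatorname{div}(w_{q+1}^{(p)}\otimes w_{q+1}^{(p)}+\mathring{R}_{l})=\Upsilon\,\operatorname{div}\!\big(w_{q+1}^{(p)}\otimes w_{q+1}^{(p)}+\Upsilon_{l}^{-1}\mathring{R}_{l}\big)+\operatorname{div}\!\big((1-\Upsilon\Upsilon_{l}^{-1})\mathring{R}_{l}\big)$, I would run the oscillation analysis \eqref{estimate 6}--\eqref{estimate 26} on the first group using the identity above in place of \eqref{[Equation (7.12a), LQ20]}; since $\Upsilon$ is $x$-independent it commutes with $\mathbb{P}_{\neq0}$ and $\mathbb{P}_{\geq\lambda_{q+1}\sigma/2}$, so it can be carried outside every Fourier projection in $\mathcal{E}_{\zeta,\zeta',1}$, $\mathcal{E}_{\zeta,\zeta',2,k}$, $A_{2}$, $A_{3}$ and absorbed into the constants as $\lvert\Upsilon\rvert\le m_{L}^{2}$, while $(1-\Upsilon\Upsilon_{l}^{-1})\mathring{R}_{l}$ is a commutator term controlled by $m_{L}^{c}l^{\frac12-2\delta}\delta_{q+1}M_{0}(t)\ll c_{R}\delta_{q+2}M_{0}(t)$, to be grouped with $R_{\mathrm{com1}}$. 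The $R_{\mathrm{lin}},R_{\mathrm{cor}},R_{\mathrm{osc}}$ estimates \eqref{estimate 9}--\eqref{estimate 29} and the commutator estimates \eqref{[Equation (105), Y20b]}--\eqref{[Equation (106), Y20b]} then go through with $M_{0}(t)$ multiplied by a fixed power of $m_{L}$, absorbed by enlarging $a\in10\mathbb{N}$, giving \eqref{[Equation (126c), Y20b]} at level $q+1$. Finally, exactly as at the end of the proof of Proposition \ref{[Proposition 4.8, Y20b]}, one checks that $(v_{q+1},\mathring{R}_{q+1})$ is $(\mathcal{F}_{t})_{t\geq0}$-adapted --- using in addition that $\Upsilon(t)=e^{B(t)}$ and $\Upsilon_{l}$ are $(\mathcal{F}_{t})_{t\geq0}$-adapted because $\varphi_{l}$ is supported in $\mathbb{R}_{+}$ --- and that $(v_{q+1},\mathring{R}_{q+1})(0,x)$ is deterministic whenever $(v_{q},\mathring{R}_{q})(0,x)$ is, since $\Upsilon(0)=\Upsilon_{l}(0)=1$.

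The main obstacle I anticipate is purely the bookkeeping around $\Upsilon$: because $B$ is nowhere differentiable, one must ensure that $\Upsilon$ enters the amplitudes --- and hence the perturbation, the terms $A_{3},A_{4}$, and the pressures --- only through the mollified, smooth $\Upsilon_{l}$, while the genuine object $\Upsilon$ appears solely as a bounded $x$-independent prefactor that is either carried harmlessly through the divergence-free Fourier projections or differenced against $\Upsilon_{l}$ with the $l^{\frac12-2\delta}$ gain. Once this is organized, every remaining estimate is a transcription of the additive case with $M_{0}(t)^{1/2}\mapsto m_{L}M_{0}(t)^{1/2}$ plus finitely many extra $m_{L}$-factors, which do not affect the $\lambda_{q+1}$-power counting because $m_{L}$ is fixed once $L$ is fixed.
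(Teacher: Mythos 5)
The proposal misses the key structural device of the paper's proof, and as written the oscillation cancellation would fail. The paper does \emph{not} use the same amplitude for all three pieces of the perturbation: in \eqref{[Equation (141), Y20b]} the modified amplitude $\bar{a}_{\zeta}=\Upsilon_{l}^{-1/2}a_{\zeta}$ is used only for $w_{q+1}^{(p)}$ and $w_{q+1}^{(c)}$, while $w_{q+1}^{(t)}$ is defined with the \emph{unmodified} $a_{\zeta}$ from \eqref{[Equation (70), Y20b]}. This asymmetry is precisely what makes the oscillation step close. In \eqref{estimate 37} one has $\Upsilon_{l}w_{q+1}^{(p)}\otimes w_{q+1}^{(p)}=(\sum_{\zeta}a_{\zeta}\mathbb{W}_{\zeta})\otimes(\sum_{\zeta'}a_{\zeta'}\mathbb{W}_{\zeta'})$, so the $\Upsilon_{l}$ factor disappears and the oscillation identity \eqref{estimate 26} holds verbatim in terms of $a_{\zeta}$; consequently the divergence produces the term $-\mu^{-1}(\sum_{+}-\sum_{-})\partial_{t}\mathbb{P}_{\neq 0}(a_{\zeta}^{2}\mathbb{P}_{\neq 0}(\eta_{\zeta}^{2}\zeta))$, and $\partial_{t}w_{q+1}^{(t)}$ (built from the very same $a_{\zeta}^{2}$) is what cancels it exactly, leaving only the gradient piece $A_{4}$ and the controlled term $A_{3}$.

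You instead define all three pieces ``verbatim as in \eqref{[Equation (74), Y20b]}'' from a single amplitude containing $\Upsilon_{l}^{-1/2}$, so your $w_{q+1}^{(t)}$ carries $\bar{a}_{\zeta}^{2}=\Upsilon_{l}^{-1}a_{\zeta}^{2}$. After the cancellation of $\Upsilon_{l}$ against $w_{q+1}^{(p)}\otimes w_{q+1}^{(p)}$, the term you must kill is $-\mu^{-1}(\sum_{+}-\sum_{-})\partial_{t}\mathbb{P}_{\neq 0}(a_{\zeta}^{2}\mathbb{P}_{\neq 0}(\eta_{\zeta}^{2}\zeta))$, whereas your $\partial_{t}w_{q+1}^{(t)}$ supplies $\mu^{-1}(\sum_{+}-\sum_{-})\mathbb{P}\mathbb{P}_{\neq 0}\partial_{t}(\Upsilon_{l}^{-1}a_{\zeta}^{2}\mathbb{P}_{\neq 0}(\eta_{\zeta}^{2}\zeta))$. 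The mismatch is a factor $\Upsilon_{l}^{-1}$, which is bounded but not close to $1$ (it is of size up to $e^{L^{1/4}}$); the leftover $(1-\Upsilon_{l}^{-1})\,\mu^{-1}\partial_{t}(a_{\zeta}^{2}\eta_{\zeta}^{2}\zeta)$ is therefore of the same order as the term one was trying to remove, and $\lVert\mathring{R}_{q+1}\rVert_{C_{t}L_{x}^{1}}$ will not come down to $M_{0}(t)c_{R}\delta_{q+2}$. Keeping $\Upsilon$ (rather than $\Upsilon_{l}$) as a prefactor on the oscillation group, as you propose, makes this worse and also introduces the nowhere-differentiable $\Upsilon$ under a $\partial_{t}$; the paper instead isolates the entire unmollified--mollified discrepancy once and for all as $R_{\mathrm{com2}}=(\Upsilon-\Upsilon_{l})(v_{q+1}\mathring{\otimes}v_{q+1})$ in \eqref{[Equation (152g), Y20b]}, so that everything downstream sees only the smooth $\Upsilon_{l}$. (Your displayed identity $\Upsilon\operatorname{div}(w\otimes w+\mathring{R}_{l})=\Upsilon\operatorname{div}(w\otimes w+\Upsilon_{l}^{-1}\mathring{R}_{l})+\operatorname{div}((1-\Upsilon\Upsilon_{l}^{-1})\mathring{R}_{l})$ is also false as stated --- the two sides differ by $(\Upsilon-1)\operatorname{div}\mathring{R}_{l}$ --- though this is repairable since the mollified equation \eqref{[Equation (138), Y20b]} already has $\operatorname{div}\mathring{R}_{l}$ without a $\Upsilon$ prefactor.) The rest of your outline (mollifying $\Upsilon$, adding $\tfrac12\mathcal{R}w_{q+1}$ to $R_{\mathrm{lin}}$, absorbing fixed $m_{L}$-powers, tracking adaptedness and determinism at $t=0$ using $\Upsilon(0)=\Upsilon_{l}(0)=1$ and the $\mathbb{R}_{+}$-support of $\varphi_{l}$) agrees with the paper; the single missing idea is that $w_{q+1}^{(t)}$ must be left with the unmodified $a_{\zeta}$ so that the transport cancellation is exact.
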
 

Taking Proposition \ref{[Proposition 5.7, Y20b]} for granted, we can now prove Theorem \ref{[Theorem 2.3, Y20b]}. 
\begin{proof}[Proof of Theorem \ref{[Theorem 2.3, Y20b]} assuming Proposition \ref{[Proposition 5.7, Y20b]}]
This proof is similar to the proof of Theorem \ref{[Theorem 2.1, Y20b]} assuming Proposition \ref{[Proposition 4.8, Y20b]} in Subsection \ref{Subsection 4.2}; we sketch it in the Appendix for completeness. 
\end{proof}

\subsection{Proof of Proposition \ref{[Proposition 5.7, Y20b]}}

\subsubsection{Choice of parameters}

We fix $L$ sufficiently large so that it satisfies \eqref{[Equation (132), Y20b]}. We take the same choices of $m^{\ast}, \eta, \alpha, r, \mu$, and $\sigma$ in \eqref{[Equation (2.2), LQ20]} - \eqref{estimate 22}, and $b \in\{\iota \in \mathbb{N}: \iota > \frac{16}{\alpha} \}$ such that $r \in \mathbb{N}$ and $\lambda_{q+1} \sigma \in 10 \mathbb{N}$ so that both requirements of $r \in \mathbb{N}$ and $\lambda_{q+1} \sigma \in 5 \mathbb{N}$ from \eqref{[Equation (4.14), LQ20]} are satisfied. Then we define $\beta > 0$ sufficiently small to satisfy \eqref{estimate 21} and $l$ by \eqref{[Equation (56), Y20b]} so that \eqref{[Equation (57), Y20b]} remains valid. We take $a \in 10 \mathbb{N}$ larger if necessary so that $a^{26} \geq \sqrt{3} L^{\frac{1}{4}} e^{\frac{1}{2} L^{\frac{1}{4}}}$; because $\alpha b > 16$ and $c_{R} \ll 1$ we see that this implies 
\begin{equation}\label{special} 
m_{L} \overset{\eqref{[Equation (125), Y20b]}}{\leq} a^{\frac{3 \alpha b}{2} + 2} \overset{\eqref{[Equation (56), Y20b]}}{\leq} l^{-1} \text{ and } m_{L} \overset{\eqref{[Equation (125), Y20b]} \eqref{[Equation (132), Y20b]}}{\leq} c_{R} e^{L} \leq M_{0}(t)^{\frac{1}{2}}. 
\end{equation} 
Lastly, taking $a \in 10 \mathbb{N}$ even larger can guarantee $L \leq a^{4} \pi -1$ in \eqref{[Equation (129), Y20b]}  while taking $\beta > 0$ even smaller if necessary allows the other inequalities in \eqref{[Equation (129), Y20b]} to be satisfied, namely 
\begin{equation*}
72 \sqrt{3} < 8 \sqrt{3} a^{2\beta b} \leq \frac{c_{R} e^{L - \frac{1}{2} L^{\frac{1}{4}}}}{L^{\frac{1}{4}} (2L + \frac{1}{2} + \pi)}. 
\end{equation*}
Thus, hereafter we consider such $m^{\ast}, \eta, \alpha, b,$ and $l$ fixed, preserving our freedom to take $a \in 10 \mathbb{N}$ larger and $\beta > 0$ smaller as needed. 

\subsubsection{Mollification} 
We mollify $v_{q}, \mathring{R}_{q}$, and $\Upsilon(t) = e^{B(t)}$ by $\phi_{l}$ and $\varphi_{l}$ again so that 
\begin{equation}\label{[Equation (137), Y20b]}
v_{l} \triangleq (v_{q} \ast_{x} \phi_{l}) \ast_{t} \varphi_{l}, \hspace{2mm} \mathring{R}_{l} \triangleq (\mathring{R}_{q} \ast_{x} \phi_{l}) \ast_{t} \varphi_{l}, \hspace{2mm} \text{ and } \hspace{2mm} \Upsilon_{l} \triangleq \Upsilon \ast_{t} \varphi_{l}. 
\end{equation} 
By \eqref{[Equation (122), Y20b]} we see that $v_{l}, \mathring{R}_{l}$, and $\Upsilon_{l}$ satisfy 
\begin{equation}\label{[Equation (138), Y20b]}
\partial_{t} v_{l} + \frac{1}{2} v_{l} + (-\Delta)^{m} v_{l} + \Upsilon_{l} \text{div} (v_{l} \otimes v_{l}) + \nabla p_{l} = \text{div} (\mathring{R}_{l} + R_{\text{com1}})
\end{equation} 
where 
\begin{subequations}
\begin{align}
p_{l} \triangleq& (p_{q} \ast_{x} \phi_{l}) \ast_{t} \varphi_{l} - \frac{1}{2} (\Upsilon_{l} \lvert v_{l} \rvert^{2} - (( \Upsilon \lvert v_{q} \rvert^{2} ) \ast_{x} \phi_{l} ) \ast_{t} \varphi_{l} ), \label{estimate 55}\\
R_{\text{com1}} \triangleq& R_{\text{commutator1}} \triangleq - ((\Upsilon (v_{q} \mathring{\otimes} v_{q} )) \ast_{x} \phi_{l} )\ast_{t} \varphi_{l} + \Upsilon_{l} (v_{l} \mathring{\otimes} v_{l}). \label{estimate 56}
\end{align}
\end{subequations}
Next, making use of the fact that $\alpha b > 16$ and taking $a \in 10 \mathbb{N}$ sufficiently large we obtain for all $t \in [0, T_{L}]$ and $N \geq 1$ 
\begin{subequations}
\begin{align}
&\lVert v_{q} - v_{l} \rVert_{C_{t}L_{x}^{2}}  \overset{\eqref{[Equation (57), Y20b]} \eqref{[Equation (126b), Y20b]}}{\lesssim} m_{L} M_{0}(t)^{\frac{1}{2}} \lambda_{q+1}^{-\alpha}  \leq \frac{m_{L}}{4} M_{0}(t)^{\frac{1}{2}} \delta_{q+1}^{\frac{1}{2}}, \label{[Equation (140a), Y20b]}\\
& \lVert v_{l} \rVert_{C_{t}L_{x}^{2}} \overset{\eqref{[Equation (126a), Y20b]}}{\leq} m_{L} M_{0}(t)^{\frac{1}{2}} (1+ \sum_{1 \leq \iota \leq q} \delta_{\iota}^{\frac{1}{2}}) \overset{\eqref{[Equation (129), Y20b]}}{\leq} 2m_{L} M_{0}(t)^{\frac{1}{2}}, \label{[Equation (140b), Y20b]}\\
& \lVert v_{l} \rVert_{C_{t,x}^{N}} \overset{\eqref{[Equation (126b), Y20b]}}{\lesssim} l^{-N + 1} m_{L} M_{0}(t)^{\frac{1}{2}} \lambda_{q}^{4} \overset{\eqref{[Equation (56), Y20b]}}{\leq} l^{-N} m_{L} M_{0}(t)^{\frac{1}{2}} \lambda_{q+1}^{-\alpha}. \label{[Equation (140c), Y20b]} 
\end{align}
\end{subequations}

\subsubsection{Perturbation}
We proceed with the same definition of $\chi$ in \eqref{[Equation (64), Y20b]} and $\rho$ in \eqref{[Equation (65), Y20b]} identically except that $M_{0}(t)$ is now defined by \eqref{[Equation (125), Y20b]} instead of \eqref{[Equations (37) and (39), Y20b]}. Although our definition of $\mathring{R}_{0}$ in \eqref{[Equation (128), Y20b]} differs from that of \eqref{[Equation (42), Y20b]}, the estimates of \eqref{[Equation (66), Y20b]} and \eqref{[Equation (67), Y20b]} remain valid as their proofs depend only on the definitions of $\rho$ and $\chi$, not $M_{0}(t)$ or $\mathring{R}_{l}$. We define a modified amplitude function to be 
\begin{equation}\label{[Equation (141), Y20b]}
\bar{a}_{\zeta} (\omega, t, x) \triangleq \bar{a}_{\zeta, q+1}(\omega, t, x) \triangleq \Upsilon_{l}^{-\frac{1}{2}} a_{\zeta} (\omega, t, x), 
\end{equation} 
where $a_{\zeta} (\omega, t,x)$ is identical to that defined in \eqref{[Equation (70), Y20b]}. For convenience let us observe a simple estimate of 
\begin{equation}\label{simple}
\lVert \Upsilon_{l}^{-\frac{1}{2}} \rVert_{C_{t}} \overset{\eqref{[Equation (125), Y20b]} \eqref{[Equation (123), Y20b]}}{\leq} m_{L}. 
\end{equation}
Using this estimate, for all $t \in [0, T_{L}]$ by taking $c_{R} \ll M^{-4}$ we can obtain 
\begin{equation}
\lVert \bar{a}_{\zeta} \rVert_{C_{t}L_{x}^{2}} \overset{\eqref{[Equation (66), Y20b]} \eqref{[Equation (67), Y20b]} }{\leq} m_{L} \sqrt{12} [ 4 \pi^{2} c_{R} \delta_{q+1} M_{0}(t) + \lVert \mathring{R}_{l} (\omega) \rVert_{C_{t}L_{x}^{1}}]^{\frac{1}{2}} (\frac{M}{C_{\Lambda}}) \overset{\eqref{estimate 4} }{\leq} \frac{c_{R}^{\frac{1}{4}} m_{L} M_{0}(t)^{\frac{1}{2}} \delta_{q+1}^{\frac{1}{2}}}{2 \lvert \Lambda \rvert}. 
\end{equation} 
Because \eqref{[Equation (40c), Y20b]} and \eqref{[Equation (126c), Y20b]} are identical except the definitions of $M_{0}(t)$, tracing the proof of \eqref{[Equation (68), Y20b]} we see that we still have \eqref{[Equation (68), Y20b]} which leads us to \eqref{[Equation (69), Y20b]} as well as \eqref{[Equation (73), Y20b]}. For all $t \in [0, T_{L}]$, $N \geq 0$ and $k \in\{  0, 1, 2\}$, along with \eqref{simple} this allows us to deduce the estimates of  
\begin{equation}\label{[Equation (143), Y20b]}
\lVert \bar{a}_{\zeta} \rVert_{C_{t}C_{x}^{N}} \overset{\eqref{[Equation (73), Y20b]}}{\leq} m_{L} c_{R}^{\frac{1}{4}} \delta_{q+1}^{\frac{1}{2}} M_{0}(t)^{\frac{1}{2}} l^{- \frac{3}{2} - 4N}, \hspace{2mm} \lVert \bar{a}_{\zeta} \rVert_{C_{t}^{1}C_{x}^{k}} \overset{\eqref{[Equation (73), Y20b]} \eqref{[Equation (124), Y20b]} \eqref{special}}{\leq} m_{L} c_{R}^{\frac{1}{8}} \delta_{q+1}^{\frac{1}{2}} M_{0}(t)^{\frac{1}{2}} l^{- \frac{13}{2} - 4 k}, 
\end{equation}
where we took $c_{R} \ll 1$ to eliminate implicit constants in the second inequality. 

Now we define $w_{q+1}^{(p)}$ and $w_{q+1}^{(c)}$ as in \eqref{[Equation (74), Y20b]} with $a_{\zeta}$ replaced by $\bar{a}_{\zeta}$ from \eqref{[Equation (141), Y20b]} and $M_{0}(t)$ from \eqref{[Equation (125), Y20b]} within the definition of $\rho(\omega, t, x)$, and finally $w_{q+1}^{(t)}$ identically as in \eqref{[Equation (74), Y20b]} with $a_{\zeta}$ from \eqref{[Equation (70), Y20b]}, only with $M_{0}(t)$ from \eqref{[Equation (125), Y20b]}. Then we define the perturbation identically as in \eqref{[Equation (76), Y20b]}: 
\begin{equation}\label{[Equation (145), Y20b]}
w_{q+1} \triangleq w_{q+1}^{(p)} + w_{q+1}^{(c)} + w_{q+1}^{(t)} \text{ and } v_{q+1} \triangleq v_{l}+  w_{q+1}. 
\end{equation} 
We see that as a consequence of \eqref{[Equation (5.9), LQ20]} 
\begin{equation}\label{estimate 52}
 (w_{q+1}^{(p)} + w_{q+1}^{(c)}) (t,x) \overset{\eqref{[Equation (5.9), LQ20]} \eqref{[Equation (141), Y20b]}}{=}  \Upsilon_{l}^{-\frac{1}{2}}(t) \nabla^{\bot}  (\sum_{\zeta \in \Lambda} a_{\zeta}(t,x) \eta_{\zeta}(t,x) \psi_{\zeta}(x)).
\end{equation} 
Consequently, we see that $w_{q+1}$ is both divergence-free and mean-zero. Next, the following estimates for all $t \in [0, T_{L}]$ and $p \in (1,\infty)$ are essentially immediate consequences of \eqref{[Equation (77), Y20b]}, \eqref{[Equation (78a), Y20b]}, \eqref{[Equation (78b), Y20b]}, and \eqref{simple}: 
\begin{subequations}
\begin{align}
& \lVert w_{q+1}^{(p)} \rVert_{C_{t}L_{x}^{2}} \overset{\eqref{[Equation (141), Y20b]}}{\leq} m_{L} \sum_{\zeta \in \Lambda} \lVert a_{\zeta} \mathbb{W}_{\zeta} \rVert_{C_{t}L_{x}^{2}} \overset{\eqref{[Equation (77), Y20b]}}{\lesssim} m_{L} c_{R}^{\frac{1}{4}} \delta_{q+1}^{\frac{1}{2}} M_{0}(t)^{\frac{1}{2}}, \label{[Equation (146a), Y20b]}\\
&  \lVert w_{q+1}^{(p)} \rVert_{C_{t}L_{x}^{p}} \overset{\eqref{[Equation (141), Y20b]}}{\leq} m_{L} \sup_{s \in [0,t]} \sum_{\zeta \in \Lambda} \lVert a_{\zeta} (s) \rVert_{L_{x}^{\infty}} \lVert \mathbb{W}_{\zeta} (s) \rVert_{L_{x}^{p}} 
\overset{\eqref{[Equation (78a), Y20b]}}{\lesssim} m_{L} \delta_{q+1}^{\frac{1}{2}} M_{0}(t)^{\frac{1}{2}} l^{-\frac{3}{2}} r^{1- \frac{2}{p}}, \label{[Equation (146b), Y20b]} \\
& \lVert w_{q+1}^{(c)} \rVert_{C_{t}L_{x}^{p}} \overset{\eqref{[Equation (141), Y20b]}}{\leq} m_{L}  \sum_{\zeta \in \Lambda} \lVert \nabla^{\bot} (a_{\zeta} \eta_{\zeta})  \rVert_{C_{t}L_{x}^{p}} \lVert \psi_{\zeta} \rVert_{L_{x}^{\infty}} \overset{\eqref{[Equation (78b), Y20b]}}{\lesssim} m_{L} \delta_{q+1}^{\frac{1}{2}} M_{0}(t)^{\frac{1}{2}} l^{-\frac{11}{2}} \sigma r^{2- \frac{2}{p}}. \label{[Equation (146c), Y20b]} 
\end{align}
\end{subequations}
Finally, the estimate of $\lVert w_{q+1}^{(t)} \rVert_{C_{t}L_{x}^{p}}$ in \eqref{[Equation (78c), Y20b]} remains valid. Therefore, for all $t\in [0,T_{L}]$ we can estimate from \eqref{[Equation (145), Y20b]} by taking $c_{R} \ll 1$ and $a \in 10 \mathbb{N}$ sufficiently large 
\begin{align}
\lVert w_{q+1} \rVert_{C_{t}L_{x}^{2}} \overset{\eqref{[Equation (78c), Y20b]} \eqref{[Equation (146a), Y20b]} \eqref{[Equation (146c), Y20b]}}{\lesssim}& m_{L} c_{R}^{\frac{1}{4}} \delta_{q+1}^{\frac{1}{2}} M_{0}(t)^{\frac{1}{2}} + m_{L} \delta_{q+1}^{\frac{1}{2}} M_{0}(t)^{\frac{1}{2}} l^{- \frac{11}{2}} \sigma r + \mu^{-1} \delta_{q+1}M_{0}(t) l^{-3} r \nonumber\\
\overset{\eqref{[Equation (57), Y20b]}}{\leq}& m_{L} M_{0}(t)^{\frac{1}{2}} \delta_{q+1}^{\frac{1}{2}} [ \frac{3}{8} + C \lambda_{q+1}^{11\alpha-4 \eta} + C M_{0} (L)^{\frac{1}{2}} \lambda_{q+1}^{6 \alpha - 2 \eta} ] \nonumber\\
\leq& \frac{3m_{L} M_{0}(t)^{\frac{1}{2}} \delta_{q+1}^{\frac{1}{2}}}{4}, \label{[Equation (147), Y20b]}
\end{align}
where the last inequality used the facts that $11 \alpha - 4 \eta < 0$ and $6 \alpha - 2 \eta < 0$, both of which may be readily verified by \eqref{[Equation (2.2), LQ20]}-\eqref{alpha}. It follows from similar computations to \eqref{estimate 53}  that \eqref{[Equation (126a), Y20b]} at level $q + 1$ and \eqref{[Equation (133), Y20b]} can now be verified as follows: 
\begin{align*}
&\lVert v_{q+1} \rVert_{C_{t}L_{x}^{2}} \overset{\eqref{[Equation (145), Y20b]}}{\leq} \lVert v_{l} \rVert_{C_{t}L_{x}^{2}} + \lVert w_{q+1} \rVert_{C_{t}L_{x}^{2}} 
\overset{\eqref{[Equation (140b), Y20b]}\eqref{[Equation (147), Y20b]}}{\leq} m_{L} M_{0}(t)^{\frac{1}{2}} (1+ \sum_{1\leq \iota \leq q+1} \delta_{\iota}^{\frac{1}{2}}), \\
& \lVert v_{q+1}(t) - v_{q}(t) \rVert_{L_{x}^{2}} \overset{\eqref{[Equation (140a), Y20b]} \eqref{[Equation (145), Y20b]}}{\leq} \lVert w_{q+1} \rVert_{C_{t}L_{x}^{2}} + \frac{m_{L}}{4} M_{0}(t)^{\frac{1}{2}} \delta_{q+1}^{\frac{1}{2}} 
\overset{\eqref{[Equation (147), Y20b]}}{\leq} m_{L} M_{0}(t)^{\frac{1}{2}} \delta_{q+1}^{\frac{1}{2}}.
\end{align*} 
Next, we estimate for all $t \in [0, T_{L}]$  
\begin{subequations}\label{[Equation (148), Y20b]}
\begin{align}
\lVert w_{q+1}^{(p)} \rVert_{C_{t,x}^{1}} \leq&  \sum_{\zeta \in \Lambda} \lVert \bar{a}_{\zeta}\rVert_{C_{t,x}^{1}} \lVert \mathbb{W}_{\zeta} \rVert_{C_{t,x}^{1}} \label{[Equation (148a), Y20b]} \\
&\overset{\eqref{[Equation (4.22), LQ20]} \eqref{[Equation (143), Y20b]}}{\lesssim} (m_{L} \delta_{q+1}^{\frac{1}{2}} M_{0}(t)^{\frac{1}{2}} l^{-\frac{13}{2}} ) \lambda_{q+1} \sigma \mu r^{2} \leq m_{L} M_{0}(t)^{\frac{1}{2}} l^{- \frac{13}{2}} \lambda_{q+1} \sigma \mu r^{2}, \nonumber\\
 \lVert w_{q+1}^{(c)} \rVert_{C_{t,x}^{1}} \leq&  \sum_{\zeta \in \Lambda} \lVert  \nabla^{\bot} (\bar{a}_{\zeta} \eta_{\zeta}) \psi_{\zeta} \rVert_{C_{t,x}^{1}} \overset{\eqref{[Equations (4.4) and (4.5), LQ20]} \eqref{[Equation (4.23), LQ20]} \eqref{[Equation (143), Y20b]}}{\lesssim} m_{L} \delta_{q+1}^{\frac{1}{2}} M_{0}(t)^{\frac{1}{2}} \lambda_{q+1}^{1- 6 \eta} \label{[Equation (148b), Y20b]}\\
&  \times [ l^{-\frac{21}{2}} \lambda_{q+1}^{-1} +  l^{-\frac{11}{2}} \lambda_{q+1}^{1- 8 \eta} + l^{-\frac{13}{2}} \lambda_{q+1}^{-4 \eta} + l^{-\frac{3}{2}} \lambda_{q+1}^{2 - 12 \eta}] \lesssim m_{L} \delta_{q+1}^{\frac{1}{2}} M_{0}(t)^{\frac{1}{2}} \lambda_{q+1}^{3 - 18 \eta} l^{-\frac{3}{2}},\nonumber 
\end{align} 
\end{subequations}
where we used $\delta_{q+1}^{\frac{1}{2}}$ to eliminate implicit constant in \eqref{[Equation (148a), Y20b]}. On the other hand, the estimate of $ \lVert w_{q+1}^{(t)} \rVert_{C_{t,x}^{1}}$ from \eqref{[Equation (83), Y20b]} remains applicable for us. We may now verify \eqref{[Equation (126b), Y20b]} at level $q+1$ as follows. For any $t \in [0, T_{L}]$
\begin{align}
& \lVert v_{q+1} \rVert_{C_{t,x}^{1}} \overset{\eqref{[Equation (140c), Y20b]} \eqref{[Equation (145), Y20b]}}{\leq} l^{-1} m_{L} M_{0}(t)^{\frac{1}{2}} \lambda_{q+1}^{-\alpha} + \lVert w_{q+1}^{(p)} \rVert_{C_{t,x}^{1}} + \lVert w_{q+1}^{(c)} \rVert_{C_{t,x}^{1}} + \lVert w_{q+1}^{(t)} \rVert_{C_{t,x}^{1}} \\
&\leq m_{L} M_{0}(t)^{\frac{1}{2}}  [ l^{-1} \lambda_{q+1}^{-\alpha} + C  \lambda_{q+1}^{13 \alpha + 3 - 14 \eta} + C \lambda_{q+1}^{3- 18 \eta} l^{- \frac{3}{2}} + C \lambda_{q+1}^{3- 16 \eta + \alpha} M_{0}(t)^{\frac{1}{2}} l^{-3} ] \leq m_{L} M_{0}(t)^{\frac{1}{2}} \lambda_{q+1}^{4} \nonumber 
\end{align} 
where the last inequality used \eqref{estimate 38} and that $13 \alpha + 3 - 14 \eta < 4$ which can be readily verified by \eqref{[Equation (2.2), LQ20]}-\eqref{alpha}. Next, as a consequence of \eqref{[Equation (5.9), LQ20]} we have the identity of 
\begin{equation}\label{estimate 54}
(w_{q+1}^{(p)} + w_{q+1}^{(c)})(t,x) \overset{\eqref{[Equation (141), Y20b]}}{=} \Upsilon_{l}^{ -\frac{1}{2}}(t) \nabla^{\bot} (\sum_{\zeta \in \Lambda} a_{\zeta}(t,x) \eta_{\zeta}(t,x) \psi_{\zeta}(x)). 
\end{equation} 
This allows us to estimate for all $t \in [0, T_{L}]$ and $p \in (1, \infty)$, by utilizing \eqref{[Equation (88a), Y20b]} and \eqref{simple} 
\begin{equation}\label{estimate 57}
 \lVert w_{q+1}^{(p)} + w_{q+1}^{(c)} \rVert_{C_{t}W_{x}^{1,p}} \leq \lVert \Upsilon_{l}^{-\frac{1}{2}} \rVert_{C_{t}} \lVert \nabla^{\bot} \sum_{\zeta \in \Lambda}  a_{\zeta} \eta_{\zeta}\psi_{\zeta} \rVert_{C_{t}W_{x}^{1,p}} \lesssim m_{L} \delta_{q+1}^{\frac{1}{2}} M_{0}(t)^{\frac{1}{2}} r^{1 - \frac{2}{p}} l^{- \frac{3}{2}} \lambda_{q+1}.
\end{equation}
On the other hand, the estimate of $\lVert w_{q+1}^{(t)} \rVert_{C_{t}W_{x}^{1,p}}$ from \eqref{[Equation (88b), Y20b]} remains applicable for us. 

\subsubsection{Reynolds stress}

We can choose the same $p^{\ast}$ from \eqref{p ast} and compute from \eqref{[Equation (122), Y20b]}, \eqref{[Equation (138), Y20b]}, and \eqref{[Equation (145), Y20b]}  
\begin{align}
& \text{div}\mathring{R}_{q+1} - \nabla p_{q+1} \label{[Equation (151), Y20b]}\\
=& \underbrace{\frac{1}{2} w_{q+1} + (-\Delta)^{m} w_{q+1} + \partial_{t} (w_{q+1}^{(p)} + w_{q+1}^{(c)}) + \Upsilon_{l} \text{div} (v_{l} \otimes w_{q+1} + w_{q+1} \otimes v_{l})}_{\text{div}(R_{\text{lin}} ) + \nabla p_{\text{lin}}} \nonumber\\
&+ \underbrace{\Upsilon_{l} \text{div}((w_{q+1}^{(c)} + w_{q+1}^{(t)}) \otimes w_{q+1} + w_{q+1}^{(p)} \otimes (w_{q+1}^{(c)} + w_{q+1}^{(t)}))}_{\text{div} (R_{\text{cor}}) + \nabla p_{\text{cor}}} \nonumber\\
& + \underbrace{\text{div}(\Upsilon_{l} w_{q+1}^{(p)} \otimes w_{q+1}^{(p)} + \mathring{R}_{l}) + \partial_{t}w_{q+1}^{(t)}}_{\text{div} (R_{\text{osc}}) + \nabla p_{\text{osc}}} + \underbrace{(\Upsilon - \Upsilon_{l}) \text{div}(v_{q+1} \otimes v_{q+1})}_{\text{div} (R_{\text{com2}}) + \nabla p_{\text{com2}}} + \text{div}(R_{\text{com1}} ) - \nabla p_{l} \nonumber
\end{align} 
where 
\begin{subequations}
\begin{align}
R_{\text{lin}} \triangleq& R_{\text{linear}} \nonumber\\
\triangleq& \mathcal{R} ( \frac{1}{2} w_{q+1} + (-\Delta)^{m} w_{q+1} + \partial_{t} (w_{q+1}^{(p)} + w_{q+1}^{(c)})) + \Upsilon_{l} (v_{l} \mathring{\otimes} w_{q+1} + w_{q+1} \mathring{\otimes} v_{l}), \label{[Equation (152a), Y20b]} \\
p_{\text{lin}} \triangleq& p_{\text{linear}} \triangleq \Upsilon_{l} (v_{l} \cdot w_{q+1}), \label{[Equation (152b), Y20b]} \\
R_{\text{cor}} \triangleq& R_{\text{corrector}} \triangleq \Upsilon_{l} ((w_{q+1}^{(c)} + w_{q+1}^{(t)}) \mathring{\otimes} w_{q+1} + w_{q+1}^{(p)} \mathring{\otimes} (w_{q+1}^{(c)} + w_{q+1}^{(t)})), \label{[Equation (152c), Y20b]} \\
p_{\text{cor}} \triangleq& p_{\text{corrector}} \triangleq \frac{\Upsilon_{l}}{2} ((w_{q+1}^{(c)} + w_{q+1}^{(t)}) \cdot w_{q+1} + w_{q+1}^{(p)} \cdot (w_{q+1}^{(c)} + w_{q+1}^{(t)})), \label{[Equation (152d), Y20b]} \\
R_{\text{com2}} \triangleq& R_{\text{commutator2}} \triangleq (\Upsilon - \Upsilon_{l}) (v_{q+1} \mathring{\otimes} v_{q+1}), \label{[Equation (152g), Y20b]} \\
p_{\text{com2}} \triangleq& p_{\text{commutator2}} \triangleq \frac{\Upsilon - \Upsilon_{l}}{2} \lvert v_{q+1} \rvert^{2}.\label{[Equation (152h), Y20b]}
\end{align}
\end{subequations}
Concerning $R_{\text{osc}}$ and $p_{\text{osc}}$ we have 
\begin{align}
& \text{div} ( \Upsilon_{l} w_{q+1}^{(p)} \otimes w_{q+1}^{(p)} + \mathring{R}_{l}) + \partial_{t} w_{q+1}^{(t)} \label{estimate 37}\\
\overset{\eqref{[Equation (141), Y20b]}}{=}& \text{div}( (\sum_{\zeta \in \Lambda} a_{\zeta} \mathbb{W}_{\zeta} ) \otimes ( \sum_{\zeta' \in \Lambda} a_{\zeta'} \mathbb{W}_{\zeta'} ) + \mathring{R}_{l} ) + \partial_{t} w_{q+1}^{(t)} \nonumber \\
\overset{\eqref{estimate 26}}{=}&  \frac{1}{2} \sum_{\zeta, \zeta' \in \Lambda} \mathcal{E}_{\zeta, \zeta', 1} + \frac{1}{2} \sum_{\zeta, \zeta' \in \Lambda} \sum_{k= 1, 3, 4} \mathcal{E}_{\zeta, \zeta', 2, k} + A_{2} + A_{3} \nonumber \\
&+ \nabla [ \frac{1}{2} \lvert  \sum_{\zeta \in \Lambda} a_{\zeta} \mathbb{W}_{\zeta} \rvert^{2} + \frac{1}{2} \sum_{\zeta, \zeta' \in \Lambda} \mathbb{P}_{\neq 0} (a_{\zeta} a_{\zeta'} \mathbb{P}_{\geq \frac{\lambda_{q+1}}{10}} (\eta_{\zeta} \eta_{\zeta'} \lambda_{q+1}^{2} \psi_{\zeta} \psi_{\zeta'})) \nonumber \\
& \hspace{5mm} + \frac{1}{2} \sum_{\zeta \in \Lambda} a_{\zeta}^{2} \mathbb{P}_{\geq \frac{\lambda_{q+1} \sigma}{2}} \eta_{\zeta}^{2} - \Delta^{-1} \nabla\cdot \mu^{-1} (\sum_{\zeta \in \Lambda^{+}} - \sum_{\zeta \in \Lambda^{-}}) \mathbb{P}_{\neq 0} \partial_{t} (a_{\zeta}^{2} \mathbb{P}_{\neq 0} \eta_{\zeta}^{2} \zeta )]. \nonumber 
\end{align} 
Therefore, we can define similarly to \eqref{estimate 27} - \eqref{estimate 28}
\begin{subequations}
\begin{align}
R_{\text{osc}} \triangleq& R_{\text{oscillation}} \triangleq \mathcal{R} (\frac{1}{2} \sum_{\zeta, \zeta' \in \Lambda} \mathcal{E}_{\zeta, \zeta', 1} + \frac{1}{2} \sum_{\zeta, \zeta' \in \Lambda} \sum_{k= 1, 3, 4} \mathcal{E}_{\zeta, \zeta', 2, k} + A_{2} + A_{3} ), \\
p_{\text{osc}} \triangleq& p_{\text{oscillation}} \triangleq \frac{1}{2} \lvert \sum_{\zeta \in \Lambda} a_{\zeta} \mathbb{W}_{\zeta} \rvert^{2} + \frac{1}{2} \sum_{\zeta, \zeta' \in \Lambda} \mathbb{P}_{\neq 0} (a_{\zeta} a_{\zeta'} \mathbb{P}_{\geq \frac{\lambda_{q+1} }{10}} (\eta_{\zeta} \eta_{\zeta'} \lambda_{q+1}^{2} \psi_{\zeta} \psi_{\zeta'})) 1_{\zeta + \zeta' \neq 0}  \nonumber\\
& \hspace{12mm} + \frac{1}{2} \sum_{\zeta \in \Lambda} a_{\zeta}^{2} \mathbb{P}_{\geq \frac{\lambda_{q+1} \sigma}{2}} \eta_{\zeta}^{2} - \Delta^{-1} \nabla\cdot \mu^{-1} (\sum_{\zeta \in \Lambda^{+}} - \sum_{\zeta \in \Lambda^{-}}) \mathbb{P}_{\neq 0} \partial_{t} (a_{\zeta}^{2} \mathbb{P}_{\neq 0} \eta_{\zeta}^{2} \zeta ) 
\end{align} 
\end{subequations} 
and claim the same bound as in \eqref{estimate 29} for $R_{\text{osc}}$. Thus, let us define formally  
\begin{equation}
p_{q+1} \triangleq - p_{\text{lin}}  - p_{\text{cor}} - p_{\text{osc}}  - p_{\text{com2}} + p_{l} \text{ and } \mathring{R}_{q+1} \triangleq R_{\text{lin}} + R_{\text{cor}}  + R_{\text{osc}} + R_{\text{com2}} + R_{\text{com1}}.
\end{equation}

Now we compute for all $t\in [0, T_{L}]$ from \eqref{[Equation (152a), Y20b]} 
\begin{align}
 \lVert R_{\text{lin}} \rVert_{C_{t}L_{x}^{p^{\ast}}} \lesssim& \lVert w_{q+1} \rVert_{C_{t}L_{x}^{p^{\ast}}} + \lVert \mathcal{R} ( -\Delta)^{m} w_{q+1} \rVert_{C_{t}L_{x}^{p^{\ast}}} \nonumber  \\
&+ \lVert \mathcal{R} \partial_{t} (w_{q+1}^{(p)} + w_{q+1}^{(c)}) \rVert_{C_{t} L_{x}^{p^{\ast}}} + \lVert \Upsilon_{l} (v_{l} \mathring{\otimes} w_{q+1} + w_{q+1} \mathring{\otimes} v_{l}) \rVert_{C_{t} L_{x}^{p^{\ast}}}. \label{estimate 30} 
\end{align} 
First, by the estimate of $m_{L} \leq M_{0}(t)^{\frac{1}{2}}$ from \eqref{special} we can compute  from \eqref{[Equation (145), Y20b]} for all $t\in [0, T_{L}]$
\begin{align}
\lVert w_{q+1} \rVert_{C_{t}L_{x}^{p^{\ast}}}  \overset{\eqref{[Equation (78c), Y20b]}\eqref{[Equation (146b), Y20b]} \eqref{[Equation (146c), Y20b]} }{\lesssim}& m_{L} \delta_{q+1}^{\frac{1}{2}} M_{0}(t)^{\frac{1}{2}} l^{-\frac{3}{2}} r^{1- \frac{2}{p^{\ast}}} + m_{L} \delta_{q+1}^{\frac{1}{2}} M_{0}(t)^{\frac{1}{2}} l^{- \frac{11}{2}} \sigma r^{2- \frac{2}{p^{\ast}}} \label{another special} \\
& + \mu^{-1} \delta_{q+1} M_{0}(t) l^{-3} r^{2- \frac{2}{p^{\ast}}} \overset{\eqref{[Equation (2.2), LQ20]} \eqref{[Equation (2.3), LQ20]} \eqref{alpha} \eqref{[Equation (57), Y20b]}}{\lesssim} \delta_{q+1}^{\frac{1}{2}} M_{0}(t)^{\frac{1}{2}} r^{1- \frac{2}{p^{\ast}}} m_{L} l^{-\frac{3}{2}}.  \nonumber 
\end{align}
By Gagliardo-Nirenberg's inequality this also leads us to 
\begin{align}
 \lVert \mathcal{R} (-\Delta)^{m} w_{q+1} \rVert_{C_{t}L_{x}^{p^{\ast}}}\lesssim&  [ \delta_{q+1}^{\frac{1}{2}} M_{0}(t)^{\frac{1}{2}} r^{1- \frac{2}{p^{\ast}}} m_{L} l^{-\frac{3}{2}} ]^{1- m^{\ast}}  [ \lVert w_{q+1}^{(p)} + w_{q+1}^{(c)} \rVert_{C_{t} W_{x}^{1,p^{\ast}}} + \lVert w_{q+1}^{(t)} \rVert_{C_{t}W_{x}^{1, p^{\ast}}}]^{m^{\ast}}  \nonumber\\
& \hspace{30mm} \overset{\eqref{[Equation (88b), Y20b]} \eqref{estimate 57}}{\lesssim}  \delta_{q+1}^{\frac{1}{2}} M_{0}(t)^{\frac{1}{2}} r^{1- \frac{2}{p^{\ast}}} m_{L}l^{-\frac{3}{2}} \lambda_{q+1}^{m^{\ast}}. \label{estimate 31}
\end{align}
Second,  for all $t\in [0, T_{L}]$  we can make use of \eqref{estimate 10} and \eqref{simple} and estimate 
\begin{align}
& \lVert \mathcal{R} \partial_{t} (w_{q+1}^{(p)} + w_{q+1}^{(c)} ) \rVert_{C_{t}L_{x}^{p^{\ast}}} \label{estimate 32}\\
\overset{\eqref{estimate 52}}{\lesssim}& \sum_{\zeta \in \Lambda} \lVert \Upsilon_{l}^{-\frac{1}{2}} \rVert_{C_{t}}^{3} \lVert \partial_{t} \Upsilon_{l}\rVert_{C_{t}} \lVert a_{\zeta} \rVert_{C_{t}C_{x}} \lVert \eta_{\zeta} \rVert_{C_{t}L_{x}^{p^{\ast}}} \lVert \psi_{\zeta} \rVert_{C_{x}} + \lVert \Upsilon_{l}^{-\frac{1}{2}} \rVert_{C_{t}} \lVert \partial_{t} (a_{\zeta} \eta_{\zeta})\psi_{\zeta} \rVert_{C_{t}C_{x}} \nonumber \\
\overset{\eqref{[Equation (4.3), LQ20]} \eqref{[Equation (4.23), LQ20]} \eqref{[Equation (73), Y20b]} \eqref{estimate 10}}{\lesssim}& m_{L}^{3} l^{-1} \lVert \Upsilon \rVert_{C_{t}} \delta_{q+1}^{\frac{1}{2}} M_{0}(t)^{\frac{1}{2}} l^{-\frac{3}{2}} r^{1- \frac{2}{p^{\ast}}} \lambda_{q+1}^{-1}  \nonumber\\
& \hspace{15mm} + m_{L} \delta_{q+1}^{\frac{1}{2}} M_{0}(t)^{\frac{1}{2}} r^{1- \frac{2}{p^{\ast}}} l^{-\frac{3}{2}} \lambda_{q+1}^{1- 8 \eta}  \overset{\eqref{[Equation (124), Y20b]}}{\lesssim} m_{L} l^{-\frac{3}{2}} \delta_{q+1}^{\frac{1}{2}}M_{0}(t)^{\frac{1}{2}} r^{1- \frac{2}{p^{\ast}}} \lambda_{q+1}^{1- 8 \eta}.  \nonumber
\end{align}
Third, we can estimate  for all $t\in [0, T_{L}]$  
\begin{align}
\lVert \Upsilon_{l} (v_{l} \mathring{\otimes} w_{q+1} + w_{q+1} \mathring{\otimes} v_{l} ) \rVert_{C_{t}L_{x}^{p^{\ast}}} 
\lesssim& \lVert \Upsilon \rVert_{C_{t}} \lVert v_{q} \rVert_{C_{t,x}^{1}} \lVert w_{q+1} \rVert_{C_{t}L_{x}^{p^{\ast}}} \nonumber\\
\overset{\eqref{[Equation (124), Y20b]} \eqref{[Equation (126b), Y20b]} \eqref{another special}}{\lesssim}& m_{L}^{4} M_{0}(t) \lambda_{q}^{4} r^{1- \frac{2}{p^{\ast}}} \delta_{q+1}^{\frac{1}{2}} l^{-\frac{3}{2}}. \label{estimate 33}
\end{align}
Hence, applying \eqref{another special}-\eqref{estimate 33} to \eqref{estimate 30} and taking $a \in 10 \mathbb{N}$ sufficiently large give us 
\begin{align}
 \lVert R_{\text{lin}} \rVert_{C_{t}L_{x}^{p^{\ast}}} 
\overset{\eqref{estimate 22} \eqref{[Equation (57), Y20b]}}{\lesssim}& M_{0}(t) \delta_{q+2} [ \lambda_{q+2}^{2\beta} (\lambda_{q+1}^{1- 6 \eta})^{1- \frac{2}{p^{\ast}}} m_{L} \lambda_{q+1}^{3\alpha} \lambda_{q+1}^{m^{\ast}}  \nonumber \\ 
& \hspace{5mm} + \lambda_{q+2}^{2\beta} m_{L} \lambda_{q+1}^{3 \alpha} (\lambda_{q+1}^{1- 6 \eta})^{1- \frac{2}{p^{\ast}}} \lambda_{q+1}^{1- 8 \eta} + \lambda_{q+2}^{2\beta} m_{L}^{4} \lambda_{q+1}^{\frac{\alpha}{4}} (\lambda_{q+1}^{1- 6 \eta})^{1- \frac{2}{p^{\ast}}} \lambda_{q+1}^{3\alpha}]  \nonumber \\
\overset{\eqref{estimate 21} \eqref{p ast}}{\lesssim}& M_{0}(t) \delta_{q+2} [m_{L} \lambda_{q+1}^{- \frac{275\alpha}{8}} + m_{L}^{4} \lambda_{q+1}^{ \frac{ - 273 \alpha - 8 + 64 \eta}{8}}]    \leq (2\pi)^{-2 ( \frac{p^{\ast} - 1}{p^{\ast}} )} \frac{M_{0}(t) c_{R} \delta_{q+2}}{5} \label{[Equation (155), Y20b]}
\end{align}
where we used the facts that $2\beta b < \frac{\alpha}{8}$ due to \eqref{estimate 21} and $- 273 \alpha - 8 + 64 \eta \leq - 273 \alpha - 8m^{\ast} < 0$ due to \eqref{[Equation (2.3), LQ20]}. 

Next, for all $t\in [0, T_{L}]$ we estimate from \eqref{[Equation (152c), Y20b]} by taking $a \in 10 \mathbb{N}$ sufficiently large 
\begin{align}
& \lVert R_{\text{cor}} \rVert_{C_{t}L_{x}^{p^{\ast}}} \label{[Equation (157), Y20b]}\\
\lesssim& \lVert \Upsilon_{l} \rVert_{C_{t}} ( \lVert w_{q+1}^{(c)} \rVert_{C_{t}L_{x}^{2p^{\ast}}} + \lVert w_{q+1}^{(t)} \rVert_{C_{t}L_{x}^{2p^{\ast}}} ) ( \lVert w_{q+1}^{(c)} \rVert_{C_{t}L_{x}^{2p^{\ast}}} + \lVert w_{q+1}^{(t)} \rVert_{C_{t}L_{x}^{2p^{\ast}}} + \lVert w_{q+1}^{(p)} \rVert_{C_{t}L_{x}^{2p^{\ast}}} )  \nonumber \\
\overset{\eqref{estimate 22} \eqref{[Equation (57), Y20b]} \eqref{[Equation (124), Y20b]}}{\lesssim}& m_{L}^{2} M_{0}(t) [ m_{L} \lambda_{q+1}^{- \frac{31 \alpha}{4} - 3 \eta} + \lambda_{q+1}^{- \eta - 3 4 \alpha} ] [ m_{L} \lambda_{q+1}^{- \frac{31 \alpha}{4} - 3 \eta} + \lambda_{q+1}^{- \eta - 34 \alpha} + m_{L} \lambda_{q+1}^{\eta - \frac{63\alpha}{4}}]   \nonumber \\
\lesssim& M_{0}(t) \delta_{q+2} \lambda_{q+2}^{2\beta} m_{L}^{3} \lambda_{q+1}^{- 34 \alpha - \frac{63\alpha}{4}} 
 \overset{\eqref{estimate 21}}{\lesssim} M_{0}(t) \delta_{q+2} m_{L}^{3} \lambda_{q+1}^{- 34 \alpha - \frac{125 \alpha}{8}}   \leq  (2\pi)^{-2 (\frac{p^{\ast} - 1}{p^{\ast}})} \frac{M_{0}(t) c_{R} \delta_{q+2}}{5}. \nonumber 
\end{align} 

Next, for all $t\in [0, T_{L}]$ we estimate using \eqref{simple}, $\lambda_{q}^{4} l^{\frac{1}{2} - 2 \delta} \lesssim \delta_{q+2} \lambda_{q}^{-\frac{8}{3}}$ from \eqref{[Equation (105), Y20b]}, and taking $a \in 10 \mathbb{N}$ sufficiently large 
\begin{align} 
\lVert R_{\text{com1}} \rVert_{C_{t}L_{x}^{1}} \overset{\eqref{estimate 56} \eqref{[Equation (124), Y20b]} \eqref{[Equation (126), Y20b]} }{\lesssim}& m_{L}^{4} M_{0}(t) l^{\frac{1}{2} - 2 \delta} \lambda_{q}^{4} \overset{\eqref{[Equation (105), Y20b]}}{\lesssim} M_{0} (t) \delta_{q+2} m_{L}^{4} \lambda_{q}^{- \frac{8}{3}}  \leq \frac{c_{R} M_{0}(t) \delta_{q+2}}{5}. \label{[Equation (158), Y20b]}
\end{align}

Finally, using $\lvert \Upsilon_{l}(t) - \Upsilon (t) \rvert \overset{\eqref{[Equation (124), Y20b]}}{\lesssim} l^{\frac{1}{2} - 2 \delta} m_{L}^{2}$,  and $\lambda_{q}^{4} l^{\frac{1}{2} - 2 \delta} \lesssim \delta_{q+2} \lambda_{q}^{-\frac{8}{3}}$ from \eqref{[Equation (105), Y20b]} again, and taking $a \in 10 \mathbb{N}$ sufficiently large we obtain for all $t\in [0, T_{L}]$ 
\begin{equation}\label{[Equation (159), Y20b]}
\lVert R_{\text{com2}} \rVert_{C_{t}L_{x}^{1}}  \overset{\eqref{[Equation (152g), Y20b]}}{\leq} \lVert \Upsilon_{l} - \Upsilon \rVert_{C_{t}} \lVert v_{q+1} \rVert_{C_{t}L_{x}^{2}}^{2} \overset{\eqref{[Equation (140b), Y20b]} \eqref{[Equation (147), Y20b]}}{\lesssim}  l^{\frac{1}{2} - 2 \delta} m_{L}^{4} M_{0}(t) \leq \frac{M_{0}(t) c_{R} \delta_{q+2}}{5}. 
\end{equation}
Therefore, considering \eqref{[Equation (155), Y20b]}, \eqref{[Equation (157), Y20b]},  \eqref{estimate 29}, \eqref{[Equation (158), Y20b]}, and \eqref{[Equation (159), Y20b]}, we are able to conclude that $\lVert \mathring{R}_{q+1} \rVert_{C_{t}L_{x}^{1}} \leq M_{0}(t) c_{R} \delta_{q+2}$ identically as we did in \eqref{[Equation (107), Y20b]}. This verifies \eqref{[Equation (126c), Y20b]} at level $q+ 1$. 

Finally, essentially identical arguments in the proof of Proposition \ref{[Proposition 4.8, Y20b]} shows that $(v_{q}, \mathring{R}_{q})$ being $(\mathcal{F}_{t})_{t\geq 0}$-adapted leads to $(v_{q+1}, \mathring{R}_{q+1})$ being $(\mathcal{F}_{t})_{t\geq 0}$-adapted, and that $(v_{q}, \mathring{R}_{q})(0,x)$ being deterministic implies $(v_{q+1}, \mathring{R}_{q+1})(0,x)$ being deterministic. 

\section{Appendix}\label{Appendix}

\subsection{Past results}
We collect results from previous works which were used in the proofs of Theorems \ref{[Theorem 2.1, Y20b]}-\ref{[Theorem 2.4, Y20b]}.

\begin{lemma}\label{[Definition 9, Lemma 10, CDS12]}
\rm{ (\cite[Definition 9, Lemma 10]{CDS12}, also \cite[Definition 7.1, Lemmas 7.2 and 7.3]{LQ20})} For $f \in C(\mathbb{T}^{2})$, set 
\begin{equation}\label{estimate 2}
\mathcal{R} f \triangleq \nabla g + (\nabla g)^{T} - (\nabla\cdot g) \text{Id}, 
\end{equation} 
where $\Delta g = f - \fint_{\mathbb{T}^{2}} fdx$ and $\fint_{\mathbb{T}^{2}} g  dx= 0$. Then for any $f \in C(\mathbb{T}^{2})$ such that $\fint_{\mathbb{T}^{2}} f dx = 0$, $\mathcal{R}f(x)$ is a trace-free symmetric matrix for all $x \in \mathbb{T}^{2}$.  Moreover, $\nabla\cdot \mathcal{R} f = f$ and $\fint_{\mathbb{T}^{2}} \mathcal{R} f(x) dx = 0$. When $f$ is not mean-zero, we overload the notation and denote by $\mathcal{R} f \triangleq \mathcal{R} (f - \int_{\mathbb{T}^{2}} fdx)$.  Finally, for all $p \in (1,\infty)$, $\lVert \mathcal{R} \rVert_{L_{x}^{p} \mapsto W_{x}^{1,p}} \lesssim 1, \lVert \mathcal{R} \rVert_{C_{x} \mapsto C_{x}} \lesssim 1$, and $\lVert \mathcal{R}  f \rVert_{L_{x}^{p}} \lesssim \lVert (-\Delta)^{-\frac{1}{2}} f \rVert_{L_{x}^{p}}$.  
\end{lemma}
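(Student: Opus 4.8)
The plan is to build $g$ by elliptic theory on the torus and then read off every assertion about $\mathcal R$ by direct computation and classical singular-integral estimates. Given $f \in C(\mathbb{T}^{2})$ (which in the applications is $\mathbb R^{2}$-valued, so that $g$ below is a vector field and $\nabla g$ a matrix), I would first reduce to the mean-zero case by subtracting $\fint_{\mathbb{T}^{2}} f\,dx$; this is exactly the overloading convention in the statement. For mean-zero $f$ the periodic Poisson problem $\Delta g = f$ has a unique solution with $\fint_{\mathbb{T}^{2}} g\,dx = 0$, namely $g = \Delta^{-1}f$ given on the Fourier side by $\widehat g(k) = -\lvert k\rvert^{-2}\widehat f(k)$ for $k\neq 0$ and $\widehat g(0)=0$. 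Since $\Delta^{-1}$ is smoothing of order two, $g\in C^{1}(\mathbb{T}^{2})$ and $\mathcal R f \triangleq \nabla g + (\nabla g)^{T} - (\nabla\cdot g)\,\text{Id}$ is a well-defined continuous $2\times 2$ matrix field.

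\textbf{Algebraic identities.} Symmetry is immediate because $\nabla g + (\nabla g)^{T}$ and $(\nabla\cdot g)\text{Id}$ are both symmetric. For trace-freeness one uses dimension two: $\text{tr}(\nabla g + (\nabla g)^{T}) = 2\,\nabla\cdot g$ while $\text{tr}((\nabla\cdot g)\text{Id}) = 2\,\nabla\cdot g$, so the trace vanishes. For $\nabla\cdot\mathcal R f = f$, write $(\mathcal R f)_{ij} = \partial_{i}g_{j} + \partial_{j}g_{i} - \delta_{ij}\,\nabla\cdot g$ and sum $\partial_{j}$ over $j$: the result is $\partial_{i}(\nabla\cdot g) + \Delta g_{i} - \partial_{i}(\nabla\cdot g) = \Delta g_{i} = f_{i}$. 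Finally each entry of $\mathcal R f$ is a first-order derivative of a periodic function, hence $\fint_{\mathbb{T}^{2}}\mathcal R f\,dx = 0$.

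\textbf{The three bounds.} Every entry of $\mathcal R f$ is a linear combination of entries of $\nabla g = \nabla\Delta^{-1}(f - \fint_{\mathbb{T}^{2}} f\,dx)$, so all three estimates reduce to mapping properties of $\nabla\Delta^{-1}$ on $\mathbb{T}^{2}$. For $\lVert\mathcal R\rVert_{L^{p}_{x}\to W^{1,p}_{x}}\lesssim 1$: the operator $\nabla^{2}\Delta^{-1}$ is a composition of Riesz transforms, hence a Calderón--Zygmund operator bounded on $L^{p}(\mathbb{T}^{2})$ for $p\in(1,\infty)$, and the zeroth-order part is handled by $\lVert\nabla g\rVert_{L^{p}}\lesssim\lVert f\rVert_{L^{p}}$ via Poincar\'e applied to the mean-zero field $\nabla g$. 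For $\lVert\mathcal R\rVert_{C_{x}\to C_{x}}\lesssim 1$: the convolution kernel of $\nabla\Delta^{-1}$ on $\mathbb{T}^{2}$ is the periodized gradient of the logarithmic Green's function, which is $O(\lvert x\rvert^{-1})$ near the origin and therefore lies in $L^{1}(\mathbb{T}^{2})$, so $\nabla g = K\ast(f - \fint f)$ satisfies $\lVert\nabla g\rVert_{L^{\infty}}\le\lVert K\rVert_{L^{1}}\lVert f\rVert_{L^{\infty}}$ and is continuous because $f$ is. For $\lVert\mathcal R f\rVert_{L^{p}_{x}}\lesssim\lVert(-\Delta)^{-1/2}f\rVert_{L^{p}_{x}}$: factor $\nabla\Delta^{-1} = [\nabla(-\Delta)^{-1/2}]\circ(-\Delta)^{-1/2}$, note $\nabla(-\Delta)^{-1/2}$ is the vector of Riesz transforms and so is $L^{p}$-bounded, and $(-\Delta)^{-1/2}$ kills the zero mode so $(-\Delta)^{-1/2}(f - \fint f) = (-\Delta)^{-1/2}f$.

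\textbf{Main obstacle.} The statement is classical and essentially bookkeeping once the singular-integral machinery is in place; the only genuinely delicate point is the $C_{x}\to C_{x}$ bound, since a naive elliptic (Schauder) estimate only produces a $C^{\alpha}$ gain on the second derivatives and would not close the $C^{0}$ estimate, and the resolution is precisely the two-dimensional observation that the kernel of $\nabla\Delta^{-1}$ is integrable on $\mathbb{T}^{2}$. Alternatively, one may simply invoke \cite[Definition 9, Lemma 10]{CDS12} verbatim.
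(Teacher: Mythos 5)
Your proof is correct, and it is worth noting that the paper itself offers no proof of this lemma: it appears in the Appendix under ``Past results'' and is invoked verbatim from \cite[Definition 9, Lemma 10]{CDS12} and \cite[Definition 7.1, Lemmas 7.2 and 7.3]{LQ20}, exactly as your closing remark anticipates. Your write-up supplies the standard details that the citation elides---the Fourier construction of $g=\Delta^{-1}f$, the two-dimensional trace cancellation, the divergence computation $\partial_j(\mathcal R f)_{ij}=\Delta g_i=f_i$, the Calder\'on--Zygmund bound for $\nabla^2\Delta^{-1}$, the $L^1$-kernel argument for the $C_x\to C_x$ estimate, and the Riesz-transform factorization $\nabla\Delta^{-1}=\bigl(\nabla(-\Delta)^{-1/2}\bigr)\circ(-\Delta)^{-1/2}$---and all of them check out. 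In particular, the Poincar\'e step for the zeroth-order part of the $W^{1,p}$ bound is legitimate because $\nabla g$ is automatically mean-zero on $\mathbb T^2$, and your reading of $f$ as $\mathbb R^2$-valued (so that $g$ is a vector field and $\nabla g$ a matrix) is the intended interpretation even though the paper's phrasing leaves this implicit.
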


\begin{lemma}\label{[Lemma 6.2, LQ20]}
\rm{(\cite[Lemma 6.2]{LQ20})} Let $f, g \in C^{\infty} (\mathbb{T}^{2})$ where $g$ is also $(\mathbb{T}/\kappa)^{2}$-periodic for some $\kappa \in \mathbb{N}$. Then there exists a constant $C \geq 0$ such that 
\begin{equation}\label{[Equation (6.3), LQ20]}
\lVert fg \rVert_{L_{x}^{2}} \leq \lVert f \rVert_{L_{x}^{2}} \lVert g \rVert_{L_{x}^{2}} + C \kappa^{-\frac{1}{2}} \lVert f \rVert_{C_{x}^{1}} \lVert g \rVert_{L_{x}^{2}}. 
\end{equation} 
\end{lemma}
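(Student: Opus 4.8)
The plan is an elementary real-space argument that exploits the exact periodicity of $g$ by ``freezing'' $f$ on the scale of one period of $g$. First I would tile $\mathbb{T}^{2}$, up to a set of measure zero, into the $\kappa^{2}$ congruent closed cubes $\{Q_{i}\}_{i}$ of side length $\tfrac{2\pi}{\kappa}$, each of which is (a translate of) a fundamental domain for the $(\mathbb{T}/\kappa)^{2}$-periodicity of $g$, and choose the center $x_{i}\in Q_{i}$ of each. Since $|g|^{2}$ is also $(\mathbb{T}/\kappa)^{2}$-periodic, this gives the exact identity $\lVert g\rVert_{L^{2}(Q_{i})}^{2}=\kappa^{-2}\lVert g\rVert_{L_{x}^{2}}^{2}$ for every $i$, with no error term whatsoever; this is the one place where it matters that $\kappa\in\mathbb{N}$ and that $g$ is genuinely periodic, not merely oscillatory. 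On each $Q_{i}$ I would write $fg=f(x_{i})\,g+(f-f(x_{i}))\,g$ and use $\lVert f-f(x_{i})\rVert_{L^{\infty}(Q_{i})}\leq \mathrm{diam}(Q_{i})\,\lVert\nabla f\rVert_{L_{x}^{\infty}}\lesssim \kappa^{-1}\lVert f\rVert_{C_{x}^{1}}$, so that
\begin{equation*}
\lVert fg\rVert_{L^{2}(Q_{i})}^{2}\leq 2\,|f(x_{i})|^{2}\lVert g\rVert_{L^{2}(Q_{i})}^{2}+2\,\lVert f-f(x_{i})\rVert_{L^{\infty}(Q_{i})}^{2}\lVert g\rVert_{L^{2}(Q_{i})}^{2}.
\end{equation*}

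Next I would sum over the $\kappa^{2}$ cubes and insert the periodicity identity, obtaining
\begin{equation*}
\lVert fg\rVert_{L_{x}^{2}}^{2}\leq 2\kappa^{-2}\Big(\sum_{i}|f(x_{i})|^{2}\Big)\lVert g\rVert_{L_{x}^{2}}^{2}+C\kappa^{-2}\lVert f\rVert_{C_{x}^{1}}^{2}\lVert g\rVert_{L_{x}^{2}}^{2}.
\end{equation*}
The quantity $\kappa^{-2}\sum_{i}|f(x_{i})|^{2}$ is a Riemann sum for $(2\pi)^{-2}\int_{\mathbb{T}^{2}}|f|^{2}$: from $\big||f(x_{i})|^{2}-\fint_{Q_{i}}|f|^{2}\big|\leq 2\,\mathrm{diam}(Q_{i})\lVert f\rVert_{L_{x}^{\infty}}\lVert\nabla f\rVert_{L_{x}^{\infty}}\lesssim \kappa^{-1}\lVert f\rVert_{C_{x}^{1}}^{2}$ and the fact that there are $\kappa^{2}$ cubes, one gets $\kappa^{-2}\sum_{i}|f(x_{i})|^{2}\leq (2\pi)^{-2}\lVert f\rVert_{L_{x}^{2}}^{2}+C\kappa^{-1}\lVert f\rVert_{C_{x}^{1}}^{2}$. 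Substituting yields $\lVert fg\rVert_{L_{x}^{2}}^{2}\leq \tfrac{2}{(2\pi)^{2}}\lVert f\rVert_{L_{x}^{2}}^{2}\lVert g\rVert_{L_{x}^{2}}^{2}+C\kappa^{-1}\lVert f\rVert_{C_{x}^{1}}^{2}\lVert g\rVert_{L_{x}^{2}}^{2}$, and taking square roots with $\sqrt{a+b}\leq\sqrt{a}+\sqrt{b}$ gives \eqref{[Equation (6.3), LQ20]}, since $\tfrac{2}{(2\pi)^{2}}<1$.

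The computation is entirely elementary, so the only point requiring genuine care is producing the coefficient $1$ (not merely an absolute constant) in front of $\lVert f\rVert_{L_{x}^{2}}\lVert g\rVert_{L_{x}^{2}}$: one must check that the finitely many absolute constants accumulated from the two triangle-inequality splittings are all absorbed by the small geometric factor $(2\pi)^{-2}$ coming from $|\mathbb{T}^{2}|=(2\pi)^{2}$, and that the slot identity $\lVert g\rVert_{L^{2}(Q_{i})}^{2}=\kappa^{-2}\lVert g\rVert_{L_{x}^{2}}^{2}$ is used exactly, with no error. I would also remark that a purely Fourier-analytic proof (writing $\widehat{fg}(k)=\sum_{j\in\mathbb{Z}^{2}}\widehat{f}(k-\kappa j)\widehat{g}(\kappa j)$ using that $\widehat{g}$ is supported on $\kappa\mathbb{Z}^{2}$) does not go through under these hypotheses, because bounding the high-frequency part of $g$ in $\ell^{1}$ would cost derivatives on $g$; it is precisely the real-space freezing of $f$ at the period scale that makes the regularity fall on $f$ rather than $g$, which is why only $\lVert f\rVert_{C_{x}^{1}}$ and $\lVert g\rVert_{L_{x}^{2}}$ enter the error.
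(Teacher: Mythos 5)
Your argument is correct, and it is exactly the real-space ``freezing'' proof that underlies \cite[Lemma 6.2]{LQ20} (and the analogous decorrelation lemma in Buckmaster--Vicol); the paper simply cites the lemma without reproving it. All the ingredients check out: $|g|^{2}$ inherits the $(\mathbb{T}/\kappa)^{2}$-periodicity, so $\int_{Q_{i}}|g|^{2}=\kappa^{-2}\lVert g\rVert_{L_{x}^{2}}^{2}$ holds exactly on each of the $\kappa^{2}$ period cells; the splitting $fg=f(x_{i})g+(f-f(x_{i}))g$ loses a factor $2$ after squaring and the Riemann-sum comparison of $\kappa^{-2}\sum_{i}|f(x_{i})|^{2}$ with $(2\pi)^{-2}\lVert f\rVert_{L_{x}^{2}}^{2}$ costs $O(\kappa^{-1}\lVert f\rVert_{C_{x}^{1}}^{2})$; and after the square root the leading coefficient is $\sqrt{2}/(2\pi)<1$, which is indeed absorbed as you note. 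The one point worth flagging is the closing aside: the reason a naive Fourier argument fails is not that one has to pay derivatives on $g$, but on $f$. In the decomposition $\widehat{fg}(k)=\sum_{j}\widehat{f}(k-\kappa j)\widehat{g}(\kappa j)$, the natural move is to split $f$ into $\mathbb{P}_{\le\kappa/2}f$ (whose translates by $\kappa\mathbb{Z}^{2}$ have disjoint Fourier supports, giving the main term) and $\mathbb{P}_{>\kappa/2}f$; the latter would then be bounded in $L^{\infty}$ via $\sum_{|k|>\kappa/2}|\widehat{f}(k)|$, which diverges under $C^{1}$ control of $f$ in dimension two — hence the need for the real-space argument, where the regularity of $f$ enters only through a local Lipschitz bound on a scale-$\kappa^{-1}$ cube. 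The $C^{1}$ norm of $g$ never enters in either approach, and no derivatives fall on $g$. Aside from that slip, the proof is complete and correct.
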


\begin{lemma}\label{[Lemma 7.4, LQ20]}
\rm{(\cite[Lemma 7.4]{LQ20})} For any given $p \in (1,\infty), \lambda \in \mathbb{N}, a \in C^{2}(\mathbb{T}^{2})$,  and $f \in L^{p}(\mathbb{T}^{2})$, 
\begin{equation}\label{estimate 50}
\lVert (-\Delta)^{-\frac{1}{2}} \mathbb{P}_{\neq 0} (a \mathbb{P}_{\geq \lambda} f) \rVert_{L_{x}^{p}} \lesssim \lambda^{-1} \lVert a \rVert_{C_{x}^{2}} \lVert f \rVert_{L_{x}^{p}}. 
\end{equation} 
\end{lemma}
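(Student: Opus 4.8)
The statement to prove is Lemma~\ref{[Lemma 7.4, LQ20]}, the inverse-divergence commutator estimate $\lVert (-\Delta)^{-1/2}\mathbb{P}_{\neq 0}(a\,\mathbb{P}_{\geq\lambda}f)\rVert_{L^p_x}\lesssim \lambda^{-1}\lVert a\rVert_{C^2_x}\lVert f\rVert_{L^p_x}$. The plan is to exploit the frequency gap between $a$ (which, being only $C^2$, we treat as essentially low-frequency after a paraproduct decomposition) and $\mathbb{P}_{\geq\lambda}f$, so that the product $a\,\mathbb{P}_{\geq\lambda}f$ restricted to nonzero frequencies inherits a lower-frequency bound of order $\lambda$, against which $(-\Delta)^{-1/2}$ gains the claimed factor $\lambda^{-1}$. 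This is a standard high-frequency commutator estimate in the spirit of \cite[Lemma~B.1]{BV19a} / \cite[Lemma~7.4]{LQ20}, and I would follow the Littlewood--Paley route rather than the explicit kernel computation of \cite{CET94}.

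First I would fix a Littlewood--Paley decomposition $a=\sum_{j\geq -1}\Delta_j a$ and $f=\sum_k\Delta_k f$, and write $a\,\mathbb{P}_{\geq\lambda}f=\sum_{j,k:\,2^k\gtrsim\lambda}\Delta_j a\,\Delta_k f$. Split into the usual paraproduct pieces: (i) $j\ll k$ (low--high), (ii) $|j-k|\lesssim 1$ (high--high), (iii) $j\gg k$ (high--low, which here forces $2^j\gtrsim\lambda$ as well). In case (i) the product $\Delta_j a\,\Delta_k f$ is frequency-localized to $|\xi|\approx 2^k\gtrsim\lambda$, so $(-\Delta)^{-1/2}$ acting on it is a bounded Fourier multiplier of size $2^{-k}\leq C\lambda^{-1}$; summing in $k$ using $\lVert\Delta_j a\rVert_{L^\infty}\lesssim\lVert a\rVert_{C^0}$ and Littlewood--Paley square-function/Bernstein bounds in $L^p$ gives the bound with $\lVert a\rVert_{C^0}\leq\lVert a\rVert_{C^2}$. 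In case (ii), $|j-k|\lesssim1$ with $2^k\gtrsim\lambda$, so after applying $\mathbb{P}_{\neq0}$ the output frequency is $\lesssim 2^{k+1}$ but we cannot gain from below directly; instead we use the two derivatives on $a$: $\lVert\Delta_j a\rVert_{L^\infty}\lesssim 2^{-2j}\lVert a\rVert_{C^2}\lesssim 2^{-2k}\lVert a\rVert_{C^2}$, and $\lVert(-\Delta)^{-1/2}\mathbb{P}_{\neq0}(\cdot)\rVert$ contributes a harmless bounded multiplier on the $\lesssim 2^k$ band (after removing the zero mode), so each term is $\lesssim 2^{-2k}\lVert a\rVert_{C^2}\lVert\Delta_k f\rVert_{L^p}$; summing the geometric series $\sum_{2^k\gtrsim\lambda}2^{-2k}\lesssim\lambda^{-2}\leq\lambda^{-1}$ and absorbing $\lVert f\rVert_{L^p}$ via Littlewood--Paley gives an even better power. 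Case (iii) is similar: $2^j\gtrsim\lambda$, so $\lVert\Delta_j a\rVert_{L^\infty}\lesssim 2^{-2j}\lVert a\rVert_{C^2}\lesssim\lambda^{-2}\lVert a\rVert_{C^2}$, the low-frequency factor $\Delta_k f$ with $2^k\lesssim 2^j$ but still $2^k\gtrsim\lambda$ (because $\mathbb{P}_{\geq\lambda}$ is already applied to $f$) is controlled in $L^p$, and $(-\Delta)^{-1/2}\mathbb{P}_{\neq0}$ on the resulting $\lesssim 2^j$ band is a bounded multiplier, so summing the $j$-series again yields the claim.

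The main obstacle — and the only genuinely delicate point — is case (ii), the high--high interaction, where the product's output frequency can be small (even though $\mathbb{P}_{\neq0}$ removes only the zero mode), so $(-\Delta)^{-1/2}$ does \emph{not} gain the factor $\lambda^{-1}$ from frequency localization there; one must instead extract the full $\lambda^{-1}$ (and then some) from the two derivatives falling on $a$, i.e. from $\lVert\Delta_j a\rVert_{L^\infty}\lesssim 2^{-2j}\lVert a\rVert_{C^2}$ with $2^j\gtrsim\lambda$. A clean way to organize this, avoiding having to bound $(-\Delta)^{-1/2}$ near frequency zero by hand, is to note $\mathbb{P}_{\neq0}=\mathbb{P}_{\neq0}\mathbb{P}_{\leq 2^{k+2}}$ on these terms and that $(-\Delta)^{-1/2}\mathbb{P}_{\neq0}\mathbb{P}_{\leq N}$ is bounded on $L^p(\mathbb{T}^2)$ with norm $\lesssim 1$ uniformly in $N\geq1$ (its symbol $|\xi|^{-1}\mathbf{1}_{1\leq|\xi|\lesssim N}$ is a Mikhlin--Hörmander multiplier after the rescaling built into the standing hypothesis $\lambda\in\mathbb{N}$, which keeps the spectrum on the integer lattice bounded away from the puncture at $0$). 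With that observation in hand, all three cases reduce to Bernstein's inequality plus summation of geometric series, and no fractional-Laplacian kernel estimates are needed. I would present the argument in this Littlewood--Paley form, remarking that the hypothesis $\lambda\in\mathbb{N}$ (rather than merely $\lambda>0$) is exactly what ensures the multiplier $|\xi|^{-1}\mathbb{P}_{\neq0}$ is well-behaved on $\mathbb{T}^2$.
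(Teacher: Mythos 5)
The paper does not prove this lemma; it is quoted verbatim from \cite[Lemma 7.4]{LQ20}, whose proof (like its antecedents in \cite[Appendix B]{BV19a}) uses a \emph{two-piece} frequency decomposition rather than a full Bony paraproduct. One writes $a = \tilde{\mathbb{P}}_{\leq \lambda/2}a + (a - \tilde{\mathbb{P}}_{\leq \lambda/2}a)$ with a \emph{smooth} Littlewood--Paley cutoff: the low part, multiplied against $\mathbb{P}_{\geq\lambda}f$, lives at frequency $\geq\lambda/2$ so $(-\Delta)^{-1/2}$ gains $\lambda^{-1}$ there; the high part satisfies $\lVert a - \tilde{\mathbb{P}}_{\leq\lambda/2}a\rVert_{L^\infty}\lesssim \lambda^{-2}\lVert a\rVert_{C^2}$ (second-order Taylor against an even kernel with vanishing first moment), and one uses that $(-\Delta)^{-1/2}\mathbb{P}_{\neq 0}$ is $L^p$-bounded. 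Your full paraproduct decomposition is a more granular version of this: cases (i) and (iii) absorb into the low-frequency piece, case (ii) into the high-frequency piece, and the same two mechanisms deliver the $\lambda^{-1}$. It is correct, though more work than the two-line version; what it buys is perhaps a sharper view of where the powers of $\lambda$ come from, and an extra $\lambda^{-1}$ in the high--high and high--low pieces that the final estimate does not use.

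Two statements in your write-up should be fixed before the argument is airtight, though neither is load-bearing. First, the claim that $(-\Delta)^{-1/2}\mathbb{P}_{\neq 0}\mathbb{P}_{\leq N}$ is bounded on $L^p(\mathbb{T}^2)$ \emph{uniformly in $N$} is false as an operator statement when $p\neq 2$: the paper's $\mathbb{P}_{\leq N}$ is the sharp ball cutoff with symbol $\mathbf{1}_{|\xi|\leq N}$, which is not uniformly $L^p$-bounded in dimension two (ball multiplier). You do not actually need it: the function $\Delta_j a\,\Delta_k f$ in your case (ii) is already supported at frequencies $\lesssim 2^{k+2}$, so one only needs the boundedness of $(-\Delta)^{-1/2}\mathbb{P}_{\neq 0}$ alone on $L^p(\mathbb{T}^2)$, which does hold by transference (the symbol $|\xi|^{-1}$ on $\mathbb{Z}^2\setminus\{0\}$ is smoothly extendable and Mikhlin--H\"{o}rmander once the lattice keeps it away from the singularity). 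Second, the role you assign to the hypothesis $\lambda\in\mathbb{N}$ is not right: the boundedness of $|\xi|^{-1}\mathbb{P}_{\neq 0}$ has nothing to do with $\lambda$. Integrality of $\lambda$ is inherited from the application (it will be $\lambda_{q+1}\sigma/2$ or $\lambda_{q+1}/10$, which the author engineers to be integers), not a feature that this lemma's proof exploits.
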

 
\subsection{Continuation of the proof of Proposition \ref{[Proposition 4.1, Y20b]}}
First, the proof of the following result from \cite{HZZ19} in case $x \in \mathbb{T}^{3}$ goes through verbatim in case $x \in \mathbb{T}^{2}$. 
\begin{lemma}\label{[Lemma A.1, HZZ19]} 
\rm{ (\cite[Lemma A.1]{HZZ19})} Let $\{(s_{n}, \xi_{n}) \}_{n\in\mathbb{N}} \subset [0,\infty) \times L_{\sigma}^{2}$ be a family such that $\lim_{n\to\infty} \lVert (s_{n}, \xi_{n}) - (s, \xi^{\text{in}}) \rVert_{\mathbb{R} \times L_{x}^{2}} = 0$ and $\{P_{n}\}_{n\in\mathbb{N}}$ be a family of probability measures on $\Omega_{0}$ satisfying for all $n \in \mathbb{N}$, $P_{n}(\{\xi(t) = \xi_{n} \hspace{1mm} \forall \hspace{1mm} t \in [0, s_{n}]\}) = 1$ and for some $\gamma, \kappa > 0$ and any $T > 0$,  
\begin{equation}\label{[Equation (177), Y20b]} 
\sup_{n\in\mathbb{N}} \mathbb{E}^{P_{n}} [ \lVert \xi \rVert_{C([0,T]; L_{x}^{2})} + \sup_{r, t \in [0,T]: \hspace{0.5mm}  r \neq t} \frac{ \lVert \xi(t) - \xi(r) \rVert_{H_{x}^{-3}}}{\lvert t- r \rvert^{\kappa}} + \lVert \xi \rVert_{L^{2}([s_{n}, T]; H_{x}^{\gamma})}^{2}] < \infty.  
\end{equation} 
Then $\{P_{n}\}_{n\in\mathbb{N}}$ is tight in $\mathbb{M} \triangleq C_{\text{loc}} ([0,\infty); H^{-3}(\mathbb{T}^{2})) \cap L_{\text{loc}}^{2}([0,\infty); L_{\sigma}^{2})$. 
\end{lemma}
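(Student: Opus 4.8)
\textbf{Proof proposal for Lemma~\ref{[Lemma A.1, HZZ19]}.}
The plan is to invoke a standard Aubin--Lions--Simon type compactness criterion together with the classical Arzel\`a--Ascoli / Kolmogorov tightness argument, exactly as in the proof of \cite[Lemma~A.1]{HZZ19}, and merely to check that replacing $\mathbb{T}^3$ by $\mathbb{T}^2$ changes nothing. First I would recall that tightness of $\{P_n\}_{n\in\mathbb{N}}$ on the Polish space $\mathbb{M}= C_{\text{loc}}([0,\infty);H^{-3}(\mathbb{T}^2)) \cap L_{\text{loc}}^2([0,\infty);L_\sigma^2)$ is equivalent, by a diagonal argument over $T\in\mathbb{N}$, to tightness on $C([0,T];H^{-3}(\mathbb{T}^2))\cap L^2([0,T];L_\sigma^2)$ for each fixed $T>0$. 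So it suffices to produce, for each $\varepsilon>0$, a compact set $\mathcal{K}_{\varepsilon,T}$ in the latter space with $\inf_n P_n(\mathcal{K}_{\varepsilon,T})\geq 1-\varepsilon$.

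Next I would build $\mathcal{K}_{\varepsilon,T}$ from sublevel sets of the three quantities appearing in \eqref{[Equation (177), Y20b]}. Fix $R>0$ large and set
\begin{equation*}
\mathcal{K}_{R,T}\triangleq \Big\{\xi:\ \lVert \xi\rVert_{C([0,T];L_x^2)}\leq R,\ \sup_{r\neq t\in[0,T]}\frac{\lVert\xi(t)-\xi(r)\rVert_{H_x^{-3}}}{|t-r|^{\kappa}}\leq R,\ \lVert \xi\rVert_{L^2([s_n,T];H_x^{\gamma})}\leq R\Big\}.
\end{equation*}
The first two bounds give uniform boundedness in $L^\infty_tL^2_x$ together with uniform $\kappa$-H\"older equicontinuity valued in $H^{-3}$; by Arzel\`a--Ascoli on $C([0,T];H^{-3})$ (here one uses that bounded sets of $L^2_x$ are relatively compact in $H^{-3}_x$ on the compact manifold $\mathbb{T}^2$, which holds by Rellich's theorem just as in the three-dimensional case) this already yields precompactness in $C([0,T];H^{-3}(\mathbb{T}^2))$. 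For the $L^2([0,T];L_\sigma^2)$ component I would interpolate: the uniform $H^{\gamma}_x$-bound in time-integrated form, combined with the equicontinuity in $H^{-3}_x$, produces by an Aubin--Lions--Simon argument (e.g.\ \cite[Theorem~5]{S86}) relative compactness of $\mathcal{K}_{R,T}$ in $L^2([0,T];L^2(\mathbb{T}^2))$, and divergence-freeness is preserved in the limit so one lands in $L_\sigma^2$. Thus $\mathcal{K}_{R,T}$ is relatively compact in $\mathbb{M}$ restricted to $[0,T]$; passing to its closure gives a genuine compact set. Finally, Chebyshev's inequality applied to \eqref{[Equation (177), Y20b]} shows $\sup_n P_n(\mathcal{K}_{R,T}^c)\leq C/R$, so choosing $R=R(\varepsilon,T)$ large enough yields $\inf_n P_n(\mathcal{K}_{R,T})\geq 1-\varepsilon$. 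Intersecting over $T\in\mathbb{N}$ with a summable choice of errors produces the desired compact set in $\mathbb{M}$, proving tightness.

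The only genuinely dimension-sensitive ingredients are the Sobolev/Rellich embeddings $L^2(\mathbb{T}^n)\hookrightarrow\hookrightarrow H^{-3}(\mathbb{T}^n)$ and $H^{\gamma}(\mathbb{T}^n)\hookrightarrow\hookrightarrow L^2(\mathbb{T}^n)$, and both are compact on any compact $n$-torus with no restriction on $n$; the Aubin--Lions--Simon machinery is likewise dimension-free. Hence, as asserted, the proof of \cite[Lemma~A.1]{HZZ19} goes through verbatim with $\mathbb{T}^3$ replaced by $\mathbb{T}^2$, and the only ``obstacle'' is the bookkeeping of checking that every embedding and compactness lemma used there is stated (or trivially holds) in general dimension --- there is no substantive new difficulty. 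I would therefore simply remark that the argument is identical and refer the reader to \cite{HZZ19, S86} for the routine details.
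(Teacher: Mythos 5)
Your overall strategy is the right one and matches what the paper (and \cite{HZZ19}) intend: reduce to bounded intervals, obtain precompactness in $C([0,T];H^{-3})$ from Arzel\`a--Ascoli plus Rellich, obtain precompactness in $L^2([0,T];L^2_\sigma)$ from an Aubin--Lions--Simon argument, and conclude tightness by Chebyshev; all of these steps are indeed dimension-free, which is exactly the point the paper makes by citing \cite[Lemma A.1]{HZZ19} without reproducing its proof.

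There is, however, a small but genuine imprecision in the way you define $\mathcal{K}_{R,T}$: the third constraint $\lVert\xi\rVert_{L^2([s_n,T];H^\gamma_x)}\leq R$ depends on $n$, so $\mathcal{K}_{R,T}$ is not a single set and cannot be fed directly into the tightness definition. You need to exhibit one fixed compact set whose $P_n$-probability is uniformly large. The standard repair uses two facts you have not invoked: (i) $P_n(\{\xi(t)=\xi_n\ \forall\ t\in[0,s_n]\})=1$, so on the support of $P_n$ the trajectory is the \emph{constant} $\xi_n$ on $[0,s_n]$, and $\{\xi_n\}_n\cup\{\xi^{\text{in}}\}$ is compact in $L^2_x$ since $\xi_n\to\xi^{\text{in}}$; and (ii) $s_n\to s$, so for any fixed $s'>s$ one has $s_n<s'$ for all large $n$ and hence $\lVert\xi\rVert_{L^2([s',T];H^\gamma)}\leq\lVert\xi\rVert_{L^2([s_n,T];H^\gamma)}$, giving an $n$-independent bound on $[s',T]$. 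One then patches the Aubin--Lions compactness on $[s',T]$ together with the precompactness of the constant initial segments on $[0,s']$ (using (i) and the $C([0,T];H^{-3})$ control near $t=s'$) to build a genuine $n$-independent compact set in $L^2([0,T];L^2_\sigma)$. The finitely many indices $n$ with $s_n\geq s'$ are handled by the trivial observation that each individual $P_n$ is automatically tight. Without this patching step your argument, as written, does not quite close; with it, the proof is complete and is indeed the one underlying \cite[Lemma A.1]{HZZ19}, which the paper invokes verbatim.
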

Now we fix $\{P_{n}\} \subset \mathcal{C} ( s_{n}, \xi_{n}, \{C_{t,q}\}_{q\in\mathbb{N}, t \geq s_{n}})$ and will show that it is tight in $\mathbb{M}$ by relying on Lemma \ref{[Lemma A.1, HZZ19]}.  We define $J(\xi) \triangleq - \mathbb{P} \text{div} (\xi \otimes \xi) - (-\Delta)^{m} \xi$. By definition of $\mathcal{C} (s_{n}, \xi_{n}, \{C_{t,q}\}_{q \in \mathbb{N}, t \geq s_{n}} )$ and (M2) of Definition \ref{[Definition 4.1, Y20b]}, we know that for all $n \in \mathbb{N}$ and $t \in [s_{n}, \infty)$  
\begin{equation}\label{[Equation (178), Y20b]} 
\xi(t) = \xi_{n} + \int_{s_{n}}^{t} J(\xi(r)) dr + M_{t, s_{n}}^{\xi} \hspace{3mm} P_{n}\text{-a.s.,}
\end{equation} 
where the map $t \mapsto M_{t, s_{n}}^{\xi, i} \triangleq \langle M_{t, s_{n}}^{\xi}, \mathfrak{g}_{i} \rangle$ for $\xi \in \Omega_{0}$ and $\mathfrak{g}_{i} \in C^{\infty} (\mathbb{T}^{2}) \cap L_{\sigma}^{2}$ is a continuous, square-integrable martingale w.r.t. $(\mathcal{B}_{t})_{t \geq s_{n}}$ such that $\langle \langle M_{t, s_{n}}^{\xi, i} \rangle \rangle = \int_{s_{n}}^{t} \lVert G(\xi(r))^{\ast} \mathfrak{g}_{i} \rVert_{U}^{2} dr$. We can compute for any $p \in (1,\infty)$, 
\begin{align*}
& \mathbb{E}^{P_{n}} [ \sup_{r, t \in [s_{n}, T]: \hspace{0.5mm} r \neq t} \frac{ \lVert \int_{r}^{t} J(\xi(l)) dl \rVert_{H_{x}^{-3}}^{p} }{\lvert t-r \rvert^{p-1}} ] \leq \mathbb{E}^{P_{n}} [ \int_{s_{n}}^{T} (\lVert \xi \otimes \xi \rVert_{H_{x}^{-2}} + \lVert \xi \rVert_{H_{x}^{2m-3}})^{p} dl]
\end{align*} 
by H$\ddot{\mathrm{o}}$lder's inequality where $\lVert \xi \otimes \xi \rVert_{H_{x}^{-2}}  \lesssim \lVert \xi \rVert_{L_{x}^{2}}^{2}$ and $\lVert \xi \rVert_{H_{x}^{2m-3}} \lesssim 1 + \lVert \xi \rVert_{L_{x}^{2}}^{2}$ because $m \in (0,1)$. Therefore, 
\begin{align}\label{[Equation (179), Y20b]}
 \mathbb{E}^{P_{n}} [ \sup_{r, t \in [s_{n}, T]: \hspace{0.5mm} r \neq t} \frac{ \lVert \int_{r}^{t} J(\xi(l)) dl \rVert_{H_{x}^{-3}}^{p} }{\lvert t-r \rvert^{p-1}} ]   \overset{(M3)}{\lesssim}_{p} TC_{T,p} (1+ \lVert \xi_{n} \rVert_{L_{x}^{2}}^{2p}).  
\end{align} 
On the other hand, making use of \eqref{[Equations (11) and (12), Y20b]}, (M2) and (M3) of Definition \ref{[Definition 4.1, Y20b]} and Kolmogorov's test (e.g., \cite[Theorem 3.3]{DZ14}) gives us for any $\alpha \in (0, \frac{p-1}{2p})$  
\begin{equation}\label{[Equation (180), Y20b]}
\mathbb{E}^{P_{n}} [ \sup_{r, t \in [0, T]: \hspace{0.5mm} r \neq t} \frac{ \lVert M_{t, s_{n}}^{\xi} - M_{r, s_{n}}^{\xi} \rVert_{L_{x}^{2}}}{\lvert t-r \rvert^{\alpha}} ] \lesssim_{p} C_{t,p} (1+ \lVert \xi_{n} \rVert_{L_{x}^{2}}^{2p}). 
\end{equation} 
Making use of \eqref{[Equation (178), Y20b]}-\eqref{[Equation (180), Y20b]} leads to for all $\kappa \in (0, \frac{1}{2})$, 
\begin{equation}\label{[Equation (181), Y20b]}
\sup_{n \in \mathbb{N}} \mathbb{E}^{P_{n}} [ \sup_{r, t \in [0,T]: \hspace{0.5mm} r \neq t} \frac{ \lVert \xi(t) - \xi(r) \rVert_{H_{x}^{-3}}}{\lvert t-r \rvert^{\kappa}} ] < \infty. 
\end{equation} 
Hence, (M1), \eqref{[Equation (15), Y20b]} with $q=1$, and \eqref{[Equation (181), Y20b]} together allow us to deduce that $\{P_{n}\}$ is tight in $\mathbb{M}$ by Lemma \ref{[Lemma A.1, HZZ19]}. By Prokhorov's theorem (e.g., \cite[Theorem 2.3]{DZ14}) we deduce that $P_{n}$ converges weakly to some $P \in \mathcal{P}(\Omega_{0})$ and by Skorokhod's representation theorem (e.g., \cite[Theorem 2.4]{DZ14}) there exists a probability space $(\tilde{\Omega}, \tilde{\mathcal{F}}, \tilde{P})$ and $\mathbb{M}$-valued random variables $\{ \tilde{\xi}_{n}\}_{n\in\mathbb{N}}$ and $\tilde{\xi}$ such that 
\begin{equation}\label{[Equation (182), Y20b]}
\tilde{\xi}_{n} \text{ has the law } P_{n} \hspace{1mm} \forall \hspace{1mm} n \in \mathbb{N}, \tilde{\xi}_{n} \to \tilde{\xi} \text{ in } \mathbb{M} \hspace{1mm}  \tilde{P}\text{-a.s. and }\tilde{\xi} \text{ has the law } P. 
\end{equation} 
Making use of \eqref{[Equation (182), Y20b]} and (M1) for $P_{n}$ immediately leads to 
\begin{equation}\label{[Equation (183), Y20b]}
P( \{ \xi(t) = \xi^{\text{in}} \hspace{1mm} \forall \hspace{1mm} t \in [0,s] \}) =  \lim_{n\to\infty} \tilde{P} ( \{ \tilde{\xi}_{n}(t) = \xi_{n} \hspace{1mm} \forall \hspace{1mm} t \in [0, s_{n}] \}) =  1,
\end{equation} 
which implies (M1) for $P$. Next, it follows immediately that for every $\mathfrak{g}_{i} \in C^{\infty}(\mathbb{T}^{2})$, $\tilde{P}$-a.s. 
\begin{equation}\label{[Equation (184), Y20b]}
\langle \tilde{\xi}_{n}(t), \mathfrak{g}_{i} \rangle \to \langle \tilde{\xi}(t), \mathfrak{g}_{i} \rangle, \hspace{3mM} \int_{s_{n}}^{t} \langle J(\tilde{\xi}_{n}(r)), \mathfrak{g}_{i} \rangle dr \to \int_{s}^{t} \langle J(\tilde{\xi} (r)), \mathfrak{g}_{i} \rangle dr.
\end{equation} 
In particular, to prove the second convergence we can write 
\begin{align*}
& \mathbb{E}^{\tilde{P}}[ \int_{s_{n}}^{t} \langle J(\tilde{\xi}_{n}(r), \mathfrak{g}_{i} \rangle dr - \int_{s}^{t} \langle J(\tilde{\xi}(r)), \mathfrak{g}_{i} \rangle dr] \\
=& \mathbb{E}^{\tilde{P}}[ \int_{s_{n}}^{s} \langle - \mathbb{P} \text{div} (\tilde{\xi}_{n} \otimes \tilde{\xi}_{n}) - (-\Delta)^{m} \tilde{\xi}_{n}, \mathfrak{g}_{i} \rangle dr \\
&+ \int_{s}^{t} \langle - \mathbb{P} \text{div} (\tilde{\xi}_{n} \otimes \tilde{\xi}_{n}) + \mathbb{P} \text{div} (\tilde{\xi} \otimes \tilde{\xi}), \mathfrak{g}_{i} \rangle dr + \int_{s}^{t} \langle - (-\Delta)^{m} (\tilde{\xi}_{n} - \tilde{\xi}), \mathfrak{g}_{i} \rangle dr], 
\end{align*} 
among which we only point out that 
\begin{align*}
& \mathbb{E}^{\tilde{P}} [ \int_{s_{n}}^{s} \langle - (-\Delta)^{m} \tilde{\xi}_{n}, \mathfrak{g}_{i} \rangle dr] 
\leq \mathbb{E}^{\tilde{P}} [ \int_{s_{n}}^{s} \lVert \tilde{\xi}_{n} \rVert_{L_{x}^{2}} \lVert (-\Delta)^{m} \mathfrak{g}_{i} \rVert_{L_{x}^{2}} dr] \to 0, \\
&  \mathbb{E}^{\tilde{P}} [ \int_{s}^{t} \langle (-\Delta)^{m}( \tilde{\xi}_{n} - \tilde{\xi}), \mathfrak{g}_{i} \rangle dr] \leq \mathbb{E}^{\tilde{P}} [ \int_{s}^{t} \lVert \tilde{\xi}_{n} - \tilde{\xi} \rVert_{L_{x}^{2}} \lVert (-\Delta)^{m} \mathfrak{g}_{i} \rVert_{L_{x}^{2}} dr]  \to 0  
\end{align*} 
as $n\to\infty$ by \eqref{[Equation (182), Y20b]}. Next, we can compute for every $t \in [s, \infty)$ and $p \in (1,\infty)$, 
\begin{align}
\sup_{n \in \mathbb{N}} \mathbb{E}^{\tilde{P}} [ \lvert M_{t, s_{n}}^{\tilde{\xi}_{n}, i} \rvert^{2p}] \overset{\text{(M3)} \eqref{[Equation (178), Y20b]}}{\lesssim_{p}} 1  \hspace{2mm} \text{ and } \hspace{2mm} \lim_{n\to\infty} \mathbb{E}^{\tilde{P}} [ \lvert M_{t, s_{n}}^{\tilde{\xi}_{n}, i} - M_{t, s}^{\tilde{\xi}, i} \rvert]  \overset{\eqref{[Equation (178), Y20b]} \eqref{[Equation (184), Y20b]}}{=} 0. \label{especial}
\end{align}
Next, we let $t  > r \geq s$ and $g$ be any $\mathbb{R}$-valued, $\mathcal{B}_{r}$-measurable and continuous function on $\mathbb{M}$. Then we can compute 
\begin{align}
 \mathbb{E}^{P} [ (M_{t,s}^{\xi, i} - M_{r,s}^{\xi, i} ) g(\xi) ] \overset{\eqref{especial}}{=} \lim_{n\to\infty} \mathbb{E}^{\tilde{P}} [ ( M_{t, s_{n}}^{\tilde{\xi}_{n}, i} - M_{r, s_{n}}^{\tilde{\xi}_{n}, i}) g(\tilde{\xi}_{n})]  = 0. \label{[Equation (187), Y20b]}
\end{align} 
This implies that the map $t \mapsto M_{t,s}^{i}$ is a $(\mathcal{B}_{t})_{t\geq s}$-martingale under $P$. Next, we can deduce 
\begin{equation}\label{[Equation (188), Y20b]}
\lim_{n\to\infty} \mathbb{E}^{\tilde{P}} [ \lvert M_{t, s_{n}}^{\tilde{\xi}_{n}, i} - M_{t,s}^{\tilde{\xi}, i} \rvert^{2}] \overset{\eqref{especial}}{=} 0. 
\end{equation} 
This leads us to  
\begin{align}
\mathbb{E}^{P} [((M_{t,s}^{\xi, i})^{2} - (M_{r,s}^{\xi, i})^{2} - \int_{r}^{t} \lVert G(\xi (l))^{\ast} \mathfrak{g}_{i} \rVert_{U}^{2} dl) g(\xi)] \overset{\eqref{[Equation (182), Y20b]} \eqref{[Equation (188), Y20b]}}{=} 0. \label{[Equation (189), Y20b]} 
\end{align} 
Therefore, $(M_{t,s}^{\xi, i})^{2} - \int_{s}^{t} \lVert G(\xi(l))^{\ast} \mathfrak{g}_{i} \rVert_{U}^{2} dl$ is a $(\mathcal{B}_{t})_{t\geq s}$-martingale under $P$ which implies $\langle \langle M_{t,s}^{\xi, i} \rangle \rangle = \int_{s}^{t} \lVert G(\xi(l))^{\ast} \mathfrak{g}_{i} \rVert_{U}^{2} dl$ under $P$; it follows that $M_{t,s}^{\xi, i}$ is square-integrable. Therefore, (M2) for $P$ was shown. Finally, to prove (M3) it suffices to define 
\begin{equation}\label{[Equation (190), Y20b]}
R(t, s, \xi) \triangleq \sup_{r \in [0,t]} \lVert \xi(r) \rVert_{L_{x}^{2}}^{2q} + \int_{s}^{t} \lVert \xi(r) \rVert_{H_{x}^{\varepsilon}}^{2} dr, 
\end{equation} 
and observe that the map $\xi \mapsto R(t, s, \xi)$ is lower semicontinuous on $\mathbb{M}$ so that $\mathbb{E}^{P} [ R(t, s, \xi) ] \leq C_{t,q} (1+ \lVert \xi^{\text{in}} \rVert_{L_{x}^{2}}^{2q})$. Therefore, (M3) holds for $P$ so that $P \in \mathcal{C} (s,\xi_{0}, \{C_{t,q}\}_{q\in\mathbb{N}, t \geq s})$. 

\subsection{Continuation of the proof of Theorem \ref{[Theorem 2.3, Y20b]} assuming Proposition \ref{[Proposition 5.7, Y20b]}}

We fix any $T > 0, K > 1$ and $\kappa \in (0,1)$, and take $L$ sufficiently large that satisfies \eqref{[Equation (132), Y20b]}, as well as 
\begin{equation}\label{[Equation (134), Y20b]}
(\frac{1}{\sqrt{2}} - \frac{1}{2}) e^{2L T} > (\frac{1}{\sqrt{2}} + \frac{1}{2} ) e^{2L^{\frac{1}{2}}} \text{ and } L > [ \ln (K e^{\frac{T}{2}})]^{2}.
\end{equation}
We start from $(v_{0}, \mathring{R}_{0})$ in Proposition \ref{[Proposition 5.6, Y20b]}, and via Proposition \ref{[Proposition 5.7, Y20b]} inductively obtain $(v_{q}, \mathring{R}_{q})$ that satisfies \eqref{[Equation (122), Y20b]}, \eqref{[Equation (126), Y20b]}, and \eqref{[Equation (133), Y20b]}. Identically to \eqref{[Equation (46), Y20b]} we can show that for any $\varepsilon \in (0, \frac{\beta}{4+\beta})$ and any $t \in [0, T_{L}]$, $\sum_{q\geq 0} \lVert v_{q+1}(t) - v_{q}(t) \rVert_{H_{x}^{\varepsilon}} \lesssim m_{L} M_{0}(t)^{\frac{1}{2}}$ by \eqref{[Equation (133), Y20b]} and \eqref{[Equation (126b), Y20b]}. This allows us to deduce the limiting solution $\lim_{q\to\infty} v_{q} \triangleq v \in C([0, T_{L}]; H^{\varepsilon}(\mathbb{T}^{2}))$ that is $(\mathcal{F}_{t})_{t\geq 0}$-adapted because $\lim_{q\to \infty} \lVert \mathring{R}_{q} \rVert_{C_{T_{L}} L_{x}^{1}} = 0$ due to \eqref{[Equation (126c), Y20b]}. Because $u  = e^{B(t)} v$ where $\lvert e^{B(t)} \rvert \leq e^{L^{\frac{1}{4}}}$ for all $t \in [0, T_{L}]$ due to \eqref{[Equation (123), Y20b]}, we are able to deduce \eqref{[Equation (7), Y20b]} by choosing $\mathfrak{t} = T_{L}$ for $L$ sufficiently large. Moreover, we can show identically to \eqref{[Equation (49), Y20b]} that for all $t \in [0, T_{L}]$, $\lVert v(t) - v_{0}(t) \rVert_{L_{x}^{2}} \leq \frac{m_{L}}{2} M_{0}(t)^{\frac{1}{2}}$ by \eqref{[Equation (129), Y20b]} and \eqref{[Equation (133), Y20b]} which in turn implies
\begin{equation}\label{[Equation (135), Y20b]}  
e^{2 L^{\frac{1}{2}}} \lVert v(0) \rVert_{L_{x}^{2}}  \leq  e^{2L^{\frac{1}{2}}} ( \lVert v(0) - v_{0}(0) \rVert_{L_{x}^{2}} + \lVert v_{0} (0) \rVert_{L_{x}^{2}})\overset{\eqref{[Equation (130), Y20b]}}{\leq} e^{2L^{\frac{1}{2}}} (\frac{1}{2} + \frac{1}{\sqrt{2}} )m_{L} M_{0}(0)^{\frac{1}{2}}.   
\end{equation} 
These lead us to, on a set $\{T_{L} \geq T \}$ 
\begin{equation}\label{[Equation (136), Y20b]}
\lVert v(T) \rVert_{L_{x}^{2}} \overset{\eqref{[Equation (130), Y20b]}}{\geq} \frac{m_{L} M_{0}(T)^{\frac{1}{2}}}{\sqrt{2}} - \lVert v(T) - v_{0}(T) \rVert_{L_{x}^{2}}  \overset{ \eqref{[Equation (134), Y20b]}\eqref{[Equation (135), Y20b]}  }{\geq} e^{2L^{\frac{1}{2}}} \lVert v(0) \rVert_{L_{x}^{2}}^{2}. 
\end{equation} 
Moreover, for the fixed $T > 0$, $\kappa \in (0,1)$, one can take $L$ even larger to deduce $\textbf{P} (\{T_{L} \geq T \}) > \kappa$. We also see that $u^{\text{in}}(x) = \Upsilon(0)v(0,x) = v(0,x)$ which is deterministic because $v_{q}(0,x)$ is deterministic for all $q \in \mathbb{N}_{0}$ by Propositions \ref{[Proposition 5.6, Y20b]} and \ref{[Proposition 5.7, Y20b]}. Clearly from \eqref{[Equation (121), Y20b]}, $u = \Upsilon v$ is a $(\mathcal{F}_{t})_{t\geq 0}$-adapted solution to \eqref{[Equation (2), Y20b]}. Furthermore, it follows from \eqref{[Equation (123), Y20b]}, \eqref{[Equation (134), Y20b]},  and \eqref{[Equation (136), Y20b]}  that $\lVert u(T) \rVert_{L_{x}^{2}} \geq e^{L^{\frac{1}{2}}} \lVert u^{\text{in}} \rVert_{L_{x}^{2}} > K e^{\frac{T}{2}} \lVert u^{\text{in}} \rVert_{L_{x}^{2}}$ on the set $\{\mathfrak{t} \geq T \}$ which implies \eqref{[Equation (8), Y20b]}. 

\section*{Acknowledgments}
The author expresses deep gratitude to Prof. Jiahong Wu for very valuable discussions, as well as the Editors and the anonymous Referees for their valuable comments and suggestions that improved this manuscript significantly.


\begin{thebibliography}{100}  
\addtolength{\leftmargin}{0.2in} 
\setlength{\itemindent}{-0.2in} 

\bibitem{A09} D. Applebaum, \emph{L$\acute{e}$vy Processes and Stochastic Calculus}, second edition, Cambridge University Press, New York, 2009. 

\bibitem{BMR14} D. Barbato, F. Morandin, and M. Romito, \emph{Global regularity for a slightly supercritical hyperdissipative Navier-Stokes system}, Anal. PDE, \textbf{7} (2014), 2009--2027. 

\bibitem{B69} G. K. Batchelor, \emph{Computation of the energy spectrum in homogeneous two-dimensional turbulence}, Phys. Fluids, \textbf{12} (Suppl. II) (1969), 233--239. 

\bibitem{BT73} A. Bensoussan and R. Temam, \emph{Equations stochastiques du type Navier-Stokes}, J. Funct. Anal., \textbf{13} (1973), 195--222. 

\bibitem{BFH20} D. Breit, E. Feireisl, and M. Hofmanov$\acute{\mathrm{a}}$, \emph{On solvability and ill-posedness of the compressible Euler system subject to stochastic forces}, Anal. PDE, \textbf{13} (2020), 371--402. 

\bibitem{BDIS15} T. Buckmaster, C. De Lellis, P. Isett, and L. Sz$\acute{\mathrm{e}}$kelyhidi Jr., \emph{Anomalous dissipation for $1/5$-H$\ddot{o}$lder Euler flows}, Ann. of Math., \textbf{182} (2015), 127--172. 

\bibitem{BCV18} T. Buckmaster, M. Colombo, and V. Vicol, \emph{Wild solutions of the Navier-Stokes equations whose singular sets in time have Hausdorff dimension strictly less than 1}, J. Eur. Math. Soc. (JEMS), to appear. 

\bibitem{BV19a} T. Buckmaster and V. Vicol, \emph{Nonuniqueness of weak solutions to the Navier-Stokes equation}, Ann. of Math., \textbf{189} (2019), 101--144. 

\bibitem{BV19b} T. Buckmaster and V. Vicol, \emph{Convex integration and phenomenologies in turbulence}, EMS Surveys in Mathematical Sciences, \textbf{6} (2019), 173--263. 

\bibitem{CL21a} K. Chen and J. Liu, \emph{Weak solutions of the three-dimensional hypoviscous elastodynamics with finite kinetic energy},  arXiv:2104.11872 [math.AP]. 

\bibitem{C03} A. S. Cherny, \emph{On the uniqueness in law and the pathwise uniqueness for stochastic differential equations}, Theory Probab. Appl., \textbf{46} (2003), 406--419. 

\bibitem{CL21} A. Cheskidov and X. Luo, \emph{$L^{2}$-critical nonuniqueness for the 2D Navier-Stokes equations},  arXiv:2105.12117 [math.AP]. 

\bibitem{CFF19} E. Chiodaroli, E. Feireisl, and F. Flandoli, \emph{Ill posedness for the full Euler system driven by multiplicative white noise}, Indiana Univ. Math. J., \textbf{70} (2021), 1267--1282.    

\bibitem{CDS12} A. Choffrut, C. De Lellis, and L. Sz$\acute{\mathrm{e}}$kelyhidi Jr., \emph{Dissipative continuous Euler flows in two and three dimensions}, arXiv:1205.1226 [math.AP]. 

\bibitem{CM10} I. Chueshov and A. Millet, \emph{Stochastic 2D hydrodynamical type systems: well posedness and large deviations}, Appl. Math. Optim., \textbf{61} (2010), 379--420. 

\bibitem{C02} P. Constantin, \emph{Energy spectrum of quasigeostrophic turbulence}, Phys. Rev. Lett., \textbf{89} (2002), 184501. 

\bibitem{CET94} P. Constantin, W. E, and E. S. Titi, \emph{Onsager's conjecture on the energy conservation for solutions of Euler's equation}, Comm. Math. Phys., \textbf{165} (1994), 207--209.

\bibitem{CGV14} P. Constantin, N. Glatt-Holtz, and V. Vicol, \emph{Unique ergodicity for fractionally dissipated, stochastically forced 2D Euler equations}, Comm. Math. Phys., \textbf{330} (2014), 819--857. 

\bibitem{DZ96} G. Da Prato and J. Zabczyk, \emph{Ergodicity for Infinite-Dimensional Systems}, Encyclopedia of Mathematics and its Applications, \textbf{44}, Cambridge University Press, Cambridge, 1992. 

\bibitem{DZ14} G. Da Prato and J. Zabczyk, \emph{Stochastic Equations in Infinite Dimensions}, Cambridge University Press, Cambridge, 2014. 

\bibitem{D13} A. Debussche, \emph{Ergodicity results for the stochastic Navier-Stokes equations: an introduction.}  In P. Constantin, A. Debussche, G. P. Galdi, M. R$\dot{\mathrm{u}}\check{\mathrm{z}}i\check{\mathrm{c}}$ka and G. Seregin (Eds.) \emph{Topics in Mathematical Fluid Mechanics}, Lecture Notes in Mathematics, \textbf{2073}, Springer-Verlag, Berlin, Heidelberg (2013), 23--108.

\bibitem{DS09} C. De Lellis and L. Sz$\acute{\mathrm{e}}$kelyhidi Jr., \emph{The Euler equations as a differential inclusion}, Ann. of Math., \textbf{170} (2009), 1417--1436. 

\bibitem{DS10} C. De Lellis and L. Sz$\acute{\mathrm{e}}$kelyhidi Jr., \emph{On admissibility criteria for weak solutions of the Euler equations}, Arch. Ration. Mech. Anal., \textbf{195} (2010), 225--260. 

\bibitem{DS12} C. De Lellis and L. Sz$\acute{\mathrm{e}}$kelyhidi Jr., \emph{The $h$-principle and the equations of fluid dynamics}, Bull. Amer. Math. Soc., \textbf{49} (2012), 347--375. 

\bibitem{DS13} C. De Lellis and L. Sz$\acute{\mathrm{e}}$kelyhidi Jr., \emph{Dissipative continuous Euler flows}, Invent. Math., \textbf{193} (2013), 377--407. 

\bibitem{E94} G. L. Eyink, \emph{Energy dissipation without viscosity in ideal hydrodynamics, I. Fourier analysis and local energy transfer}, Phys. D, \textbf{78} (1994), 222--240. 

\bibitem{F08} F. Flandoli, \emph{An Introduction to 3D Stochastic Fluid Dynamics}: In: G. Da Prato, M. R$\ddot{\mathrm{u}}$ckner (Eds.) SPDE in Hydrodynamic: Recent Progress and Prospects. Lecture Notes in Mathematics, \textbf{1942}, Springer, Berlin, Heidelberg, (2008), 51--150. 

\bibitem{FG95} F. Flandoli and D. Gatarek, \emph{Martingale and stationary solutions for stochastic Navier-Stokes equations}, Probab. Theory Relat. Fields, \textbf{102} (1995), 367--391.

\bibitem{FR08} F. Flandoli and M. Romito, \emph{Markov selections for the 3D stochastic Navier-Stokes equations}, Probab. Theory Related Fields, \textbf{140} (2008), 407--458. 

\bibitem{GRZ09} B. Goldys, M. R$\ddot{\mathrm{o}}$ckner, and X. Zhang, \emph{Martingale solutions and Markov selections for stochastic partial differential equations}, Stochastic Process. Appl., \textbf{119} (2009), 1725--1764. 

\bibitem{G86} M. Gromov, \emph{Partial Differential Relations}, Springer-Verlag, Berlin, Heidelberg, 1986. 

\bibitem{HM06} M. Hairer and J. C. Mattingly, \emph{Ergodicity of the 2D Navier-Stokes equations with degenerate stochastic forcing}, Ann. of Math., \textbf{164} (2006), 993--1032. 

\bibitem{HZZ19} M. Hofmanov$\acute{\mathrm{a}}$, R. Zhu, and X. Zhu, \emph{Non-uniqueness in law of stochastic 3D Navier-Stokes equations}, arXiv:1912.11841 [math.PR]. 

\bibitem{HZZ22} M. Hofmanov$\acute{\mathrm{a}}$, R. Zhu, and X. Zhu, \emph{On ill- and well-posedness of dissipative martingale solutions to stochastic 3D Euler equations}, Comm. Pure Appl. Math., \url{https://doi.org/10.1002/cpa.22023}. 

\bibitem{H51} E. Hopf, \emph{$\ddot{U}$ber die Anfangswertaufgabe f$\ddot{u}$r die hydrodynamischen Grundgleichungen}, Math. Nachr., \textbf{4} (1951), 213--231. 

\bibitem{I18} P. Isett, \emph{A proof of Onsager's conjecture}, Ann. of Math., \textbf{188} (2018), 871--963. 

\bibitem{K67} R. H. Kraichnan, \emph{Inertial ranges in two-dimensional turbulence}, Phys. Fluids, \textbf{10} (1967), 1417--1423. 

\bibitem{K55} N. H. Kuiper, \emph{On $C^{1}$-isometric embeddings. I}, Nederl. Akad. Wetensch. Proc. A, \textbf{58} (1955), 545--556. 

\bibitem{L34} J. Leray, \emph{Essai sur le mouvement d'un fluide visqueux emplissant l'espace}, Acta Math., \textbf{63} (1934), 193--248.

\bibitem{L59} J.-L. Lions, \emph{Quelques r$\acute{e}$sultats d'existence dans des $\acute{e}$quations aux d$\acute{e}$riv$\acute{e}$es partielles non lin$\acute{e}$aires}, Bull. Soc. Math. France, \textbf{87} (1959), 245--273. 

\bibitem{L69} J.-L. Lions, \emph{Quelquels M$\acute{e}$thodes de R$\acute{e}$solution des Probl$\acute{e}$mes aux Limites Non Lin$\acute{e}$aires}, Dunod, Gauthiers-Villars, Paris, 1969. 

\bibitem{LQ20} T. Luo and P. Qu, \emph{Non-uniqueness of weak solutions to 2D hypoviscous Navier-Stokes equations}, J. Differential Equations, \textbf{269} (2020), 2896--2919. 

\bibitem{LT20} T. Luo and E. S. Titi, \emph{Non-uniqueness of weak solutions to hyperviscous Navier-Stokes equations - on sharpness of J.-L. Lions exponent}, Calc. Var. Partial Differential Equations, \textbf{59} (2020), https://doi.org/10.1007/s00526-020-01742-4. 

\bibitem{MB02} A. Majda and A. L. Bertozzi, \emph{Vorticity and Incompressible Flow}, Cambridge University Press, Cambridge, United Kingdom, 2002. 

\bibitem{MS03} S. M$\ddot{\mathrm{u}}$ller and V. $\check{\mathrm{S}}$ver$\acute{\mathrm{a}}$k, \emph{Convex integration for Lipschitz mappings and counterexamples to regularity}, Ann. of Math., \textbf{157} (2003), 715--742. 

\bibitem{N54} J. Nash, \emph{$C^{1}$ isometric imbeddings}, Ann. of Math., \textbf{60} (1954), 383--395. 

\bibitem{N65} E. A, Novikov, \emph{Functionals and the random-force method in turbulence theory}, Soviet Phys. JETP, \textbf{20} (1965), 1290--1294. 

\bibitem{O49} L. Onsager, \emph{Statistical hydrodynamics}, Nuovo Cim, \textbf{6} (1949), 279--287. 

\bibitem{PT98} J. Paret and P. Tabeling, \emph{Intermittency in the two-dimensional inverse cascade of energy: experimental observations}, Phys. Fluids, \textbf{10} (1998), 3126--3136. 

\bibitem{PBH00} O. Pauluis, V. Balaji, and I. M. Held, \emph{Frictional dissipation in a precipitating atmosphere}, J. Atmos. Sci., \textbf{57} (2000), 989--994. 

\bibitem{S93} V. Scheffer, \emph{An inviscid flow with compact support in space-time}, J. Geom. Anal., \textbf{3} (1993), 343--401. 

\bibitem{S97} A. Shnirelman, \emph{On the nonuniqueness of weak solution of the Euler equation}, Comm. Pure Appl. Math., \textbf{L} (1997), 1261--1286. 

\bibitem{S86} J. Simon, \emph{Compact sets in the space $L^{p} (0, T; B)$}, Ann. Mat. Pura Appl., \textbf{146} (1986), 65--96. 

\bibitem{SV97} D. W. Stroock and S. R. S. Varadhan, \emph{Multidimensional Diffusion Processes}, Springer-Verlag, Berlin, Heidelberg, New York, 1997. 

\bibitem{T09} T. Tao, \emph{Global regularity for a logarithmically supercritical hyperdissipative Navier-Stokes equation}, Anal. PDE, \textbf{2} (2009), 361--366.

\bibitem{W04} J. Wu, \emph{The generalized incompressible Navier-Stokes equations in Besov spaces}, \textbf{4} (2004), 381--400. 

\bibitem{Y19} K. Yamazaki, \emph{Markov selections for the magnetohydrodynamics and the Hall-magnetohydrodynamics systems}, J. Nonlinear Sci., \textbf{29} (2019), 1761--1812. 

\bibitem{Y20a} K. Yamazaki, \emph{Remarks on the non-uniqueness in law of the Navier-Stokes equations up to the J.-L. Lions' exponent}, Stochastic Process. Appl., \textbf{147} (2022), 226--269. 

\bibitem{Y21} K. Yamazaki, \emph{Non-uniqueness in law of three-dimensional magnetohydrodynamics system forced by random noise},  	arXiv:2109.07015 [math.AP]. 

\bibitem{Y63} V. Yudovich, \emph{Non stationary flows of an ideal incompressible fluid}, Zhurnal Vych Matematika, \textbf{3} (1963), 1032--1066. 

\bibitem{Z12} R. Zhu, \emph{SDE and BSDE on Hilbert spaces: applications to quasi-linear evolution equations and the asymptotic properties of the stochastic quasi-geostrophic equation}, Ph.D. Thesis, Bielefeld University, 2012. 

\end{thebibliography}
\end{document}